\theoremstyle{plain}
\newtheorem*{df}{Definition of $f_{t}$}
\newtheorem*{dff}{Definition of $\widetilde{f}_{2\pi - t}$}
\newtheorem*{ex1}{Example 1}
\newtheorem*{ex2}{Example 2}
\newtheorem*{A}{Theorem A}
\newtheorem*{B}{Theorem B}
\newtheorem*{C}{Theorem C}
\newtheorem*{L'}{Lemma 3.11'}
\newtheorem*{sub1}{Sublemma 1}
\newtheorem*{sub2}{Sublemma 2}
\newtheorem*{T}{Thurston's Characterization Theorem}
\newtheorem*{A.1}{Theorem A.1}
\newtheorem*{A.2}{Theorem A.2}
\newtheorem*{A.3}{Theorem A.3}
\newtheorem*{A.4}{Theorem A.4}
\newtheorem{corollary}{Corollary}[section]
\newtheorem{lemma}{Lemma}[section]
\newtheorem{remark}{Remark}[section]
\newtheorem{definition}{Definition}[section]
\newtheorem{proposition}{Proposition}[section]
\begin{document}
\title[Siegel Rational Maps with Prescribed Combinatorics]
{Dynamics of Siegel Rational Maps with Prescribed Combinatorics}
\author{Gaofei Zhang}

\address{Department of  Mathematics \\ Nanjing University, Nanjing, 210093, P. R. China  }
\email{zhanggf@hotmail.com}

\subjclass[2000]{Primary 37F10, Secondary 37F20}

\maketitle

\begin{abstract}
We extend Thurston's combinatorial criterion for postcritically
finite rational maps to a class of rational maps with bounded type
Siegel disks. The combinatorial characterization of this class of
Siegel rational maps plays a special role in the study of general
Siegel rational maps. As one of the  applications, we prove that for
any quadratic rational map with a bounded type Siegel disk, the
boundary of the Siegel disk is a quasi-circle which passes through
one or both of the critical points.

\end{abstract}

\section{Introduction}

Let $f: S^{2} \to S^{2}$ be an orientation-preserving branched
covering map. We call
$$\Omega_{f} = \{x \big{|}\: \deg_{x}f > 1\}$$
the $\emph{critical set}$ of $f$, and
$$P_{f} = \overline{\bigcup_{1 \le k < \infty}f^{k}(\Omega_{f})}$$
the $\emph{postcritical set}$. A branched covering map of the
topological two sphere is called $\emph{postcritically finite}$ if
its $\emph{postcritical set}$ is a finite set. Let $f, g:S^{2} \to
S^{2}$ be two orientation-preserving branched covering maps. We say
$f$ and $g$ are $\emph{combinatorially equivalent}$ if there exist
two homeomorphisms $\phi, \phi': (S^{2}, P_{f}) \to (S^{2}, P_{g})$,
such that the diagram

$$
     \begin{CD}
           (S^{2},P_{f})           @  >\phi'   >  >(S^{2}, P_{g})         \\
           @V f VV                         @VV g V\\
           (S^{2},P_{f})           @  >\phi   >  >      (S^{2}, P_{g})
     \end{CD}
$$
commutes, and $\phi$ is isotopic to $\phi'$ rel $P_{f}$. Thurston
proved that an orientation-preserving and $\emph{postcritically
finite}$ branched covering map with hyperbolic orbifold is
$\emph{combinatorially}$ $\emph{equivalent}$ to a rational map if
and only if it has no Thurston obstructions \cite{Th}. A detailed
proof of this theorem was presented in Douady and Hubbard's paper
\cite{DH}. Since then, it has been a tantalizing problem to see to
what extent such a combinatorial characterization is possible beyond
the $\emph{postcritically finite}$ setting.  Some progress has been
made towards this direction. For instance, McMullen proved that for
any rational map, there exist no Thurston obstructions outside all
the possible rotation domains(Siegel disks or Herman
rings)\cite{McM1}. On the other hand, it was illustrated by Cui,
Jiang, and Sullivan that there are $\emph{geometrically finite}$
branched covering maps which have no Thurston obstructions, but are
not $\emph{combinatorially equivalent}$ to  rational maps
\cite{CJS}.  Here we say a branched covering map is
$\emph{geometrically finite}$ if its $\emph{postcritical set}$ is an
infinite set but has finitely many accumulation points. The example
implies  that, to make a $\emph{postcritically infinite}$ branched
covering map  $\emph{combinatorially equivalent}$ to a rational map,
besides the non-existence of  Thurston obstructions, some additional
conditions have to be imposed on its local combinatorial structure
around the accumulation points of the $\emph{postcritical set}$. For
a $\emph{geometrically finite}$ branched covering map, such a local
condition was found by Cui, Jiang, and Sullivan, which they called
$\emph{locally linearizable}$.  According to \cite{CJS},  a
$\emph{geometrically finite}$ branched covering map is called
$\emph{locally linearizable}$ if it can  be $\emph{combinatorially
equivalent}$ to some "normalized" one such that the later map is
either holomorphically attracting or super-attracting in a
neighborhood of each accumulation point of the $\emph{postcritical
set}$. They proved that a $\emph{geometrically finite}$ branched
covering map is $\emph{combinatorially equivalent}$ to a
sub-hyperbolic rational map if and only if it is $\emph{locally
linearizable}$ and has no Thurston obstructions. Cui also studied
under what condition, a $\emph{geometrically finite}$ branched
covering map is $\emph{combinatorially equivalent}$ to a rational
map with parabolic cycles. The situation in this case becomes more
subtle where a new type of obstructions, called $\emph{invariant
connecting arcs}$, have to be considered as well as Thurston
obstructions.  We refer the reader to \cite{C1} for the details.

The main purpose of this work is to extend Thurston's combinatorial
criterion for $\emph{postcritically finite}$ rational maps to  a
class of Siegel rational maps, and then applies it to quadratic
rational maps with $\emph{bounded type}$ Siegel disks. Here we call
an irrational number $0< \theta < 1$ of $\emph{bounded type}$ if
$\sup\{a_{i}\} < \infty$ where $ [a_{1}, \cdots,a_{n},\cdots]$ is
its continued fraction. We shall assume throughout this paper that
$0< \theta< 1$ is an irrational number of $\emph{bounded type}$.

\begin{definition}\label{geom}
We use
$R_{\theta}^{geom}$
to denote the class of all the rational maps $g$ such that

\begin{itemize}
\item[1.]$g$ has a Siegel disk $D_{g}$ with  rotation number $\theta$,  and
\item[2.]$\partial D_{g}$ is a quasi-circle, and
\item[3.]$P_{g}-\overline{D_{g}}$ is a finite set.
\end{itemize}
\end{definition}

\begin{remark}\label{GS}
Assume  that $f$ has a bounded type Siegel disk $D$ such that
$\overline{D} \subset U$ where $U$ is a domain on which $f$ is holomorphic.
Then $\partial D$ must contain at least one critical point of $f$ \cite{GS}.
It follows that for any $g \in R_{\theta}^{geom }$,
$\partial D_{g} \cap \Omega_{g} \ne \emptyset$.
\end{remark}

\begin{definition}

We use $R_{\theta}^{top}$
to denote the class of all the orientation-preserving  branched covering maps
$f: S^{2} \to S^{2}$
such that
\begin{itemize}
\item[1.] $f|\Delta: z \to   e^{2 \pi  i\theta}z$ is a rigid rotation where
$\Delta = \{z \big{|}\:|z| < 1\}$ is the unit disk , and
\item[2.]$\partial \Delta \cap \Omega_{f} \ne \emptyset$ , and
\item[3.]$P_{f} - \overline{\Delta}$ is a finite set.
\end{itemize}
We call the unit disk $\Delta$ the $\emph{rotation disk}$ of $f$.
\end{definition}

For a branched covering map $f \in R_{\theta}^{top}$, we say $f$ is
$\emph{realized}$ by a Siegel
rational map $g \in R_{\theta}^{geom}$, if
$f$ and $g$ are $\emph{combinatorially equivalent}$ to each other,
and furthermore,
when restricted to the Siegel disk,
the combinatorial equivalence is a holomorphic conjugation. More precisely,

\begin{definition}\label{realize}
Let $f \in R_{\theta}^{top}$ and $g \in R_{\theta}^{geom}$.
Let $\Delta$ be the unit disk, and
$D_{g}$ be
the Siegel disk of $g$.
We say
$f$ is realized by $g$ if
\begin{itemize}
\item[1.] $f = \phi_{1} ^{-1} \circ g \circ \phi_{2}$, and
\item[2.] $\phi_{1}$ is isotopic to $\phi_{2}$ relative to $P_{f}$,
and
\item[3.] $\phi_{1}|_{\Delta}=\phi_{2}|_{\Delta}: \Delta \to D_{g}$ is holomorphic.
\end{itemize}
\end{definition}

We now present a quick summary of our results. The first theorem
extends Thurston's combinatorial
criterion for $\emph{postcritically finite}$ rational maps to the class $R_{\theta}^{geom}$.
The proof is given in Section 2.
\begin{A}
Let $0< \theta< 1$ be an irrational number of bounded type.
Then a branched covering map $f \in R_{\theta}^{top}$   can be realized by a Siegel rational map
$g \in R_{\theta}^{geom}$
if and only if $f$
has no Thurston obstructions on the outside of the rotation disk.
\end{A}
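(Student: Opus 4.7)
\medskip

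\noindent\textbf{Proof proposal for Theorem A.}

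\emph{The ``only if'' direction.} This is the soft direction. Suppose $f$ is realized by $g\in R_{\theta}^{geom}$ via $\phi_{1},\phi_{2}$. Any Thurston obstruction $\Gamma$ for $f$ sitting outside $\overline{\Delta}$ is pushed forward by $\phi_{1}\simeq\phi_{2}$ to a multicurve on $S^{2}\setminus\overline{D_{g}}$ whose intersection form with $P_{g}$ agrees with that for $f$, and hence yields a Thurston obstruction for $g$ living outside the Siegel disk $D_{g}$. McMullen's theorem cited in the introduction rules out such obstructions for honest rational maps, so none exist.

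\emph{The ``if'' direction, strategy.} The plan is to reduce Theorem A to the already known Cui--Jiang--Sullivan theorem for geometrically finite branched covers by means of a two-sided quasi-conformal surgery centered on the rotation disk. First, I would cut out the rotation disk $\Delta$ and replace the rigid rotation $z\mapsto e^{2\pi i\theta}z$ with a model Blaschke product $f_{t}$ (or $\widetilde{f}_{2\pi-t}$) of the form referenced in the definitions at the start of the paper. By the Herman--Swi\c{a}tek theorem, the bounded type assumption on $\theta$ guarantees that the restriction of such a Blaschke model to $\partial\Delta$ is quasi-symmetrically conjugate to the rigid rotation, so the gluing across $\partial\Delta$ can be done quasi-conformally. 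The new dynamics on $\overline{\Delta}$ can be arranged so that the replaced map $\widetilde{f}$ has a (super)attracting fixed point inside $\Delta$ and a critical point on $\partial\Delta$ that lies in the appropriate orbit; this turns $f$ into a geometrically finite branched covering $\widetilde{f}$ whose postcritical accumulation set consists of a single (super)attracting cycle plus the attractor of the Blaschke model on $\partial\Delta$.

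\emph{Realizing the intermediate map.} Next I would check the hypotheses of the Cui--Jiang--Sullivan theorem for $\widetilde{f}$. Any Thurston obstruction $\Gamma$ for $\widetilde{f}$ has curves disjoint from $P_{\widetilde{f}}$, in particular disjoint from $\overline{\Delta}$ up to isotopy; since $\widetilde{f}$ coincides with $f$ outside $\Delta$ and the postcritical sets outside $\Delta$ agree, $\Gamma$ would descend to a Thurston obstruction for $f$ outside the rotation disk, contradicting the hypothesis. Local linearizability at the accumulation points is built into the Blaschke model and into the choice of a (super)attracting replacement. Cui--Jiang--Sullivan then yields a sub-hyperbolic rational map $\widetilde{g}$ combinatorially equivalent to $\widetilde{f}$, whose Julia dynamics on the boundary of the corresponding immediate basin is the Blaschke model up to quasi-conformal conjugacy.

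\emph{Reverse surgery and the holomorphic condition.} Finally I would run the surgery in reverse on $\widetilde{g}$: using Shishikura-type quasi-conformal surgery and the quasi-symmetric linearization of Herman--Swi\c{a}tek, one pastes back a genuine rigid rotation of rotation number $\theta$ inside the immediate basin, integrates a new invariant Beltrami coefficient (bounded because bounded-type Siegel boundaries are quasi-circles), and produces the desired Siegel rational map $g\in R_{\theta}^{geom}$. The conjugacies $\phi_{1},\phi_{2}$ required by Definition~\ref{realize} are assembled from the Cui--Jiang--Sullivan conjugacy outside $\Delta$ and from the integrating map of the Beltrami coefficient inside $\Delta$; the latter is holomorphic on $\Delta$ by construction, which is exactly condition (3) of Definition~\ref{realize}. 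I expect the main technical obstacle to be the verification that the surgery does not create phantom obstructions and that the two conjugacies can be chosen to be isotopic rel $P_{f}$, matching on $\partial\Delta$ with the rigid rotation on one side and with the Siegel rotation on the other; this matching is where the quasi-symmetric regularity coming from the bounded type of $\theta$ is essential.
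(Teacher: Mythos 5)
Your ``only if'' direction is fine and is the same one-line McMullen citation the paper uses. The ``if'' direction, however, has a gap that I do not think can be repaired along the lines you sketch.

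Your plan is to modify $f$ inside $\Delta$ so that the rotation disk becomes a (super)attracting basin bounded by a critical circle map, then invoke Cui--Jiang--Sullivan to realize the modified map $\widetilde{f}$ as a rational map, and finally surger the Siegel disk back in. The fatal problem is that the intermediate map $\widetilde{f}$ you build is \emph{not} geometrically finite in the sense required by the Cui--Jiang--Sullivan theorem. If $\widetilde{f}$ carries a critical point on $\partial\Delta$ whose restriction to $\partial\Delta$ is a circle homeomorphism of irrational rotation number $\theta$, then the forward orbit of that critical point is \emph{dense} in $\partial\Delta$; consequently the accumulation set of $P_{\widetilde{f}}$ contains all of $\partial\Delta$, which is uncountable, not the finite accumulation set demanded by the definition cited in the introduction. (You acknowledge this yourself when you describe the accumulation set as ``the attractor of the Blaschke model on $\partial\Delta$''---but that attractor is the whole circle.) The Cui--Jiang--Sullivan theorem simply does not cover this situation; indeed, handling a postcritical set that accumulates on an entire invariant circle is precisely the new content of Theorem A, so as written the argument is essentially circular. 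There are secondary issues as well: the surgery must guarantee (i) that the basin boundary of $\widetilde{g}$ is a quasi-circle, and (ii) that the boundary dynamics returns with the \emph{same} rotation number $\theta$, and neither property is supplied automatically by combinatorial equivalence rel $P_{\widetilde{f}}$; these would require further work even if one had a realization theorem to apply.

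The paper avoids the circularity by approximating $\theta$ by rationals. One first symmetrizes $f$ across the unit circle to get a degree-$(2d-1)$ branched cover $F$ (Proposition~\ref{basic construction}), then replaces the irrational rotation by rational rotations $\theta_n=p_n/q_n$, producing a sequence of \emph{postcritically finite} symmetric maps $F_n$, to which Thurston's classical theorem---not CJS---applies once one verifies the absence of obstructions (Lemma~\ref{no Thurston obstructions}). The maps $F_n$ are realized by Blaschke products $G_n$, which one then shows lie in a compact family (Lemmas~\ref{Z-P}--\ref{com-F}); the limit $G$ is the Blaschke model, and only at this point does Herman--\'Swi\c{a}tek and quasiconformal surgery (as in $\S 2.5$) enter to paste in the rigid rotation and produce $g\in R_{\theta}^{geom}$. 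In other words, the correct version of your idea is not ``reduce to CJS,'' but ``reduce to Thurston via rational approximation and pass to the limit,'' and the genuine technical content of the proof is the compactness and non-degeneration of the sequence $\{G_n\}$---an issue your proposal does not touch.
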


The necessary part is a direct consequence of a theorem of McMullen.
For a proof, see Appendix B of \cite{McM1}. We need only to prove
the sufficient part. The idea of the proof is as follows.  First we
construct a symmetric branched covering map $F$ such that when
restricted on the outside of the unit disk,  $F$ has the same
combinatorial structure as that of $f$. Based on the branched
covering map $F$, we construct a sequence of symmetric and
$\emph{postcritically finite}$ branched covering maps $\{F_{n}\}$
such that $F_{n} \to F$ uniformly, and $|P_{F_{n}} - \partial
\Delta| = |P_{F} - \partial \Delta|$(Proposition~\ref{basic
construction}). Then we show that for $n$ large enough, $F_{n}$ has
no Thurston obstructions, and hence by Thurston's theorem, it is
$\emph{combinatorially equivalent}$ to some rational map
$G_{n}$(Lemma~\ref{no Thurston obstructions}).  Since $F_{n}$ is
symmetric about the unit circle, it follows that $G_{n}$ is a
Blaschke product. We then prove that the sequence $\{G_{n}\}$ is
contained in some compact set of $\mathcal{R}$$_{2d-1}$, the space
of all the rational maps of degree $2d-1$(Lemma~\ref{com-F}). By
passing to a convergent subsequence, we may assume that $G_{n} \to
G$ where $G$ is a Blaschke product of degree $2d-1$. Then we show
that $F$ and $G$ are $\emph{combinatorially equivalent}$ to each
other($\S2.4$). The proof of Theorem A is then completed by a
standard quasiconformal surgery on $G$($\S2.5$).

The second theorem shows that the Julia set of any $f \in
R_{\theta}^{geom}$ has zero Lebesgue measure. In particular, it
implies the combinatorial rigidity of the maps in
$R_{\theta}^{geom}$. The proof is given in Section 3.
\begin{B}
Let $f \in R_{\theta}^{geom}$. Then the Julia set of $f$ has zero Lebesgue measure.
In particular, if $f \in R_{\theta}^{top}$ has no Thurston obstructions outside the
rotation disk $\Delta$, then up to a M\"{o}bius conjugation,
there is a unique Siegel rational map $g \in R_{\theta}^{geom}$ to realize $f$.
\end{B}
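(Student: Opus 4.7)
The theorem contains two assertions: first, that every $f\in R_{\theta}^{geom}$ has a Julia set of Lebesgue measure zero, and second, that given this, the Siegel realization promised by Theorem~A is unique up to M\"{o}bius conjugation. The plan is to prove the measure statement first and then deduce the rigidity statement by a standard Beltrami argument.

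For the measure statement, I would begin by observing that $\partial D_{f}$ is a quasi-circle by clause~(2) of Definition~\ref{geom}, hence has Lebesgue measure zero. It therefore suffices to prove that $J_{f}\setminus\partial D_{f}$ has measure zero. I split this set into two pieces: points whose forward orbit stays a positive distance from $\overline{D_{f}}$, and points whose forward orbit accumulates on $\partial D_{f}$. For the first piece, clause~(3) of Definition~\ref{geom} says that $P_{f}\setminus\overline{D_{f}}$ is finite; cutting out a small neighborhood of $\overline{D_{f}}$ from $J_{f}$ produces a compact $f$-forward-invariant subset of $J_{f}$ disjoint from the postcritical set, which is a hyperbolic set of $f$ and therefore has measure zero. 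For the second piece, I would exploit the quasisymmetric linearizability of $f|_{\partial D_{f}}$ (available because $\partial D_{f}$ is a quasi-circle) together with the bounded-type hypothesis on $\theta$ in order to control the combinatorics of first returns of nearby orbits into a nested system of annular neighborhoods of $\partial D_{f}$; the bounded partial quotients give a uniform bound on return-time combinatorics, and Koebe distortion along deep inverse branches then yields density-zero estimates for the set of accumulating orbits, in the spirit of Petersen's telescope argument for quadratic Siegel polynomials.

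For the uniqueness part, suppose $g_{1},g_{2}\in R_{\theta}^{geom}$ both realize the same unobstructed $f\in R_{\theta}^{top}$. Combining the diagrams in Definition~\ref{realize} yields two homeomorphisms $\phi,\phi'\colon(S^{2},P_{g_{2}})\to(S^{2},P_{g_{1}})$ satisfying $g_{1}\circ\phi'=\phi\circ g_{2}$, isotopic rel $P_{g_{2}}$, and agreeing on $D_{g_{2}}$ as a holomorphic isomorphism onto $D_{g_{1}}$. Iteratively lifting $\phi$ through $g_{1}$ and $g_{2}$ in the Sullivan--Thurston pullback, the quasi-circle boundaries of the two Siegel disks and the finite postcritical data outside them furnish uniform quasiconformal bounds for all the lifts, so a subsequence converges to a quasiconformal homeomorphism $\Phi\colon S^{2}\to S^{2}$ with $g_{1}\circ\Phi=\Phi\circ g_{2}$. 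The complex dilatation of $\Phi$ vanishes on $D_{g_{2}}$ by holomorphy and on the remaining Fatou components (the grand orbit of $D_{g_{2}}$ and the attracting or super-attracting basins) by pullback using Boettcher and Koenigs coordinates; on $J_{g_{2}}$ it vanishes almost everywhere by the first part of the theorem. Hence $\Phi$ is conformal, and therefore M\"{o}bius.

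The principal obstacle is clearly the measure estimate for points in $J_{f}$ whose forward orbits accumulate on $\partial D_{f}$. The quasi-circle hypothesis controls the geometry of the invariant curve $\partial D_{f}$ and the bounded-type hypothesis controls the combinatorics of recurrent returns; converting these two inputs into a uniform Koebe-type distortion bound valid at arbitrary depth of return is the delicate technical step, and is the $R_{\theta}^{geom}$ analogue of the a priori bounds exploited by Petersen and Yampolsky for quadratic polynomials with bounded-type Siegel disks.
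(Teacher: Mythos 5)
Your rigidity argument follows the paper's: pull back a quasiconformal conjugacy, observe that the dilatation vanishes on the Fatou set via linearizing coordinates, and then invoke the zero-measure statement to conclude the conjugacy is M\"{o}bius. That part is essentially the paper's proof.

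For the measure statement the decomposition you use (hyperbolic set away from $\overline{D_{f}}$ plus accumulating orbits) is a valid alternative to the paper's route, which instead cites Lyubich's Proposition~1.14 to conclude that a Lebesgue density point of the Julia set must have $\omega$-limit set contained in $\partial D_{f}$, and which transfers everything to the Blaschke model $G$ via Lemma~\ref{3-fir}. A minor wrinkle in your first piece: the set of Julia points whose entire forward orbit stays $\epsilon$-far from $\overline{D_{f}}$ need not be disjoint from $P_{f}$, since $P_{f}\setminus\overline{D_{f}}$ may contain Julia points (a critical orbit landing on a repelling cycle); the hyperbolicity conclusion still holds because that finite postcritical piece is eventually periodic and nonparabolic, but the disjointness claim as you wrote it is not literally correct.

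The genuine gap is in the second piece, and it is exactly the part you flag as "the delicate technical step." You propose to run a telescope/puzzle argument in the spirit of Petersen. But the paper explicitly explains that Petersen's puzzle construction does not readily extend to the class $R_{\theta}^{geom}$, because the Siegel boundary may carry several critical points of unequal local degree, and there is no apparent way to build puzzle pieces that work uniformly across all such configurations. That obstruction is precisely why the paper introduces a different mechanism, the \emph{minimal neighborhood method} of \S3.3--\S3.5: one works with the hyperbolic half-neighborhoods $H_{d}(I)$ of boundary arcs, defines for each $n$ the first return index $m(n)$ realizing the smallest $\sigma$-length $l_{m(n)}$ (Definition~\ref{key-s}, Lemma~\ref{min-neig}), and from the minimality extracts a sequence $\tau(n)\to\infty$ for which the orbit enters a fixed cone $\Omega_{\epsilon,r}^{c}$ on which $G^{-1}$ strictly contracts the $\Omega_{*}$-hyperbolic metric (Lemma~\ref{contra}, Lemma~\ref{fin-lem}). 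That contraction, together with the Koebe estimates of Lemma~\ref{pull-b}, is what replaces the puzzle-based a priori bounds. So while the skeleton of your plan is right, the specific tool you name for the critical step is one the paper argues cannot be used in this generality; you would need to discover a substitute with the same scope as the minimal neighborhood method rather than adapting the Petersen--Yampolsky puzzles.
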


The main part of the proof is to show that the Julia set of any
Siegel rational map in $R_{\theta}^{geom}$ has zero Lebesgue
measure. The assertion of the rigidity then follows easily. For a
quadratic polynomial with a bounded type Siegel disk, the zero
measure statement was already proved by Petersen \cite{Pe1}.
Petersen's proof is based on a delicate geometric object, the so
called Petersen's puzzle. Since for a map in $R_{\theta}^{geom}$,
the boundary of the Siegel disk may contain several critical points,
each of which may have a different degree, there seems no easy way
to construct the puzzles which is suitable for all the cases.    To
avoid this difficulty, we will introduce a new method, the
$\emph{minimal neighborhood method}$, which allows us to treat all
these cases in a uniform way. One advantage of this method is that
it may also be applied in the study of the Julia sets of entire
functions with bounded type Siegel disks where Petersen puzzles are
not available\cite{Zh1}.

 Let us briefly sketch the proof of
the zero measure statement of Theorem B. Let $g \in
R_{\theta}^{geom}$. We first show that there is a Blaschke product
$G$ which models $g$. That is to say, the dynamics of $g$ on the
outside of the Siegel disk is quasiconformally conjugate to the
dynamics of $G$ on the outside of the unit disk. Therefore it
suffices to show that the set
$$
J_{\widehat{G}} = J_{G} - \bigcup_{k=0}^{\infty}G^{-k}(\Delta)
$$
has zero Lebesgue measure. Assume that it is not true. It follows
that there is a Lebesgue point of $J_{\widehat{G}}-
\bigcup_{k=0}^{\infty}G^{-k}(\partial \Delta)$, say $z_{0}$, such
that $G^{k}(z_{0}) \to \partial \Delta$ as $k \to
\infty$(Lemma~\ref{Lyu}). Now we define a sequence $\{m(k)\}_{k=1}$
such that for each $m(k)$, the point $z_{m(k)}$ is the
$\emph{nearest}$ one to $\partial \Delta$ among all the points
$z_{0}, z_{1}, \cdots, z_{m(k)}$. Here by $\emph{nearest}$ we mean
that $z_{m(k)}$ is contained in some $\emph{minimal neighborhood}$
which is attached to the unit circle(see Definition~\ref{key-s}).
The importance of the sequence $\{m(n)\}$ is that for each $m(n)$,
there is a number $\tau(n) < m(n)$, such that the inverse branch of
$G$ which maps $z_{\tau(n) + 1}$ to $z_{\tau(n)}$ strictly contracts
the hyperbolic metric in some hyperbolic Riemann surface, and
moreover, $\tau(n) \to \infty$ as $n \to
\infty$(Lemma~\ref{fin-lem}). This allows us to construct a sequence
of nested neighborhoods of $z_{0}$ such that the pre-images of
$\Delta$ count a definite part in each of these
neighborhoods($\S3.5$). It follows  that $z_{0}$ is not a Lebesgue
point of $J_{\widehat{G}}- \bigcup_{k=0}^{\infty}G^{-k}(\partial
\Delta)$. But this is a contradiction with our assumption.

As an application of Theorem A and Theorem B, in $\S4$, we prove
\begin{C}
For any bounded type irrational number $\theta$, there is a constant $1 < K < \infty$ dependent
only on $\theta$, such that for any quadratic rational map with a Siegel disk of rotation number
$\theta$, the boundary of the Siegel disk is a $K-$quasi-circle which passes through one or both
of the critical points of $f$.
\end{C}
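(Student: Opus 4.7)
Let $f$ be a quadratic rational map with a bounded type Siegel disk $D$ of rotation number $\theta$.  By Remark~\ref{GS}, at least one critical point $c_{1}$ of $f$ lies on $\partial D$; let $c_{2}$ denote the other critical point, which by Riemann--Hurwitz is also simple.  The plan is to realize $f$, up to a quasiconformal deformation whose dilatation depends only on $\theta$, as a map in $R_{\theta}^{geom}$, so that the quasi-circle conclusion becomes automatic from the very definition of that class.

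The first step is to construct a degree-two branched covering $F \in R_{\theta}^{top}$ that shares the local combinatorics of $f$ near $\partial D$ but has $P_{F}-\overline{\Delta}$ finite.  I identify $D$ with $\Delta$ via the linearizing coordinate of $f|_{D}$, so that $F|_{\overline{\Delta}}$ is the rigid rotation by $\theta$ and $c_{1}$ corresponds to a marked point of $\partial \Delta$.  If $c_{2}\in\partial D$ already, then $P_{f} \subset \overline{D}$ and the model is a direct topological copy of $f$.  If $c_{2}\notin\partial D$, I prescribe the orbit of the second critical point in the model to be pre-periodic to a superattracting cycle placed in $S^{2}\setminus \overline{\Delta}$, which enforces $P_{F}-\overline{\Delta}$ finite while preserving the boundary combinatorics.

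Next, since $F$ has degree two with only one free critical orbit outside $\overline{\Delta}$, a standard degree-two Thurston obstruction count, combined with the fact that $\Delta$ is an invariant topological rotation disk, shows that $F$ carries no Thurston obstructions outside $\Delta$.  Theorem A then produces a Siegel rational map $g \in R_{\theta}^{geom}$ that realizes $F$, and Theorem B guarantees that $g$ is unique up to M\"obius conjugation.  By the definition of $R_{\theta}^{geom}$, the boundary $\partial D_{g}$ is a $K_{0}$-quasi-circle.

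It remains to transport the quasi-circle property back to $\partial D$ with dilatation depending only on $\theta$.  The holomorphic identification between $f|_{D}$ and $g|_{D_{g}}$ (both rigid rotations by $\theta$) extends to a quasisymmetric conjugacy between $\partial D$ and $\partial D_{g}$, with quasisymmetric constant depending only on $\theta$ since $\theta$ is of bounded type (Herman--\'Swiatek).  A Douady--Earle extension across a collar, glued with the interior biholomorphism and with the combinatorial equivalence $\phi_{1},\phi_{2}$ outside the collar supplied by the realization, yields a global quasiconformal map $\Phi$ with $\Phi(\partial D)=\partial D_{g}$.  Hence $\partial D$ is a $K(\theta)$-quasi-circle through $c_{1}$ (and through $c_{2}$ when $c_{2}\in\partial D$).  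The principal obstacle is the last step: absorbing all dependence on the second critical orbit of $f$ into the freedom of the model $F$ and the rigidity of Theorem B, so that the dilatation of $\Phi$ is governed by a single Herman--\'Swiatek-type bound depending only on $\theta$ and not on the individual map $f$.
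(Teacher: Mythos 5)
Your proposal runs into a fundamental circularity at the transport step, and the preceding steps also do not produce the uniform constant that the theorem requires. Let me explain both problems.

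The transport step is the fatal one. You propose to obtain a quasisymmetric conjugacy $\partial D \to \partial D_{g}$ from the linearizing coordinates via Herman--\'Swiatek, with constant depending only on $\theta$. But Herman--\'Swiatek is a theorem about critical circle homeomorphisms: it requires you to already know that $f|_{\partial D}$ is (after uniformization) an analytic or at least $C^{3}$ circle map with a power-law critical point, which in turn requires knowing that $\partial D$ is a quasi-circle through the critical point --- precisely the statement of Theorem C. A priori one does not even know that the Riemann map $\Delta \to D$ extends continuously to $\partial \Delta$, much less quasisymmetrically. Moreover, your model $F$ is deliberately \emph{not} combinatorially equivalent to $f$ when $c_{2} \notin \partial D$ (you replace the possibly infinite orbit of $c_{2}$ by a preperiodic one to make $P_{F} - \overline{\Delta}$ finite), so Theorem B's rigidity says nothing about the relationship between $f$ and $g$; there is no reason for any $\Phi$ conjugating them to exist at all, let alone a quasiconformal one. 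Separately, the statement that $\partial D_{g}$ is a ``$K_{0}$-quasi-circle by definition of $R_{\theta}^{geom}$'' produces a $K_{0}$ that depends on $g$ (hence on $F$, hence on the combinatorics you chose), not only on $\theta$; nothing in the definition of the class gives uniformity.

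The paper sidesteps all of this with a genuinely different mechanism. It first proves a uniform distortion estimate (Lemma~\ref{distortion}) only for the one-parameter families of ``boundary-case'' models $f_{t}$ and $\widetilde{f}_{t}$, where \emph{both} critical points lie on the rotation disk boundary; this is a delicate hyperbolic-geodesic argument at the level of the Blaschke models. It then shows that the realized critical parameters trace a Jordan curve $\xi$ in the $c$-plane, and spreads the uniform quasi-circle bound from $\xi$ to the whole exterior component $\Omega_{\infty}$ by studying the cross-ratios $\lambda_{k,l,m,n}(c)$ of points on the critical orbit $\{g_{c}^{k}(1)\}$, which are non-vanishing holomorphic functions of $c$ on $\Omega_{\infty}$ extending continuously to $\xi$, so the maximum and minimum modulus principles apply. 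The closure of the critical orbit is then shown to be a Jordan quasi-circle with uniformly bounded constant via the Pommerenke cross-ratio criterion (Lemma~\ref{q-c}). This replaces any direct map-by-map comparison --- which is what your transport step attempts and cannot deliver --- by a global holomorphic-parameter argument that never has to touch the a priori unknown geometry of $\partial D$ for an individual map.
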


It was conjectured by Douady and Sullivan that the boundary of a
Siegel disk for a rational map is a Jordan curve. The conjecture is
still open and is far from being solved. For Siegel disks of
polynomial maps, however, there has been some progress towards this
conjecture \cite{D}, \cite{PZ} and \cite{Z1}. We especially refer
the readers to \cite{Z2} for a survey of all the relative results in
this aspect.

Let us sketch the proof of Theorem C as follows. In $\S4.1$, we consider a quadratic rational
map $g$ with a Siegel disk of rotation number $\theta$.  By a M\"{o}bius conjugation, we may
normalize $g$ such that the Siegel disk is centered at the origin, and $g'(1) = 0$, $g(\infty) = \infty$.
Let $\Sigma$ denote the space of all such maps. Each map in $\Sigma$ has exactly two critical points $1$
and some $c \ne 1$. We denote such map by $g_{c}$.   It follows that the maps in  $\Sigma$ are parameterized
by their critical points  which are distinct from $1$. Under this parameterization, the space $\Sigma$ is
homeomorphic to $ \widehat{\Bbb C}-\{0, 1, -1\}$.

In $\S4.2$, we consider  a family of degree-2 topological branched
covering maps $f_{t} \in R_{\theta}^{top}, 0< t < 2 \pi$ such that
both the critical points of $f_{t}$ are on the unit circle and span
an angle $t$ (see Figure 15). Clearly such $f_{t}$ has no Thurston
obstructions outside the $\emph{rotation disk}$. It follows that for
each $0< t< 2\pi$, there is a unique $c(t) \in {\Bbb C}-\{0, 1,
-1\}$, such that $g_{c(t)}$ $\emph{realizes}$ $f_{t}$(in the sense
of Lemma~\ref{rel-t}).  Similarly, we consider the family of
topological branched covering maps $\tilde{f}_{t} \in
R_{\theta}^{top}$(see Figure 16), and for each $0< t< 2\pi$, we get
a unique $\tilde{c}(t) \in {\Bbb C}-\{0, 1, -1\}$ such that
$g_{\tilde{c}(t)}$ $\emph{realizes}$ $\tilde{f}_{t}$(in the sense of
Lemma~\ref{rel-t}).

In $\S4.3$,  we prove that there is a uniform $1< K < \infty$, which
is independent of $t$, such that the boundary of the Siegel disk for
any map $g_{c(t)}$ is a $K-$quasi-circle(Lemma~\ref{distortion}).

In $\S4.4$, we prove that  $\gamma = \{c(t)\:\big{|}\:0< t < 2\pi\}$
is a continuous curve segment which connects $1$ and $-1$.  By the
same way, we get that $\tilde{\gamma} = \{\tilde{c}(t)\:\big{|}\:0<
t < 2\pi\}$ is also a continuous segment which connects $1$ and
$-1$.   We then show that $\xi = \gamma \cup \tilde{\gamma} \cup
\{1, -1\}$ is a simple closed curve, which separates $0$ and the
infinity, and moreover, $\xi$ is invariant under $c \to
1/c$(Lemma~\ref{c-curve}).

Let $\Omega_{\infty}$ be the unbounded component of $\widehat{\Bbb
C}-\xi$. In  $\S4.5$, $\S4.6$, and $\S4.7$, we show that  for any
four distinct integers $0 \le k < l< m< n$, the cross-ratios of
$g_{c}^{k}(1), g_{c}^{l}(1), g_{c}^{m}(1)$, and $g_{c}^{n}(1)$ are
holomorphic functions on $\Omega_{\infty}$ and have no zeros.
Moreover, each cross-ratio function can be continuously extended to
$\partial \Omega_{\infty} = \xi$. This implies that the modulus of
each cross-ratio function obtains its maximum and minimum on the
boundary $\xi$(Proposition~\ref{nov}). This is the key idea of the
proof.

In $\S4.8$, for each $c \in \Omega_{\infty}$, we define a map
$T_{c}: \{e^{2 k \pi i\theta}\:\big{|} \: k \ge 0\} \to
\widehat{\Bbb C}$ by $T(e^{2 k \pi i \theta} ) = g_{c}^{k}(1)$.  We
show that  $T_{c}$ can be continuously extended to a homeomorphism
$T_{c}: \partial \Delta \to \overline{\{g_{c}^{k}\}_{k \ge 0}(1)}$.
It follows that $\gamma_{c} = \overline{\{g_{c}^{k}\}_{k \ge 0}(1)}$
is a Jordan curve(Lemma~\ref{Jordan}). We then show that for every
four ordered points $z_{1}, z_{2}, z_{3}, z_{4}$ on $\gamma_{c}$,
$$\big{|}\frac{(z_{1} - z_{3})(z_{2}-z_{4})}{(z_{2} -
z_{3})(z_{1}-z_{4})}\big{|} \ge \delta$$ for some $\delta >0$. This,
together with Lemma 9.8\cite{Po}(see also Lemma\ref{q-c}), implies
that $\gamma_{c}$ is actually a quasi-circle. The same cross ratio
argument also implies that $\gamma_{c}$ moves continuously as $c$
varies on $\Omega_{\infty}$(Lemma~\ref{e-x}). It follows that
$\gamma_{c}$ is the boundary of the Siegel disk of $g_{c}$ which is
centered at the origin. This proves Theorem C.

For reader's convenience, in $\S5$, we give a brief introduction
of Thurston's characterization
theory on $\emph{postcritically finite}$ rational maps. The version
we present here is slightly
different from the one in \cite{DH}: the $\emph{postcritical set}$ $P_{f}$
is replaced by a $f-$invariant
set which contains $P_{f}$ as its subset.   We also present several results
on short simple closed geodesics
on hyperbolic Riemann surfaces, which will be used in several places in this paper.
We numbered them by Theorem A.1,
Theorem A.2, Theorem A.3, and Theorem A.4. The reader may refer to \cite{DH} for the
details of the proofs.

This work is based on my  Ph.D. thesis at CUNY \cite{Zh2}. I would
like to express my gratitude to my advisor, Prof. Yunping Jiang for
suggesting this problem, and also for his constant encouragement.
Further thanks are due to Prof. Linda Keen and Prof. Frederick
Gardiner for many useful conversations during the writing of the
paper.

\begin{center}
\section{Realize a Siegel Disk with Prescribed Combinatorics}
\end{center}

\subsection{ Constructing Symmetric Branched Covering Maps }

\subsubsection{Notations}  Let $S^{2}$ denote the topological two
sphere. Let $\Delta$ and $\Bbb T$ denote the unit disk and unit
circle, respectively.  For a set $P \subset S^{2}$, let $|P|$ denote
the cardinality of the set P. Let ${\Bbb P}^{1}$ denote the Riemann
sphere with the standard complex structure. Given a point $w \in
{\Bbb P}^{1} $, let $w^{*}$ denote the symmetric image of $w$ about
the unit circle, i.e., $w^{*} = 1 /\overline{w}$. For a set $W
\subset {\Bbb P}^{1}$, let $W^{*} = \{w^{*} \: \big{|}\:w \in W \}$.
For $x \in S^{2}$, and $\delta > 0$, let $B_{\delta}(x)$ denote the
open disk with center at $x$ and radius $\delta$ with respect to the
spherical metric. For $x, y \in S^{2}$, we use $d_{S^{2}}(x, y)$ to
denote the spherical distance between $x$ and $y$. For two maps $f,
g: S^{2} \to S^{2}$, the distance between $f$ and $g$ is defined to
be $d(f, g) = \sup_{x\in S^{2}} d_{S^{2}}(f(x), g(x))$. For two
subsets $A, B \subset S^{2}$, define $d_{S^{2}}(A, B) = \inf_{x \in
A, y\in B}d_{S^{2}}(x, y)$.

\begin{figure}
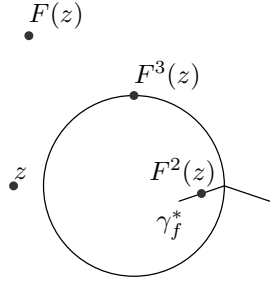

\bigskip
\begin{center}
\centertexdraw { \drawdim cm \linewd 0.02 \move(0 1)

\move(0 -2) \lcir r:1.2

\move(-1.6 -2) \fcir f:0.2 r:0.06

\move(-1.6 -1.9) \htext{$z$}

\move(-1.4 0) \fcir f:0.2 r:0.06

\move(-1.4 0.1) \htext{$F(z)$}

\move(0 -0.8) \fcir f:0.2 r:0.06

\move(0 -0.7) \htext{$F^{3}(z)$}

\move(1.2 -2) \lvec(0.6 -2.2)

\move(1.2 -2) \lvec(1.8 -2.2) \move(0.3 -2.7)
\htext{$\gamma_{f}^{*}$}

 \move(0.9 -2.1) \fcir f:0.2
r:0.06 \move(0.2 -2) \htext{$F^{2}(z)$} }
\end{center}
\vspace{0.2cm} \caption{A critical orbit of $F$ which falls into
$\Delta$}
\end{figure}

\subsubsection{The Choice of the Infinity}
Suppose $f \in R_{\theta}^{top}$ has no Thurston obstructions outside
the $\emph{rotation disk}$ $\Delta$. Let $d \ge 2$ be the degree of $f$.
By a standard topological argument, it follows that  $f$ has at least one
fixed point in the outside of the unit disk. There are two cases. In the first case,
$P_{f}$ contains a fixed point of $f$. In this case, up to a combinatorial equivalence,
we may assume that $\infty \in P_{f}$ and $f(\infty) = \infty$. In the second case, $P_{f}$
does not contain any fixed point of $f$. In this case, up to a combinatorial equivalence,
we  assume that the infinity is one of the fixed points of $f$.

\subsubsection{Construction of F}
Since $\Omega_{f} \cap \partial \Delta \ne \emptyset$,  we may also
assume that $1 \in \Omega_{f}$. It follows that there is  a curve
segment, say $\gamma_{f}$, which is attached to $1$ from the outside
of the unit disk, such that $f(\gamma_{f}) \subset \partial \Delta$.
Let
$$
X = \{z \in \Omega_{f} - \overline{\Delta}  \;\big{|}\;f^{i}(z) \in
\Delta -\{0\} \hbox{ for some } i > 0\}.
$$
For each $z \in X$, let $i_{z} > 0$ be the smallest integer such
that $f^{i_{z}}(z) \in \Delta$.  Let $\widetilde{X} =
\{f^{i_{z}}(z)\: \big{|}\: z \in X\}$ and $\sigma: S^{2} \to S^{2}$
be a homeomorphism  such that $\sigma|(S^{2} - \Delta)  = id$ and
$\sigma (\widetilde{X}) \subset \gamma_{f}^{*}$. Note that by our
notation, $\gamma_{f}^{*}$ is the symmetric image of $\gamma_{f}$
about the unit circle. Let $\widetilde{f} = \sigma \circ f$. Define
a symmetric branched covering map of the sphere by

\begin{equation}\label{branched covering map}
F(z) =
\begin{cases}
\widetilde{f}(z) & \text{ if $|z| \ge 1$}, \\
(\widetilde{f}(z^{*}))^{*}& \text{ for otherwise}.
\end{cases}
\end{equation}

From the construction of $F$, it follows that   $P_{F} - \partial
\Delta$ is a finite set, and moreover,  for $z \in X$, $F^{i_{z}}(z)
\in \gamma_{f}^{*}$, and hence $F^{i_{z}+1}(z) \in \partial \Delta$
(see Figure 1).

\subsubsection{Construction of $F_{n}$}
Let $\theta_{n} = p_{n}/q_{n}$ be a sequence of rational numbers
such that $\theta_{n} \to \theta$ as $n$ goes to $\infty$. Let
$O_{n} = \{e^{ 2\pi ik\theta_{n}}\:\big{|}\: 0 \le k < q_{n}\}$. Let
$A(a, b)$ be the annulus with outer radius $a$ and inner radius $b$.
Since $P_{F} -\partial \Delta$ is a finite set, there are $0< r< 1<
R$ such that $(A(R, r) -\partial \Delta) \cap (\Omega_{F} \cup
P_{F}) = \emptyset$. Set
$$Y = \{z \in (\Omega_{F} \cup P_{F})-\partial \Delta\:\big{|}\:F(z) \in \partial \Delta\}, $$
and
$$
Z = (\Omega_{F} \cap \partial \Delta) \cup F(Y).
$$
Clearly, $Z$ is a finite set.  It follows that for every $n$ large
enough, there is a homeomorphism $\sigma_{n}: \partial \Delta \to
\partial \Delta$ such that \begin{itemize}
\item[1.] $\sigma_{n}(1) = 1$,
\item[2.] $\sigma_{n}^{-1}(Z) \subset O_{n}$,
\item[3.] $\sigma_{n}$ preserves the orbit relations among the points in
the set $Z$ in the following sense: If there is an $m > 0$ and $x, y \in Z$
such that $F^{m}(x) = y$
then $e^{2 \pi i m\theta_{n}}\sigma_{n}^{-1}(x) = \sigma_{n}^{-1}(y)$,
\item[4.] $\sigma_{n} \to id$ uniformly as $n \to \infty$.
\end{itemize}
We then extend $\sigma_{n}$ to be a homeomorphism of the sphere to
itself, which is still denoted by $\sigma_{n}$, such that
\begin{itemize}
\item[1.] $\sigma_{n} = id$ outside $A(R, r)$,
\item[2.] $\sigma_{n}(z)^{*} = \sigma_{n}(z^{*})$,
\item[3.] as $n \to \infty$,  $\sigma_{n} \to id$ uniformly with respect to the
          spherical metric.
\end{itemize}
Now for every $n$ large enough, let us define a homeomorphism
$h_{n}: \partial \Delta \to \partial \Delta$ by $$h_{n}(z)= e^{2 \pi
i \theta_{n}} \sigma_{n}^{-1}(e^{-2 \pi i \theta}z).$$ We then
extend $h_{n}$ to be a homeomorphism of the sphere to itself, which
is still denoted by $h_{n}$,  such that
\begin{itemize}
\item[1.] $h_{n} = id$ outside  $A(R, r)$,
\item[2.] $h_{n}(z)^{*} =
h_{n}(z^{*})$,
\item[3.]  as $n \to \infty$, $h_{n}(z) \to id$ uniformly with respect to the spherical metric.
\end{itemize}
Let $\widetilde{F}_{n} = h_{n}\circ  F \circ \sigma_{n}$. It follows
that
\begin{itemize}
\item[1.] $(\widetilde{F}_{n}|\partial \Delta) (z) = e^{2 \pi i \theta_{n}} z$,
\item[2.] $P_{F}- \partial \Delta  = P_{\widetilde{F}_{n}} - \partial \Delta$,
\item[3.] $\Omega_{F} - \partial \Delta = \Omega_{\widetilde{F}_{n}} - \partial \Delta$.
\end{itemize}
For each $\xi \in Y$, take a small closed topological disk $U_{\xi}$
containing $\xi$ in its interior such that
\begin{itemize}
 \item[1.] all $U_{\xi}, \: \xi \in Y$ are disjoint with each other,
 and $U_{\xi} \cap \partial \Delta  = \emptyset$,
\item[2.] $\widetilde{F}_{n}(U_{\xi}) \subset A(R, r)$,
\item[3.] $U_{\xi}^{*} = U_{\xi^{*}}$,
\item[4.] $\widetilde{F}_{n}(U_{\xi})$ is a closed topological disk and
$\widetilde{F}_{n} (\partial U_{\xi}) = \partial  \widetilde{F}_{n}(U_{\xi})$.
\end{itemize}
For each $\xi \in Y$, let us define a homeomorphism $g_{n, \xi}:
\widetilde{F}_{n}(U_{\xi}) \to \widetilde{F}_{n}(U_{\xi})$ such that
\begin{itemize}
\item[1.]  $g_{n, \xi} = id$ on $\partial \widetilde{F}_{n}(U_{\xi})$,
\item[2.]  $g_{n,\xi}(\widetilde{F}_{n}(\xi)) = \sigma_{n}^{-1}(F(\xi))$,
\item[3.]  $g_{n, \xi}(z)^{*} = g_{n,\xi^{*}}(z^{*})$,
\item[4.]  as $n \to \infty$, $g_{n,\xi} \to id$ uniformly with respect
to the spherical metric.
\end{itemize}
Now let us define
\begin{equation}\label{covering map sequence}
F_{n}(z) =
\begin{cases}
g_{n, \xi} \circ \widetilde{F}_{n}(z) & \text{ for $z \in \bigcup_{\xi \in Y}U_{\xi}$}, \\
\widetilde{F}_{n}(z)& \text{ for otherwise}.
\end{cases}
\end{equation}

Let $Z_{n} = (\Omega_{F_{n}} \cap \partial \Delta) \cup F_{n}(Y)$.
It follows from the construction that $|Z_{n}| = |Z|$ for all $n$
large enough. Moreover, for each $x \in Z$, there is an $x_{n} \in
Z_{n}$, such that $x_{n} \to x$ as $n \to \infty$.  It follows that
for all $n$ large enough, the map $x \to x_{n}$ is a one-to-one
correspondence between $Z$ and $Z_{n}$. By the construction of
$F_{n}$, the reader shall easily supply a proof of the following
proposition:

\begin{proposition}\label{basic construction}
The sequence $\{F_{n}\}$ satisfy the following properties,
\begin{itemize}
\item[1.] $F_{n} \to F$ uniformly with respect to the spherical metric,
\item[2.] $F_{n}$ is an orientation-preserving and postcritically finite
branched covering map such that $F_{n}(z)^{*} = F_{n}(z^{*})$,
\item[3.] $|P_{F_{n}} - \overline{\Delta}| = |P_{F} - \overline{\Delta}|$
for every $n$ large enough,
\item[4.] $(F_{n} |\partial \Delta)(z) = e^{2 \pi  i \theta_{n}} z$,
\item[5.] $P_{F_{n}} \cap \partial \Delta = O_{n}$.
\item[6.] For every $n$ large enough, $F_{n}$ preserves the orbit relations
among the points in  the set $Z$ in the following sense:  if for $x, y \in Z$
and some integer $m \ge 0$, $F^{m}(x) = y$, then for the correspondent points
$x_{n}$ and $y_{n}$, $F_{n}^{m}(x_{n}) = y_{n}$.
\item[7.] For every $n$ large enough, there is a curve segment $\gamma_{n}$
attached to $1$ from the outside of the unit disk such that
$F_{n}(\gamma_{n}) \subset \partial \Delta$, and moreover, if for some
$z \in (\Omega_{F_{n}} \cup P_{F_{n}}) - \overline{\Delta}$,
$F_{n}(z) \in \Delta -\{0\}$, then $F_{n}(z) \in \gamma_{n}^{*}$.
\end{itemize}
\end{proposition}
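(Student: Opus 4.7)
The plan is to verify the seven items directly from the successive definitions, since each of the homeomorphisms $\sigma_n$, $h_n$, and $g_{n,\xi}$ was engineered to produce exactly one of the listed properties. The umbrella observation is that on $\bigcup_{\xi\in Y}U_\xi$ one has $F_n = g_{n,\xi}\circ h_n\circ F\circ \sigma_n$, and $F_n = h_n\circ F\circ \sigma_n$ elsewhere; every factor is the identity outside a fixed compact region near $\partial \Delta$ (together with the small disks $U_\xi$), commutes with the reflection $z\mapsto z^*$, and converges uniformly to $\mathrm{id}$ with respect to the spherical metric. So (1) and the symmetry half of (2) drop out at once, and (4) follows from writing out $\widetilde{F}_n|\partial\Delta$ and noting that $g_{n,\xi}$ does not touch $\partial\Delta$.

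For (5), (6) and (3) I would exploit the two design conditions: $\sigma_n^{-1}(Z) \subset O_n$ with the orbit relations $e^{2\pi i m \theta_n}\sigma_n^{-1}(x) = \sigma_n^{-1}(y)$ whenever $F^m(x)=y$, and $g_{n,\xi}(\widetilde{F}_n(\xi)) = \sigma_n^{-1}(F(\xi))$. Together these force that whenever $F$ sends a critical or postcritical point to $x \in Z$, the corresponding map $F_n$ sends the associated point to $\sigma_n^{-1}(x) \in O_n$; a short induction on orbit length gives (6), specializing to orbits that eventually land in $\partial \Delta$ yields (5), and (3) follows because all modifications act as the identity on the finite set $(\Omega_F \cup P_F) - \overline{\Delta}$, so the bijective correspondence $x \leftrightarrow x_n$ preserves cardinalities.

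The postcritical finiteness half of (2) then follows from (4) and (5): orbits of points in $O_n$ stay in the finite set $O_n$; each critical point outside $\overline{\Delta}$ lies in the unchanged finite set $\Omega_F - \partial \Delta$, and its $F_n$-orbit tracks the $F$-orbit until it meets $Y$, where $g_{n,\xi}$ deposits it onto $O_n$, after which it remains there. Item (7) is inherited from the curve $\gamma_f$ used to construct $F$: the curve $\gamma_n$ is essentially $\sigma_n^{-1}(\gamma_f)$, and the redirection of forward images of points in $X$ onto $\gamma_f^*$ that was built into $F$ via $\sigma$ is transported by the $g_{n,\xi}$ adjustment to a redirection onto $\gamma_n^*$. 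The only real obstacle is bookkeeping: one must check that when $F$ is simultaneously modified near $\partial\Delta$ by $\sigma_n$ and $h_n$ and near each $\xi \in Y$ by $g_{n,\xi}$, the regions of modification are disjoint and the various compositions remain compatible with the reflection, so that items (2)--(7) do not interfere. Once that is observed, each item reduces to an elementary check.
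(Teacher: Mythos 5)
The paper itself offers no proof of this proposition—it says only ``By the construction of $F_{n}$, the reader shall easily supply a proof''—so your direct verification from the successive definitions is exactly the intended route, and most of your checks are correct. In particular, your treatment of items (1), (2), (4), (5), and (6) is fine, and (3) is correct once one observes that the modifications $\sigma_n,h_n$ are supported in $A(R,r)$, while the $g_{n,\xi}$ post-composition changes only the \emph{images} $\widetilde F_n(U_\xi)\subset A(R,r)$, so that along each critical orbit the iterates of $F_n$ and of $F$ literally coincide until the orbit enters $\overline{\Delta}$; hence $P_{F_n}-\overline{\Delta}=P_F-\overline{\Delta}$ as sets, not merely as sets of the same cardinality.

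The one place you misdescribe the mechanism is item (7). You attribute the redirection of images landing in $\Delta-\{0\}$ onto $\gamma_n^*$ to the $g_{n,\xi}$ adjustments. That is not what $g_{n,\xi}$ does: by construction $g_{n,\xi}$ only moves $\widetilde F_n(\xi)$ to $\sigma_n^{-1}(F(\xi))\in O_n\subset\partial\Delta$, i.e. it lands the images of the points of $Y$ on the finite cycle $O_n$. The statement of (7) concerns a different set of points: those $z\in(\Omega_{F_n}\cup P_{F_n})-\overline{\Delta}$ with $F_n(z)\in\Delta-\{0\}$. For such $z$ one has $F_n(z)=\widetilde F_n(z)=h_n(F(z))$, and since $F(z)\in\sigma(\widetilde X)\subset\gamma_f^*\cap(P_F-\partial\Delta)$ lies outside $A(R,r)$ (that annulus was chosen disjoint from $\Omega_F\cup P_F$ off $\partial\Delta$) one gets $h_n(F(z))=F(z)$, so $F_n(z)=F(z)\in\gamma_f^*$. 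Taking $\gamma_n=\sigma_n^{-1}(\gamma_f)$, symmetry gives $\gamma_n^*=\sigma_n^{-1}(\gamma_f^*)$, and because $\sigma_n$ is the identity outside $A(R,r)$ this set contains the relevant points of $\gamma_f^*$. In other words, (7) comes from the fact that none of the modifications touch $\sigma(\widetilde X)$, \emph{not} from a transport by $g_{n,\xi}$. If you rewrote (7) with that mechanism you would find that $g_{n,\xi}$ has nothing to act on, so the bookkeeping you promise at the end of the sketch would stall; the fix is as above.
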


\begin{remark}\label{additional}
Note that the combinatorial structure of $f$ in the inside of the
rotation disk is not reflected by $F$. We will use an additional
argument to take care of this  in $\S2.5$.
\end{remark}

\subsection{ No Thurston Obstructions of $F_{n}$ for Large $n$}
Let $P_{F_{n}}'$ and $P_{F}'$  denote the set $P_{F_{n}} \cup \{0,
\infty\}$ and the set $P_{F} \cup \{0, \infty\}$, respectively. For
a finite subset $P \subset S^{2}$   with $|P| \ge 4$, we say a
simple closed curve $\gamma \subset S^{2} - P$ is
$\emph{non-peripheral}$ if each component of $S^{2} - \gamma$
contains at least two points of $P$. Let $\phi: S^{2} \to {\Bbb
P}^{1}$ be a homeomorphism. For each $\emph{non-peripheral}$ curve
$\gamma \subset S^{2} - P$,  there is a unique simple closed
geodesic $\eta \subset {\Bbb P}^{1} - \phi(P)$  in the homotopy
class of $\phi(\gamma)$. We use $\|\gamma\|_{\phi,P}$ to denote the
hyperbolic length of $\eta$. We say $\gamma$ is a $(\phi,
P)-geodesic$ if $\eta = \phi(\gamma)$.

\subsubsection{Thurston's pull back}
Now let $n \ge 1$ be fixed. Let $\phi_{0} = Id$. For $m= 1,2,
\cdots$, let $\tau_{m}$ be the complex structures on $S^{2}$ which
is obtained by pulling back the standard complex structure
$\tau_{0}$ by $F_{n}^{m}$. Associated to  each $\tau_{m}$ is a
quasiconformal homeomorphism $\phi_{m}:S^{2} \to {\Bbb P}^{1}$ which
fixes $0,1$ and $\infty$.  Let $G_{m} = \phi_{m} \circ F_{n} \circ
\phi_{m+1}^{-1}$, then the following diagram
$$
     \begin{CD}
           (S^{2},P_{F_{n}}')           @  >\phi_{m+1}   >  >({\Bbb P}^{1},\phi_{m+1}(P_{F_{n}}'))         \\
           @V F_{n} VV                         @VV G_{m} V\\
           (S^{2},P_{F_{n}}')           @  >\phi_{m}   >  >         ({\Bbb
P}^{1},\phi_{m}(P_{F_{n}}'))
     \end{CD}
$$commutes and $G_{m}$ is a rational map of the Riemann sphere ${\Bbb P}^{1}$.

Since $F_{n}(z^{*}) = F_{n}(z)^{*}$, by induction we have
$\phi_{m}(z^{*}) = \phi_{m}(z)^{*}$ and hence  $G_{m}(z^{*}) =
G_{m}(z)^{*}$ for all $m=0,1,\cdots$. Therefore, $G_{m}$ is a
Blaschke product on ${\Bbb P}^{1}$. By the assumption that
$f(\infty) = \infty$, it follows that  $F(\infty) = \infty$, and
therefore, $G_{m}(\infty) = \infty$. We write
$$
G_{m}(z) = \lambda_{m} z \prod_{1 \le k \le d-1}\frac{z -p_{k,m}}{1
-\overline{p}_{k,m}z} \prod_{1 \le k \le d-1}\frac{z -q_{k,m}}{1
-\overline{q}_{k,m}z}
$$
where $d \ge 2$ is the degree of $f$, and $p_{k,m} \in {\Bbb
C}-\overline{\Delta}, q_{k,m} \in \Delta , 1 \le k \le d-1$, and
$\lambda_{m} = e^{2 \pi i\alpha_{m}} $ for some  real constant $0
\le \alpha_{m} < 1$.

\vspace{0.5cm}

\subsubsection{Analysis of short simple closed geodesics}
Let $\gamma$ be a short simple closed $(\phi_{m},
P_{F_{n}}')-geodesic$. If $\gamma$ intersects the unit circle, we
use $D(\gamma)$ to denote the component of $S^{2} - \gamma$ which
does not contain the origin. Otherwise, we use $D(\gamma)$ to denote
the component of $S^{2} - \gamma$ which does not contain the unit
circle.
\begin{lemma}\label{symmetric geodesic}
Let $\gamma$ be a simple closed $(\phi_{m}, P_{F_{n}}')-geodesic$
which intersects the unit circle such that $\|\gamma\|_{\phi_{m},
P_{F_{n}}'} < log(\sqrt 2 +1)$.  Then $\gamma$ is symmetric about
the unit circle. In particular, $\gamma \cap \partial \Delta$
contains exactly two points.
\end{lemma}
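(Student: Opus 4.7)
The plan is to exploit the reflection symmetry built into the construction of $F_n$ in order to promote the reflection-invariance from the data $(F_n, P_{F_n}')$ to the geodesic $\gamma$, and then pin down the intersection number using the short-geodesic estimates stated in the appendix.

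First I would verify that Thurston's pullback respects the symmetry. Since $F_n(z^*) = F_n(z)^*$ by Proposition~\ref{basic construction}, and the augmented postcritical set $P_{F_n}' = P_{F_n} \cup \{0, \infty\}$ is manifestly symmetric about $\partial\Delta$, a straightforward induction starting from $\phi_0 = \mathrm{Id}$ (as already noted in the excerpt for $G_m$) yields $\phi_m(z^*) = \phi_m(z)^*$ for every $m$. In particular $\phi_m(\partial\Delta) = \partial\Delta$ and the anticonformal involution $\sigma(z) = 1/\bar{z}$ preserves $\phi_m(P_{F_n}')$ setwise, so $\sigma$ acts as an isometry of the hyperbolic metric on $\mathbb{P}^1 - \phi_m(P_{F_n}')$. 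Consequently $\gamma^*$ is again a simple closed $(\phi_m, P_{F_n}')$-geodesic with the same hyperbolic length as $\gamma$.

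Next I would show $\gamma = \gamma^*$ by contradiction. If $\gamma \ne \gamma^*$, they are two distinct simple closed geodesics on the hyperbolic surface $\mathbb{P}^1 - \phi_m(P_{F_n}')$. Since every point of $\gamma \cap \partial\Delta$ is fixed by $\sigma$, each such point lies in both $\gamma$ and $\gamma^*$, forcing the two geodesics to intersect transversally. The critical length $\log(\sqrt{2}+1) = \mathrm{arcsinh}(1)$ is precisely the value below which the short-geodesic / collar estimate from the appendix (Theorem A.1--A.4) forbids two distinct simple closed geodesics from crossing. This contradiction gives $\gamma = \gamma^*$, proving the symmetry assertion.

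Finally, for the ``exactly two points'' statement, observe that after symmetry is established, $\gamma$ meets $\partial\Delta$ transversally in a symmetric finite set of even cardinality $2k$. Suppose $k \ge 2$. Then the outside piece $\gamma \cap \{|z| \ge 1\}$ consists of $k$ disjoint arcs paired with the $k$ inside arcs via $\sigma$, and by selecting one outside arc together with a suitably chosen inside arc one builds a symmetric simple closed curve $\gamma'$ whose hyperbolic length is a proper fraction of $\|\gamma\|_{\phi_m, P_{F_n}'}$. The non-peripherality of $\gamma'$ with respect to $\phi_m(P_{F_n}')$ comes from the fact that each pair of consecutive intersection points on $\partial\Delta$ separates the punctures $\phi_m(O_n) \subset \partial\Delta$ into non-trivial subsets. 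The geodesic representative of $\gamma'$ is then a second symmetric short geodesic distinct from $\gamma$ which still crosses $\partial\Delta$, and repeating the collar argument above yields a contradiction, so $k = 1$. The main technical obstacle will be the bookkeeping in this last step---namely, verifying that the auxiliary curve built from partial arcs is genuinely non-peripheral and has length strictly below the threshold $\log(\sqrt{2}+1)$.
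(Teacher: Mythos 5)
Your argument for the symmetry $\gamma = \gamma^*$ coincides with the paper's: by induction $\phi_m(z^*) = \phi_m(z)^*$, so $\gamma^*$ is a $(\phi_m, P_{F_n}')$-geodesic of the same length; it meets $\gamma$ on $\partial\Delta$, and Theorem A.1 forces $\gamma = \gamma^*$. That part is fine.

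The place to rethink is the count $|\gamma \cap \partial\Delta| = 2$, which you try to obtain by building a shorter symmetric subloop. You will not get to the bookkeeping you were worried about: the construction already fails at the pairing stage, and that failure \emph{is} the proof. Once $\gamma = \gamma^*$, the anticonformal involution $\sigma(z) = 1/\bar z$ restricts to a homeomorphic involution $\sigma|_\gamma$ of the circle $\gamma$, whose fixed-point set is exactly $\gamma \cap \partial\Delta$. It is not the identity (because $\gamma \neq \partial\Delta$, since $\partial\Delta$ passes through punctures of $\phi_m(P_{F_n}')$), and a non-identity involution of $S^1$ with a fixed point is orientation-reversing and has exactly two fixed points. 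Equivalently, in the language of your pairing: if $O$ is an outside arc of $\gamma$ with endpoints $p, q$, then $\sigma(O)$ is a single component of $\gamma \cap \Delta$ with the \emph{same} endpoints $p, q$, because $\sigma|_\gamma$ fixes $\gamma \cap \partial\Delta$ pointwise and hence permutes the complementary arcs; but for $k \geq 2$ the $2k$ intersection points alternate around $\gamma$ between inside and outside arcs, so no inside arc of $\gamma$ has endpoints $\{p, q\}$. This contradiction yields $k = 1$ directly, with no auxiliary geodesic, no non-peripherality check, and no second application of the collar lemma.
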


\begin{proof}
Let $\gamma^{*}$ be the symmetric image  of $\gamma$ about the unit
circle. Clearly, $\gamma^{*}$  is also a simple closed  $(\phi_{m},
P_{F_{n}}')-geodesic$ and $\|\gamma^{*}\|_{\phi_{m}, P_{F_{n}}'}  =
\|\gamma\|_{\phi_{m}, P_{F_{n}}'} < log(\sqrt 2 +1)$.  Since $\gamma
\cap \gamma^{*} \ne \emptyset$, by Theorem A.1, we get that $\gamma
= \gamma^{*}$.

\end{proof}

\begin{lemma}\label{lower bound of symmetric geodesic}
For every $n$ large enough, there is a $\delta>0$ independent of $m$
such that for  every simple closed  $(\phi_{m},
P_{F_{n}}')-geodesic$ $\gamma$ which intersects the unit circle, we
have $\|\gamma\|_{\phi_{m},P_{F_{n}}'} \ge \delta$.
\end{lemma}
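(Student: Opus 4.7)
I would argue by contradiction. Fix $n$ large and suppose no such $\delta>0$ exists; then there exist $m_k\to\infty$ and simple closed $(\phi_{m_k},P'_{F_n})$-geodesics $\gamma_k$ meeting $\partial\Delta$ with $\ell_k:=\|\gamma_k\|_{\phi_{m_k},P'_{F_n}}\to 0$. For $k$ large we have $\ell_k<\log(\sqrt{2}+1)$, so Lemma~\ref{symmetric geodesic} forces every $\gamma_k$ to be symmetric about the unit circle and to cross it in exactly two points. Since $P'_{F_n}$ is a finite set, only finitely many free homotopy classes of non-peripheral simple closed curves in $S^2-P'_{F_n}$ admit uniformly bounded hyperbolic representatives, so after passing to a subsequence I may assume that all $\gamma_k$ are freely homotopic to a single curve $\gamma$, and $\|\gamma\|_{\phi_{m_k},P'_{F_n}}\to 0$.

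Next I would apply the standard Thurston pullback machinery for $F_n$, which is postcritically finite by Proposition~\ref{basic construction}(2). For each $k$ the essential non-peripheral components of $F_n^{-1}(\gamma_k)$ are, after straightening, $(\phi_{m_k+1},P'_{F_n})$-geodesics, and their hyperbolic lengths are related to those of $\gamma_k$ via the standard covering-type inequalities recorded in the appendix (Theorems A.2--A.4). Because $\ell_k\to 0$ along the iterated pullback orbit of $[\gamma]$, a compactness argument over the finitely many available homotopy classes produces an $F_n$-stable multi-curve $\Gamma\subset S^2-P'_{F_n}$ containing $[\gamma]$ whose Thurston matrix has leading eigenvalue $\lambda(\Gamma,F_n)\ge 1$. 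The symmetry $F_n(z^*)=F_n(z)^*$ of Proposition~\ref{basic construction}(2), combined with Lemma~\ref{symmetric geodesic} applied at every pullback step, allows me to choose $\Gamma$ so that each of its components is symmetric about $\partial\Delta$ and crosses the unit circle in exactly two points.

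To finish, I would descend $\Gamma$ to a Thurston obstruction for $f$ on the exterior of the rotation disk. Truncate each component of $\Gamma$ to its intersection with $S^2-\overline{\Delta}$ and close up the endpoints by short arcs on $\partial\Delta$ placed inside the arcs of $\partial\Delta-O_n$; this is homotopically canonical because $O_n\subset P_{F_n}$ partitions the unit circle. The resulting multi-curve $\Gamma_{\mathrm{out}}$ lives in the exterior of $\overline{\Delta}$ and avoids $(P_{F_n}-\Delta)\cup\{0,\infty\}$. By the construction in \S2.1 together with Proposition~\ref{basic construction}(1), the combinatorial action of $F_n$ outside the unit disk coincides, for $n$ large, with that of $f$ outside the rotation disk; hence $\Gamma_{\mathrm{out}}$ is $f$-stable and its Thurston matrix inherits leading eigenvalue $\ge 1$, which is precisely a Thurston obstruction for $f$ outside the rotation disk, contradicting the hypothesis.

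The principal obstacle is the extraction of an $F_n$-stable multi-curve $\Gamma$ with Perron eigenvalue $\ge 1$ from the fact that $\ell_k\to 0$: this needs the appendix results on short simple closed geodesics and careful bookkeeping to show that the iterated pullbacks of the short curves remain trapped in finitely many homotopy classes. The symmetry supplied by Lemma~\ref{symmetric geodesic} is used crucially throughout; without it one could not guarantee that the limiting obstruction $\Gamma_{\mathrm{out}}$ lies entirely outside the rotation disk and thus produces a genuine obstruction for $f$ there.
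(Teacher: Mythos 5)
Your proposal takes a genuinely different route from the paper's, and I believe the key step does not go through as stated.

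The paper proves this lemma directly, without invoking the hypothesis that $f$ has no Thurston obstructions. It pushes a hypothetical very short geodesic $\gamma$ forward $N+2$ times (where $N = |P_F' - \overline\Delta|$) to obtain $N+2$ short symmetric geodesics $\xi_1,\dots,\xi_{N+2}$ crossing the unit circle, and then runs a counting argument against $|P_{F_n}' - \partial\Delta| = 2N$: either two of the $D(\xi_i)$ are nested, which forces a component of $\partial\Delta - P_{F_n}'$ to be periodic of period $\le N+1$ (impossible for $n$ large, since $\theta_n \to \theta$ irrational), or they are disjoint and the count of postcritical points is too large. In the remaining case the paper uses the periodicity of $O_n$ to produce a second short geodesic that must \emph{cross} $\gamma$, violating Theorem A.1. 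The whole argument is localized to the dynamics of $F_n$ on $\partial\Delta$ and uses A.1 as the final contradiction.

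Your route is to extract (via a DH $\S 8$--type compactness over homotopy classes) an $F_n$-stable multicurve $\Gamma$ with $\lambda(\Gamma,F_n)\ge 1$ whose components all cross $\partial\Delta$, truncate each component to its exterior half, close up along arcs of $\partial\Delta$, and declare the result an $f$-Thurston obstruction outside the rotation disk. The gap is in this last descent step, and I think it is fatal as written. First, there is no reason that a curve $\gamma_{\mathrm{out}}$ obtained by cutting $\gamma$ at $\partial\Delta$ and re-closing along an arc of $\partial\Delta - O_n$ is even non-peripheral in $S^2 - (P_{F_n}'-\Delta)$, nor is the choice of closing arc canonical (there are two arcs, and they give different homotopy classes). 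Second, and more seriously, the Thurston linear transformation does not "inherit" its eigenvalue under this surgery: when you pull back $\gamma_{\mathrm{out}} \subset S^2-\overline\Delta$ by $f$, the preimage components that land \emph{inside} $\Delta$ are discarded, so the matrix entries of $\Gamma_{\mathrm{out}}$ under $f$ are a priori strictly smaller than those of $\Gamma$ under $F_n$, and the leading eigenvalue can drop below $1$. The clean identification between $F_n$-obstructions and $f$-obstructions that the paper exploits (in Lemma~2.4) works precisely because the curves there do \emph{not} cross $\partial\Delta$ and the Thurston matrix is block-diagonal; that identification is not available here. Third, your assertion that the multicurve $\Gamma$ produced by the $\S 8$ compactness argument consists entirely of curves crossing $\partial\Delta$ is unjustified: that argument collects \emph{all} sufficiently short simple closed geodesics at some stage, and some of these could well lie entirely inside $\Delta$ or entirely outside $\overline\Delta$. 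For these reasons I do not see how to complete the descent; the paper's proof sidesteps it entirely by producing a contradiction via Theorem A.1, and I would recommend that approach.
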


The idea behind the proof is as follows.  Let $\gamma$ be a simple
closed geodesic which intersects the unit circle. If $\gamma$ is
short enough,  its images under the forward iterations of $F_{n}$
generate a set of short simple  closed geodesics which intersect
the unit circle. The number of the short simple closed geodesics in
this set can be very large if $\gamma$ is short enough. But on the
other hand, there can not be too many such short simple closed
geodesics, for otherwise, there would be two of them which intersect
with each other, and this is a contradiction with Theorem A.1.

\begin{proof}
We prove it by contradiction. We claim that for every $n$ large
enough, there exist $\delta' > 0$ and $ 1< C < \infty$ independent
of $m$,  such that if $\gamma \subset S^{2}-P_{F_{n}}'$ is a  simple
closed $(\phi_{m}, P_{F_{n}}')-geodesic$ with $\|\gamma\|_{\phi_{m},
P_{F_{n}}'} < \delta'$, there is a simple closed $(\phi_{m},
P_{F_{n}}')-geodesic$ $\xi$ which is symmetric about the unit circle
such that $\|\xi\|_{\phi_{m}, P_{F_{n}}'} < C \delta'$ and $D(\xi)
\cap \partial \Delta \cap P_{F_{n}}'$ contains at least two points.
Let us prove the claim. Suppose it is not true. Then $D(\gamma) \cap
\partial \Delta \cap P_{F_{n}}'$ contains at most one point. Now
take $\delta'>0$ small, so that the simple closed geodesics
generated in the following are all short enough.  Let $N  = |P_{F}'
- \overline{ \Delta}|$, and hence $|P_{F_{n}}' -\partial \Delta| =
N$ by (3) of Proposition~\ref{basic construction}.  For each $k =1,
2, \cdots, N+2$, Let $\eta_{k} \subset S^{2} -
F_{n}^{-k}(P_{F_{n}}')$ be the shortest simple  closed $(\phi_{m},
F_{n}^{-k}(P_{F_{n}}'))-geodesic$ which is homotopic to $\gamma$ in
$S^{2} - P_{F_{n}}'$. By Theorem A.3, we have
\begin{equation}
\|\eta_{k}\|_{\phi_{m}, F_{n}^{-k}(P_{F_{n}}')}  < C_{1}
\|\gamma\|_{\phi_{m}, P_{F_{n}}'}
\end{equation}
where $1 < C_{1}< \infty$ depends only on $k$ and $|P_{F_{n}}'|$.
From  Theorem A.2,  we conclude that $F_{n}^{k}(\eta_{k})$ covers a
simple closed $(\phi_{m-k}, P_{F_{n}}')-geodesic$ $\xi_{k}'$. Hence
\begin{equation}
\|\xi_{k}'\|_{\phi_{m-k}, P_{F_{n}}'} \le \|\eta_{k}\|_{\phi_{m},
F_{n}^{-k}(P_{F_{n}}')}
\end{equation}

Let $\xi_{k} \subset S^{2}- P_{F_{n}}'$ be the simple closed
$(\phi_{m}, P_{F_{n}}')-geodesic$ which is homotopic to $\xi_{k}'$
in $S^{2}- P_{F_{n}}'$. By Theorem A.4 and the fact Thurston's pull
back does not increase the Teichm\"{u}ller distance (see Proposition
3.3, \cite{DH}), it follows that there is a constant $1 < C_{2}<
\infty$ independent of $m$, such that
\begin{equation}
\|\xi_{k}\|_{\phi_{m}, P_{F_{n}}'} < C_{2} \|\xi_{k}'\|_{\phi_{m-k},
P_{F_{n}}'}
\end{equation}

Now by taking $\delta'$  small, we conclude that $\xi_{1}, \cdots,
\xi_{N+2}$ are all short simple closed $(\phi_{m},
P_{F_{n}}')-geodesics$  which intersect the unit circle. By
Lemma~\ref{symmetric geodesic}, they are all symmetric about the
unit circle. Now let us show that the domains  $D(\xi_{1}),\cdots,
D(\xi_{N+2})$ are disjoint with each other.  Suppose this is not
true.  Then by Theorem A.1, we have $D(\xi_{j}) \subset D(\xi_{i})$
for some $1 \le i \ne j \le N+2$. We may assume that $|D(\xi_{i})
\cap \partial \Delta \cap P_{F_{n}}'|\le 1$, for otherwise the claim
is proved. It then follows that $\xi_{i}$ intersects either exactly
one of the connected components of $\partial \Delta - P_{F_{n}}'$ or
two of them which are adjacent to each other.  Let $I$ be a
component  of $\partial \Delta - P_{F_{n}}'$ which intersects both
$\xi_{i}$ and $\xi_{j}$. Let  $l = |j - i| \le N+1$. Then $I$ is
either periodic under $F_{n}^{l}$ or is mapped by $F_{n}^{l}$ to one
of its adjacent component of $\partial \Delta - P_{F_{n}}'$.  Since
$(F_{n}|\partial \Delta)(z) = e^{2 \pi i\theta_{n}} z$ and
$\theta_{n} \to \theta$ as $n \to \infty$, both of the two cases are
impossible when $n$ is large enough.

If none of $D(\xi_{i}), 1\le i\le N+2$ contains at least two points
in $P_{F_{n}}' - \partial \Delta$, we have for every $1 \le  i\le
N+2$, $|D(\xi_{i}) \cap (P_{F_{n}}' - \partial \Delta)| \ge 2$ and
hence $|P_{F_{n}}' - \partial \Delta| \ge 2N+2$. This is a
contradiction with that $|P_{F}' - \overline{\Delta}| = N$. This
proves the claim.

Now we may assume that $D(\gamma) \cap \partial \Delta \cap
P_{F_{n}}'$ contains at least two points. There are two cases. In
the first case, $(\partial \Delta  \cap P_{F_{n}}')- D(\gamma)=
\emptyset$. It follows that $\gamma$ intersects exactly one of the
connected components of $\partial \Delta -P_{F_{n}}'$. When $n$ is
large enough, by the same argument as before, we get $N+2$ short
simple closed $(\phi_{m}, P_{F_{n}}')-geodesics$ $\xi_{1}, \cdots,
\xi_{N+2}$. It follows from (5) of Proposition~\ref{basic
construction} that every $\xi_{i}$ also intersects exactly one of
the connected components of $\partial \Delta -P_{F_{n}}'$, for $1
\le i \le N+2$. It follows that each $D(\xi_{i})$ either contains
all the points in $\partial \Delta \cap P_{F_{n}}'$, or contains
none of them. We claim that there are $\xi_{i},\xi_{j}$ such that
$D(\xi_{i}) \subset D(\xi_{j})$ for some $1 \le i \ne j \le N+2$. In
fact, if this is not true, by Theorem A.1, the domains $D(\xi_{1}),
\cdots, D(\xi_{N+2})$ are disjoint with each other. It follows that
there are at least $N+1$ domains of $D(\xi_{i}), 1 \le i\le N+2$
which contain none of the points in $\partial \Delta \cap
P_{F_{n}}'$. Therefore, each of these domains must contain at least
two points in $P_{F_{n}}' -
\partial \Delta$, and this implies that $|P_{F_{n}}' -
\partial \Delta | \ge 2(N+1)$, which is a contradiction with that
$|P_{F}' -\overline{\Delta}| = N$. The claim follows. Now assume
that $D(\xi_{i}) \subset D(\xi_{j})$ for some $1 \le i \ne j \le
N+2$.  We claim that each component of $\partial \Delta -P_{F_{n}}'$
intersects at most one of the curves in  $\xi_{1}, \cdots, \xi_
{N+2}$. In fact, if some component, say $I$, of $\partial \Delta
-P_{F_{n}}'$ interests both  $\xi_{l}$ and $\xi_{m}$ for some $1 \le
l <  m\le N+2$, then $I$ is periodic under $F_{n}^{m-l}$, which is
impossible when $n$ is large enough.    It follows that $D(\xi_{j})$
must contain all the other $D(\xi_{k}), 1 \le k \le N+2, k \ne j$,
and hence the $N+1$ domains $D(\xi_{k}), 1 \le k \le N+2, k \ne j$,
must be disjoint with each other, and moreover,  each of them
contains none of the points in $\partial \Delta \cap P_{F_{n}}'$.
By counting the number of the points in $P_{F_{n}}' -\partial
\Delta$, we get a contradiction again.

In the second case, $(\partial \Delta  \cap P_{F{n}})- D(\gamma) \ne
\emptyset$. Let $I = \partial \Delta \cap D(\gamma)$. Since $O_{n} =
P_{F_{n}}' \cap \partial \Delta$ is a periodic cycle of $F_{n}$ with
period $q_{n}$, it follows that there is an integer $0< k< q_{n}$
such that (1)
 $F_{n}^{k}(I) \cap I \cap P_{F_{n}}' \ne \emptyset$, (2)
 $(I - F_{n}^{k}(I))\cap P_{F_{n}}' \ne \emptyset$ and (3)
 $(F_{n}^{k}(I) - I)\cap P_{F_{n}}' \ne \emptyset$.
Let $ \eta_{k} \subset S^{2} - F_{n}^{-k}(P_{F_{n}}') $ be a simple
closed  $(\phi_{m}, F_{n}^{-k}(P_{F_{n}}'))-geodesic$ which is
homotopic to $\gamma$ in $S^{2} - P_{F_{n}}'$. By Theorem A.2, $
F_{n}^{k}(\eta_{k}) $ covers a short simple closed $(\phi_{m-k},
P_{F_{n}}')-geodesic$ $\xi_{k}'$. By theorem A.4, there is a short
simple closed $(\phi_{m}, P_{F{n}})-geodesic$ $\xi_{k} \subset S^{2}
- P_{F_{n}}'$ which is homotopic to $\xi_{k}'$ in $S^{2} -
P_{F_{n}}'$.  It follows that $D(\gamma) \cap D(\xi_{k}) \ne
\emptyset$ and neither of them is contained in the other one. This
implies that $\gamma \cap \xi_{k} \ne \emptyset$. This is a
contradiction with Theorem A.1.

\end{proof}

\begin{lemma}\label{invarance}
Let $\gamma$ be a simple closed $(\phi_{m}, P_{F_{n}}')-geodesic$
which is contained in the inside of the unit disk. If
$\|\gamma\|_{\phi_{m}, P_{F_{n}}'}$ is small enough, then each
non-peripheral component of $F_{n}^{-1}(\gamma)$ is totally
contained in the inside of the unit disk also.
\end{lemma}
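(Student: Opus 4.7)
The plan is to argue by contradiction, exploiting the $z\mapsto z^{*}$ symmetry of $F_n$ together with Lemma~\ref{lower bound of symmetric geodesic} and Theorem A.1. First I would note the easy topological fact that every component $\gamma'$ of $F_n^{-1}(\gamma)$ lies entirely in $\Delta$ or entirely in $\Delta^{*}=S^{2}-\overline{\Delta}$: since $F_n|_{\partial\Delta}$ is the rotation $z\mapsto e^{2\pi i\theta_n}z$, we have $F_n(\partial\Delta)=\partial\Delta$, hence $F_n^{-1}(\gamma)\cap\partial\Delta=\emptyset$ whenever $\gamma\subset\mathrm{int}\,\Delta$.

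Suppose, for contradiction, that some non-peripheral component $\gamma'$ of $F_n^{-1}(\gamma)$ lies in $\Delta^{*}$. Because $\Omega_{F_n}^{*}=\Omega_{F_n}$ and $F_n(z^{*})=F_n(z)^{*}$, the set $P_{F_n}'$ is invariant under $z\mapsto z^{*}$, so $\zeta:=(\gamma')^{*}\subset\Delta$ is a non-peripheral component of $F_n^{-1}(\gamma^{*})$ with $F_n(\zeta)=\gamma^{*}$. Let $\xi$ be the $(\phi_m,P_{F_n}')$-geodesic homotopic to $\zeta$ in $S^{2}-P_{F_n}'$. The standard Thurston-pullback estimate (Theorem A.3 together with the non-expansion of Teichm\"uller distance under Thurston's pullback, cf.\ Proposition~3.3 of \cite{DH}) gives $\|\xi\|_{\phi_m,P_{F_n}'}\le C\,\|\gamma\|_{\phi_m,P_{F_n}'}$, where $C$ depends only on $\deg F_n$ and $|P_{F_n}'|$. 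Choosing $\|\gamma\|<\delta/C$, with $\delta$ the constant from Lemma~\ref{lower bound of symmetric geodesic}, forces $\xi\cap\partial\Delta=\emptyset$; thus $\xi$ lies in $\Delta$ or in $\Delta^{*}$. The two partition classes of $P_{F_n}'$ determined by $\zeta$ are $A:=(P_{F_n}'\cap D_{\gamma'}^{\mathrm{in}})^{*}\subset\Delta$ and its complement, which contains $O_n\subset\partial\Delta$; since neither is a subset of $\Delta^{*}$, the disk bounded by $\xi$ cannot lie in $\Delta^{*}$, and therefore $\xi\subset\Delta$.

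Now $\gamma$ and $\xi$ are two short $(\phi_m,P_{F_n}')$-geodesics both lying in $\Delta$, so by Theorem A.1 they are either equal or disjoint. Using $F_n(\zeta)=\gamma^{*}$ and orientation preservation, one has $F_n(D_\zeta^{\mathrm{in}})=(D_\gamma^{\mathrm{in}})^{*}\subset\Delta^{*}$, whence $F_n(A)\subset(D_\gamma^{\mathrm{in}}\cap P_{F_n}')^{*}\subset\Delta^{*}$: the $\ge 2$ points of $P_{F_n}'\cap\Delta$ enclosed by $\zeta$ must map under $F_n$ into $P_{F_n}'\cap\Delta^{*}$. By property~(7) of Proposition~\ref{basic construction}, the $F_n$-images of points in $(\Omega_{F_n}\cup P_{F_n})-\overline{\Delta}$ that enter $\Delta-\{0\}$ all lie on the single curve $\gamma_n^{*}$, so the set of ``exit'' points in $P_{F_n}'\cap\Delta$ (those whose $F_n$-iterate leaves $\Delta$) is small and its elements are mutually separated by definite amounts in the $(\phi_m,P_{F_n}')$-hyperbolic metric; for $\|\gamma\|$ small enough, the disk $D_\xi^{\mathrm{in}}$ cannot contain $\ge 2$ such exit points, yielding the contradiction. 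I expect this quantitative separation step --- ensuring a uniform lower bound on the pairwise hyperbolic distances of the exit points that is independent of $m$ --- to be the main obstacle; the preceding hyperbolic-geometric reductions serve to bring the problem to this concrete combinatorial-geometric one.
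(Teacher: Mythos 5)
Your reduction to a short geodesic $\xi$ lying strictly inside $\Delta$ is correct, and the symmetry trick of reflecting $\gamma'$ to $\zeta=(\gamma')^{*}$ is a clean reorganization. But the proposal has two real gaps, and the second one is not repairable without effectively recreating the paper's argument.

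First, the identity $F_{n}(D_{\zeta}^{\mathrm{in}})=(D_{\gamma}^{\mathrm{in}})^{*}$ does not hold in general. If $D(\gamma')$ contains a critical point of $F_{n}$ whose critical value lands in $D(\gamma)$, or contains further components of $F_{n}^{-1}(\gamma)$, then $F_{n}|_{D(\gamma')}$ is not a branched cover onto $D(\gamma)$, and the containment $F_{n}(D(\gamma'))\subset D(\gamma)$ fails. The paper meets this with an explicit case split (whether the contraction of $\gamma$ to a point lifts), and in the branched case it uses the critical point $w\in D(\eta)\cap\Omega_{F_{n}}$ itself, rather than a point of $P_{F_{n}}'$, as the witness whose $F_{n}$-image lands on $\gamma_{n}^{*}$. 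So even before your final step, you need this dichotomy; the consequence you actually have is the existence of a single point of $(\Omega_{F_{n}}\cup P_{F_{n}})\cap D(\eta)$ whose image lies in $D(\gamma)$, not that all of $A$ consists of exit points.

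Second, the separation of exit points that you flag as the remaining obstacle is, as far as I can see, genuinely out of reach at this point in the logical chain. A uniform (in $m$) lower bound on pairwise hyperbolic distances of marked points of $P_{F_{n}}'$ is exactly the sort of bounded geometry that Lemma~\ref{no Thurston obstructions} and the compactness Lemma~\ref{com-F} are designed to establish later, and their proofs rest on the present lemma; building it in here would be circular. The paper avoids the issue entirely by a topological observation: the witness point has $F_{n}$-image on the arc $\gamma_{n}^{*}\subset\Delta$, an embedded curve joined to $\partial\Delta$ at the critical point $1\in P_{F_{n}}'$, and $1\notin D(\gamma)$. Since the geodesic $\xi$ homotopic to $\gamma$ rel $P_{F_{n}}'$ separates that image from $1$ and these two points lie on the connected set $\gamma_{n}^{*}\cup\partial\Delta$, necessarily $\xi\cap(\gamma_{n}^{*}\cup\partial\Delta)\ne\emptyset$. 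Crucially, the paper chooses $\xi$ to be a geodesic in $S^{2}-F_{n}^{-1}(P_{F_{n}}')$ (still homotopic to $\gamma$ in $S^{2}-P_{F_{n}}'$), so that $F_{n}(\xi)$ is a short geodesic in $S^{2}-P_{F_{n}}'$; since $F_{n}(\gamma_{n}^{*}\cup\partial\Delta)\subset\partial\Delta$, the pushed-forward geodesic meets $\partial\Delta$ and Lemma~\ref{lower bound of symmetric geodesic} gives the contradiction. Your $\xi$ lives in $S^{2}-P_{F_{n}}'$ rather than in the pulled-back surface, so it cannot be pushed forward by $F_{n}$, and you lose access to that mechanism; this is why your argument is forced toward the separation claim instead.
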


\begin{proof}
 Suppose $\|\gamma\|_{\phi_{m}, P_{F_{n}}'}$ is small enough.
 Let $\eta$ be a non-peripheral component of $F_{n}^{-1}(\gamma)$.
 Clearly, $\eta$ is a short simple closed
 $(\phi_{m+1}, F_{n}^{-1}(P_{F_{n}}'))-geodesic$. Since
 $F_{n}: \partial\Delta \to \partial \Delta$ is a homeomorphism,
 it follows that $\eta$ does not intersect the unit circle. Suppose $\eta$
 is in the outside of the unit disk. Take a point, say $x \in D(\gamma)$.
 Let us contract $\gamma$ continuously to $x$. There are two cases.
 In the first case,  $D(\gamma)$ does not contain any critical value
 $v = F_{n}(c)$ for some $c \in D(\eta) \cap \Omega_{F_{n}}$.  Then
 we can lift the contraction by $F_{n}$.  It follows that $D(\eta)$
 will contract to some point in $D(\eta)$. Let $z \in D(\eta) \cap P_{F_{n}}'$.
 In this case, $F_{n}(z) \in D(\gamma)$. By (7) of proposition~\ref{basic construction},
 it follows that  $F_{n}(z) \in \gamma_{n}^{*}$, where $\gamma_{n}$ is
 a curve segment which is attached to the  point $1$ and lies in the
 outside of the unit disk such that $F_{n}(\gamma_{n}) \subset \partial \Delta$.
Now let $\xi$ be one of the shortest simple closed $(\phi_{m},
F_{n}^{-1}(P_{F_{n}}'))-geodesics$ which are homotopic to $\gamma$
in $S^{2} - P_{F_{n}}'$. It follows that $\xi \cap (\partial \Delta
\cup \gamma_{n}^{*}) \ne \emptyset$. This implies that $F_{n}(\xi)
\cap \partial \Delta \ne \emptyset$. But on the other hand,
$\|\xi\|_{\phi_{m}, F_{n}^{-1}(P_{F_{n}}')}$ goes to $0$ as
$\|\gamma\|_{\phi_{m}, P_{F_{n}}'}$ goes to $0$ by Theorem A.3, and
so does $\|F_{n}(\xi)\|_{\phi_{m-1}, P_{F_{n}}'}$. This is a
contradiction with Lemma~\ref{lower bound of symmetric geodesic}.

In the second case, the contraction of $D(\gamma)$ can not be lifted
to a contraction of $D(\eta)$. This implies that there is a point $w
\in \Omega_{F_{n}}\cap D(\eta) \subset \Omega_{F_{n}}  -
\overline{\Delta}$ such that $F_{n}(w) \in D(\gamma)$. As before,
this implies that $F_{n}(w) \in \gamma_{n}^{*}$. By using the same
argument as above, we  get a contradiction again.

\end{proof}

\begin{lemma}\label{no Thurston obstructions}
$F_{n}$ has no Thurston obstructions in $S^{2} - P_{F_{n}}'$ for
every $n$ large enough.
\end{lemma}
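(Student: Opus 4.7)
The plan is to argue by contradiction. Suppose that for infinitely many $n$ the map $F_n$ carries a Thurston obstruction $\Gamma_n \subset S^2 - P_{F_n}'$. By the standard connection between Thurston obstructions and Thurston's pull-back iteration \cite{DH} (summarized in Section~5), the existence of $\Gamma_n$ forces some simple closed $(\phi_m, P_{F_n}')$-geodesic $\gamma = \gamma(m)$ to have hyperbolic length tending to $0$ as $m \to \infty$.

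First, Lemma~\ref{lower bound of symmetric geodesic} guarantees that, for $n$ large, every simple closed $(\phi_m, P_{F_n}')$-geodesic meeting $\partial \Delta$ has length at least a universal constant $\delta > 0$. Hence for $m$ sufficiently large, $\gamma(m)$ lies either entirely inside $\Delta$ or entirely outside $\overline{\Delta}$. Because $F_n$ commutes with the reflection $z \mapsto z^*$ and $P_{F_n}'$ is symmetric about $\partial \Delta$, the Thurston pull-back sequence may be chosen symmetric, so $\gamma(m)^*$ has the same hyperbolic length as $\gamma(m)$. After replacing $\gamma(m)$ by $\gamma(m)^*$ if necessary, I assume $\gamma(m)$ lies outside $\overline{\Delta}$.

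Next, I would invoke the symmetric analogue of Lemma~\ref{invarance}: for a short geodesic outside $\overline{\Delta}$, every non-peripheral preimage under $F_n$ again lies outside $\overline{\Delta}$, for otherwise pulling back would create a short simple closed $(\phi_{m+1}, P_{F_n}')$-geodesic that either crosses $\partial \Delta$, contradicting Lemma~\ref{lower bound of symmetric geodesic}, or sits inside $\Delta$, in which case the symmetric partner of Lemma~\ref{invarance} produces the same contradiction. Iterating this, one extracts a multicurve $\Gamma'_n \subset S^2 - \overline{\Delta}$ that is $F_n$-invariant in the Thurston sense and whose transition matrix still has leading eigenvalue at least $1$; that is, a genuine Thurston obstruction for $F_n$ supported entirely outside $\overline{\Delta}$.

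Finally, I would let $n \to \infty$. By Proposition~\ref{basic construction}, $F_n \to F$ uniformly and $P_{F_n}' - \partial \Delta$ is in natural bijection with $P_F' - \partial \Delta$ preserving the combinatorial orbit structure, so the obstructions $\Gamma'_n$ accumulate to a Thurston obstruction $\Gamma' \subset S^2 - \overline{\Delta}$ for $F$. Since $F = \widetilde{f} = \sigma \circ f$ on $S^2 - \Delta$ and $\sigma|_{S^2 \setminus \Delta} = \mathrm{id}$, the combinatorial dynamics of $F$ and $f$ outside the rotation disk coincide, and $\Gamma'$ transfers to a Thurston obstruction for $f$ outside the rotation disk, contradicting the hypothesis of Theorem~A. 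I expect the main obstacle to be extracting an honest submatrix obstruction $\Gamma'_n$ from the ``some geodesic becomes short'' data and verifying that the leading eigenvalue condition survives the symmetry folding by which we collapsed the inside case into the outside case.
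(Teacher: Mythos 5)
Your proposal captures the core of the first half of the paper's argument: by Lemma~\ref{lower bound of symmetric geodesic} short geodesics avoid $\partial\Delta$; by Lemma~\ref{invarance} (and its symmetric analogue) the non‑peripheral preimages of a short geodesic outside $\overline{\Delta}$ stay outside, so the obstruction multicurve splits into an inside block and an outside block with equal Thurston matrices; and the outside block gives an $f$-obstruction outside the rotation disk. Two issues, one stylistic and one substantive.

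Stylistically, you do not need to ``let $n\to\infty$'' and speak of obstructions ``accumulating.'' Multicurves are discrete isotopy-theoretic data, and the number of punctures in $S^{2}-P_{F_n}'$ grows with $n$, so a limit of homotopy classes is not a well-defined object without care. The paper sidesteps this entirely: for each fixed large $n$, the marked points of $F_n$ outside $\overline{\Delta}$ are in orbit-preserving bijection with those of $F$ (Proposition~\ref{basic construction} (3) and (6)), and $F_n$ agrees combinatorially with $F$, hence with $f$, outside $\overline{\Delta}$. So the outside multicurve for $F_n$ is literally an $f$-stable multicurve with the same transition matrix $B$; the block-diagonal structure $A = \mathrm{diag}(B,B)$ then gives $\|A\|=\|B\|<1$ at once. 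No compactness or passage to a limit is needed.

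The substantive gap: you never address the passage from $P_{F_n}$ to $P_{F_n}'=P_{F_n}\cup\{0,\infty\}$. Adding the two extra marked points can create a simple closed curve that is non-peripheral in $S^{2}-P_{F_n}'$ but peripheral in $S^{2}-P_{F_n}$ --- a curve $\gamma$ outside $\overline{\Delta}$ with $D(\gamma)$ containing exactly $\infty$ and one point $x\in P_{F_n}$ --- together with its symmetric partner $\gamma^{*}$. Such a curve cannot ``transfer to a Thurston obstruction for $f$ outside the rotation disk,'' because after forgetting $\{0,\infty\}$ it becomes peripheral, so your final contradiction with the hypothesis of Theorem~A does not apply to it. The paper handles this separately: the transition matrix of $\{\gamma,\gamma^{*}\}$ is forced to be the $2\times 2$ identity, giving a component $\gamma'$ of $F_n^{-1}(\gamma)$ homotopic to $\gamma$ with $F_n:\gamma'\to\gamma$ a homeomorphism; contracting $\gamma$ to $x$ and lifting then yields $F_n(x)=x$, contradicting the normalization ``the choice of the infinity'' in \S2.1.2. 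Without this half of the argument the proof of Lemma~\ref{no Thurston obstructions} is incomplete.
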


\begin{proof}
First let us prove that $F_{n}$ has no Thurston obstructions in
$S^{2} - P_{F_{n}}$.  Suppose $\Gamma$ is a $F_{n}-stable$ family
which consists of all the short simple closed geodesics. By
Lemma~\ref{lower bound of symmetric geodesic}, if  $\gamma \in
\Gamma$, then $\gamma$ is disjoint from the unit circle. Since
$F_{n}$ is symmetric, therefore, the symmetric image of $\gamma$
about the unit circle, $\gamma^{*}$, must also belong to $\Gamma$.
We order the curves in $\Gamma$ as $\{\gamma_{1}, \cdots,
\gamma_{l}, \gamma_{1}^{*}, \cdots, \gamma_{l}^{*}\}$, where
$\gamma_{i} \subset \Delta$,  and $\gamma_{i}^{*}$ is the symmetric
image of $\gamma_{i}$ about the unit circle, $1 \le i \le l$. Now
let $A$ be the associated Thurston linear transformation matrix of
$\Gamma$(see \cite{DH} or $\S5$ for the definition). By
Lemma~\ref{invariance}, any non-peripheral component of
$F_{n}^{-1}(\gamma_{i})$ must be homotopic to one of the curves in
$\gamma_{i}, 1 \le i\le l$, and by the same reason, any
non-peripheral component of $F_{n}^{-1}(\gamma_{i}^{*})$ must be
homotopic to one of the curves in $\gamma_{i}^{*}, 1 \le i\le l$. It
follows that $\{\gamma_{1}, \cdots, \gamma_{l}\}$ is a $f-stable$
family. Let $B$ be its associated Thurston linear transformation
matrix. Then we have
$$
                   A =
                   \begin{pmatrix}
                   B  & 0 \\
                   0  & B
                   \end{pmatrix}
$$
Since $f$ has no Thurston obstructions outside the unit disk, so
$\|B\| < 1$.  Therefore, $\|A\|<1$.

Now let us prove that $F_{n}$ has no Thurston obstructions in $S^{2}
- P_{F_{n}}'$.  By the choice of the infinity, Let us assume that
$P_{F_{n}}' \ne P_{F_{n}}$, for otherwise, the lemma has been
proved.  Let us suppose that $F_{n}$ has Thurston obstructions in
$S^{2} - P_{F_{n}}'$.  Since $F_{n}$ has no Thurston obstructions in
$S^{2} - P_{F_{n}}$,  it follows that any short simple closed
$(\phi_{m}, P_{F_{n}}')-geodesics$ $\gamma \subset S^{2} -
P_{F_{n}}'$ must be homotopic to a point in $S^{2} - P_{F_{n}}$.
This implies that there are exactly two short simple closed
geodesics in $S^{2} - P_{F_{n}}'$, say $\gamma$ and $\gamma^{*}$,
such that $\gamma$ is contained in the outside of the unit disk, and
$D(\gamma)$ contains exactly two distinct points in $P_{F_{n}}'$,
one is the infinity, and the other one, say $x$, is a point in
$P_{F_{n}}$. By the same argument as before, we can show that any
non-peripheral component of $F_{n}^{-1}(\gamma)$ is contained in the
outside of the unit disk, and hence homotopic to $\gamma$ in $S^{2}
- P_{F_{n}}'$.  Similarly, any non-peripheral component of
$F_{n}^{-1}(\gamma^{*})$ is contained in the inside of the unit
disk, and hence homotopic to $\gamma^{*}$ in $S^{2} - P_{F_{n}}'$.
It follows that the associated Thurston linear transformation matrix
is a $2 \times 2$ diagonal matrix, and hence equal to the identity
matrix. This implies that there is a simple closed curve $\gamma'$
which is homotopic to $\gamma$ in  $S^{2} - P_{F_{n}}'$  and $F_{n}:
\gamma' \to \gamma$ is a homeomorphism. Now continuously contract
$\gamma$ to $x$. Since $D(\gamma) -\{x\}$ contains no critical
values of $F_{n}$, it follows that the contraction can be lifted to
a contraction of $\gamma'$, which then must contract to $x$ too.
This implies that $F_{n}(x) = x$. But this is a contradiction with
our choice of the infinity. The proof of the lemma is completed.
\end{proof}

\subsection{ The Compactness of $\{G_{n}\}$ and Bounded Geometry of $P_{G_{n}}$}

\subsubsection{The sequence of Blaschke products $\{G_{n}\}$ }
For $n$ large enough, by Lemma 2.4, $F_{n}$ has no Thurston
obstructions in $S^{2} - P_{F_{n}}'$.  By Thurston's
characterization theorem on $\emph{postcritically finite}$ rational
maps, it follows that there is a Blaschke product $G_{n}$ which is
$\emph{combinatorially equivalent}$ to $F_{n}$ rel $P_{F_{n}}'$(see
\cite{DH}, or $\S5$). That is to say, there is a pair of
homeomorphisms $\phi_{n}, \psi_{n}$ of the sphere which are isotopic
to each other rel $P_{F_{n}}'$, such that $G_{n} = \phi_{n} \circ
F_{n} \circ \psi_{n}^{-1}$.  In this section, we will show that the
sequence $\{G_{n}\}$ is contained in a compact set of the space of
all the rational maps of degree $2d -1$, and moreover, the geometry
of $P_{G_{n}}$ is uniformly bounded.

\subsubsection{Analysis of short simple closed geodesics in
${\Bbb P}^{1}- (X_{L}^{n} \cup P_{n})$} We would like to mention
that all the proofs in this subsection does not rely on the
condition that $\theta$ is of bounded type. The only arithmetic
condition of $\theta$ we used is that it is an irrational number.

Let $L \ge 1$ be an integer. Define
\begin{equation}\label{finite set}
X_{L}^{n} = \{G_{n}^{k}(x) \big{|}\:x \in \Omega_{G_{n}}, -L \le k
\le L\} \cap \partial \Delta
\end{equation}
and
\begin{equation}\label{outside pc}
P_{n} = (P_{G_{n}} - \partial \Delta ) \cup \{0, \infty\}.
\end{equation}

Let $I \subset \partial \Delta$ be an arc segment(it may be open,
closed, or half open and half closed). Define
$$
\sigma_{n}(I) = \frac{|I \cap P_{G_{n}}|} {|\partial \Delta \cap
P_{G_{n}}|}.
$$
Since $P_{G_{n}} \cap \partial \Delta$ consists of a periodic orbit
and since $G_{n}|\partial \Delta: \partial \Delta \to \partial
\Delta$ is a homeomorphism, it follows that $\sigma_{n}$ is
$G_{n}-$invariant, i.e., for any $I \subset
\partial \Delta$, $$\sigma_{n}(I)  = \sigma_{n}(G_{n}(I)).$$ Let $x, y
\in
\partial \Delta$  be two distinct points. They separate $\partial
\Delta$ into two arc segments  $I$ and $J$.  Let $\overline{I}$ abd
$\overline{J}$ denote the closure of $I$ and $J$, respectively.
Define
$$
d_{\sigma_{n}}(x, y) = \min\{\sigma_{n}(\overline{I}),
\sigma_{n}(\overline{J})\}.
$$
It is clear that
\begin{equation}\label{tri-ine}
d_{\sigma_{n}}(x, z) \le d_{\sigma_{n}}(x, y)+ d_{\sigma_{n}}(y, z).
\end{equation}

\begin{lemma}\label{metric}
For any $k \ge 1$, there is an $\epsilon > 0$ such that for any $x
\in \partial \Delta$, the following inequality holds for all $n$
large enough
$$
d_{\sigma_{n}}(x, G_{n}^{k}(x)) \ge \epsilon.
$$
\end{lemma}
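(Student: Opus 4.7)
The plan is to exploit the fact that $\sigma_{n}$ is, up to normalization, the counting measure on the $q_{n}$-periodic cycle $P_{G_{n}}\cap\partial\Delta$, so that $d_{\sigma_{n}}$ measures combinatorial position on that cycle. The crucial structural input is that, via the combinatorial equivalence with $F_{n}$, the orientation-preserving circle homeomorphism $G_{n}|\partial\Delta$ permutes the $q_{n}$ points of $P_{G_{n}}\cap\partial\Delta$ with the same combinatorial rotation number $p_{n}/q_{n}$ as $F_{n}|O_{n}$. This follows from the fact that the homeomorphism matching $O_{n}$ to $P_{G_{n}}\cap\partial\Delta$ is orientation-preserving, so it must preserve cyclic order on these finite sets, and the rigid rotation by $\theta_{n}=p_{n}/q_{n}$ on $O_{n}$ shifts cyclic positions by exactly $p_{n}$ out of $q_{n}$.

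Once this is in hand, the argument reduces to a counting exercise. I would label the points of $P_{G_{n}}\cap\partial\Delta$ cyclically as $a_{0},\ldots,a_{q_{n}-1}$ with $G_{n}(a_{i})=a_{i+p_{n}}$ (indices mod $q_{n}$). For any $x\in\partial\Delta$ lying in the open arc from $a_{i}$ to $a_{i+1}$, the iterate $G_{n}^{k}(x)$ lies in the open arc from $a_{i+kp_{n}}$ to $a_{i+kp_{n}+1}$. Setting $j=kp_{n}\bmod q_{n}$, the two closed arcs joining $x$ to $G_{n}^{k}(x)$ contain exactly $j$ and $q_{n}-j$ of the $a_{i}$'s respectively, so
\[
d_{\sigma_{n}}(x,G_{n}^{k}(x))\;\ge\;\frac{\min(j,\,q_{n}-j)}{q_{n}}\;=\;\min\!\bigl(\tfrac{kp_{n}}{q_{n}}\bmod 1,\;1-\tfrac{kp_{n}}{q_{n}}\bmod 1\bigr).
\]
If $x$ itself coincides with some $a_{i}$, the same bound holds (in fact with an extra $1/q_{n}$).

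The conclusion then comes from letting $n\to\infty$. For fixed $k\ge 1$, since $p_{n}/q_{n}\to\theta$ and $\theta$ is irrational, $kp_{n}/q_{n}\to k\theta\notin\mathbb{Z}$, so $\epsilon_{k}:=\min(k\theta\bmod 1,\,1-k\theta\bmod 1)$ is strictly positive. For all $n$ large enough one has $q_{n}>k$ (forcing $j\ne 0$) together with $|kp_{n}/q_{n}-k\theta|$ small enough that the right-hand side above is at least $\epsilon_{k}/2$, giving the lemma with $\epsilon=\epsilon_{k}/2$ uniformly in $x$. The substantive content is the combinatorial identification in the first paragraph; the one place I would check carefully is that the combinatorial equivalence really produces a cyclic-order-preserving correspondence between $O_{n}$ and $P_{G_{n}}\cap\partial\Delta$, but this follows from the orientation-preserving character of the conjugating homeomorphism together with the fact that $G_{n}|\partial\Delta$ has degree one, so I do not anticipate serious difficulty beyond this verification.
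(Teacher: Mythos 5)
Your proof is correct, and it takes a genuinely different (more explicit) route from the paper's. You identify $\sigma_n$ as the normalized counting measure on the $q_n$-cycle $P_{G_n}\cap\partial\Delta$, observe that $G_n$ permutes these points as a rotation by $p_n/q_n$, and then count directly: the two arcs from $x$ to $G_n^k(x)$ contain $j$ and $q_n-j$ cycle points where $j=kp_n\bmod q_n$, so $d_{\sigma_n}(x,G_n^k(x))\ge\min(j,q_n-j)/q_n\to\min(\{k\theta\},1-\{k\theta\})>0$. The paper instead runs a covering argument: since $\theta_n\to\theta$ irrational, there is a uniform $m=m(\theta,k)$ with $\partial\Delta\subset\bigcup_{i=0}^{m}G_n^{ik}(I)$ (and likewise for $J$), whence $G_n$-invariance of $\sigma_n$ gives $\sigma_n(I)\ge 1/(m+1)$ without ever writing down a rotation-number formula. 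The two arguments rest on the same underlying facts (the combinatorial rotation number $p_n/q_n\to\theta$ and $\theta$ irrational), but yours produces a sharper, explicit lower bound $\approx\min(\{k\theta\},1-\{k\theta\})$ at the cost of first carefully establishing the cyclic-order-preserving correspondence between $O_n$ and $P_{G_n}\cap\partial\Delta$ (which you correctly flag as the point to verify, and which does follow from the orientation-preserving combinatorial equivalence and $G_n|\partial\Delta$ being a degree-one circle homeomorphism). The paper's covering formulation is slightly slicker and bypasses that bookkeeping, reusing the same pattern of argument that appears in Lemmas 2.6 and 2.10 of the paper; in this sense it is a bit more uniform in style, while yours is more self-contained and concrete.
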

\begin{proof}
Assume that $n$ is large wnough. Then $x$ and $G_{n}^{k}(x)$
separate $\partial \Delta$ into two arc intervals $I$ and $J$. Since
$\theta_{n}$ converges to $\theta$, there is an  $m \ge 1$ dependent
only on $\theta$ and $k$ such that for all $n$ large enough,
$$
\partial \Delta \subset \bigcup_{i=0}^{m} G_{n}^{ik}(I) \hbox{ and }
\partial \Delta \subset \bigcup_{i=0}^{m} G_{n}^{ik}(J).
$$
Since $\sigma_{n}$ is $G_{n}-$invariant, it follows that
$$
\min\{\sigma_{n}(I), \sigma_{n}(J)\} \ge 1/(m+1).
$$
This implies Lemma~\ref{metric}.
\end{proof}

As before, let $N = |P_{F}' - \overline{\Delta}|$. It follows that
for every $n$ large enough,
\begin{equation}\label{N-value}
|P_{n}|= 2 N.
\end{equation}
\begin{lemma}\label{tech-1}
Let $L  \ge  N +2$ and $M \ge 1$ be some integers.  Then for any $1
\le k \le M$,  and every $n$ large enough,  $X_{L}^{n}\cup P_{n}
\subset  G_{n}^{-k}(X_{L+M}^{n}\cup P_{n})$. Moreover,  the map
$$G^{k}: {\Bbb P}^{1} - G_{n}^{-k}(X_{L+M}^{n}\cup P_{n}) \to {\Bbb
P}^{1} - (X_{L+M}^{n}\cup P_{n})$$ is a holomorphic covering map.
\end{lemma}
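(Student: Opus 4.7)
The plan is to verify the two claims of the lemma separately: the set-theoretic inclusion first, and then the covering-map property, which will follow almost automatically.

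For the inclusion $X_L^n \cup P_n \subset G_n^{-k}(X_{L+M}^n \cup P_n)$, I would pick an arbitrary $z$ in the left-hand side and show $G_n^k(z) \in X_{L+M}^n \cup P_n$ by three cases. If $z = G_n^j(x) \in X_L^n$ with $x \in \Omega_{G_n}$ and $-L \le j \le L$, then $z \in \partial\Delta$, and since $\partial\Delta$ is totally invariant under $G_n$ (the restriction is rotation by $\theta_n$), the image $G_n^k(z) = G_n^{j+k}(x)$ still lies on $\partial\Delta$ with $|j+k| \le L + M$, so $G_n^k(z) \in X_{L+M}^n$. If $z \in \{0, \infty\}$, the symmetry $G_n(z^*) = G_n(z)^*$ and the normalization $G_n(\infty) = \infty$ inherited from the choice of infinity force $G_n(0) = 0$ and $G_n(\infty) = \infty$, so $G_n^k(z) = z \in P_n$. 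The essential case is $z \in P_{G_n} - \partial\Delta$: write $z = G_n^j(x)$ with $x \in \Omega_{G_n}$ and $j \ge 0$ minimal. If $G_n^k(z) \notin \partial\Delta$, then $G_n^k(z) \in P_{G_n} - \partial\Delta \subset P_n$ and we are done. Otherwise, $G_n^k(z) \in \partial\Delta$ and the identity $G_n^k(z) = G_n^{j+k}(x)$ exhibits it as an element of $X_{j+k}^n$, so it suffices to show $j + k \le L + M$, i.e., $j \le L$. This is where the hypothesis $L \ge N+2$ enters: since $G_n$ is a Blaschke product, the two sides of $\partial\Delta$ are individually invariant, so the minimal-depth path from a critical point $x$ (necessarily on the same side as $z$) to $z$ stays on that side and consists of points in $P_{G_n}$ off $\partial\Delta$; the symmetry $G_n(z^*) = G_n(z)^*$ pairs these with their reflections, so the depth is bounded by roughly half of $|P_n| = 2N$, yielding $j \le N \le L$.

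For the covering map claim, I would appeal to the standard fact that a non-constant proper holomorphic map between open subsets of Riemann surfaces is an unbranched covering map as soon as one removes the branch values from the target and their preimages from the source. The branch locus of $G_n^k$ is $V_k := \bigcup_{i=1}^{k} G_n^i(\Omega_{G_n})$, and a direct check shows $V_k \subset X_{L+M}^n \cup P_n$: each $G_n^i(x)$ with $x \in \Omega_{G_n}$ and $1 \le i \le k \le M$ lies either on $\partial\Delta$ (hence in $X_i^n \subset X_{L+M}^n$) or off $\partial\Delta$ (hence in $P_{G_n} - \partial\Delta \subset P_n$). Since $V_k \subset X_{L+M}^n \cup P_n$, removing this set from the target and its full $G_n^k$-preimage from the source kills all critical points of $G_n^k$ in the source and yields a proper, unramified, surjective holomorphic map, i.e., a covering map.

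The main obstacle is the third sub-case of the inclusion, where one must prove the depth estimate $j \le L$ for the minimal representation of a point in $P_{G_n} - \partial\Delta$. This requires combining the postcritically finite structure of $G_n$, the stabilization $|P_n| = 2N$ for $n$ large, the side-preservation by the Blaschke product, and the symmetry $G_n(z^*) = G_n(z)^*$; together these show that the $L \ge N + 2$ hypothesis is precisely what is needed. Everything else in the proof is a short formal verification.
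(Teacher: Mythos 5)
Your Cases 1 and 2 and the covering-map argument are correct and essentially the paper's. The gap is in Case 3, which is the heart of the lemma. You assert that ``since $G_n$ is a Blaschke product, the two sides of $\partial\Delta$ are individually invariant,'' and then deduce the depth bound $j\le N$ from this together with the symmetry $G_n(z^*)=G_n(z)^*$. But side-invariance is false for the $G_n$ of this paper. They have the form
$$
G_n(z)=\lambda_n z\prod_{1\le k\le d-1}\frac{z-p_{k,n}}{1-\overline{p}_{k,n}z}\prod_{1\le k\le d-1}\frac{z-q_{k,n}}{1-\overline{q}_{k,n}z}
$$
with $p_{k,n}$ \emph{outside} $\overline{\Delta}$ and $q_{k,n}$ inside; so $G_n$ has zeros outside the unit disk (the $p_{k,n}$) and poles inside it (the $1/\overline{p}_{k,n}$), and neither $\Delta$ nor ${\Bbb C}-\overline{\Delta}$ is $G_n$-invariant---only $\partial\Delta$ is. Indeed, the set $X$, the curve $\gamma_f^*$, and the conjugation by $\sigma$ in the construction of $F$ exist precisely because critical orbits on the outside can fall into $\Delta$. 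Your bound $j\le N$ therefore does not follow from the argument you gave.

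What the paper uses in place of side-invariance is item (7) of Proposition~\ref{basic construction}: if $z\in(\Omega_{F_n}\cup P_{F_n})-\overline{\Delta}$ has $F_n(z)\in\Delta-\{0\}$, then $F_n(z)\in\gamma_n^*$, hence $F_n^2(z)\in\partial\Delta$. A critical orbit that crosses $\partial\Delta$ into $\Delta-\{0\}$ is forced onto $\partial\Delta$ one step later---this is a combinatorial feature built into the surgery, not a dynamical property of Blaschke products. Combined with $|P_n|=2N$, it yields that every $z\in P_n$ can be written $z=G_n^i(c)$ for some $c\in\Omega_{G_n}$ and $0\le i\le N+1\le L-1$, which is exactly the depth bound you need; the rest of your Case 3 then goes through unchanged. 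Replacing the side-invariance claim with this appeal to Proposition~\ref{basic construction}(7) closes the gap.
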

\begin{proof}
Let $z \in X_{L}^{n}\cup P_{n}$ and $1 \le k \le M$. We have two
cases. In the first case, $z \in X_{L}^{n}$. It follows  from
(\ref{finite set}) that $G_{n}^{k}(z) \in X_{L+M}^{n}$. In the
second case, $z \in P_{n}$. Then from(7) of Proposition~\ref{basic
construction},    there is some critical point $c \in
\Omega_{G_{n}}$ and some integer $0 \le i \le N+1$ such that $$z =
G_{n}^{i}(c).$$ Since $L \ge N+2$, it follows from (\ref{finite
set}) that $G_{n}^{k}(z) \in X_{L+M}^{n}$. This proves the first
assertion.

The second assertion follows since  $L \ge N + 2$, and therefore,
for any $c \in \Omega_{G_{n}}$, the forward orbit segment
$$\{G_{n}^{i}(c)\:\big{|}\: 1 \le i \le M\}$$  is contained in $X_{L+M}^{n} \cup
P_{n}$.
\end{proof}

For $L> 0$ and $n$ large enough, set
$$
r_{L}^{n} = \max\{\sigma_{n}(\overline{I})\:\big{|}\:I \:\hbox{is an
interval component of}\: \partial \Delta - X_{L}^{n}\}.
$$
\begin{lemma}\label{ques-1}
Let $\epsilon > 0$ be an arbitrary number. Then there exist $L'$ and
$N'$ such that $r_{L}^{n} < \epsilon$ provided that $L > L'$ and $n
> N'$.
\end{lemma}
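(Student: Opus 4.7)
\smallskip
\noindent\emph{Proof plan.} The plan is to exploit the fact that $G_{n}|_{\partial\Delta}$ is, combinatorially, an orientation-preserving homeomorphism of $\partial\Delta$ with rotation number $\theta_{n}=p_{n}/q_{n}\to\theta$, so that forward and backward $G_{n}$-iterates of a critical point on $\partial\Delta$ equidistribute on $\partial\Delta$. This will force $X_{L}^{n}$ to cut $\partial\Delta$ into arcs each of small $\sigma_{n}$-measure, once $L$ and $n$ are large enough.

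First I would locate a critical point $c_{n}\in\Omega_{G_{n}}\cap\partial\Delta$. By condition~(2) in the definition of $R_{\theta}^{top}$, $\Omega_{f}\cap\partial\Delta\neq\emptyset$, and the construction in Proposition~\ref{basic construction} preserves critical points on $\partial\Delta$, so $\Omega_{F_{n}}\cap\partial\Delta\neq\emptyset$. Transporting by the combinatorial equivalence $G_{n}=\phi_{n}\circ F_{n}\circ\psi_{n}^{-1}$ (arranged, using the $z\mapsto z^{*}$ symmetry of $F_{n}$ and $G_{n}$, so that $\phi_{n}(\partial\Delta)=\partial\Delta$) yields $c_{n}$; since $G_{n}(\partial\Delta)=\partial\Delta$, all $2L+1$ points $\{G_{n}^{k}(c_{n})\}_{-L\le k\le L}$ lie in $\partial\Delta$ and hence in $X_{L}^{n}$. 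The Blaschke symmetry $G_{n}(z^{*})=G_{n}(z)^{*}$ together with a zeros-minus-poles count inside $\Delta$ gives $G_{n}|_{\partial\Delta}$ topological degree $d-(d-1)=1$, so it is an orientation-preserving homeomorphism of $\partial\Delta$. Its postcritical cycle $O_{n}^{G}=P_{G_{n}}\cap\partial\Delta$ has period $q_{n}$ and combinatorial rotation number $\theta_{n}$, inherited from $F_{n}|_{\partial\Delta}$. Consequently the cyclic order of $\{G_{n}^{k}(c_{n})\}_{-L\le k\le L}$ inside $O_{n}^{G}$ coincides with that of $\{e^{2\pi ik\theta_{n}}\}_{-L\le k\le L}$ inside $\{e^{2\pi ik\theta_{n}}\}_{0\le k<q_{n}}$, and $\sigma_{n}$ corresponds to the normalized counting measure on $O_{n}^{G}$ under this identification.

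Given $\epsilon>0$, minimality of rotation by $\theta$ produces an $L'$ such that $\{e^{2\pi ik\theta}:|k|\le L'\}$ is $(\epsilon/4)$-dense on $\partial\Delta$ in normalized Lebesgue measure. Since $\theta_{n}\to\theta$, there is $N_{1}'$ such that for $n\ge N_{1}'$ and $L\ge L'$ the analogous set for $\theta_{n}$ is $(\epsilon/2)$-dense. Because $O_{n}=\{e^{2\pi ik\theta_{n}}\}_{0\le k<q_{n}}$ is uniformly spaced with spacing $1/q_{n}$, every arc of Lebesgue length less than $\epsilon/2$ meets $O_{n}$ in at most $(\epsilon/2)q_{n}+1$ points. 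Through the combinatorial identification just established, every arc component of $\partial\Delta\setminus X_{L}^{n}$ therefore has $\sigma_{n}$-measure at most $\epsilon/2+1/q_{n}$. Enlarging $N'\ge N_{1}'$ so that $1/q_{n}<\epsilon/2$ for $n\ge N'$ yields $r_{L}^{n}<\epsilon$ whenever $L\ge L'$ and $n\ge N'$, as required.

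The main obstacle is the second step: verifying that the combinatorial equivalence transports the rotation combinatorics of $F_{n}|_{\partial\Delta}$ onto $\partial\Delta$ for $G_{n}$. Since the equivalence is given rel $P_{F_{n}}'$ and not rel $\partial\Delta$, one must adjust $\phi_{n}$ within its isotopy class rel $P_{F_{n}}'$ so that $\phi_{n}(\partial\Delta)=\partial\Delta$; this can be done by exploiting the $z\mapsto z^{*}$ symmetry of both $F_{n}$ and $G_{n}$, after which the cyclic-order matching between $O_{n}^{G}$ and $O_{n}$, and the identification of $\sigma_{n}$ with the normalized counting measure on $O_{n}^{G}$, become automatic.
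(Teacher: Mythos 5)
Your proof is correct and, at bottom, is the same argument as the paper's: reduce everything to the rigid rotation $R_{\theta_n}$ via the combinatorial equivalence between $G_n$ and $F_n$, and then exploit irrationality of $\theta$ plus $\theta_n\to\theta$. The difference is in presentation. The paper does the reduction in a single sentence ("consider the combinatorial model $F_n$ instead of $G_n$; replace $G_n$ by $F_n$ in the definitions of $X_L^n$, $\sigma_n$, and $r_L^n$"), taking it for granted that $\sigma_n$ and the gap structure of $X_L^n$ are combinatorial invariants transported by the equivalence; you instead spend most of your space justifying this transport (degree count showing $G_n|_{\partial\Delta}$ is a circle homeomorphism, the $z\mapsto z^*$ symmetry forcing the conjugating homeomorphism to preserve $\partial\Delta$, cyclic-order matching), which is a real gap in the paper's exposition that your version makes explicit. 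For the final estimate you also argue slightly differently: the paper picks $K\ge 1/\epsilon$ and a $\delta$ so that the $K+1$ rotation translates of any arc of length $<\delta$ are pairwise disjoint, then uses $F_n$-invariance of $\sigma_n$ to get $(K+1)\sigma_n(\overline I)\le 1$, whereas you convert Lebesgue density of $\{e^{2\pi ik\theta_n}:|k|\le L\}$ into a count of points of the evenly spaced cycle $O_n$ in each gap, yielding $\sigma_n(I)\le\epsilon/2+1/q_n$ and then using $q_n\to\infty$. These are equivalent — the $F_n$-invariance the paper invokes is precisely the fact that in the model $\sigma_n$ is the normalized counting measure on an evenly spaced periodic orbit — so the proofs land in the same place. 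One small bookkeeping note: as stated, your "$(\epsilon/2)$-dense" step gives gap lengths at most $\epsilon/2$, not strictly less, which is harmless but worth matching to the inequality you then apply.
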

\begin{proof}
Let us consider the combinatorial model $F_{n}$ instead of $G_{n}$.
That is, replace $G_{n}$ by $F_{n}$ in the definitions of
$X_{L}^{n}$, $\sigma_{n}$, and $r_{L}^{n}$. Let us still keep the
same notations.

For $\epsilon > 0$ given, let $K$ be the least integer such that $K
\ge 1/\epsilon$. Since $0< \theta< 1$ is irrational, there is a $0<
\delta < 1$ which depends only on $\theta$ such that for any  closed
arc segment $I \subset \partial \Delta$ with $|I| < \delta$, the
$K+1$ arc segments $e^{2 \pi i k \theta} I, 0 \le k \le K $ are
disjoint. For such $\delta$, there is an integer $L'$  which depends
only on $\delta$ and $\theta$ such that for all $L> L'$, every
component of
$$
\Xi = \partial \Delta - \{e^{2 \pi i k \theta}\:\big{|}\: -L \le k
\le L \}
$$
has Euclidean length less than $\delta/2$. It follows that for every
component $I$ of $\Xi$, the closure of the arc segments $e^{2 \pi i
k \theta} I, 0 \le k \le K$,  are disjoint. Since $\theta_{n} \to
\theta$ as $n \to \infty$, it follows that there is an $N'> 0$ such
that for all $n > N'$, and any component $I$ of
$$
\Xi_{n} = \partial \Delta - \{e^{2 \pi i k \theta_{n}}\:\big{|}\: -L
\le k \le L \},
$$
the closure of the $K+1$ arc segments $e^{2 \pi i k \theta_{n}} I, 0
\le k \le K $, are disjoint. Since $\sigma_{n}$ is
$F_{n}-$invariant, we have
$$
\sigma_{n}(\overline{I}) = \sigma_{n}(e^{2 \pi i
 \theta_{n}} \overline{I}) = \cdots = \sigma_{n}(e^{2 \pi i K \theta_{n}} \overline{I}).
$$
From the disjointness, we have
$$
\sigma_{n}(\overline{I}) + \sigma_{n}(e^{2 \pi i
 \theta_{n}} \overline{I}) + \cdots + \sigma_{n}(e^{2 \pi i K \theta_{n}} \overline{I}) \le
 1.
$$
It  follows that $\sigma_{n}(\overline{I}) < 1/K \le \epsilon$. The
lemma then follows since $I$ is an arbitrary component of $\Xi_{n}$
and $\{e^{2 \pi i k \theta_{n}}\:\big{|}\: -L \le k \le L \}$ is
contained in $X_{L}^{n}$.

\end{proof}

Recall that $N = |P_{F}' - \overline{\Delta}|$. As a consequence of
Lemma~\ref{metric} and ~\ref{ques-1}, we have
\begin{corollary}\label{L-value}
There exist integers $L_{0}$ and $N_{0}$ such that when $L
> L_{0}$ and $n > N_{0}$, the inequality
$$
d_{\sigma_{n}}(x, G_{n}^{k}(x)) > 3 r_{L}^{n}
$$
holds  for any $x \in
\partial \Delta$ and every $1 \le k \le N+1$.
\end{corollary}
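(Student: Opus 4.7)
The plan is to obtain the corollary as a direct combination of Lemma~\ref{metric} and Lemma~\ref{ques-1}, with the integer $k$ ranging over the finite set $\{1, 2, \ldots, N+1\}$ so that we can pass to a uniform lower bound.

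First I would apply Lemma~\ref{metric} once for each $k = 1, 2, \ldots, N+1$. For each such $k$, the lemma furnishes some $\epsilon_k > 0$ and some threshold $N_k$ so that $d_{\sigma_n}(x, G_n^k(x)) \ge \epsilon_k$ holds for every $x \in \partial \Delta$ and every $n > N_k$. Since we are taking a minimum over finitely many indices, setting $\epsilon = \min_{1 \le k \le N+1} \epsilon_k > 0$ and $N_1 = \max_{1 \le k \le N+1} N_k$ gives the uniform bound
\[
d_{\sigma_n}(x,\, G_n^k(x)) \ge \epsilon \quad \text{for all } x \in \partial \Delta,\ 1 \le k \le N+1,\ n > N_1.
\]

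Next I would feed the number $\epsilon/4$ into Lemma~\ref{ques-1}. That lemma provides integers $L_0$ and $N_2$ such that $r_L^n < \epsilon/4$ whenever $L > L_0$ and $n > N_2$. Setting $N_0 = \max(N_1, N_2)$ and combining the two bounds, for every $L > L_0$, $n > N_0$, $x \in \partial \Delta$, and $1 \le k \le N+1$, we obtain
\[
3 r_L^n < \tfrac{3\epsilon}{4} < \epsilon \le d_{\sigma_n}(x,\, G_n^k(x)),
\]
which is exactly the stated inequality.

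There is no real obstacle here — the two preceding lemmas were evidently proved precisely so that this corollary becomes a quick quantitative synthesis. The only tiny point to be careful about is that the constant $\epsilon$ from Lemma~\ref{metric} is allowed to depend on $k$, which is why we must restrict to the finite range $1 \le k \le N+1$ in order to take a minimum; the corollary does not claim any bound uniform in $k$.
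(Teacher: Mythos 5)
Your proposal is correct and mirrors exactly what the paper intends: the corollary is stated as a direct consequence of Lemma~\ref{metric} and Lemma~\ref{ques-1}, and your argument — take the finite minimum of the $\epsilon_k$ from Lemma~\ref{metric} over $k = 1, \ldots, N+1$, take the corresponding maximum of the thresholds $N_k$, then invoke Lemma~\ref{ques-1} with a small enough fraction of $\epsilon$ — is precisely the intended synthesis. The point you flag at the end (the need to restrict to finitely many $k$ because $\epsilon_k$ depends on $k$) is the only thing worth saying out loud, and you said it.
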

For a simple closed geodesic $\xi$ which intersects the unit
circle, we use $D(\xi)$ to denote the bounded component of $S^{2}
-\gamma$.

\begin{lemma}\label{dico}
Let $L_{0}$ be the number in Corollary~\ref{L-value}. Let $L >
\max\{N+2, L_{0}\}$. Then there is a $\delta
> 0$ and $1< C < \infty$  such that for every $n$ large enough and
any simple closed geodesic $\gamma \subset{\Bbb P}^{1} - (X_{L}^{n}
\cup P_{n})$ with $\|\gamma\|_{{\Bbb P}^{1} - (X_{L}^{n} \cup
P_{n})} \le \delta$ and $\gamma \cap \partial \Delta \ne \emptyset$,
one of the following two cases must be true: \begin{itemize}
\item[1.] $|(\partial \Delta - D(\gamma)) \cap X_{L}^{n}| \ge 2$ and
$|D(\gamma) \cap X_{L}^{n}| \ge 2$, \item[2.] there is a short
simple closed geodesic $\eta \subset {\Bbb P}^{1} - (X_{L+N+2}^{n}
\cup P_{n})$ with $$\|\eta\|_{{\Bbb P}^{1} - (X_{L+N+2}^{n} \cup
P_{n})} \le C \|\gamma\|_{{\Bbb P}^{1} - (X_{L}^{n} \cup P_{n})}$$
such that
 $|(\partial \Delta - D(\eta)) \cap X_{L+N+2}^{n}| \ge 2$ and
 $|D(\eta) \cap X_{L+N+2}^{n}| \ge 2$.
 \end{itemize}
\end{lemma}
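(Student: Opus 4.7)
The plan is to argue by contradiction, parallel to Lemma~\ref{lower bound of symmetric geodesic}. Assuming case 1 fails for $\gamma$ and no $\eta$ as in case 2 exists, I will iterate $\gamma$ under $G_n$ to produce too many short symmetric geodesics, forcing a puncture count incompatible with $|P_n| = 2N$. Since $G_n$ is a Blaschke product, $X_L^n \cup P_n$ is symmetric about $\partial\Delta$; once $\delta$ is sufficiently small, the argument of Lemma~\ref{symmetric geodesic} shows $\gamma$ is symmetric about $\partial\Delta$ and crosses $\partial\Delta$ at exactly two points $p, q$. If case 1 fails, then one of the two arcs of $\partial\Delta$ between $p$ and $q$, call it $I_0$, contains at most one point of $X_L^n$; by the definition of $r_L^n$, $\sigma_n(\overline{I_0}) \le 2r_L^n$, so $d_{\sigma_n}(p, q) \le 2r_L^n$.

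For each $1 \le k \le N+2$, Lemma~\ref{tech-1} supplies the inclusion $X_L^n \cup P_n \subset G_n^{-k}(X_{L+N+2}^n \cup P_n)$ together with a holomorphic covering $G_n^k : {\Bbb P}^1 - G_n^{-k}(X_{L+N+2}^n \cup P_n) \to {\Bbb P}^1 - (X_{L+N+2}^n \cup P_n)$. Let $\gamma'_k$ be the geodesic representative of $\gamma$ in the pullback surface; Theorem A.3 gives $\|\gamma'_k\| \le C_1\|\gamma\|$ with $C_1$ uniform in $n$, the number of added punctures being controlled by $k$, $\deg G_n$, and $|X_{L+N+2}^n \cup P_n|$. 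By Theorem A.2, $G_n^k(\gamma'_k)$ covers a simple closed geodesic $\eta_k \subset {\Bbb P}^1 - (X_{L+N+2}^n \cup P_n)$ with $\|\eta_k\| \le C\|\gamma\|$ for a uniform $C$. Each $\eta_k$ meets $\partial\Delta$ since $\partial\Delta$ is $G_n$-invariant, and by the same symmetric-geodesic argument crosses $\partial\Delta$ at exactly two points $p_k, q_k$ lying in the components of $\partial\Delta - X_{L+N+2}^n$ containing $G_n^k(p)$ and $G_n^k(q)$.

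Suppose for contradiction that no $\eta_k$ satisfies case 2. Each $\eta_k$ then has a deficient arc $I'_k \subset \partial\Delta$ with $|I'_k \cap X_{L+N+2}^n| \le 1$, so $\sigma_n(\overline{I'_k}) \le 2r_{L+N+2}^n \le 2r_L^n$. Corollary~\ref{L-value} yields $d_{\sigma_n}(G_n^i(p), G_n^j(p)) > 3r_L^n$ for $1 \le |i-j| \le N+1$, forcing $\overline{I'_1}, \ldots, \overline{I'_{N+2}}$ to be pairwise disjoint on $\partial\Delta$. Theorem A.1 excludes mutual intersection of the $\eta_k$'s; the homotopic alternative is excluded because homotopic deficient arcs would coincide, contradicting the $\sigma_n$-separation. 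Hence the $D(\eta_k)$'s are pairwise nested or disjoint. Reflection symmetry forces $|D(\eta_k) \cap P_n| = (|P_n|-2)/2 = N-1$, one from each symmetric pair in $P_n - \{0, \infty\}$. In any nested pair $D(\eta_i) \subsetneq D(\eta_j)$, the annulus is disjoint from $P_n$ and must contain at least one point of $X_{L+N+2}^n$ (else $\eta_i, \eta_j$ are freely homotopic and equal as geodesics); combined with $|D(\eta_j) \cap X_{L+N+2}^n| \le 1$ this caps nested chains at length $2$ and makes the inner geodesic of any length-$2$ chain peripheral when $N \le 2$. Summing the $P_n$-count over pairwise disjoint chain heads then produces a contradiction in all remaining configurations for $N \ge 1$.

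The main obstacle is the combinatorial bookkeeping in the last step: analyzing the forest structure of the inclusion relation among $D(\eta_1), \ldots, D(\eta_{N+2})$, handling the ambiguity of which side of $\eta_k$ is the deficient one, and ensuring the counts remain tight in the small-$N$ cases. A subsidiary technical issue is the uniformity of $C$ in the iteration construction, which rests on bounding the extra-puncture count in Theorem A.3 independently of $n$.
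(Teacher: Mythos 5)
Your setup matches the paper's: you correctly deduce from Lemma~\ref{symmetric geodesic} that $\gamma$ is symmetric and crosses $\partial\Delta$ at two $\sigma_n$-close points, and you correctly pull back via Lemma~\ref{tech-1} with $M = N+2$ and push forward with Theorems~A.2 and A.3 to obtain the controlled family $\eta_1, \dots, \eta_{N+2}$. But the final counting argument goes wrong, and you acknowledge as much in your last paragraph.

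The fatal step is the assertion ``Reflection symmetry forces $|D(\eta_k) \cap P_n| = (|P_n|-2)/2 = N-1$.'' This cannot be right: $D(\eta_k)$ is symmetric about the unit circle, so if it contains a point $w \in P_n$ it also contains $w^*$, making $|D(\eta_k) \cap P_n|$ \emph{even}; and there is no reason the count should be the same constant for every $k$ or equal to any function of $N$ alone. The consequences you draw (that any nested annulus $D(\eta_j) \setminus \overline{D(\eta_i)}$ is disjoint from $P_n$, and the resulting cap on chain length) all rest on this false equality. The paper's proof needs only the far weaker fact that $|D(\eta_k) \cap P_n| \ge 2$ (non-peripherality plus symmetry), combined with a dichotomy: either all the $D(\eta_k)$ are pairwise disjoint, giving $|P_n| \ge 2(N+2)$, or some nesting $D(\eta_i) \subset D(\eta_j)$ occurs, in which case the claim about $X_{L+N+2}^n$-gaps forces $|(\partial\Delta - D(\eta_j)) \cap X_{L+N+2}^n| \le 1$, hence all other domains sit inside $D(\eta_j)$ and are pairwise disjoint, giving $|P_n| \ge 2(N+1)$. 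In both branches this contradicts $|P_n| = 2N$.

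A secondary issue: you claim the deficient closed arcs $\overline{I'_1}, \dots, \overline{I'_{N+2}}$ are pairwise disjoint on $\partial\Delta$ as a consequence of Corollary~\ref{L-value}. The corollary only supplies the threshold $3r_L^n$, whereas two arcs each of $\sigma_n$-measure up to $2r_L^n$ (or more, once you account for the slack between $G_n^k(p)$ and the actual crossing point of $\eta_k$) can overlap without violating that bound. The paper proves the more careful and in fact different statement that no single \emph{component} of $\partial\Delta - X_{L+N+2}^n$ is crossed by two of the $\eta_k$'s, via a three-term application of the triangle inequality (\ref{tri-ine}) with the estimates (\ref{ime-1}) and (\ref{ime-2}). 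That is what the constant $3$ in Corollary~\ref{L-value} is calibrated to, and it is exactly what the dichotomy in the previous paragraph needs.
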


\begin{proof}
Let $\gamma \subset {\Bbb P}^{1} - (X_{L}^{n} \cup P_{n})$ be a
short simple closed geodesic with $$\gamma \cap \partial \Delta \ne
\emptyset \hbox{ and } \|\gamma\|_{{\Bbb P}^{1} - (X_{L}^{n} \cup
P_{n})} \le \delta$$ for some $\delta > 0$. Assume that the first
case does not hold, that is, either $|(\partial \Delta - D(\gamma))
\cap X_{L}^{n}| < 2$ or $|D(\gamma) \cap X_{L}^{n}| < 2$.  Let us
prove the second case must hold. Let $\gamma \cap \partial \Delta =
\{x, y\}$. It follows that
\begin{equation}\label{dis}
d_{\sigma_{n}}(x, y) \le 2 r_{L}^{n}.
\end{equation}
See Figure 2 for an illustration(Here $|D(\gamma) \cap X_{L}^{n}|
=1$).
\begin{figure}
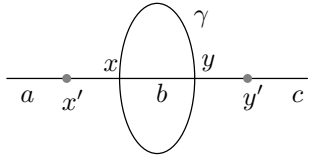

\bigskip
\begin{center}
\centertexdraw { \drawdim cm \linewd 0.02 \move(0 1)

\move(-2 -0.9) \lvec(2 -0.9)

\move(-1.8 -1.2) \htext{$a$} \move(0 -1.2) \htext{$b$} \move(1.8
-1.2) \htext{$c$}

\move(0.5 -0.2) \htext{$\gamma$}

 \move(-0.7 -0.8) \htext{$x$} \move(0.6
-0.8) \htext{$y$} \move(0 -0.9) \lellip rx:0.5 ry:1
 \move(-1.25 -1.3) \htext{$x'$}
 \move(1.15 -1.3) \htext{$y'$}
 \move(1.2 -0.9) \fcir f:0.5 r:0.06
 \move(-1.2 -0.9) \fcir f:0.5 r:0.06
 }
\end{center}
\vspace{0.2cm} \caption{$X_{L}^{n} \cap D(\gamma) = \{b\}$ with $a,
b, c $ being three adjacent points in $X_{L}^{n}$}
\end{figure}

Let us assume that $\delta$ is so small  that the simple closed
geodesics generated in the following are all short enough. In
Lemma~\ref{tech-1}, taking $M = N+2$, then for any $1 \le k \le
N+2$, we have
$$
{\Bbb P}^{1} - G_{n}^{-k}(X_{L+N+2}^{n}\cup P_{n}) \subset {\Bbb
P}^{1} - ( X_{L}^{n} \cup P_{n}).
$$
Since $|G_{n}^{-k}(X_{L+N+2}^{n}\cup P_{n}) - (X_{L}^{n} \cup
P_{n})|$ depends only on $L, N$, and $d$,  by Theorem A.3, there is
a constant $C$ dependent only on $L, N$, and  $d$, such that for
each $1 \le k \le N+2$, there is a simple closed geodesic $$\xi_{k}'
\subset {\Bbb P}^{1} - G_{n}^{-k}(X_{L+N+2}^{n}\cup P_{n})$$ which
is homotopic to $\gamma$ in ${\Bbb P}^{1} - ( X_{L}^{n} \cup P_{n})$
such that
$$
\|\xi_{k}'\|_{{\Bbb P}^{1} - G_{n}^{-k}(X_{L+N+2}^{n}\cup P_{n})}
\le C \|\gamma\|_{{\Bbb P}^{1} - (X_{L}^{n}\cup P_{n})}.
$$
When $\delta$ is small, by Lemma~\ref{tech-1} and Theorem A.2,
$G_{n}^{k}(\xi_{k}')$ covers a simple closed geodesic  $\xi_{k}$ in
${\Bbb P}^{1} - ( X_{L+N+2}^{n}\cup P_{n})$, and hence
$$
\|\xi_{k}\|_{{\Bbb P}^{1} - (X_{L+N+2}^{n}\cup P_{n})} \le
\|\xi_{k}'\|_{{\Bbb P}^{1} - G_{n}^{-k}(X_{L+N+2}^{n}\cup P_{n})}.
$$

Now it suffices to prove  that there is some $\xi_{k}, 1 \le k \le N
+ 2$,  such that $$|(\partial \Delta - D(\xi_{k})) \cap
X_{L+N+2}^{n}| \ge 2$$ and $$|D(\xi_{k}) \cap X_{L+N+2}^{n}| \ge
2.$$ Assume at least one of the above two inequalities were not
true. We will get a contradiction as follows.

We first claim that for every $n$ large enough,  each component of
$\partial \Delta  - X_{L+N+2}^{n}$ intersects at most one of
$\xi_{k}, 1 \le k \le N +2$,  and in particular,
\begin{equation}\label{ne}
 \xi_{ i } \ne \xi_{j}
 \end{equation}
  for $1\le i \ne j \le N
+2$.     Suppose this is not true. Then there exist $1 \le i < j \le
N+2$ and  a component of $\partial \Delta  - X_{L+N+2}^{n}$, say
$I$, such that $\xi_{i}\cap I \ne \emptyset$ and $\xi_{j}\cap I \ne
\emptyset$.

Recall that $\xi_{i}'$ and $\xi_{j}'$ cover $\xi_{i}$ and $\xi_{j}$,
respectively.  Let $x' \in \xi_{i}' \cap
\partial \Delta$ and $y' \in \xi_{j}' \cap
\partial \Delta$ such that $G_{n}^{i}(x') \in \xi_{i}\cap I $ and
$G_{n}^{j}(y') \in \xi_{j}\cap I $. Since both $\xi_{i}'$ and
$\xi_{j}'$ are homotopic to $\gamma$ in ${\Bbb P}^{1} - ( X_{L}^{n}
\cup P_{n})$, it follows from (\ref{dis}) that
$$
d_{\sigma_{n}}(x', y') \le 2r_{L}^{n}.
$$
See Figure 2 for an illustration(Since $x'$ and $y'$ belong to the
arc interval $(a, c)$ whose $\sigma_{n}-$length is not more than $2
r_{L}^{n}$).

Since $\sigma_{n}$ is $G_{n}-$invariant, we have
\begin{equation}\label{ime-1}
d_{\sigma_{n}}(G_{n}^{i}(x'), G_{n}^{i}(y')) \le 2 r_{L}^{n}.
\end{equation}
On the other hand, since $G_{n}^{j}(y'), G_{n}^{i}(x') \in I$, we
get
\begin{equation}\label{ime-2}
d_{\sigma_{n}}(G_{n}^{j}(y'), G_{n}^{i}(x')) =
d_{\sigma_{n}}(G_{n}^{j-i}(G_{n}^{i}(y')), G_{n}^{i}(x')) \le
\sigma_{n}(I) \le  r_{L}^{n}. \end{equation}
 It follows from (\ref{tri-ine}), (\ref{ime-1}), and (\ref{ime-2}), that
$$
d_{\sigma_{n}}(G_{n}^{j}(y'), G_{n}^{i}(y')) \le 3 r_{L}^{n}.
$$ This is a contradiction with the definition of $L_{0}$ in Corollary~\ref{L-value}
and the choice of $L$. The claim
follows.

Now there are two cases. In the first case, all the domains
$D(\xi_{i}), 1 \le i \le N+2$ are disjoint. Since each component of
$\partial \Delta  - X_{L+N+2}^{n}$ intersects at most one of
$\xi_{i}, 1 \le i \le N +2$,  it follows from the claim that for
every $1 \le i \le N+2$,
\begin{equation}\label{sec-c}
|(\partial \Delta - D(\xi_{i})) \cap X_{L+N+2}^{n}| \ge 2.
\end{equation}
This is because otherwise, there would be two domains $D(\xi_{i})$
and $D(\xi_{j})$ with $1 \le i \ne j \le N+2$ such that one is
contained in the other one, and this is impossible since we have
assumed that all the domains $D(\xi_{i})$, $1 \le i \le N+2$, are
disjoint in this case.
\begin{figure}
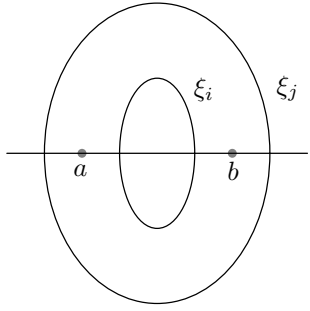

\bigskip
\begin{center}
\centertexdraw { \drawdim cm \linewd 0.02 \move(0 1)

\move(-1 -0.9) \fcir f:0.5 r:0.06 \move(1 -0.9) \fcir f:0.5 r:0.06
\move(-2 -0.9) \lvec(2 -0.9)

\move(-1.1 -1.2) \htext{$a$}  \move(0.95 -1.25) \htext{$b$}

\move(0.5 -0.2) \htext{$\xi_{i}$} \move(1.6 -0.2) \htext{$\xi_{j}$}
 \move(0 -0.9) \lellip rx:0.5 ry:1

\move(0 -0.9) \lellip rx:1.5 ry:2

}

\end{center}
\vspace{0.2cm} \caption{$D(\xi_{j})$ contains at two points in
$X_{L+N+2}$}
\end{figure}
\begin{figure}
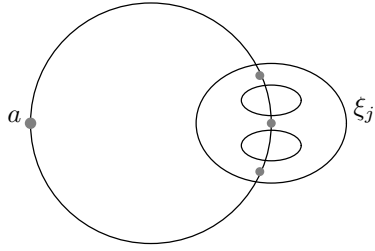

\bigskip
\begin{center}
\centertexdraw { \drawdim cm \linewd 0.02 \move(0 1)

\move(0 -1.2) \lcir r:1.6 \move(1.6 -1.2) \lellip rx:1 ry:0.8
\move(1.6 -0.9) \lellip rx:0.4 ry:0.2 \move(1.6 -1.5) \lellip rx:0.4
ry:0.2  \move(1.45 -0.57) \fcir f:0.5 r:0.06 \move(1.6 -1.2) \fcir
f:0.5 r:0.06 \move(-1.6 -1.2) \fcir f:0.5 r:0.08 \move(-1.9 -1.2)
\htext{$a$} \move(2.7 -1.2) \htext{$\xi_{j}$} \move(1.45 -1.85)
\fcir f:0.5 r:0.06
 }
\end{center}
\vspace{0.2cm} \caption{$D(\xi_{j})$ contains all $D(\xi_{k})$ for
$k \ne j$}
\end{figure}
From (\ref{sec-c}), it follows that
$$
|D(\xi_{i}) \cap X_{L+N+2}^{n}| \le 1
$$
for every $1 \le i \le N+2$. For otherwise, the lemma has been
proved. Since $\xi_{i}$ is non-peripheral, it follows that
$D(\xi_{i}) \cap P_{n}$ is non-empty, and by the symmetric property
of $\xi_{i}$ and $P_{n}$, we have
$$
|D(\xi_{i}) \cap P_{n}| \ge 2.
$$
We thus get
$$
|P_{n}| \ge \sum _{1 \le i \le N+2}|D(\xi_{i}) \cap
P_{n}| \ge 2(N+2).
$$
This is a contradiction with (\ref{N-value}).

In the second case, there are two domains $D(\xi_{i})$ and
$D(\xi_{j})$ such that
$$
D(\xi_{i}) \subset D(\xi_{j})
$$ where $1 \le i \ne j\le N+2$.  By the claim which we proved previously,
it follows that none of the components of $\partial \Delta -
X_{L+N+2}^{n}$ intersect both $\xi_{i}$ and $\xi_{j}$.  This implies
that
$$
|D_{\xi_{j}} \cap X_{L+N+2}^{n}| \ge 2.
$$
See Figure 3 for an illustration.  We thus get by assumption that
\begin{equation}\label{single}
|(\partial \Delta - D(\xi_{j})) \cap X_{L+N+2}^{n}| \le 1.
\end{equation}
It follows  that all the  domains $D(\xi_{k}), k \ne j$, are
contained in $D(\xi_{j})$. This is because if some $D(\xi_{k})$, $k
\ne j$, is not contained in $D(\xi_{j})$,  from (\ref{single}), it
follows that one component of $\partial \Delta - X_{L+N+2}^{n}$
would intersect both $\xi_{j}$ and  $\xi_{k}$, but  this again
contradicts with the claim we previously proved. See Figure 4 for an
illustration. In this figure, we assume that $(\partial \Delta -
D(\xi_{j})) \cap X_{L+N+2}^{n}$ contains a single point $a$.

Now we  claim that all the domains $D(\xi_{k}), k \ne j$, are
disjoint with each other. In fact, if $D(\xi_{i'}) \subset
D(\xi_{j'})$ for some $i'$ and $j'$ such that $i' \ne j', i' \ne j,
j' \ne j$, then by the same  argument as above, we get that
$D(\xi_{j'})$ contains all the other domains $D(\xi_{k}), 1 \le k
\le N +2, k \ne j'$. In particular,
$$
D(\xi_{j}) \subset D(\xi_{j'})
$$ and hence $\xi_{j} = \xi_{j'}$. This
is a contradiction with (\ref{ne}).  The claim follows. Since for
any $1 \le k \le N+2, k \ne j$,  $D(\xi_{k}) \subset D(\xi_{j})$ and
each component of $\partial \Delta - X_{L+N+2}^{n}$ can not
intersect both $\xi_{j}$ and $\xi_{k}$, it follows that for every $1
\le k \le N+2$ and $k \ne j$,  $$|(\partial \Delta - D(\xi_{k}))
\cap X_{L+N+2}^{n}| \ge 2.$$ See Figure 4 for an illustration.

By assumption, we have
$$|D(\xi_{k}) \cap X_{L+N+2}^{n}| \le 1.$$ As before, it follows that
$$|D(\xi_{k}) \cap P_{n}| \ge 2$$ for every $1 \le k \le N+2, k \ne
j$. This implies that $$|P_{n}| \ge   \sum _{1 \le k \le N+2, k \ne
j}|D(\xi_{k}) \cap P_{n}| \ge 2(N+1).$$ This is a contradiction with
(\ref{N-value}). The proof of  Lemma~\ref{dico} is completed.

\end{proof}

\begin{lemma}\label{L-N}
For any $L > 0$ there is an $\epsilon > 0$ such that for every $n$
large enough,  we have
$$
d_{\sigma_{n}}(x, y) > \epsilon
$$
for any two distinct points $x, y \in X_{L}^{n}$.
\end{lemma}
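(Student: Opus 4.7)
The plan is to pull the estimate back to the combinatorial model $F_n$ via the Thurston equivalence $G_n=\phi_n\circ F_n\circ\psi_n^{-1}$. On the $F_n$-side, $F_n|\partial\Delta$ is rigid rotation by $\theta_n=p_n/q_n$, the postcritical cycle $O_n=\{e^{2\pi ik\theta_n}\}_{k=0}^{q_n-1}$ consists exactly of the $q_n$-th roots of unity, and $\sigma_n$ pulls back to (a multiple of) the counting measure on $O_n$; therefore, for two points of $O_n$, the $\sigma_n$-distance coincides (up to an $O(1/q_n)$ rounding) with their angular distance on $\partial\Delta$. Because $G_n|\partial\Delta$ is a homeomorphism that cyclically permutes $P_{G_n}\cap\partial\Delta$, it acts as a $\sigma_n$-isometry; hence replacing $(x,y)$ by $(G_n^L(x),G_n^L(y))$ reduces the problem to $x=G_n^{k_1}(c_1)$ and $y=G_n^{k_2}(c_2)$ with $c_i\in\Omega_{G_n}$ and $0\le k_1,k_2\le 2L$.

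On the $F_n$-side, each point of $X_L^n$ has the form $F_n^k(\tilde c)$ for some $0\le k\le 2L$, where $\tilde c$ ranges over a finite, $n$-independent set $M\subset\partial\Delta$ of \emph{marked points}: the critical points of $F$ lying on $\partial\Delta$, together with the first landing point on $\partial\Delta$ of the forward orbit of each critical point outside $\overline\Delta$ (well-defined by property~(7) of Proposition~\ref{basic construction}). Property~(6) of Proposition~\ref{basic construction} transfers the orbit relations among the $\tilde c$'s from $F$ to $F_n$ faithfully and $n$-independently, and each marked point's angle on $\partial\Delta$ converges, as $n\to\infty$, to a fixed $\alpha_{\tilde c}\in[0,1)$. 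Consequently, for a pair $x=F_n^{k_1}(\tilde c_1)$, $y=F_n^{k_2}(\tilde c_2)$ in $X_L^n$, the angular distance on $\partial\Delta$ between $x$ and $y$ tends as $n\to\infty$ to
\[
\Lambda(\tilde c_1,\tilde c_2,k_1,k_2)=\bigl\|(k_1-k_2)\theta+\alpha_{\tilde c_1}-\alpha_{\tilde c_2}\bigr\|,
\]
where $\|\cdot\|$ denotes distance to the nearest integer.

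The decisive observation is: $\Lambda=0$ forces $\alpha_{\tilde c_2}-\alpha_{\tilde c_1}\equiv(k_2-k_1)\theta\bmod 1$, i.e.\ $\tilde c_2=F^{k_2-k_1}(\tilde c_1)$ on the $F$-side, and the orbit-preservation clause of property~(6) then forces $\tilde c_2^n=F_n^{k_2-k_1}(\tilde c_1^n)$ exactly for all large $n$, so that $x=y$---contrary to the hypothesis $x\ne y$. Since $M$ is finite and $0\le k_1,k_2\le 2L$, only finitely many configurations occur; taking the minimum $\epsilon$ of the positive values of $\Lambda$ gives a uniform lower bound, and hence $d_{\sigma_n}(x,y)\ge\epsilon/2$ for all $n$ sufficiently large.

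The main obstacle is that marked points corresponding to distinct $F$-orbits on $\partial\Delta$ all lie, for each fixed $n$, in the single $F_n$-orbit $O_n$, so their combinatorial label distance in $O_n$ is generically of order $q_n$ and Lemma~\ref{metric} cannot be invoked combinatorially. Converting to angular distance on $\partial\Delta$ and using the orbit preservation of property~(6) of Proposition~\ref{basic construction} to rule out vanishing limits is what supplies the required uniform bound.
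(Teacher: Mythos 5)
Your proof is correct, and like the paper's it passes to the combinatorial model $F_n$ and exploits the convergence of the finite set $X_L^n$ to $X_L$. The mechanism you use to obtain a $\sigma_n$-lower bound differs, though. The paper never compares $\sigma_n$ with Lebesgue measure: it takes the smallest gap $I$ of $\partial\Delta - X_L$, uses irrationality of $\theta$ to get a finite covering $\partial\Delta \subset \bigcup_{0\le l\le m}e^{2\pi il\theta}I$, propagates this by openness and convergence to the gaps of $X_L^n$ under $\theta_n$, and then concludes $\sigma_n(\overline I)\ge 1/(m+1)$ from the $F_n$-invariance of $\sigma_n$ alone. You instead observe that $\sigma_n$ is the normalized counting measure on the equally-spaced set $O_n$, hence uniformly close to Lebesgue, so that $d_{\sigma_n}$ converges to normalized angular distance; you then compute the limiting angular gaps $\Lambda$ explicitly and rule out $\Lambda=0$ via the orbit-preservation clause, property~(6) of Proposition~\ref{basic construction}. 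Your route has the virtue of making explicit the no-collision fact --- that two distinct points of $X_L^n$ cannot converge to the same point of $X_L$ --- which the paper folds silently into the assertion $X_L^n\to X_L$; the paper's covering-number argument is slightly more intrinsic, since it never uses the special $q_n$-th-roots-of-unity structure of $O_n$. Both proofs ultimately rest on the same two ingredients: convergence of the marked points and Proposition~\ref{basic construction}(6).
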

\begin{proof}
As in the proof of Lemma~\ref{ques-1}, we may consider the
combinatorial model $F_{n}$ instead of $G_{n}$.  That is,
$$
 X_{L}^{n} = \{F_{n}^{k}(x) \big{|}\:x \in
\Omega_{F_{n}}, -L \le k \le L\} \cap \partial \Delta.
$$
Let us also define
$$
X_{L} = \{F^{k}(x) \big{|}\:x \in \Omega_{F}, -L \le k \le L\} \cap
\partial \Delta.
$$
Now for $L > 0$ given, $X_{L}^{n} \to X_{L}$ as $n \to \infty$.
 Let $I$ be the smallest component of $\partial \Delta - X_{L}$.
 Since $\theta$ is an irrational number,  there is a least integer $m > 0$ such that
$$
\partial \Delta \subset \bigcup_{0\le l \le m} e^{2 \pi i l
\theta}I.
$$
Since each $e^{2 \pi i l \theta} I$ is open, $0 \le l \le m$, it
follows that there is an $N_{1}> 0$, such that for all $n
> N_{1}$, and any component $I$ of $\partial \Delta - X_{L}^{n}$, we have
$$
\partial \Delta \subset \bigcup_{0\le l \le m} e^{2 \pi i l
\theta_{n}}I.
$$
Since $\sigma_{n}$ is $F_{n}-$invariant, it follows that for any $x$
and $y$ in $X_{L}^{n}$,
$$
d_{\sigma_{n}}(x, y) > 1/(m+1)
$$
for all $n > N_{1}$.
\end{proof}

\begin{lemma}\label{trans}
For any $0< \epsilon <1$, there exist $0<\mu<1/2$ and an integer
$L(\epsilon) \ge 1$ dependent only on $\epsilon$ and $\theta$ such
that for all $n$ large enough and any arc segment $I$ with $\epsilon
\le \sigma_{n}(I) \le 1 - \epsilon$, there is an integer $1 \le l
\le L(\epsilon)$ such that the following inequalities hold:
\begin{itemize}
\item[1.]  $\sigma_{n}(I \cap G_{n}^{l}(I)) > \mu \epsilon$,
\item[2.]  $\sigma_{n}(I - G_{n}^{l}(I)) > \mu \epsilon$,
\item[3.]  $\sigma_{n}( G_{n}^{l}(I)-I) > \mu \epsilon$.
\end{itemize}
\end{lemma}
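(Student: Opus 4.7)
The plan is to exploit the combinatorial structure on $\partial\Delta$: $G_n|\partial\Delta$ is a homeomorphism whose action on the periodic orbit $\widetilde O_n = P_{G_n}\cap\partial\Delta$ is the cyclic shift by $p_n$ positions (its combinatorial rotation number being $\theta_n = p_n/q_n$). Since $\sigma_n$ is normalized counting measure on $\widetilde O_n$, the problem reduces to a combinatorial question about shifting a cyclic interval on $\Bbb Z/q_n\Bbb Z$, which will ultimately be resolved by the density of $\{l\theta\bmod 1\}$ as $l$ varies.

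First I would carry out the combinatorial reduction. Enumerate $\widetilde O_n$ cyclically as $\widetilde x_0,\ldots,\widetilde x_{q_n-1}$, and for each arc $I$ write $S_I = \{i:\widetilde x_i\in I\}\subset\Bbb Z/q_n\Bbb Z$, a cyclic interval of size $k=q_n\sigma_n(I)$. Since $G_n^l$ is an orientation-preserving homeomorphism of $\partial\Delta$ sending $\widetilde x_i$ to $\widetilde x_{i+lp_n}$, the orbit points in $G_n^l(I)$ are indexed by $S_I+lp_n$; therefore
\[
\sigma_n(I\cap G_n^l(I)) = \tfrac{1}{q_n}|S_I\cap(S_I+lp_n)|,
\]
and analogously for the two set-differences. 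Identifying $\Bbb Z/q_n\Bbb Z$ with the equally-spaced points of $\Bbb R/\Bbb Z$, these normalized counts agree with the Lebesgue measure of the corresponding continuous arcs up to an error $O(1/q_n)$.

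Next I would establish the key overlap inequality. For $t=\sigma_n(I)\in[\epsilon,1-\epsilon]$ and $\alpha=l\theta_n\bmod 1$, consider on $\Bbb R/\Bbb Z$ the arc $[0,t)$ and its rotation $[\alpha,\alpha+t)$. A direct case analysis according to whether $t\le 1/2$ or $t>1/2$ shows that if $\alpha\in(\mu\epsilon,\tfrac{\epsilon}{2}-\mu\epsilon)$ with $\mu<1/4$, then each of the three Lebesgue overlaps is at least $\mu\epsilon$ uniformly in $t$; the reason is that this range of $\alpha$ stays a distance $\mu\epsilon$ away from the four critical values $0,t,1-t,1$ in both sub-cases. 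Transferring back, each of the three $\sigma_n$-quantities is then at least $\mu\epsilon-O(1/q_n)\ge\mu\epsilon/2$ for $n$ sufficiently large.

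Finally I would produce the required $l$ using the bounded type hypothesis. By the three-distance theorem for $\theta$ (or effective equidistribution of irrational rotations), there is $L(\epsilon)=L(\epsilon,\theta)$ such that $\{l\theta\bmod 1:1\le l\le L(\epsilon)\}$ is $(\mu\epsilon/4)$-dense in $[0,1]$; hence some $l_0\le L(\epsilon)$ has $l_0\theta\bmod 1$ lying at distance greater than $\mu\epsilon/4$ from the endpoints of $(\mu\epsilon,\tfrac{\epsilon}{2}-\mu\epsilon)$. Since $\theta_n\to\theta$ and $L(\epsilon)$ is fixed, for all $n$ large enough $|l\theta_n-l\theta|<\mu\epsilon/4$ uniformly in $l\le L(\epsilon)$, so the same $l_0$ satisfies $l_0\theta_n\bmod 1\in(\mu\epsilon,\tfrac{\epsilon}{2}-\mu\epsilon)$. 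Combined with the previous step and after replacing $\mu$ by $\mu/2$ in the final statement, the lemma follows. The main obstacle is the slightly tedious bookkeeping in the overlap case analysis; the Diophantine step is essentially a consequence of the irrationality of $\theta$, with bounded type providing the sharp linear rate $L(\epsilon)\sim 1/\epsilon$.
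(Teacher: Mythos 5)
Your proof is correct and follows the same underlying strategy as the paper: reduce the three $\sigma_n$-estimates to a statement about rotations of arcs on the circle, then exploit the density of $\{l\theta\bmod 1\}$ and the convergence $\theta_n\to\theta$ to produce a bounded $l$. Where you differ is in the execution of the reduction and the rotation estimate, and both changes are clean improvements in explicitness. The paper switches to the combinatorial model $F_n$, proves a qualitative comparability between Euclidean length and $\sigma_n$-length (its Claim 3), and then asserts the rotation-overlap statement (its Claim 1) ``by a compacting argument'' without giving the range of admissible rotation angles. You instead encode $\sigma_n$ directly as normalized counting on the periodic cycle $\widetilde O_n\simeq\Bbb Z/q_n\Bbb Z$, so that $\sigma_n(I\cap G_n^l(I))=\tfrac1{q_n}|S_I\cap(S_I+lp_n)|$ exactly, and transfer this \emph{without loss} to Lebesgue measure of arcs on $\Bbb R/\Bbb Z$ (indeed the $O(1/q_n)$ error you hedge against vanishes if you identify $j$ with the block $[j/q_n,(j+1)/q_n)$, since the cyclic-interval counts then coincide with the arc measures). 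This sidesteps the paper's Claim 3 entirely. You also make the rotation estimate concrete by exhibiting the explicit admissible window $\alpha\in(\mu\epsilon,\epsilon/2-\mu\epsilon)$, for which the three quantities become $t-\alpha$, $\alpha$, $\alpha$ once $\alpha<\epsilon/2\le\min(t,1-t)$ rules out wrap-around; this replaces the paper's unspecified compactness argument. One small remark: the paper explicitly notes (at the start of §2.3.2) that the lemmas in this subsection use only irrationality of $\theta$, not bounded type, and you correctly observe the same — your invocation of ``bounded type'' is only needed if one wants a linear rate for $L(\epsilon)$, which the lemma does not require.
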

\begin{proof}
As in the proofs of Lemma~\ref{ques-1} and \ref{L-N},  let us
consider the combinatorial model $F_{n}$ instead of $G_{n}$. In
particular, in the definition of $\sigma_{n}$,  $G_{n}$ is replaced
by $F_{n}$, and $\sigma_{n}$ is thus $F_{n}-$invariant.

Claim 1:  For any $0< \delta < 1$, there exist $0<\nu<1/2$ and an
integer $K(\delta) \ge 1$ dependent only on $\delta$ and $\theta$
such that for any arc segment $I$ with $\delta \le |I| \le 2\pi -
\delta$, there is an integer $1 \le l \le K(\delta)$ such that the
following inequalities hold:
\begin{itemize}
\item[1.]  $|I \cap e^{2 \pi i l\theta} I | > \nu\delta$,
\item[2.]  $|I - e^{2 \pi i l\theta} I | >\nu\delta$,
\item[3.]  $|e^{2 \pi i l\theta} I -I| > \nu\delta$.
\end{itemize}
By using the fact that $\theta$ is an irrational number, the claim
can be proved by a compacting argument. We  leave the details to the
reader.

Claim 2:   For any $0< \delta < 1$, there exist $0<\nu<1/2$ and an
integer $K(\delta) \ge 1$ dependent only on $\delta$ such that for
all $n$ large enough and any arc segment $I$ with $\delta \le |I|
\le 2\pi - \delta$, there is an integer $1 \le l \le K(\delta)$ such
that the following inequalities hold:
\begin{itemize}
\item[1.]  $|I \cap e^{2 \pi i l\theta_{n}} I | > \nu\delta$,
\item[2.]  $|I - e^{2 \pi i l\theta_{n}} I | >\nu\delta$,
\item[3.]  $|e^{2 \pi i l\theta_{n}} I -I| > \nu\delta$.
\end{itemize}
Claim 2 follows from Claim 1 and the fact that  $\theta_{n} \to
\theta$ as $n \to \infty$.

Claim 3: For any $\epsilon
> 0$, there is a $\delta
> 0$ dependently only on $\epsilon$ and $\theta$ such that for all
$n$ large enough and  any arc segment $I$,  $\sigma_{n}(I) > \delta$
provided that $|I| > \epsilon$, and moreover, the converse is also
true: $|I| > \delta$,  provided that $\sigma_{n}(I) > \epsilon$.

Let us prove the first assertion. Suppose $|I| > \epsilon$. Then
there is an integer $K \ge 1$ dependent only on $\theta$ and
$\epsilon$ such that  for all $n$ large enough, the following holds:
$$
\partial \Delta \subset \bigcup_{0 \le l \le K} e^{2 \pi i
l\theta_{n}} I.
$$
Since $\sigma_{n}$ is $F_{n}-$invariant, it follows that
$\sigma_{n}(I) > 1/(K + 1)$.  This proves the first assertion.

Let us prove the second assertion now. It is sufficient to prove
that $\sigma_{n}(I) < \epsilon$ provided that $|I| < \delta$. For
$\epsilon > 0$ given, let $K$ be the least integer such that $K >
1/\epsilon$.  Since $\theta$ is an irrational number, it follows
that there is a $\delta > 0$ such that for any arc segment $I$ with
$|I| < \delta$, the closure of the arc segments
$$
e^{ 2 \pi i l \theta} I, 0 \le l \le K
$$
are disjoint with each other. Since $\theta_{n} \to \theta$ as $n
\to \infty$, it follows that for all $n$ large enough, and any arc
segment $I$ with $|I| < \delta$, the closure of the arc segments
$$
e^{ 2 \pi i l \theta_{n}} I, 0 \le l \le K
$$
are disjoint with each other. Since $\sigma_{n}$ is
$F_{n}-$invariant, it follows that
$$
\sigma_{n}(I) < 1/(1 + K ) < \epsilon.
$$
This completes the proof of Claim 3. Now the lemma follows directly
as a consequence of Claim 2 and 3.
\end{proof}

\begin{lemma}\label{tech-2}
Let $\tau > 0$. Then there exist $K > 0$ and $N_{0}> 0$ dependent
only on $\tau$ and $\theta$, such that  for all $L \ge K$ and $n \ge
N_{0}$ and any arc segment $I$ with $\sigma_{n}(I) > \tau$, the
following inequality
$$
|I \cap X_{L}^{n}| \ge 2
$$
holds.
\end{lemma}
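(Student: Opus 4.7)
The plan is to deduce Lemma~\ref{tech-2} directly from Lemma~\ref{ques-1}, which already says that the maximum $\sigma_n$-mass of a complementary interval of $X_L^n$ shrinks to zero as $L$ and $n$ grow. Given $\tau > 0$, I would apply Lemma~\ref{ques-1} with $\epsilon = \tau/3$ to obtain integers $L'$ and $N'$ such that $r_L^n < \tau/3$ whenever $L \ge L'$ and $n \ge N'$, and then simply set $K = L'$ and $N_0 = N'$.

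The main step is a pigeonhole argument by contradiction. Fix $L \ge K$, $n \ge N_0$, and an arc segment $I$ with $\sigma_n(I) > \tau$, and suppose $|I \cap X_L^n| \le 1$. If $I \cap X_L^n = \emptyset$, then $I$ lies in a single open component of $\partial \Delta - X_L^n$, whose closure has $\sigma_n$-mass at most $r_L^n$; hence $\sigma_n(I) \le r_L^n < \tau/3 < \tau$, a contradiction. If $I \cap X_L^n = \{x\}$, then $I = I_1 \cup \{x\} \cup I_2$ with $I_1, I_2$ each contained in one of the two open components of $\partial \Delta - X_L^n$ adjacent to $x$; combining, $\sigma_n(I)$ is at most the sum of the $\sigma_n$-masses of the two corresponding closed components, which is at most $2 r_L^n < 2\tau/3 < \tau$, again contradicting $\sigma_n(I) > \tau$. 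Therefore $|I \cap X_L^n| \ge 2$.

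I do not expect any genuine obstacle here: this lemma is essentially the dual reformulation of Lemma~\ref{ques-1}, trading ``every complementary interval has small mass'' for ``every arc of definite mass meets $X_L^n$ in at least two points.'' The only minor bookkeeping is to decide whether one bounds $\sigma_n$ on open or closed components, but since the endpoints lie in $X_L^n \subset O_n$ and each carries mass $1/q_n$, which also tends to zero with $n$, this has no essential effect and can be absorbed into the choice of $\epsilon$. The presumable role of Lemma~\ref{tech-2} in the sequel is to feed into arguments of the Lemma~\ref{trans} type: whenever an iterate $G_n^l(I)$ overlaps $I$ in a subarc of definite $\sigma_n$-mass, this lemma will supply two or more distinct landing points of the critical orbit inside the overlap, which is the combinatorial input needed to rule out certain bad short geodesics in ${\Bbb P}^1 - (X_L^n \cup P_n)$.
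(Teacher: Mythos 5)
Your proof is correct, and it takes a genuinely different (and in fact more economical) route than the paper's. The paper first converts the hypothesis $\sigma_n(I) > \tau$ into a Euclidean length lower bound $|I| > \epsilon$ using Claim 3 from the proof of Lemma~\ref{trans}, then invokes the irrationality of $\theta$ to find $K$ so that the orbit $\{e^{2\pi i l\theta} : -K \le l \le K\}$ meets every arc of Euclidean length $\ge \epsilon/2$ at least twice, and finally passes from $\theta$ to $\theta_n$ via $\theta_n \to \theta$. You instead stay entirely inside the $\sigma_n$-mass framework: you apply Lemma~\ref{ques-1} with $\epsilon = \tau/3$ and argue by pigeonhole that an arc meeting $X_L^n$ at most once sits inside the union of at most two closed complementary components, each of $\sigma_n$-mass $< \tau/3$, forcing $\sigma_n(I) < \tau$. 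This correctly realizes the observation that Lemma~\ref{tech-2} is essentially dual to Lemma~\ref{ques-1}, while the paper's proof re-derives the needed equidistribution from scratch along the same lines as the internal claims of Lemma~\ref{trans}. One small remark: your parenthetical worry about the mass $1/q_n$ carried by the shared endpoint $x$ is not actually needed, since the bound $\sigma_n(I) \le \sigma_n(\overline{J_1}) + \sigma_n(\overline{J_2}) \le 2 r_L^n$ already accounts for $x$ through the \emph{closed} complementary arcs; and as a matter of bookkeeping you should set $K = L'+1$ rather than $K = L'$, since Lemma~\ref{ques-1} gives the bound for $L > L'$ while the statement requires it for $L \ge K$.
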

\begin{proof}
As in the proofs of Lemma~\ref{ques-1} and \ref{L-N},  let us
consider the combinatorial model $F_{n}$ instead of $G_{n}$.  Assume
that $\sigma_{n}(I) > \tau$ for some $\tau > 0$. From Claim 3 in the
proof of Lemma~\ref{trans}, there exist $\epsilon > 0$ and $N_{1}>0$
which depend only on $\theta$ and $\tau$ such that for all $n \ge
N_{1}$, the following inequality
$$
|I| > \epsilon
$$
holds provided that $\sigma_{n}(I) > \tau$.  For such $\epsilon >
0$, since $\theta$ is irrational, it follows that there exists a  $K
> 0$ which depends only on $\theta$ and
 $\epsilon$ such that for any $I \subset \partial \Delta$ with $|I|
> \epsilon/2$,
$$
|I \cap \{e^{2 \pi i l \theta}\:\big{|}\: -K \le l \le K\}| \ge 2.
$$
Since $\theta_{n} \to \theta$, it follows
that there exists an $N_{2}> 0$ which depends only on $K$ and
$\epsilon$, such that for all $n \ge  N_{2}$ and  any arc segment $I
\subset
\partial \Delta$ with $|I|
> \epsilon$,
$$
|I \cap \{e^{2 \pi i l \theta_{n}}\:\big{|}\: -K \le l \le K\}| \ge
2.
$$
Let $N_{0} = \max\{N_{1}, N_{2}\}$. Then for all $L \ge K$ and $n
\ge N_{0}$, we have
$$
|I \cap \{e^{2 \pi i l \theta_{n}}\:\big{|}\: -L \le l \le L\}| \ge
|I \cap \{e^{2 \pi i l \theta_{n}}\:\big{|}\: -K \le l \le K\}| \ge
2.
$$
The lemma follows.
\end{proof}
\begin{lemma}\label{ml}
For any $L $ large enough there is a $\delta > 0$  such that for all
$n$ large enough, the hyperbolic length of every simple closed
geodesic in ${\Bbb P}^{1} - (X_{L}^{n} \cup P_{n})$, which
intersects the unit circle, is greater than $\delta$.
\end{lemma}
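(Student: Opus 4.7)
My plan is to argue by contradiction, mimicking and extending the strategy used for Lemma~\ref{lower bound of symmetric geodesic}: assume an arbitrarily short simple closed geodesic $\gamma \subset {\Bbb P}^{1} - (X_{L}^{n} \cup P_{n})$ meets $\partial \Delta$, then produce a partner short geodesic $\xi$ in the same punctured sphere which must intersect $\gamma$, contradicting Theorem A.1. The new ingredient, compared with Lemma~\ref{lower bound of symmetric geodesic}, is that I now have the quantitative control on $\sigma_{n}$ supplied by Lemmas~\ref{L-N}, \ref{trans}, \ref{tech-2}, which lets the partner $\xi$ be detected without the symmetry assumption.

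First I would fix $L \ge \max\{N+2,\, L_{0},\, L(\epsilon)+N+2\}$, with $L_{0}$ from Corollary~\ref{L-value} and $L(\epsilon)$ from Lemma~\ref{trans} (for the $\epsilon$ chosen below). Given the hypothetical short $\gamma$, Lemma~\ref{dico} lets me replace it, after possibly enlarging $L$ by $N+2$, by a short geodesic for which both components of $\partial \Delta \setminus \gamma$ contain at least two points of $X_{L}^{n}$. Writing $I = \partial \Delta \cap D(\gamma)$, Lemma~\ref{L-N} then produces $\epsilon > 0$, independent of $n$, with
$$
\epsilon \le \sigma_{n}(I) \le 1 - \epsilon.
$$

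Second, I would feed this $\epsilon$ into Lemma~\ref{trans} to obtain $\mu \in (0,1/2)$ and an integer $1 \le l \le L(\epsilon)$ such that
$$
\sigma_{n}(I \cap G_{n}^{l}(I)),\; \sigma_{n}(I - G_{n}^{l}(I)),\; \sigma_{n}(G_{n}^{l}(I) - I) \;>\; \mu \epsilon,
$$
so that $I$ and $G_{n}^{l}(I)$ properly overlap on $\partial \Delta$. Since $L \ge L(\epsilon)+N+2$, Lemma~\ref{tech-1} makes $G_{n}^{l}\colon {\Bbb P}^{1} - G_{n}^{-l}(X_{L}^{n} \cup P_{n}) \to {\Bbb P}^{1} - (X_{L}^{n} \cup P_{n})$ a holomorphic covering. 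Then I pull back and push forward as in Lemma~\ref{lower bound of symmetric geodesic}: let $\eta$ be the geodesic in the richer punctured sphere ${\Bbb P}^{1} - G_{n}^{-l}(X_{L}^{n} \cup P_{n})$ homotopic to $\gamma$ (short by Theorem A.3), then $G_{n}^{l}(\eta)$ covers a short geodesic $\xi' \subset {\Bbb P}^{1} - (X_{L}^{n} \cup P_{n})$ (Theorem A.2), and Theorem A.4 produces a short geodesic $\xi \subset {\Bbb P}^{1} - (X_{L}^{n} \cup P_{n})$ homotopic to $\xi'$, with $\|\xi\|$ bounded by a uniform multiple of $\|\gamma\|$.

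Finally I analyze the topology: the two points $\xi \cap \partial \Delta$ lie near $G_{n}^{l}(x), G_{n}^{l}(y)$, where $\{x,y\} = \gamma \cap \partial \Delta$, and $\partial \Delta \cap D(\xi)$ is (up to the combinatorial data on $X_{L}^{n}$) the arc $G_{n}^{l}(I)$. Because $I$ and $G_{n}^{l}(I)$ properly overlap, neither of $D(\gamma), D(\xi)$ contains the other, so $\gamma \cap \xi \ne \emptyset$. This contradicts Theorem A.1 once $\|\gamma\|$ is sufficiently small, yielding the required $\delta$. The main obstacle I expect is the bookkeeping in this last topological step: one has to verify rigorously that the homotopy classes traced through the pull-back/push-forward procedure send $\partial \Delta \cap D(\gamma)$ to an arc combinatorially equal to $G_{n}^{l}(I)$, using that $G_{n}|\partial \Delta$ is a homeomorphism rotating $X_{L}^{n}$ in the pattern dictated by $\theta_{n}$.
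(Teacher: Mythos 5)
Your opening moves match the paper's: Lemma~\ref{dico} to normalize $\gamma$, Lemma~\ref{L-N} for the lower bound $\epsilon$ on $\sigma_{n}(I)$, Lemma~\ref{trans} for the overlapping interval $G_{n}^{l}(I)$, and a pull-back/push-forward through Theorems A.2 and A.3. The gap is in the final comparison, exactly the step you flagged as ``bookkeeping,'' and the paper's way around it is a different idea, not merely careful accounting.

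The problem is where you locate $\xi$. You push $\eta'$ forward to $\xi'$ (which naturally lives in the \emph{more}-punctured surface, on the level of $X_{L+L(\epsilon)}^{n}$ or larger, by Lemma~\ref{tech-1} --- note also that the covering in Lemma~\ref{tech-1} has $X_{L+M}^{n}\cup P_{n}$ as its target, not $X_{L}^{n}\cup P_{n}$), and then you project back to the \emph{less}-punctured ${\Bbb P}^{1}-(X_{L}^{n}\cup P_{n})$ to compare with $\gamma$. This projection loses exactly the combinatorial data you need. Concretely: $\eta'$ is homotopic to $\gamma$ in ${\Bbb P}^{1}-(X_{L}^{n}\cup P_{n})$, so $D(\eta')\cap\partial\Delta$ carries the same $X_{L}^{n}$-points as $I$, but $D(\eta')\cap\partial\Delta$ strictly contains $I$: there are two ``slack'' arcs next to the endpoints of $I$ (containing points of $G_{n}^{-l}(X_{L+L(\epsilon)}^{n})\setminus X_{L}^{n}$ and the true intersection points $a,b$ of $\eta'$ with the circle). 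After applying $G_{n}^{l}$, the images of these slack arcs can contain $X_{L}^{n}$-points that are \emph{not} in $G_{n}^{l}(I)$, since the slack arcs contain no $X_{L}^{n}$-points but may contain preimages of $X_{L}^{n}$-points. Hence $[c,d]=D(\xi')\cap\partial\Delta$ may carry strictly more $X_{L}^{n}$-points than $G_{n}^{l}(I)$. Your crucial assertion that ``$\partial\Delta\cap D(\xi)$ is combinatorially $G_{n}^{l}(I)$'' may therefore fail, and with it the intersection $\gamma\cap\xi\neq\emptyset$: it is possible for all the $X_{L}^{n}$-points of $I\setminus G_{n}^{l}(I)$ to be absorbed into $[c,d]$, which destroys the condition $(D(\gamma)\setminus D(\xi))\cap(X_{L}^{n}\cup P_{n})\neq\emptyset$ that an actual intersection requires.

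One might hope to repair this by bounding the slack with $r_{L}^{n}$ (Lemma~\ref{ques-1}) and comparing against $\mu\epsilon$, but that bound needs $r_{L}^{n}\ll\mu(\epsilon)\epsilon$, where $\epsilon=\epsilon(L)$ from Lemma~\ref{L-N} itself decreases as $L$ grows; your initial choice $L\ge L(\epsilon)+N+2$ already has this circularity built in. The paper sidesteps the whole issue by \emph{not} returning to the $X_{L}^{n}$-surface at all. It keeps $\eta'$ (pulled back, in ${\Bbb P}^{1}-G_{n}^{-l}(P_{n}\cup X_{S}^{n})$) and $\eta''$ (pushed forward, in ${\Bbb P}^{1}-(P_{n}\cup X_{S}^{n})$) and transfers both to a common \emph{more-punctured} surface ${\Bbb P}^{1}-(P_{n}\cup Z)$ with $Z=X_{S}^{n}\cap G_{n}^{-l}(X_{S}^{n})\supset X_{L}^{n}$. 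The extra punctures in $Z$, combined with Lemma~\ref{tech-2}, pin the four intersection points $a,c,b,d$ in a cyclic order and guarantee each of the three separating arcs contains at least two $Z$-points; since $\xi',\xi''$ partition $Z\cup P_{n}$ exactly like $\eta',\eta''$, they must cross. There is no slack to control and no circularity. So the additional ingredient you are missing is the auxiliary set $Z$ and the comparison surface ${\Bbb P}^{1}-(P_{n}\cup Z)$: this is a structural idea, not a bookkeeping detail, and without it the argument as you have laid it out does not close.
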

The  proof is by contradiction. Assuming that the Lemma were not
true. The basic idea is to construct two short simple closed
geodesics so that they intersect with each other. This is realized
by first constructing two short simple closed geodesics $\eta'$ and
$\eta''$ which intersect with each other, but which belong to
different hyperbolic Riemann surfaces. The next step is to find a
common hyperbolic Riemann surface in which there exist two simple
closed geodesics $\xi'$ and $\xi''$ which are, respectively,
homotopic to $\eta'$ and $\eta''$, and most importantly,  separate
some set $Z \subset \partial \Delta$ in the same way as $\eta'$ and
$\eta''$. This implies that $\xi'$ and $\xi''$ must intersect with
each other. This is a contradiction with Theorem A.1(see Figure 5
for an illustration).
\begin{proof}
Let $L_{0}$ be the number defined in Corollary~\ref{L-value}.
Suppose  that $L > \max\{N +2, L_{0}\}$  and that $\gamma$ is a
simple closed geodesic in ${\Bbb P}^{1} - (X_{L}^{n} \cup P_{n})$,
which intersects the unit circle, and has length less than $\delta$.
By Lemma~\ref{dico} and replacing $L$ by $L + N + 2$ if necessary,
we will have a  short simple closed geodesic $\eta$ in ${\Bbb P}^{1}
- (P_{n} \cup X_{L}^{n})$ such that
$$
|D(\eta) \cap X_{L}^{n}| \ge 2
$$ and
$$
|(\partial \Delta -
D(\eta)) \cap X_{L}^{n}| \ge 2.
$$

Let $$I\subset \partial \Delta \cap D(\eta)$$ be the maximal closed
arc segment such that $\partial I \subset X_{L}^{n}$. Here $\partial
I$ denote the set of the two end points of $I$. Similarly, let $$J
\subset \partial \Delta - D(\eta)$$ be the maximal closed arc
segment such that $\partial J \subset X_{L}^{n}$. From
Lemma~\ref{L-N} and the above two inequalities, it follows that
there is an $\epsilon > 0$ which  depends only on $L$ and $\theta$
such that
\begin{equation}\label{com}
\min\{\sigma_{n}(I),
\sigma_{n}(J)\} \ge \epsilon
\end{equation}
for all $n$ large enough. For such $\epsilon$, let $0< \mu < 1/2$
and $L(\epsilon) \ge 1$ be the numbers given in Lemma~\ref{trans}.
Now take $\tau = \mu \epsilon$ in Lemma~\ref{tech-2} and let $K > 0$
be the value there. Let
$$
S = K +  L + L(\epsilon).
$$
By Lemma~\ref{trans} and (\ref{com}), there is an $0 < l <
L(\epsilon)$ such that the following inequalities hold for all $n$
large enough:
\begin{itemize}
\item[1.]  $\sigma_{n}(I \cap G_{n}^{l}(I)) > \tau$,
\item[2.]  $\sigma_{n}(I - G_{n}^{l}(I)) > \tau$,
\item[3.]  $\sigma_{n}( G_{n}^{l}(I)-I) > \tau$.
\end{itemize}
Let $Z = X_{S}^{n} \cap G_{n}^{-l}(X_{S}^{n})$. It follows that
$X_{K}^{n} \subset Z$. From  Lemma~\ref{tech-2} and the above three
inequalities, we have
\begin{itemize}
\item[i.]  $|I \cap G_{n}^{l}(I) \cap Z| \ge
2$,
\item[ii.]  $|(I - G_{n}^{l}(I)) \cap Z| \ge
2$,
\item[iii.]  $|( G_{n}^{l}(I) -I ) \cap Z| \ge
2$.
\end{itemize}

\begin{figure}
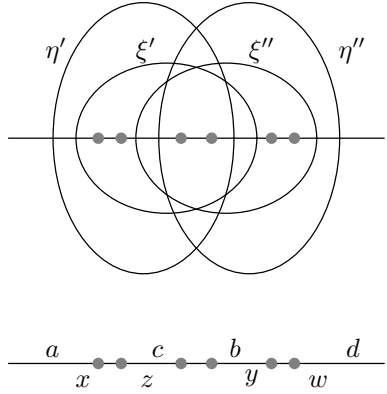

\bigskip
\begin{center}
\centertexdraw { \drawdim cm \linewd 0.02 \move(0 1)

\move(-2.5 -1) \lvec(2.5 -1)

\move(-0.7 -1) \lellip rx:1.2 ry:1.8 \move(0.7 -1) \lellip rx:1.2
ry:1.8

\move(-0.4 -1) \lellip rx:1.2 ry:1 \move(0.4 -1) \lellip rx:1.2 ry:1

\move(-2 0) \htext{$\eta'$} \move(1.9 0) \htext{$\eta''$}

\move(-0.8 0) \htext{$\xi'$}

\move(0.7 0) \htext{$\xi''$}

\move(-0.2 -1) \fcir f:0.5 r:0.08 \move(0.2 -1) \fcir f:0.5 r:0.08

\move(-1.3 -1) \fcir f:0.5 r:0.08 \move(-1 -1) \fcir f:0.5 r:0.08

\move(1.3 -1) \fcir f:0.5 r:0.08 \move(1 -1) \fcir f:0.5 r:0.08

\move(-2.5 -4) \lvec(2.5 -4)

\move(-0.2 -4) \fcir f:0.5 r:0.08 \move(0.2 -4) \fcir f:0.5 r:0.08

\move(-1.3 -4) \fcir f:0.5 r:0.08 \move(-1 -4) \fcir f:0.5 r:0.08

\move(1.3 -4) \fcir f:0.5 r:0.08 \move(1 -4) \fcir f:0.5 r:0.08

\move(-2 -3.9) \htext{$a$} \move(-0.58 -3.9) \htext{$c$} \move(0.45
-3.9) \htext{$b$} \move(2 -3.9) \htext{$d$}

\move(-1.6 -4.3) \htext{$x$} \move(-0.74 -4.3) \htext{$z$}
\move(0.65 -4.3) \htext{$y$} \move(1.5 -4.3) \htext{$w$}

 }
\end{center}
\vspace{0.2cm} \caption{The geodesics $\eta', \eta'', \xi'$, $\xi''$
and the points in $Z$}
\end{figure}

Now let us assume that $\delta$, and hence $\|\gamma\|_{{\Bbb P}^{1}
- (P_{n} \cup X_{L}^{n})}$ and $\|\eta\|_{{\Bbb P}^{1} - (P_{n} \cup
X_{L}^{n})}$ are small enough so that Theorem A.2 and Theorem A.3
can be applied in the following discussion.

From  Lemma~\ref{tech-1}(by taking $M = K + L(\epsilon)$), we have
$$
(P_{n} \cup X_{L}^{n}) \subset G_{n}^{-l}(P_{n} \cup X_{S}^{n})
$$
and
$$
| G_{n}^{-l}(P_{n} \cup X_{S}^{n}) - (P_{n} \cup X_{L}^{n})| \le
C,
$$
where $C$ only depends on $L, N, S$, and the degree of $F$.  By
Theorem A.3, there exists a simple closed geodesic $\eta'$ in
 ${\Bbb P}^{1} - G_{n}^{-l}(P_{n} \cup X_{S}^{n})$, which is homotopic to
 $\eta$ in ${\Bbb P}^{1} - (P_{n} \cup X_{L}^{n})$ such that
\begin{equation}\label{eta'}
\|\eta'\|_{{\Bbb P}^{1} - G_{n}^{-l}(P_{n} \cup X_{S}^{n})} \le C'
\|\eta\|_{{\Bbb P}^{1} - (P_{n} \cup X_{L}^{n})}
\end{equation} where $C'$ depends only on $L, N, S$, and the degree
of $F$. Since $\eta'$  is homotopic to
 $\eta$ in ${\Bbb P}^{1} - (P_{n} \cup X_{L}^{n})$, we have
 $$
 I \subset  \partial \Delta \cap D(\eta')  \hbox{ and } J \subset
 \partial \Delta - D(\eta').
 $$
Let $$ \partial \Delta \cap \eta' = \{a, b \}.$$

By theorem A.2, $G_{n}^{l}(\eta')$ covers a short simple closed
geodesic $\eta''$ in ${\Bbb P}^{1} - (P_{n} \cup X_{S}^{n})$, and
therefore,
\begin{equation}\label{eta''}
\|\eta''\|_{{\Bbb P}^{1} - (P_{n} \cup X_{S}^{n})} \le
\|\eta'\|_{{\Bbb P}^{1} - G^{-l}(P_{n} \cup X_{S}^{n})}.
\end{equation}
Most importantly, $\eta''$ separates $G_{n}^{l}(I)$ and
$G_{n}^{l}(J)$, that is, one of them is contained in $D(\eta'')$ and
the other one is contained in the outside of $D(\eta'')$.  Let
$$
\partial \Delta \cap \eta'' = \{G_{n}^{l}(a), G_{n}^{l}(b)\} = \{c, d\}.
$$

From the inequalities (i), (ii), and (iii), it follows that one can
label the four intersection points $a, b, c$, and $d$ appropriately,
such that they are distributed   in the order of $a, c, b$, and $d$,
 and moreover, each of the three
segments $[a, c]$, $[c, b]$, and $[b, d]$ contains at least two
points in $Z$. See Figure 5 for an illustration.

From above it follows that both $\eta'$ and $\eta''$  are
non-peripheral curves in ${\Bbb P}^{1} - (P_{n} \cup Z)$. Let $\xi'$
be the simple closed geodesic in ${\Bbb P}^{1} - (P_{n} \cup Z)$
which is homotopic to $\eta'$ in ${\Bbb P}^{1} - (P_{n} \cup Z)$.
Since $${\Bbb P}^{1} - (P_{n} \cup Z) \supset {\Bbb P}^{1} -
G^{-l}(P_{n} \cup X_{S}^{n})$$ by the definition of $Z$, it follows
that
\begin{equation}\label{xi'}
\|\xi'\|_{{\Bbb P}^{1} - (P_{n} \cup Z)} \le \|\eta'\|_{{\Bbb P}^{1}
- G^{-l}(P_{n} \cup X_{S}^{n})}.
\end{equation}
Suppose $\xi'$ intersects with $\partial \Delta$ at the two points
$x$ and $y$. Similarly, let $\xi''$ be the simple closed geodesic in
${\Bbb P}^{1} - (P_{n} \cup Z)$ which is homotopic to $\eta''$ in
${\Bbb P}^{1} - (P_{n} \cup Z)$. Since $${\Bbb P}^{1} - (P_{n} \cup
Z) \supset {\Bbb P}^{1} - (P_{n} \cup X_{S}^{n})$$ by the definition
of $Z$, it follows that
\begin{equation}\label{xi''}
\|\xi''\|_{{\Bbb P}^{1} - (P_{n} \cup Z)} \le  \|\eta''\|_{{\Bbb
P}^{1} - (P_{n} \cup X_{S}^{n})}.
\end{equation}
Suppose $\xi''$ intersects with $\partial \Delta$ at the two points
$z$ and $w$. One can label $x, z, y$, and $w$ so that they  are in
the same order as $a, c, b$ and $d$. This implies that $\xi'$ and
$\xi''$ separates the points in $Z$ in the same way as $\eta'$ and
$\eta''$. It follows that $$\xi' \cap \xi'' \ne \emptyset.$$ See
Figure 5 for an illustration.  But (\ref{eta'}), (\ref{eta''}),
(\ref{xi'}) and (\ref{xi''}) imply that both $\xi'$ and $\xi''$ can
be short to any extent provided that $\delta$ is small. This is a
contradiction with Theorem A.1.

\end{proof}

\begin{lemma}\label{fil}
There exist $L\ge N+2$ and a $\delta>0$ such that for every $n$
large enough,  any simple closed geodesic $\gamma$ in ${\Bbb P}^{1}
- (P_{n} \cap X_{L}^{n})$ has length greater  than $\delta$.
\end{lemma}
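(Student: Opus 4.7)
The plan is to reduce to Lemma~\ref{ml} and then handle the remaining geodesics (those disjoint from $\partial\Delta$) by combining the $*$-symmetry of $G_n$ with a Thurston obstruction argument transferred from $G_n$ back to $f$.

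First I would fix $L\ge\max\{N+2,L_{0}\}$ large enough that Lemma~\ref{ml} supplies a uniform lower bound $\delta_{1}>0$ on the hyperbolic length of any simple closed geodesic in ${\Bbb P}^{1}-(P_{n}\cup X_{L}^{n})$ that meets $\partial\Delta$. It then suffices to exhibit a uniform lower bound $\delta_{2}>0$ for simple closed geodesics that are disjoint from $\partial\Delta$, and to set $\delta=\min\{\delta_{1},\delta_{2}\}$. For such a $\gamma$, the identity $G_{n}(z^{*})=G_{n}(z)^{*}$ combined with the invariance of $P_{n}\cup X_{L}^{n}$ under $z\mapsto z^{*}$ (since $X_{L}^{n}\subset\partial\Delta$ is pointwise fixed, and $P_{n}$ is symmetric because $G_{n}$ is a Blaschke product fixing $\infty$) shows that $\gamma^{*}$ is a geodesic of the same hyperbolic length. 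We may therefore assume $\gamma$ lies outside $\overline{\Delta}$.

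I would then argue by contradiction: if no such $\delta_{2}$ exists, take a sequence $n_{k}\to\infty$ and geodesics $\gamma_{n_{k}}\subset{\Bbb P}^{1}-\overline\Delta$ whose hyperbolic lengths tend to zero. Transferring each $\gamma_{n_{k}}$ across the combinatorial equivalence between $G_{n_{k}}$ and $F_{n_{k}}$ produced by Lemma~\ref{no Thurston obstructions} yields a non-peripheral simple closed curve in $S^{2}-P_{F_{n_{k}}}'$ situated outside the unit disk. The mirror version of Lemma~\ref{invarance}, obtained by applying the $*$-symmetry of $F_{n_{k}}$ to the statement already proved for inside geodesics, guarantees that non-peripheral components of $F_{n_{k}}^{-1}(\gamma_{n_{k}})$ again lie outside $\overline\Delta$ whenever $\gamma_{n_{k}}$ is short enough. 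Iterating the pullback and passing to a further subsequence (using the finiteness of homotopy classes of simple closed curves of bounded length in the surface $S^{2}-P_{F_{n_{k}}}'$), I extract an $F_{n_{k}}$-stable multicurve of arbitrarily short geodesics supported outside $\overline\Delta$. Its Thurston transition matrix has spectral radius at least $1$; projecting it back to $f$ produces a Thurston obstruction for $f$ lying entirely outside the rotation disk, contradicting the hypothesis of Theorem~A.

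The main obstacle will be careful bookkeeping between the marking $P_{n}\cup X_{L}^{n}$ used to measure hyperbolic length on $G_{n}$ and the coarser marking $P_{F_{n}}'$ on which the ``no Thurston obstruction outside $\Delta$'' hypothesis is formulated: enlarging the puncture set distorts hyperbolic lengths of geodesics, and one has to re-use the comparison estimates of Theorems~A.2--A.4 in the same spirit as in the proofs of Lemmas~\ref{tech-1}--\ref{ml} in order to push the iterated pullbacks back and forth between the two markings without losing the short length. A secondary technicality is verifying the symmetric analogue of Lemma~\ref{invarance} using the curve segment $\gamma_{n}\subset\overline\Delta^{c}$ of Proposition~\ref{basic construction} and its symmetric image $\gamma_{n}^{*}\subset\Delta$.
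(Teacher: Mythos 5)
The first half of your plan matches the paper: you invoke Lemma~\ref{ml} to handle the geodesics that meet $\partial\Delta$, and your observation that the marking $P_{n}\cup X_{L}^{n}$ is $*$-symmetric so that $\gamma$ and $\gamma^{*}$ have equal length is correct (though the paper does not need to single out one side, since it works with the symmetric family $\Gamma$ directly).

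The second half contains a genuine gap, and I think the logic is in fact backwards. You claim that the $F_{n_{k}}$-stable multicurve you extract ``has Thurston transition matrix with spectral radius at least $1$,'' and then transport this to an obstruction for $f$. Nothing in your argument supports the spectral-radius claim, and it cannot be true: $G_{n_{k}}$ is a genuine rational map, so by the necessity direction of Thurston's theorem \emph{every} $G_{n_{k}}$-stable multicurve has $\|A\|<1$; equivalently, Lemma~\ref{no Thurston obstructions} already shows the corresponding $F_{n_{k}}$-stable families (and their $f$-projections outside $\Delta$) have $\|A\|<1$. So the existence of short geodesics for the $G_{n_{k}}$'s does not, by itself, produce any obstruction — there is no contradiction at the point where you declare one. (The underlying confusion is that arbitrarily short geodesics along a \emph{Thurston pullback sequence for a fixed map} signal an obstruction, but here you have a sequence of \emph{different} rational maps $G_{n_{k}}$, each already obstruction-free; moreover compactness of $\{G_{n}\}$, which might let you pass to a limiting map, is Lemma~\ref{com-F} and is only proved \emph{after} this lemma, using it, so you cannot appeal to it.)

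What the paper actually does is quantitative. Following \S8 of \cite{DH}, any short geodesic disjoint from $\partial\Delta$ belongs to a $G_{n}$-stable family $\Gamma$ satisfying a ``gap property.'' Because $|P_{n}|=2N$ is uniformly bounded and the curves of $\Gamma$ avoid $\partial\Delta$, the number of curves in $\Gamma$ — and hence the set of possible Thurston matrices $A$ — is finite and independent of $n$. Each such $A$ has $\|A\|<1$ (since each $G_{n}$ is rational), so there is a single $m$ with $\|A^{m}\|\le 1/2$ for \emph{all} of them. Then Theorems A.2--A.4 together with the gap property and $L>m$ convert the matrix bound into the length estimate
\[
\sum_{i}\|\gamma_{i}\|^{-1}_{{\Bbb P}^{1}-(P_{n}\cup X_{L}^{n})}\le 2C,
\]
which gives the uniform lower bound on geodesic lengths. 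Your proposal has none of this: no uniform power $m$, no gap property, and no mechanism to convert a norm bound on $A$ into a length bound. Those are precisely the missing ingredients you would need to add; the ``obstruction from short geodesics'' shortcut does not exist here.

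A secondary point: even setting aside the spectral-radius issue, your extraction of an ``$F_{n_{k}}$-stable multicurve of arbitrarily short geodesics'' by a subsequence argument is comparing curves and lengths across the changing marked surfaces $S^{2}-P_{F_{n_{k}}}'$ (and, more seriously, across the two different markings $P_{n}\cup X_{L}^{n}$ and $P_{F_{n}}'$). Passing between these markings distorts hyperbolic lengths, and that bookkeeping — which you flag as ``the main obstacle'' — is not a technicality to be deferred; it is a substantive part of the proof and needs the Theorem~A.3 comparison estimates exactly as the paper deploys them.
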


Our argument  is an adapted version of the one used in  $\S8$ of
\cite{DH}. In fact,  all the short simple closed geodesics which do
not intersect the unit circle, consist of a $G_{n}-$stable family
$\Gamma$. Since $|P_{G_{n}}'-\partial \Delta| = |P_{F}' -\partial
\Delta|$ does not depend on $n$, there is an $m
> 0$ independent of $n$ such that $\|A^{m}\| < 1/2$ where $A$ is the
associated linear transformation matrix. Then by using a similar
argument with the one in \cite{DH}, it follows that the simple
closed geodesics in $\Gamma$ can not be too short. In the following
proof, we will present the details at the place where the situation
here is different from that in $\S8$ of \cite{DH}, and only give a
sketch if they are the same.

\begin{proof}
By Lemma~\ref{ml}, we can take $N+2 \le L< \infty$ and $\epsilon >0$
such that for all $n$ large enough, any simple closed geodesic in
${\Bbb P}^{1} - (P_{n} \cup X_{L}^{n})$, which intersects the unit
circle, has length greater than $\epsilon$.

Let $0< \delta < \epsilon$ and  consider the family
$\Gamma_{\delta}$ of all the simple closed geodesics in ${\Bbb
P}^{1} - (P_{n}  \cup X_{L}^{n})$ which has length $\le \delta$. By
using the same argument as in the proof of Proposition 8.1 in
\cite{DH}, it follows  that if $\Gamma_{\delta} \ne \emptyset$ for
$\delta$ small enough,  then there is a  $G_{n}-stable$ family in
${\Bbb P}^{1} - P_{G_{n}} \cup \{0, \infty\}$, say $\Gamma$, which
consists of short simple closed geodesics in ${\Bbb P}^{1} - (P_{n}
\cup X_{L}^{n})$ and which satisfies certain gap property. Roughly
speaking, the gap property means that there is a uniform $\tau > 0$,
such that every  simple closed geodesic in ${\Bbb P}^{1} - (P_{n}
\cup X_{L}^{n})$ either belongs to $\Gamma$ or has hyperbolic length
greater than $\tau$. We refer the reader to $\S8$ of \cite{DH} for
more details about this property.

Now  let $A$ be the associated linear transformation matrix of
$\Gamma$. According to Thurston's characterization theorem (see
$\S5$),  we have $\|A\| < 1$. Since the curves in $\Gamma$ do not
intersect the unit circle, and $|P_{n}| = 2N$, it follows that the
number of the curves in $\Gamma$ has an upper bound which is
independent of $n$. This implies that the number of all the possible
linear transformation matrixes also has an upper bound independent
of $n$. Therefore, there is an $0< m < \infty$, which is independent
of $n$, such that $\|A^{m}\| \le 1/2$ where $A$ is the Thurston
linear transformation matrix for any such $G_{n}-stable$ family
$\Gamma$. Note that $m$ does not depend on $L$. In the following  we
may assume that $L> m$ by increasing $L$ if necessary.

Now assume that $\gamma \in \Gamma$ is a short simple closed
geodesic in ${\Bbb P}^{1} - (P_{n} \cup X_{L}^{n})$. Recall that for
a hyperbolic Riemann surface $X$, we  use $\|\gamma\|_{X}$ to denote
the hyperbolic length of the simple closed geodesic which is
homotopic to $\gamma$ in $X$. Let us consider the set of all the
simple closed geodesics in ${\Bbb P}^{1} - (P_{n} \cup
X_{L+2m}^{n})$ which are homotopic to $\gamma$ in ${\Bbb P}^{1} -
(P_{n} \cup X_{L}^{n})$ and with length less than $\log(\sqrt 2 +
1)$. The number of the curves in this set is not more than
$$
| X_{L+2m}^{n}- X_{L}^{n}|
$$
which is independent of $n$.  By Lemma~\ref{ml}, it follows that
among all these curves, only the one, which does not intersect the
unit circle (therefore, is homotopic to $\gamma$ in ${\Bbb P}^{1} -
P_{n} \cup X_{L+2m}^{n}$), can be short. The length of all the other
curves has a positive lower bound which is independent of $n$.  By
Theorem A.3 we get
\begin{equation}\label{com-1}
\|\gamma\|_{{\Bbb P}^{1} -(P_{n} \cup X_{L}^{n})}^{-1} \le
\|\gamma\|_{{\Bbb P}^{1} - (P_{n} \cup X_{L+2m}^{n})}^{-1} + C_{1},
\end{equation}
where $C_{1}$ is some constant independent of $n$.

Let
$$
Y = \partial \Delta  \cap G_{n}^{-m}(X_{L}^{n}).
$$
It follows that
$${\Bbb P}^{1} - (P_{n} \cup X_{L+2m}^{n}) \subset {\Bbb P}^{1} -
(P_{n} \cup Y).$$ So
\begin{equation}\label{com-2}
\|\gamma\|_{{\Bbb P}^{1} - (P_{n} \cup X_{L+2m}^{n})}^{-1} <
\|\gamma\|_{{\Bbb P}^{1} - (P_{n} \cup Y)}^{-1}.
\end{equation}
From (\ref{com-1}) and (\ref{com-2}), we have
\begin{equation}\label{com-3}
\|\gamma\|_{{\Bbb P}^{1} -(P_{n} \cup X_{L}^{n})}^{-1} \le
\|\gamma\|_{{\Bbb P}^{1} - (P_{n} \cup Y)}^{-1} + C_{1}.
\end{equation}

Note that
$$
{\Bbb P}^{1} - G_{n}^{-m}(P_{n} \cup X_{L}^{n}) \subset {\Bbb P}^{1}
- (P_{n} \cup Y).
$$ For each $\gamma_{i} \in \Gamma$, let
$$
\gamma_{i,\: j,\:\alpha} \subset {\Bbb P}^{1} - G_{n}^{-m}(P_{n}
\cup X_{L}^{n}), \:\:\alpha  \in \Lambda_{i,j},$$ be all the
components of $G_{n}^{-m}(\gamma_{j})$,  which are homotopic to
$\gamma_{i}$ in ${\Bbb P}^{1} - (P_{n} \cup Y)$, and whose length is
less then $\log(\sqrt 2 +1)$. Here $\Lambda_{i,j}$ is a finite set.

By the gap property of $\Gamma$, there is a uniform positive lower
bound $B > 0$ independent of $n$ such that every simple closed
geodesic in ${\Bbb P}^{1} - G_{n}^{-m}(P_{n} \cup X_{L}^{n})$, which
is homotopic to some $\gamma_{i}$ in ${\Bbb P}^{1} - (P_{n} \cup
Y)$, but does not belong to $\{\gamma_{i,\: j,\:\alpha} \}$, must
have length greater than $B$(This is the place where the gap
property is required). This, together with Theorem A.3, and the fact
that  $$|G_{n}^{-m}(P_{n} \cup X_{L}^{n}) - (P_{n}\cup Y)|$$ depends
only on $L, N, m$, and the degree of $F$, implies that there is a
$0< C_{2}< \infty$ independent of $n$ such that
\begin{equation}\label{gap}
\|\gamma_{i}\|^{-1}_{{\Bbb P}^{1} - (P_{n} \cup Y)} \le \sum_{j}
\sum_{\alpha} \|\gamma_{i,j,\alpha}\|^{-1}_{{\Bbb P}^{1} -
G_{n}^{-m}(P_{n} \cup X_{L}^{n})} + C_{2}.
\end{equation}

Since $L > m$, it follows that
$$
G^{m}: {\Bbb P}^{1} - G_{n}^{-m}(P_{n} \cup X_{L}^{n}) \to {\Bbb
P}^{1} - (P_{n} \cup X_{L}^{n})$$
is a holomorphic covering map.
This, together with the inequality $\|A\|^{m} \le \frac{1}{2}$
implies
\begin{equation}\label{th}
\sum_{i} \sum_{j} \sum_{\alpha} \|\gamma_{i,j,\alpha}\|^{-1}_{{\Bbb
P}^{1} - G_{n}^{-m}(P_{n} \cup X_{L}^{n})} \le \frac{1}{2}\sum_{i}
\|\gamma_{i}\|^{-1}_{{\Bbb P}^{1} - (P_{n} \cup X_{L}^{n})}.
\end{equation}
From (\ref{com-3}), (\ref{gap}), and (\ref{th}), we have
$$
\sum_{i} \|\gamma_{i}\|^{-1}_{{\Bbb P}^{1} - (P_{n} \cup X_{L}^{n}
)} \le \frac{1}{2}\sum_{i} \|\gamma_{i}\|^{-1}_{{\Bbb P}^{1} -
(P_{n} \cup X_{L}^{n} )} +C,
$$ and hence
\begin{equation}
\sum_{i} \|\gamma_{i}\|^{-1}_{{\Bbb P}^{1} - (P_{n} \cup X_{L}^{n}
)} \le 2C.
\end{equation}
where $0< C< \infty$ depends only on $L, m, N$ and the degree of
$F$.  Lemma~\ref{fil} follows.
\end{proof}

Let $\mathcal{Z}_{n}$ and $\mathcal{P}_{n}$ denote the set of the
zeros and poles of $G_{n}$, respectively. The following two lemmas
imply the bounded geometry of $P_{G_{n}}$ and the compactness of the
sequence $\{G_{n}\}$.
\begin{lemma}\label{Z-P}
There is a $\delta>0$ independent of $n$ such that for any two
points in $P_{n} \cup \mathcal{Z}_{n} \cup \mathcal{P}_{n}$, say $x$
and $y$, we have $d_{S^{2}}(x,y) > \delta$.
\end{lemma}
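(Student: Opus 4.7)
The plan is to argue by contradiction using Lemma~\ref{fil} together with the covering structure of Lemma~\ref{tech-1}. Fix $L \geq N+2$ and $M \geq 1$ so that Lemma~\ref{fil} yields a uniform lower bound $\delta > 0$ on lengths of simple closed geodesics in $V_n := {\Bbb P}^1 - (P_n \cup X_{L+M}^n)$, and so that by Lemma~\ref{tech-1} the map
\[
G_n : U_n := {\Bbb P}^1 - G_n^{-1}(P_n \cup X_{L+M}^n) \longrightarrow V_n
\]
is a holomorphic covering of bounded degree $2d-1$. Since $\{0, \infty\} \subset P_n$ and $G_n(0)=0$, $G_n(\infty)=\infty$, one has $\mathcal{Z}_n \cup \mathcal{P}_n = G_n^{-1}(\{0,\infty\}) \subset U_n^c$, while $P_n \subset U_n^c$ by Lemma~\ref{tech-1}. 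Thus $P_n \cup \mathcal{Z}_n \cup \mathcal{P}_n \subset U_n^c$.

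Suppose for contradiction that $d_{S^2}(x_n, y_n) \to 0$ for some $x_n \neq y_n$ in $P_n \cup \mathcal{Z}_n \cup \mathcal{P}_n$. After extraction, $x_n, y_n \to z^* \in {\Bbb P}^1$; let $C_n \subset U_n^c$ denote the maximal cluster of points converging to $z^*$. Because $|U_n^c|$ is uniformly bounded (by the degree of $G_n$ together with the uniform bound on $|P_n \cup X_{L+M}^n|$), the complement $U_n^c \setminus C_n$ stays at spherical distance $\geq \varepsilon_0 > 0$ from $z^*$. I would take $\alpha_n$ to be a small spherical circle around $z^*$ separating $C_n$ from the rest of $U_n^c$. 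Then $\|\alpha_n\|_{U_n} \to 0$ by standard hyperbolic-geometry estimates near a collapsing cluster of punctures. Since $G_n : U_n \to V_n$ is a local isometry for the intrinsic hyperbolic metrics, $G_n(\alpha_n) \subset V_n$ has the same hyperbolic length, so the geodesic representative $\beta_n$ of its free homotopy class, if it exists, satisfies $\|\beta_n\|_{V_n} \leq \|\alpha_n\|_{U_n} \to 0$.

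The crucial step is to verify that $[G_n(\alpha_n)]$ is non-peripheral in $V_n$, equivalently that $|G_n(C_n)| \geq 2$, so that $\beta_n$ actually exists as a closed geodesic. I would split into cases. If $C_n$ meets both $\mathcal{Z}_n$ and $\mathcal{P}_n$, then $G_n(C_n) \supset \{0,\infty\}$. If two points of $C_n$ have distinct $G_n$-images in $P_n \cup X_{L+M}^n$, we are done. Otherwise $G_n(C_n) = \{p\}$ for a single $p$; choose a small topological disk $D_n$ around $z^*$ containing $C_n$ so that $G_n : D_n \to G_n(D_n)$ is a proper branched cover of some degree $k \geq 2$. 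With $m = |C_n \cap G_n^{-1}(p)| \geq 2$, Riemann--Hurwitz gives total branching $k-1$ in $D_n$, while the ramification at the $m$ preimages of $p$ totals only $k - m$; this leaves $m - 1 \geq 1$ units of branching at critical points $c_n \in D_n$ with $G_n(c_n) \neq p$. Such $c_n$ lies in $G_n^{-1}(P_{G_n}) \subset U_n^c$, and since the critical points separating merging simple preimages of $p$ converge to $z^*$ themselves, maximality of $C_n$ forces $c_n \in C_n$, contradicting $G_n(C_n) = \{p\}$. Hence $\beta_n$ exists with $\|\beta_n\|_{V_n} \to 0$; by the Margulis collar lemma, $\beta_n$ is simple for $n$ large enough, contradicting Lemma~\ref{fil}.

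The main obstacle is the Riemann--Hurwitz argument in the third subcase: one must guarantee that the critical point produced by the branched-cover count actually lies in the shrinking cluster $C_n$, which in turn requires controlling its position relative to $z^*$ as the zeros or postcritical points of $G_n$ degenerate together.
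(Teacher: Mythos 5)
Your overall strategy---cluster analysis around the collapsing pair, a short non-peripheral curve in the pullback surface, and a contradiction with Lemma~\ref{fil} via the covering structure---matches the paper's, and your cluster bookkeeping (bounded cardinality of $U_n^c$, subsequence extraction, the uniform gap $\varepsilon_0$) fills in details the paper states very tersely. The trouble begins exactly where you diverge from the paper: you push the Euclidean circle $\alpha_n$ forward and then have to decide whether $[G_n(\alpha_n)]$ is non-peripheral in $V_n$, which is what drags you into the Riemann--Hurwitz subcase. The difficulty you flag there is genuine as written: the Riemann--Hurwitz count only produces a critical point inside some disk $D_n$, and to conclude it lies in $C_n$ you must arrange $D_n\cap U_n^c=C_n$ with $G_n|_{D_n}$ proper, which requires exactly the control over degenerating configurations you concede is missing.

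The fix is to push forward a geodesic rather than the spherical circle. One has $|C_n|\ge 2$ (it contains $x_n\ne y_n$) and $|U_n^c\setminus C_n|\ge 2$: both $0,\infty\in P_n\subset U_n^c$ so at most one can lie in $C_n$, and if (say) $0\in C_n$ then $z^*=0$ while $1\in U_n^c$ lies on $\partial\Delta$ at fixed distance from $0$, so $\{1,\infty\}\subset U_n^c\setminus C_n$. Hence $\alpha_n$ is non-peripheral in $U_n$ and has a geodesic representative $\widetilde\alpha_n\subset U_n$ with $\|\widetilde\alpha_n\|_{U_n}\le\|\alpha_n\|_{U_n}\to 0$. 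Since $G_n:U_n\to V_n$ is a holomorphic covering and hence a local isometry, $G_n(\widetilde\alpha_n)$ is a closed geodesic in $V_n$ of the same length; a closed geodesic is automatically non-trivial and non-peripheral (at the level of Fuchsian groups, $(G_n)_*:\pi_1(U_n)\hookrightarrow\pi_1(V_n)$ is an inclusion and the hyperbolic element represented by $\widetilde\alpha_n$ remains hyperbolic in the larger group), so it covers a simple closed geodesic $\xi_n$ with $\|\xi_n\|_{V_n}\le\|\widetilde\alpha_n\|_{U_n}\to 0$, and $|G_n(C_n)|$ never enters. This is the route the paper takes. One further point: Lemma~\ref{fil} produces a single $L$, and its conclusion does not automatically transfer to $L+M$---a curve short in ${\Bbb P}^1-(P_n\cup X_{L+M}^n)$ can be peripheral in ${\Bbb P}^1-(P_n\cup X_L^n)$---so you should simply set $Y_n={\Bbb P}^1-G_n^{-1}(P_n\cup X_L^n)$ with the $L$ from Lemma~\ref{fil}, as the paper does; the containment $P_n\cup\mathcal{Z}_n\cup\mathcal{P}_n\subset G_n^{-1}(P_n\cup X_L^n)$ and the covering $G_n:Y_n\to{\Bbb P}^1-(P_n\cup X_L^n)$ both hold with this choice.
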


\begin{proof}
Let $L\ge N+2$ be the number in Lemma~\ref{fil}.  It is clear that
$$
P_{n} \cup \mathcal{Z}_{n} \cup \mathcal{P}_{n} \subset
G_{n}^{-1}(P_{n}  \cup X_{L}^{n}).
$$Consider the space
$$
Y_{n} = {\Bbb P}^{1} - G_{n}^{-1}(P_{n}  \cup X_{L}^{n}).
$$
Assume that Lemma~\ref{Z-P} were not true. Then we would have a
sequence of integers, say $\{n_{k}\}$, such that $n_{k} \to \infty$
as $k \to \infty$, and a sequence of short simple closed geodesics,
say  $\gamma_{n_{k}} \subset Y_{n_{k}}$, such that
 $\|\gamma_{n_{k}}\|_{Y_{n_{k}}} \to 0$. Then every
$G_{n_{k}}(\gamma_{n_{k}})$ covers a short simple closed geodesic
$\xi_{n_{k}} \subset {\Bbb P}^{1} - (P_{n_{k}}  \cup X_{L}^{n_{k}})$
whose length goes to $0$ as $k \to \infty$. This is a contradiction
with Lemma~\ref{fil}.
\end{proof}

\begin{lemma}\label{X-D}
There is a $\delta>0$ independent of $n$ such that for any point in
$P_{n} \cup \mathcal{Z}_{n} \cup \mathcal{P}_{n}$, say $x$, we have
$d_{S^{2}}(x,\partial \Delta) > \delta$.
\end{lemma}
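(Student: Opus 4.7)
The plan is to deduce Lemma~\ref{X-D} from Lemma~\ref{Z-P} together with the symmetry of $G_{n}$ about the unit circle.

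First I would check that the set $P_{n} \cup \mathcal{Z}_{n} \cup \mathcal{P}_{n}$ is invariant under the reflection $z \mapsto z^{*} = 1/\overline{z}$ and disjoint from $\partial \Delta$. Since $G_{n}(z^{*}) = G_{n}(z)^{*}$, the critical set satisfies $\Omega_{G_{n}}^{*} = \Omega_{G_{n}}$, and induction on $k$ gives $G_{n}^{k}(z^{*}) = G_{n}^{k}(z)^{*}$; hence $P_{G_{n}}^{*} = P_{G_{n}}$, so $(P_{G_{n}} - \partial \Delta)^{*} = P_{G_{n}} - \partial \Delta$. Combined with $\{0,\infty\}^{*} = \{0,\infty\}$, this shows $P_{n}^{*} = P_{n}$ and $P_{n} \cap \partial \Delta = \emptyset$. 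For zeros and poles, $G_{n}(z_{0}) = 0$ is equivalent to $G_{n}(z_{0}^{*}) = 0^{*} = \infty$, so $\mathcal{Z}_{n}^{*} = \mathcal{P}_{n}$; and since $G_{n} \colon \partial \Delta \to \partial \Delta$ is a homeomorphism of degree one circle map (the restriction is $z \mapsto e^{2\pi i \theta_{n}}z$ up to conjugation by $\phi_{n}$), no zero or pole can lie on $\partial \Delta$.

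The next step is the elementary spherical geometry observation that $d_{S^{2}}(x, x^{*}) \to 0$ as $d_{S^{2}}(x, \partial \Delta) \to 0$. More quantitatively, there is a constant $C < \infty$ such that
\begin{equation*}
d_{S^{2}}(x, x^{*}) \le C \cdot d_{S^{2}}(x, \partial \Delta)
\end{equation*}
for every $x \in {\Bbb P}^{1}$. (This is immediate from the fact that $x \mapsto x^{*}$ is a $C^{\infty}$ involution of the sphere that fixes $\partial \Delta$ pointwise; the bound follows from a mean value estimate.)

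Now suppose Lemma~\ref{X-D} fails. Then there is a sequence $x_{n_{k}} \in P_{n_{k}} \cup \mathcal{Z}_{n_{k}} \cup \mathcal{P}_{n_{k}}$ with $d_{S^{2}}(x_{n_{k}}, \partial \Delta) \to 0$. By the first step, $x_{n_{k}}^{*}$ also lies in $P_{n_{k}} \cup \mathcal{Z}_{n_{k}} \cup \mathcal{P}_{n_{k}}$ and is distinct from $x_{n_{k}}$ (since $x_{n_{k}} \notin \partial \Delta$). By the second step, $d_{S^{2}}(x_{n_{k}}, x_{n_{k}}^{*}) \to 0$. This contradicts Lemma~\ref{Z-P}, completing the proof.

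The argument is essentially a one-step symmetry trick; the only point requiring any care is verifying that the three pieces of the set are each symmetric under $z \mapsto z^{*}$ and that none of them meets $\partial \Delta$, so that reflection actually produces a \emph{second} point of the set.
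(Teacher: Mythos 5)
Your argument is correct and is essentially the same one the paper uses: reflect $x$ to $x^{*}$ across the unit circle, note that $x^{*}$ is again in $P_{n}\cup\mathcal{Z}_{n}\cup\mathcal{P}_{n}$ and distinct from $x$, and invoke Lemma~\ref{Z-P}. The paper compresses this to a single line by observing that $z\mapsto 1/\bar z$ is a spherical isometry fixing $\partial\Delta$, so $d_{S^{2}}(x,\partial\Delta)=d_{S^{2}}(x,x^{*})/2$ exactly, which is a cleaner version of your mean-value bound.
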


\begin{proof}
Let $x^{*}$ be the symmetric image of $x$ about the unit circle. It
follows that $x^{*} \in P_{n} \cup \mathcal{Z}_{n} \cup
\mathcal{P}_{n}$. By Lemma~\ref{Z-P},  $d_{S^{2}}(x, \partial
\Delta) =  d_{S^{2}}(x, x^{*})/2$, and therefore has a positive
lower bound independent of $n$.
\end{proof}

Recall that $\mathcal{R}$$_{2d-1}$ denotes the space of all the
rational maps of degree $2d-1$. From Lemma~\ref{Z-P} and
Lemma~\ref{X-D}, we get
\begin{lemma}\label{com-F}
The sequence $\{G_{n}\}$ is contained in some compact subset of
$\mathcal{R}$$_{2d-1}$.
\end{lemma}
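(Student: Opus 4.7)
The plan is to identify the zeros and poles of each $G_n$ explicitly, invoke Lemmas \ref{Z-P} and \ref{X-D} to show they stay in a compact region of $\mathbb{P}^{1}$ and pairwise separated, and conclude that no degeneration is possible in the limit. From the explicit form
\[
G_n(z) = \lambda_n\, z \prod_{k=1}^{d-1}\frac{z - p_{k,n}}{1 - \overline{p}_{k,n}z}\prod_{k=1}^{d-1}\frac{z - q_{k,n}}{1 - \overline{q}_{k,n}z},
\]
the zero set is $\mathcal{Z}_n = \{0\}\cup\{p_{k,n}\}\cup\{q_{k,n}\}$ and the pole set is $\mathcal{P}_n = \{\infty\}\cup\{1/\overline{p}_{k,n}\}\cup\{1/\overline{q}_{k,n}\}$. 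By construction $\{0,\infty\} \subset P_n$, so both are already in $P_n \cup \mathcal{Z}_n \cup \mathcal{P}_n$.

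Applying Lemma \ref{Z-P} with one of the two points taken to be $0$ or $\infty$, I obtain a uniform $\delta>0$ such that every element of $\mathcal{Z}_n \cup \mathcal{P}_n$ different from $0,\infty$ has spherical distance at least $\delta$ from both $0$ and $\infty$. Since neighborhoods of $\infty$ in the spherical metric are complements of large Euclidean disks, this bounds $|p_{k,n}|$ and $|1/\overline{q}_{k,n}|$ above, and bounds $|q_{k,n}|$ and $|1/\overline{p}_{k,n}|$ away from $0$. Lemma \ref{X-D} additionally keeps every one of these zeros and poles at uniform spherical distance from $\partial\Delta$. So each $p_{k,n}$ and each $1/\overline{q}_{k,n}$ stays in a fixed compact annulus in $\mathbb{C}\setminus\overline{\Delta}$, and each $q_{k,n}$ and each $1/\overline{p}_{k,n}$ stays in a fixed compact annulus in $\Delta\setminus\{0\}$, all independently of $n$.

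Now apply Lemma \ref{Z-P} again, but to two points both distinct from $0,\infty$: it provides a uniform lower bound on the pairwise spherical distance between any two distinct elements of $\mathcal{Z}_n \cup \mathcal{P}_n$. In particular, no zero collides with a pole as $n\to\infty$. Passing to a subsequence, we may assume $\lambda_n \to \lambda_* \in \mathbb{T}$, $p_{k,n}\to p_k^* \in \mathbb{C}\setminus\overline{\Delta}$, and $q_{k,n}\to q_k^* \in \Delta\setminus\{0\}$, with all limit points pairwise distinct in $\mathbb{P}^1$ and lying strictly on the correct side of $\partial\Delta$. Consequently
\[
G_*(z) = \lambda_*\, z \prod_{k=1}^{d-1}\frac{z - p_k^*}{1 - \overline{p}_k^* z}\prod_{k=1}^{d-1}\frac{z - q_k^*}{1 - \overline{q}_k^* z}
\]
is a Blaschke product with no zero-pole cancellation, hence a rational map of degree exactly $2d-1$, i.e., $G_* \in \mathcal{R}_{2d-1}$. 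Since every subsequence has a further subsequence with a limit in $\mathcal{R}_{2d-1}$, the sequence $\{G_n\}$ lies in a compact subset of $\mathcal{R}_{2d-1}$.

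There is no serious obstacle here: the whole content is already packaged in Lemmas \ref{Z-P} and \ref{X-D}. The only thing to be careful about is translating "spherical distance bounded below from $0$ and $\infty$" into "Euclidean modulus bounded above and below," and verifying that degree-$(2d-1)$ is preserved in the limit because the separation in Lemma \ref{Z-P} prevents any zero from annihilating a pole.
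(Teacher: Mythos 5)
Your proof is correct and is exactly the elaboration the paper expects the reader to supply: the paper's own proof of Lemma~\ref{com-F} is a single sentence citing Lemma~\ref{Z-P} and Lemma~\ref{X-D}, and your argument unpacks precisely how those two lemmas force the zeros and poles of $G_n$ into fixed compact annuli, keep them uniformly separated, and hence prevent any zero-pole cancellation or degree drop in the limit. One cosmetic point: strictly speaking Lemma~\ref{Z-P} only separates \emph{distinct} elements of $P_n\cup\mathcal{Z}_n\cup\mathcal{P}_n$, so it does not by itself rule out $p_{j}^{*}=p_{k}^{*}$ when $p_{j,n}=p_{k,n}$ for all $n$; but a repeated zero or pole in the limit is harmless — what matters (and what you correctly use) is that no zero limit coincides with a pole limit, which is exactly what the uniform zero-pole separation guarantees.
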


\subsubsection{Bounded geometry of $P_{G_{n}}$ on $\partial \Delta$}
 By passing to a convergent subsequence, we may now assume that $G_{n} \to G$.
It follows that  $G|\partial \Delta $ is an analytic critical circle
homeomorphism with rotation number $\theta$. It was proved by Herman
and Swiatek that such a critical circle homeomorphism is
quasi-symmetrically conjugate to the rigid rotation $R_{\theta}$ if
$\theta$ is of $\emph{bounded type}$ (see \cite{Pe2} for a detailed
proof). Let $h: \partial \Delta \to \partial \Delta$ be the
quasi-symmetric homeomorphism such that
$$
h(1) = 1 \hbox{ and } G|\partial \Delta = h \circ R_{\theta} \circ
h^{-1}.
$$

Since $G_{n}$ and $F_{n}$ are $\emph{combinatorially equivalent}$ to
each other rel $P_{F_{n}}'$, there exist  a pair of homeomorphisms
$\phi_{n}, \psi_{n}: S^{2} \to {\Bbb P}^{1}$ such that
$$
     \begin{CD}
           (S^{2},P_{F_{n}}')           @  >\psi_{n}   >  >({\Bbb P}^{1}, P_{G_{n}}')         \\
           @V F_{n} VV                         @VV G_{n} V\\
           (S^{2},P_{F_{n}}')           @  >\phi_{n}   >  >      ({\Bbb P}^{1}, P_{G_{n}}')
     \end{CD}
$$
and $\phi_{n}$ is isotopic to $\psi_{n}$ rel $P_{F_{n}}'$.

\begin{lemma}\label{boundary}
$\psi_{n}|\partial \Delta \to h, \phi_{n}|\partial \Delta \to h$
uniformly as $n \to \infty$.
\end{lemma}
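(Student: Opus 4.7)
The plan is to pin down $\phi_n$ and $\psi_n$ on the $F_n$-periodic orbit $O_n \subset \partial \Delta$ using the combinatorial equivalence, show convergence there, and then upgrade to uniform convergence on $\partial \Delta$ via monotonicity.

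First I would normalize the representatives of the combinatorial equivalence so that $\phi_n$ and $\psi_n$ preserve $\partial \Delta$ setwise, restrict to orientation-preserving circle homeomorphisms, and satisfy $\phi_n(1) = \psi_n(1) = 1$. (The symmetry of $F_n$ and $G_n$ about $\partial \Delta$ and the fact that $O_n \subset \partial \Delta$ is a full periodic cycle mapped by $\phi_n$ onto the corresponding $G_n$-periodic cycle on its invariant circle make this standard.) Since $\phi_n \equiv \psi_n$ on $P_{F_n}' \supset O_n$ and $(F_n|\partial \Delta)(z) = e^{2\pi i \theta_n} z$, induction on $k$ in the commuting diagram $\phi_n \circ F_n = G_n \circ \psi_n$ starting from $\phi_n(1) = 1$ yields
\[
\phi_n(e^{2\pi i k \theta_n}) \;=\; \psi_n(e^{2\pi i k \theta_n}) \;=\; G_n^k(1), \qquad k \ge 0.
\]

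Next I pass to the limit. Because $G_n \to G$ uniformly on $\mathbb{P}^1$, for each fixed $k$ we have $G_n^k(1) \to G^k(1)$. The Herman--Swiatek conjugacy satisfies $h(1) = 1$ and $G|\partial\Delta = h \circ R_\theta \circ h^{-1}$, so $G^k(1) = h(e^{2\pi i k \theta})$. Combined with $\theta_n \to \theta$ and uniform continuity of $h$, this gives
\[
\phi_n(e^{2\pi i k \theta_n}) \;\longrightarrow\; h(e^{2\pi i k \theta}) \qquad \text{for every fixed } k \ge 0.
\]
Since $\theta$ is irrational, $\{e^{2\pi i k \theta} : k \ge 0\}$ is dense in $\partial \Delta$; modulo the harmless perturbation $e^{2\pi i k \theta_n} - e^{2\pi i k \theta} \to 0$, this is pointwise convergence of $\phi_n|\partial\Delta$ to $h$ on a dense subset of $\partial \Delta$.

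To upgrade to uniform convergence I use the standard monotonicity argument for circle homeomorphisms. Given $\varepsilon > 0$, pick finitely many exponents $k_1 < \cdots < k_M$ so that the points $w_j = e^{2\pi i k_j \theta}$ break $\partial \Delta$ into arcs of length less than $\varepsilon$ and their $h$-images also form arcs of length less than $\varepsilon$. For $n$ large, the $M$ points $w_j^{(n)} = e^{2\pi i k_j \theta_n}$ are $\varepsilon$-close to the $w_j$, and by the previous step $\phi_n(w_j^{(n)})$ is $\varepsilon$-close to $h(w_j)$. Since $\phi_n|\partial\Delta$ is an orientation-preserving homeomorphism, for any $z \in \partial \Delta$ the value $\phi_n(z)$ is sandwiched between two consecutive $\phi_n(w_j^{(n)})$, forcing $|\phi_n(z) - h(z)| = O(\varepsilon)$. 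The same argument gives $\psi_n|\partial\Delta \to h$ uniformly, because $\psi_n$ agrees with $\phi_n$ on $O_n$ and is itself a circle homeomorphism after normalization.

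The main obstacle I anticipate is the normalization in the first step: although $\partial \Delta$ is not in $P_{F_n}'$, so the isotopy rel $P_{F_n}'$ is a priori free to deform $\partial \Delta$, one must choose the isotopy class representative so that the restriction to $\partial \Delta$ is genuinely a circle homeomorphism with $\phi_n(1) = 1$. Once this topological bookkeeping is carried out, exploiting the symmetry of both $F_n$ and $G_n$ about $\partial \Delta$ and the preservation of the combinatorial cyclic order of $O_n$, the remainder of the proof is formal.
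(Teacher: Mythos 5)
Your proof is correct and follows essentially the same route as the paper: pin $\psi_n$ down on the periodic orbit via $\psi_n(e^{2\pi i k\theta_n}) = G_n^k(1)$, pass to the limit using $G_n \to G$ and $G^k(1) = h(e^{2\pi i k\theta})$, and upgrade the resulting dense-orbit convergence to uniform convergence by a three-point sandwiching argument that relies on the monotonicity of an orientation-preserving circle homeomorphism. The normalization of $\psi_n|\partial\Delta$ to a circle homeomorphism fixing $1$, which you correctly flag as the main bookkeeping step, is likewise used (implicitly) in the paper's proof when it asserts that $\psi_n$ maps arcs of length less than $6\delta_1$ to arcs shorter than half the circle.
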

\begin{proof} We need only to prove that $\psi_{n} \to h$ uniformly
as $n \to \infty$. The other one can be proved by the same argument.
Let $N$ be an integer such that the length of each interval
component of $\partial \Delta - \{G^{k}(1)\}_{0 \le k \le N}$ is
less than one-sixth of the whole circle. Since $G_{n} \to G$
uniformly as $n \to \infty$, it follows that when $n$ is large
enough, the length of each component of  $\partial \Delta -
\{G_{n}^{k}(1)\}_{0 \le k \le N}$ is less then one-fifth of the
whole circle. Let $$\delta_{1} = \min \{|I|/6 \:\big{|}\:I
\hbox{\:is a component of \:}\partial \Delta - \{e^{2 k \pi i
\theta}\}_{0 \le k \le N}\}.$$ It follows that for every $n$ large
enough, the image of an arc segment with length less than
$6\delta_{1}$ will be mapped by $\psi_{n}$  to some arc segment less
than one half of the whole circle. In fact, if $\psi_{n}(I)$
contains a half of the circle, then it contains at least two
components of $\partial \Delta - \{G_{n}^{k}(1)\}_{0 \le k \le N}$.
This implies that $I$ contains at least two components of $\partial
\Delta - \{e^{2 k \pi i \theta}\}_{0 \le k \le N}$. But this is a
contradiction with the definition of $\delta_{1}$.

Now for any given $\epsilon > 0$, since $h$ is uniformly continuous,
we have a $\delta_{2}>0$ such that for any $x, x' \in \partial
\Delta$ and $|x - x'| < 4 \delta_{2}$ ,
\begin{equation}\label{con-h}
|h(x)- h(x')| < \epsilon/5.
\end{equation}
Take $\delta = \min \{\delta_{1}, \delta_{2}\}$. For such $\delta$,
there is an integer $M  >0$ such that for any $x$ in the unit
circle, there are two integers $0< k_{1}, k_{2}<M$ such that
\begin{equation}\label{sita}
e^{2\pi ik_{1}\theta} \in (x+\delta, x+2\delta) \hbox{ and } e^{2\pi
ik_{2}\theta} \in (x-2\delta, x-\delta).
\end{equation}
 For such
$\epsilon, \delta$, and $M$, take $N$ large enough such that when $n
>N$,
\begin{itemize}
\item[i.] $|\theta_{n}- \theta| < \delta / 2\pi M $;
\item[ii.] $|G_{n}^{k}(x) - G^{k}(x)| < \epsilon /5$ for all
$1 \le k \le M$ and all $x \in \partial \Delta$;
\end{itemize}
From (\ref{sita}) and Property (i) above, it follows that we have
\begin{equation}\label{sitan}
e^{2\pi ik_{1}\theta_{n}} \in (x, x+3\delta)\quad   \hbox{and} \quad
e^{2\pi i k_{2}\theta_{n}} \in (x-3\delta, x).
\end{equation}
This implies that $e^{2 \pi ik_{1}\theta_{n}}, x$, and $e^{2 \pi
ik_{2}\theta_{n}}$ are contained in an arc segment with length less
then $6 \delta$, which is mapped by $\psi_{n}$  to some arc segment
less than one half of the  circle. It follows that
$$| \psi_{n}(x) - \psi_{n}(e^{2\pi ik_{1} \theta_{n}})| \le
|\psi_{n}(e^{2\pi ik_{1} \theta_{n}}) -\psi_{n}(e^{2\pi i k_{2}
\theta_{n}})|.$$ We thus have the following,
$$
|\psi_{n}(x) -h(x)| \leq | \psi_{n}(x) - \psi_{n}(e^{2\pi ik_{1}
\theta_{n}})| + |\psi_{n}(e^{2\pi ik_{1}\theta_{n}}) -h(e^{2\pi
k_{1}\theta})| $$$$ +|h (e^{2\pi k_{1}\theta})-h(x)|
$$
$$
\leq |\psi_{n}(e^{2\pi ik_{1} \theta_{n}}) -\psi_{n}(e^{2\pi i k_{2}
\theta_{n}})| + |\psi_{n}(e^{2\pi ik_{1}\theta_{n}}) -h(e^{2\pi i
k_{1}\theta})|$$$$ +|h(e^{2\pi k_{1}\theta})-h(x)|
$$
$$
=|G_{n}^{k_{1}}(1)-G_{n}^{k_{2}}(1)| +|G_{n}^{k_{1}}(1)
-G^{k_{1}}(1)| + |h (e^{2\pi i k_{1}\theta}) -h(x)|
$$
$$
\leq |G_{n}^{k_{1}}(1)-G^{k_{1}}(1)| +|G^{k_{1}}(1) -G^{k_{2}}(1)|+
|G^{k_{2}}(1)-G^{k_{2}}_{n}(1)|
$$
$$+|G^{k_{1}}_{n}(1) -G^{k_{1}}(1)|+ |h(e^{2\pi i k_{1}\theta}) -h(x)| \leq \epsilon
$$

Let us explain how the last inequality comes.  The inequalities
$|G_{n}^{k_{1}}(1)-G^{k_{1}}(1)| < \epsilon/5$,
$|G^{k_{2}}(1)-G^{k_{2}}_{n}(1)|< \epsilon/5$, and
$|G^{k_{1}}_{n}(1) -G^{k_{1}}(1)| < \epsilon/5$ come from the
property (ii) above. The inequality $|G^{k_{1}}(1) -G^{k_{2}}(1)| <
\epsilon/5$ comes from (\ref{con-h}) and (\ref{sita}).  The
inequality $|h(e^{2\pi i k_{1}\theta}) -h(x)|< \epsilon/5$ comes
from (\ref{con-h}) and (\ref{sitan}).

\end{proof}

\subsection{The Candidate Blaschke Product}
In this section, we will show that $G$ is the desired Blaschke
product by showing that there are homeomorphisms $\phi, \psi: S^{2}
\to S^{2}$ which fix $0$, $1$, and $\infty$, such that $G = \phi
\circ F \circ \psi^{-1}$ and $\phi, \psi$ are isotopic to each other
rel $P_{F}'$.  Recall that for every $n$ large enough, there is a
pair of homeomorphisms $\phi_{n}$ and $\psi_{n}$ such that $G_{n} =
\phi_{n} \circ F_{n} \circ \psi_{n}^{-1}$ and $\phi_{n}$ and
$\psi_{n}$ are isotopic to each other rel $P_{F_{n}}'$. The aim of
this section is to show that the homotopy classes of $\phi_{n}$ and
$\psi_{n}$ converge to the same one as $n \to \infty$.

First we will show that for every $n$ large enough, by deforming
$\phi_{n}$ and $\psi_{n}$ in their isotopy class, we can make
$\phi_{n}$ and $\psi_{n}$ satisfy some local properties around each
point in $\Omega_{F_{n}} \cup P_{F_{n}}'$ (Lemma~\ref{adjust}).
Secondly we will  prove that for every $\rho > 0$, provided that $n$
is large enough, the map  $\phi_{n}$ and $\psi_{n}$ can be perturbed
within their $\rho-$neighborhood into a pair of homeomorphisms
$\widehat{\phi}_{n}$ and $\widehat{\psi}_{n}$ such that  $G =
\widehat{\phi}_{n} \circ F \circ \widehat{\psi}_{n}^{-1}$
(Lemma~\ref{pair}). Finally we will prove that when $\rho$ is small,
the maps $\widehat{\phi}_{n}$ and $\widehat{\psi}_{n}$ are isotopic
to each other rel $P_{F}'$(Lemma~\ref{iso}).

\subsubsection{Deforming $\phi_{n}$ and $\psi_{n}$ in their isotopy class}

 Let $r> 0$ be a number  such that
 $$
 d_{S^{2}}(x, y) > r
 $$
for any two distinct points $x$ and $y$ in $\Omega_{F} \cup (P_{F}'
-\partial \Delta)$.  Since $F_{n} \to F$ uniformly, it follows that
for any $x \in \Omega_{F} \cup (P_{F}' -
\partial \Delta)$, and every large $n$,  $B_{r/3}(x)$ contains
exactly one point in  $\Omega_{F_{n}} \cup (P_{F_{n}}' - \partial
\Delta) $. Let us denote this point by $\tau_{n}(x)$. It is easy to
see that $\tau_{n}(x) \to x$ as $n \to \infty$. By passing to a
convergent subsequence, and by Lemma~\ref{com-F}, we may assume that
for any $x \in \Omega_{F} \cup (P_{F}' - \partial \Delta)$,
$\psi_{n}(\tau_{n}(x))$ and $\phi_{n}(F_{n}(\tau_{n}(x)))$ converge
as $n \to \infty$.

\begin{lemma}\label{adjust}
For any  $r, \delta > 0$ there exist $0< r_{0} < r$ and $0<
\delta_{0} < \delta$, such that for any $0< r' < r_{0}$ and  $0<
\delta' < \delta_{0}$, there exist $0 < r_{1} < r_{2}< r_{3}< r' $,
and $0< \delta_{1} < \delta_{2}< \delta_{3}< \delta'$  such that for
every $n$ large enough, there exist homeomorphisms $\phi_{n}$ and
$\psi_{n}: S^{2} \to S^{2}$ such that \begin{itemize} \item[1.]
$\psi_{n}$ and $\phi_{n}$ are isotopic to each other rel
$P_{F_{n}}'$, \item[2.]  $G_{n} = \phi_{n} \circ F_{n} \circ
\psi_{n}^{-1}$, \item[3.]  for any $x \in \Omega_{F} \cup (P_{F}' -
\partial \Delta)$, by taking a convergent subsequence, $\psi_{n}(\tau_{n}(x))$ and
$\phi_{n}(F_{n}(\tau_{n}(x)))$ converge as $n \to \infty$,
\item[4.]
for every $x \in P_{F_{n}}' \cup \Omega_{F_{n}}$,
$B_{\delta_{1}}(\phi_{n}(x)) \subset \phi_{n}(B_{r_{1}}(x)) \subset
\phi_{n}(B_{r_{2}}(x)) \subset B_{\delta_{2}}(\phi_{n}(x)) \subset
\phi_{n}(B_{r_{3}}(x)) \subset B_{\delta_{3}}(\phi_{n}(x))  \subset
B_{\delta_{0}}(\phi_{n}(x)) \subset \phi_{n}(B_{r_{0}}(x))$ and this
inclusion relation also holds if we replace $\phi_{n}$ by
$\psi_{n}$.
\end{itemize}
\end{lemma}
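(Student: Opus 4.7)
The plan is to take the existing pair $(\phi_n,\psi_n)$ furnished by the combinatorial equivalence $G_n = \phi_n \circ F_n \circ \psi_n^{-1}$ and to deform each of $\phi_n$ and $\psi_n$ within its isotopy class rel $P_{F_n}'$ by ambient isotopies supported in disjoint small disks around the finitely many points of $P_{F_n}' \cup \Omega_{F_n}$, arranging that at the end, each map satisfies the nested-ball condition of item (4). The radii $r_0,\delta_0$ are taken small enough that these supporting disks are pairwise disjoint and remain disjoint from $\partial \Delta$, uniformly in $n$; the six intermediate radii are then chosen with enough separation to accommodate a radial ambient isotopy near each marked point.

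First, using Lemma~\ref{com-F} together with Lemmas~\ref{Z-P} and~\ref{X-D}, I would take $r_0$ to be one third of the minimum of $r$ and the minimum spherical distance between distinct points of $\Omega_F \cup (P_F' - \partial\Delta)$, and choose $\delta_0$ analogously using the uniform separation of the points of $\mathcal{Z}_n \cup \mathcal{P}_n \cup P_n$ and their uniform distance from $\partial\Delta$. By the uniform convergence $F_n \to F$ and the compactness of $\{G_n\}$, these choices work uniformly in $n$ for all $n$ large. In particular, for each $x \in \Omega_F \cup (P_F' - \partial\Delta)$ the disk $B_{r_0}(x)$ contains exactly one preimage point $\tau_n(x) \in \Omega_{F_n} \cup (P_{F_n}' - \partial\Delta)$, and $\phi_n(\tau_n(x)),\psi_n(\tau_n(x))$ take values in a compact set.

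Given $r' < r_0$ and $\delta' < \delta_0$, I would pick strictly nested radii $r_1 < r_2 < r_3 < r'$ and $\delta_1 < \delta_2 < \delta_3 < \delta'$ with ratios wide enough (say each at most a tenth of the next) to carry out the radial isotopy. For each $x \in \Omega_{F_n} \cup P_{F_n}'$, an ambient isotopy of the sphere supported in $B_{\delta_0}(\phi_n(x))$ and fixing $\phi_n(x)$ can be applied to $\phi_n$ so that $\phi_n(\partial B_{r_i}(x))$ coincides with $\partial B_{\delta_j}(\phi_n(x))$ in the prescribed nested pattern; such an isotopy exists by the standard annulus theorem, since each $\phi_n(B_{r_i}(x))$ is a topological disk containing $\phi_n(x)$ and sitting inside the larger disk $\phi_n(B_{r_0}(x))$. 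The corresponding modification of $\psi_n$ at non-critical marked points is the analogous direct modification, and at each critical point $c \in \Omega_{F_n}$ is the lift of the modification of $\phi_n$ near $F_n(c)$ through the local branched cover $F_n\colon B_{r_0}(c) \to F_n(B_{r_0}(c))$, followed by the corresponding lift through the branched cover $G_n$ on the target side. Property (3) is then arranged by extracting a diagonal convergent subsequence over the finite set $\Omega_F \cup (P_F' - \partial\Delta)$.

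The main obstacle I anticipate is the coordination at critical points so that the resulting pair still satisfies $G_n = \phi_n \circ F_n \circ \psi_n^{-1}$ and remains isotopic rel $P_{F_n}'$. Concretely, if $H_t$ is the ambient isotopy used to modify $\phi_n$ near $\phi_n(F_n(c)) = G_n(\psi_n(c))$, then in order to preserve the semiconjugacy one must replace $\psi_n$ by $G_n^{-1} \circ H_1 \circ G_n \circ \psi_n$ on the appropriate small disk around $\psi_n(c)$, where $G_n^{-1}$ denotes the local branch through $\psi_n(c)$; this is a genuine sphere homeomorphism precisely because $H_1$ can be chosen equivariant under the local rotational symmetry of degree $d_c = \deg_{c}F_n = \deg_{\psi_n(c)}G_n$. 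The buffer $r_0 \gg r_3$ provides room to make this equivariant choice, and since each supporting disk contains only one marked point, the isotopy class of $(\phi_n,\psi_n)$ rel $P_{F_n}'$ is preserved throughout the deformation.
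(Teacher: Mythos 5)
Your proposal is correct and takes essentially the paper's route: fix $r_{0},\delta_{0}$ using the uniform bounded geometry of $P_{G_{n}}$ (Lemmas~\ref{Z-P}, \ref{X-D}, \ref{com-F}), then deform $\phi_{n}$ by isotopies supported in shrinking disks around the marked points and re-derive $\psi_{n}$ as the lift through $G_{n}$, iterating so that each new pair of inclusions is achieved without disturbing the outer ones. One small correction to your exposition, which does not affect the argument: the adjustment of $\psi_{n}$ near a critical point $c$ is a \emph{single} lift of $H$ through the local branch of $G_{n}$ at $\psi_{n}(c)$ (not a composite lift through $F_{n}$ and then $G_{n}$), and the equivariance hypothesis on $H$ is unnecessary---any ambient isotopy fixing the branch value lifts to a homeomorphism through a one-branch-point branched cover of the disk, since $z\mapsto H(z^{d_{c}})$ then factors as $\Phi(z)^{d_{c}}$ for a unique $\Phi$ agreeing with the identity outside the support of $H$.
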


\begin{proof}From the previous sections, it follows that for every $n$
large enough, there exist a pair of homeomorphisms $\phi_{n}$ and
$\psi_{n}$ such that (1), (2) and (3) hold. For any $r, \delta> 0$,
since $P_{G_{n}}$ has uniform bounded geometry(see Lemma~\ref{Z-P},
\ref{X-D}, and \ref{boundary}), we can take $r_{0}' \ll r$ and
$\delta_{0}' \ll \delta$ such that for every $n$ large enough,
$\phi_{n}$ can be deformed in its isotopic class  so that it
satisfies
$$
B_{\delta_{0}'}(\phi_{n}(x)) \subset
\phi_{n}(B_{r_{0}'}(x))
$$ for every $x \in P_{F_{n}}' \cup
\Omega_{F_{n}}$. Then we lift $\phi_{n}$ by the equation $$\phi_{n}
\circ F_{n} = G_{n} \circ \psi_{n}$$ and get $\psi_{n}$. Since
$F_{n} \to F$ and $G_{n} \to G$ uniformly in the spherical metric,
it follows that there exist $r_{0}''$ and $\delta_{0}''$, which can
be taken arbitrarily small provided that  $r_{0}'$ and $\delta_{0}'$
are small, such that $$B_{\delta_{0}''}(\psi_{n}(x)) \subset
\psi_{n}(B_{r_{0}''}(x))$$ for every $x \in P_{F_{n}}' \cup
\Omega_{F_{n}}$. Now take $r_{0} = \max\{r_{0}', r_{0}''\}$ and
$\delta_{0} = \min\{\delta_{0}', \delta_{0}''\}$. By taking $r_{0}',
\delta_{0}'$ small enough, we can assure $r_{0} < r$ and $\delta_{0}
< \delta$. In particular,
$$
B_{\delta_{0}}(\phi_{n}(x)) \subset \phi_{n}(B_{r_{0}}(x)), \hbox{
and } B_{\delta_{0}}(\psi_{n}(x)) \subset \psi_{n}(B_{r_{0}}(x))
$$ for
every $x \in P_{F_{n}}' \cup \Omega_{F_{n}}$. Now let $r' < r_{0}$
and $\delta' < \delta_{0}$ be given. We may use the same process to
get $r_{3}, \delta_{3}$ as follows.

Deform $\phi_{n}$ in a smaller disk around each point $x \in
P_{F_{n}}' \cup \Omega_{F_{n}}$ so that
$$
\phi_{n}(B_{r_{3}'}(x)) \subset B_{\delta_{3}'}(\phi_{n}(x))\subset
B_{\delta_{0}}(\phi_{n}(x))
$$
for some  $r_{3}'\ll r', \delta_{3}'\ll \delta'$, and then get
$\psi_{n}$ by lifting $\phi_{n}$. As in the last step, By choosing
$r_{3}', \delta_{3}'$ small, we can  get  $0< r_{3}< r'$ and $0<
\delta_{3}< \delta'$, such that
$$
\phi_{n}(B_{r_{3}}(x)) \subset B_{\delta_{3}}(\phi_{n}(x))\subset
B_{\delta_{0}}(\phi_{n}(x))
$$ and
$$
\psi_{n}(B_{r_{3}}(x)) \subset B_{\delta_{3}}(\psi_{n}(x)) \subset
B_{\delta_{0}}(\psi_{n}(x)) $$ for all $x \in P_{F_{n}}' \cup
\Omega_{F_{n}}$. Since we deform $\phi_{n}$ only  in a smaller disk,
this step will not affect the relations obtained in the last step.
We may repeat this procedure and get $r_{1}, r_{2}, \delta_{1},
\delta_{2}$ so that the corresponding relations are also satisfied.
The proof of the lemma is completed.
\end{proof}

\subsubsection{Perturbing $\phi_{n}$ and $\psi_{n}$}
\begin{lemma}\label{pair}
Let $\rho > 0$ be an arbitrary number. Then there exists an $N
> 0$ such that for every $n > N$, there exist homeomorphisms
$\widehat{\phi}_{n}$, $\phi_{n}$,  $\widehat{\psi}_{n}$, and
$\psi_{n}$ of the sphere such that
\begin{itemize}
\item[1.]  $G_{n} = \phi_{n} \circ F_{n} \circ \psi_{n}^{-1}$,
and $G = \widehat{\phi}_{n} \circ F \circ
\widehat{\psi}_{n}^{-1}$,
\item[2.] $\phi_{n}$ and  $\psi_{n}$ are isotopic to each other rel
$P_{F_{n}}'$,
\item[3.] $\max_{z \in S^{2}}d_{S^{2}}(\widehat{\phi}_{n}(z),
\phi_{n}(z))< \rho$ and $\max_{z \in
S^{2}}d_{S^{2}}(\widehat{\psi}_{n}(z), \psi_{n}(z)) < \rho$,
\item[4.] $\widehat{\phi}_{n}(\Omega_{F}) =
\widehat{\psi}_{n}(\Omega_{F}) = \Omega_{G}$, and
$\widehat{\phi}_{n}(P_{F}') = \widehat{\psi}_{n}(P_{F}') = P_{G}'$.
\item[5.] $\widehat{\phi}_{n}|\partial \Delta  =
\widehat{\psi}_{n}|\partial \Delta = h$ where $h: \partial \Delta
\to \partial \Delta$ is the quasi-symmetric homeomorphism in
Lemma~\ref{boundary}.
\end{itemize}

\end{lemma}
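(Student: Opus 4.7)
The plan is to construct a single pair $(\widehat{\phi}, \widehat{\psi})$ of homeomorphisms of $S^{2}$ realizing the conjugacy $G = \widehat{\phi} \circ F \circ \widehat{\psi}^{-1}$, and then set $\widehat{\phi}_{n} := \widehat{\phi}$ and $\widehat{\psi}_{n} := \widehat{\psi}$ for every $n$ large enough along the convergent subsequence provided by Lemma~\ref{adjust}(3). Properties (1), (4), and (5) will then be built into $(\widehat{\phi}, \widehat{\psi})$ directly, while (3) will reduce to proving $\phi_{n} \to \widehat{\phi}$ and $\psi_{n} \to \widehat{\psi}$ uniformly on $S^{2}$.

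First I would build $\widehat{\psi}$ by specifying its values on a skeleton and extending. On $\partial \Delta$, set $\widehat{\psi} = h$, the Herman--Swiatek conjugacy of Lemma~\ref{boundary}. On the finite set $\Omega_{F} \cup (P_{F}' - \partial \Delta)$, invoke Lemma~\ref{adjust}(3) to define $\widehat{\psi}(x) := \lim_{n \to \infty} \psi_{n}(\tau_{n}(x))$; these limits lie in $\Omega_{G} \cup (P_{G}' - \partial \Delta)$, since $\psi_{n}$ sends $\Omega_{F_{n}}$ to $\Omega_{G_{n}}$ and $P_{F_{n}}'$ to $P_{G_{n}}'$, and the latter sets converge to $\Omega_{G}, P_{G}'$ by Lemma~\ref{com-F}. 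To extend over the rest of $S^{2}$, use the nested balls of Lemma~\ref{adjust}(4) to produce a local homeomorphism with controlled image near each skeleton point, interpolate in an annular neighbourhood of $\partial \Delta$ using uniform continuity of $h$, and close up elsewhere by any Alexander-type radial extension. The nested inclusions of Lemma~\ref{adjust}(4) let us arrange the extension so that $\widehat{\psi}$ lies within spherical distance $\rho$ of $\psi_{n}$ for all $n$ large.

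Next, I define $\widehat{\phi}$ by the equation $\widehat{\phi} \circ F = G \circ \widehat{\psi}$, i.e.\ $\widehat{\phi}(F(z)) := G(\widehat{\psi}(z))$. Well-definedness amounts to the assertion that $\widehat{\psi}$ sends fibers of $F$ to fibers of $G$; this is obtained by passing to the limit in the identity $\phi_{n} \circ F_{n} = G_{n} \circ \psi_{n}$, using the uniform convergence $F_{n} \to F$ and $G_{n} \to G$ together with the bounded geometry from Lemmas~\ref{Z-P} and \ref{X-D} to rule out fibers collapsing in the limit. The resulting $\widehat{\phi}$ is a homeomorphism whose local degrees at critical points match those of $G$, because the combinatorial equivalence of $F_{n}$ and $G_{n}$ respects local degrees and this property persists in the limit. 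On $\partial \Delta$, from $G|\partial \Delta = h \circ R_{\theta} \circ h^{-1}$, $F|\partial \Delta = R_{\theta}$, and $\widehat{\psi}|\partial \Delta = h$, one computes $\widehat{\phi}(F(z)) = G(h(z)) = h(R_{\theta}(z)) = h(F(z))$, forcing $\widehat{\phi}|\partial \Delta = h$, so (4) and (5) hold by construction.

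Finally, to obtain the closeness estimate (3), I would extract a uniformly convergent subsequence from $\{\phi_{n}\}$ and $\{\psi_{n}\}$ using the equicontinuity supplied by Lemma~\ref{com-F} and Lemma~\ref{adjust}(4), and then pass to the limit in $G_{n} = \phi_{n} \circ F_{n} \circ \psi_{n}^{-1}$ to verify that the limit pair conjugates $F$ to $G$. Since the skeleton construction of $(\widehat{\phi}, \widehat{\psi})$ is tailored to coincide with these limits on $\Omega_{F} \cup P_{F}' \cup \partial \Delta$, the extension can be arranged so that the limit pair is precisely $(\widehat{\phi}, \widehat{\psi})$, giving the uniform approximation required by (3). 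The main obstacle I expect is the well-definedness step for $\widehat{\phi}$: ensuring that the fiber structure of $F_{n}$ relative to $G_{n}$ survives the limit without identifications or splittings, which is where the uniform lower bound of Lemma~\ref{ml} on hyperbolic lengths of simple closed geodesics crossing the unit circle becomes essential, guaranteeing that no sequence of pairs of $F_{n}$-preimages of a common point can drift apart as $n \to \infty$.
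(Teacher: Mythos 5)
Your proposal rests on an unjustified assumption: that the conjugating homeomorphisms $\phi_n, \psi_n$ admit a uniformly convergent subsequence on $S^2$. These maps come from Thurston's theorem and are specified only up to isotopy rel $P_{F_n}'$; the local adjustments of Lemma~\ref{adjust} pin them down only inside small disks around finitely many points, and Lemma~\ref{boundary} controls them only on $\partial\Delta$. Neither Lemma~\ref{com-F} (compactness of the Blaschke products $G_n$ in rational maps) nor Lemma~\ref{adjust}(4) (nested-ball control near $P_{F_n}'\cup\Omega_{F_n}$) gives a global modulus of continuity for the family $\{\psi_n\}$; indeed Lemma~\ref{adjust}(3) only asserts convergence of the \emph{values} $\psi_n(\tau_n(x))$ at finitely many points $x$, not of the maps. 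Without uniform convergence, your plan to use a single pair $(\widehat{\phi},\widehat{\psi})$ for all large $n$ cannot satisfy (3), since $\phi_n$ itself may drift. The same gap breaks the well-definedness of $\widehat{\phi}$: you set $\widehat{\phi}(F(z)) := G(\widehat{\psi}(z))$ and argue fiber-compatibility ``by passing to the limit in $\phi_n\circ F_n = G_n\circ\psi_n$,'' but that limit presupposes $\psi_n\to\widehat{\psi}$. Your $\widehat{\psi}$ is an ad hoc skeleton-plus-Alexander extension, and a generic extension of $h$ and the skeleton values has no reason to carry $F$-fibers into $G$-fibers; fiber-compatibility of a homeomorphism with a pair of branched coverings is a rigid global condition, not something inherited from matching on finitely many points and a circle.

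The equation $\widehat{\phi}\circ F = G\circ\widehat{\psi}$ is asymmetric: read as a lifting problem for $\widehat{\psi}$ through the covering $G$ it is tractable, whereas read as a factorization problem for $\widehat{\phi}$ it requires $\widehat{\psi}$ to already be fiber-compatible. The paper exploits this asymmetry by going the other way: perturb $\phi_n$ first to an $n$-dependent homeomorphism $\widehat{\phi}_n$ close to $\phi_n$ with $\widehat{\phi}_n(P_F')=P_G'$, $\widehat{\phi}_n(\Omega_F)=\Omega_G$, and $\widehat{\phi}_n|\partial\Delta=h$, then define $\widehat{\psi}_n$ by lifting $\widehat{\phi}_n\circ F$ through $G$, choosing the branch of $G^{-1}$ near $\psi_n(x)$ (local injectivity of $G$ away from $\Omega_G$ makes the choice unique), and matching local degrees of $F$ and $G$ at critical points to fill in. Fiber-compatibility is then automatic, $d(\widehat{\psi}_n,\psi_n)$ is controlled by $d(\widehat{\phi}_n,\phi_n)$ together with $F_n\to F$, $G_n\to G$, and Lemma~\ref{ml} never enters the construction of $\widehat{\psi}_n$ at all — the lift is built pointwise from a covering map, not obtained as a limit that might collapse fibers.
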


\begin{proof}
Let  $$0< r_{1}< r_{2}<r_{3}< r'< r_{0}< r $$ and $$0< \delta_{1} <
\delta_{2}< \delta_{3}< \delta'< \delta_{0}< \delta,
$$ be a group of constants as in Lemma~\ref{adjust} such that  $$d_{S^{2}}(x, y) > \delta$$ for any two distinct
points $x$ and $y$  in $\Omega_{G} \cup (P_{G}' -
\partial \Delta)$.   Let $\phi_{n}$ and
$\psi_{n}$ be homeomorphisms which satisfy the conditions in Lemma
~\ref{adjust} with the constants given above.  We will adjust these
constants appropriately as the proof proceeds.

By taking  $r'$ small, and hence $r_{2}$ small, we may assume that
for each $c \in \Omega_{F}$, there is an open topological  disk
$B_{c}$ containing $c$ such that
\begin{itemize}
\item[1.] $F: B_{c} \to B_{r_{2}}(F(c))$ is a $d_{c}$-to-$1$ branched
covering map, where $d_{c} \ge 2$ is the local degree of $F$ at $c$,
\item[2.]   for $c \in \Omega_{F} -
\partial \Delta$, $(B_{c} - \{c\}) \cap  P_{F}' = \emptyset$,
\item[3.] all $B_{c},  c \in \Omega_{F}$, are disjoint.
\end{itemize}

Since $F_{n} \to F$ uniformly as $n \to \infty$, it follows that for
any $c \in \Omega_{F}$ and every $n$ large enough,
$$
B_{r_{1}}(F_{n}(\tau_{n}(c))) \subset  F_{n}(B_{c}) \subset
B_{r_{3}}(F_{n}(\tau_{n}(c))).
$$
This, together with (4) of Lemma~\ref{adjust}, implies that
$$
B_{\delta_{1}}(\phi_{n}(F_{n}(\tau_{n}(c)))) \subset
\phi_{n}(B_{r_{1}}(F_{n}(\tau_{n}(c))))\subset
\phi_{n}(F_{n}(B_{c})) = G_{n}\circ \psi_{n}(B_{c})
$$
and
$$
G_{n} \circ \psi_{n}(B_{c}) = \phi_{n}(F_{n}(B_{c})) \subset
\phi_{n}(B_{r_{3}}(F_{n}(\tau_{n}(c)))) \subset
B_{\delta_{3}}(\phi_{n}(F_{n}(\tau_{n}(c)))).
$$

From  $$B_{\delta_{1}}(\phi_{n}(F_{n}(\tau_{n}(c)))) \subset
G_{n}\circ \psi_{n}(B_{c})$$ and  $$G_{n} \circ \psi_{n}(B_{c})
\subset B_{\delta_{3}}(\phi_{n}(F_{n}(\tau_{n}(c)))),$$ it follows
that there exist $\nu , \mu> 0$ such that for every $n $ large
enough, and any $c \in \Omega_{F}$,
\begin{equation}\label{app-rad}
B_{\mu}(\psi_{n}(\tau_{n}(c)))  \subset \psi_{n}(B_{c}) \subset
B_{\nu}(\psi_{n}(\tau_{n}(c))).
\end{equation}
 Since as $\delta'
\to 0$, $\delta_{1}, \delta_{3} \to 0$, one can take $\mu$ and $\nu$
such that $\mu, \nu \to 0$ as $\delta' \to 0$. In particular, by
taking $\delta'$ small, we may assume that $\nu \le \delta_{0}/40$.

Set
$$
U = \bigcup_{c \in \Omega_{F}} B_{c}.
$$

From the first inclusion of (\ref{app-rad}) and the fact that
$\tau_{n}(c) \to c$ for any $c \in \Omega_{F}$, it follows that for
 all $n$ large enough,
\begin{equation}\label{dis-later}
B_{\mu/2 }(\psi_{n}(z)) \cap \Omega_{G} = \emptyset
\end{equation}
holds for any $ z \in S^{2} - U$. For such $\mu$, there
is an $0< \epsilon< \mu /10$ such that for any $z \in S^{2} -
\bigcup_{c \in \Omega_{G}}B_{\mu/2}(c)$, $G$ is injective on the
disk $B_{\epsilon}(z)$.

For any $\eta>0$, from the bounded geometry of $P_{G_{n}}'$, and
Lemma~\ref{boundary}, there is an $N$ large enough, such that for
every $n
> N$, there exists a homeomorphism $\widehat{\phi}_{n}:S^{2} \to
S^{2}$ such that
\begin{itemize}
\item[i.] $d(\widehat{\phi}_{n}, \phi_{n}) = \max_{z \in S^{2}}
d_{S^{2}}(\phi_{n}(z), \widehat{\phi}_{n}(z)) \le \eta$,
\item[ii.] $\widehat{\phi}_{n}(P_{F}') = P_{G}'$,
\item[iii.] $\widehat{\phi}_{n}(\Omega_{F}) = \Omega_{G}$,  and
\item[iv.] $\widehat{\phi}_{n}|\partial \Delta = h$,
where $h: \partial \Delta \to \partial \Delta$ is the
quasi-symmetric homeomorphism in Lemma~\ref{boundary}.
\end{itemize}

We now claim that by taking $\eta>0$ small enough, we can make sure
that  for every $n$ large enough, and  any  $x \in S^{2} - U$, there
is a unique point $y \in B_{\epsilon}(\psi_{n}(x))$, such that
\begin{equation}
G(y) = \widehat{\phi}_{n}(F(x))
\end{equation}
where $\widehat{\phi}_{n}$ is the map defined previously so that (i)
- (iv) are satisfied.

Let us prove the claim. Since $F_{n} \to F$ and $G_{n} \to G$
uniformly as $n \to \infty$, when $\eta$ is small and $n$ is large
enough, $\widehat{\phi}_{n}(F(x)) \in G(B_{\epsilon}(\psi_{n}(x))$.
This implies the existence of the point $y$. Since
$B_{\mu/2}(\psi_{n}(x)) \cap \Omega_{G} = \emptyset$ by
(\ref{dis-later}), from the choice of $\epsilon$ above, it follows
that $G$ is injective on $B_{\epsilon}(\psi_{n}(x))$. Therefore,
such $y$ must be unique and does not belong to $\Omega_{G}$. This
proves the claim. We define $\widehat{\psi}_{n}(x) = y$ for $x \in
S^{2} - U$. It follows that $\widehat{\psi}_{n}$ is continuous and
locally injective in $S^{2} - U$, and
\begin{equation}\label{no-cri}
\widehat{\psi}_{n}(S^{2} - U) \cap \Omega_{G} = \emptyset.
\end{equation}
Since $\widehat{\psi}_{n}(x) = y\in B_{\epsilon}(\psi_{n}(x))$, it
follows that $|\widehat{\psi}_{n}(x) - \psi_{n}(x)| < \epsilon$.

We now claim that for all $n$ large enough, and every $c \in
\Omega_{F}$, $\widehat{\psi}_{n}|\partial B_{c}$ is injective and
hence $\widehat{\psi}_{n}(\partial B_{c})$ is a Jordan curve. In
fact,  since for each $c \in \Omega_{F}$, by the definition of
$B_{c}$, $F: B_{c} \to B_{r_{2}}(F(c))$ is a $d_{c}$-to-$1$ branched
covering map, where $d_{c} \ge 2$ is the local degree of $F$  at
$c$, it follows that $F(\partial B_{c}) =
\partial B_{r_{2}}(F(c))$ is a Jordan curve, and hence
$\widehat{\phi}_{n}(F(\partial B_{c}))$ is a Jordan curve. From the
construction of $\widehat{\psi}_{n}$, we have
\begin{equation}\label{cover-lift}
G(\widehat{\psi}_{n}(\partial B_{c})) =
\widehat{\phi}_{n}(F(\partial B_{c})).
\end{equation}
Since $\widehat{\psi}_{n}(\partial B_{c}) \cap \Omega_{G} =
\emptyset$ by (\ref{no-cri}), it follows that
$\widehat{\psi}_{n}(\partial B_{c})$ does not intersect with itself,
and is therefore  a Jordan curve. Note that by the construction, we
have
\begin{itemize}
\item[1.]   $B_{\mu}(\psi_{n}(\tau_{n}(c))) \subset \psi_{n}(B_{c})$ by (\ref{app-rad}),
\item[2.]   $\epsilon < \mu/10$ by the choice of $\epsilon$,
\item[3.]   $\psi_{n}|\partial B_{c}: \partial B_{c} \to \psi_{n}(\partial B_{c})$ is a homeomorphism,
\item[4.]   $|\widehat{\psi}_{n}(z) - \psi_{n}(z)| < \epsilon$ for $z \in
\partial B_{c}$.
\end{itemize}
All the above implies that the topological degree of
$\widehat{\psi}_{n}:
\partial B_{c} \to \widehat{\psi}_{n}(\partial B_{c})$ must be  $1$.
Since $\widehat{\psi}_{n}$ is locally injective in $S^{2} - U$, in
particular, it is locally injective on $\partial B_{c}$. It follows
that $\widehat{\psi}_{n}$ is injective on $\partial B_{c}$. The
claim follows.

For any  $c \in \Omega_{F}$, by (3) of Lemma~\ref{adjust},
$\psi_{n}(\tau_{n}(c))$ converges. Let us denote it by $c'$. From
(\ref{app-rad}), and the fact that $0< \epsilon < \mu/10$, and that
$|\widehat{\psi}_{n}(z) - \psi_{n}(z)| < \epsilon$ for $z \in
\partial B_{c}$,  it follows that for every $n$ large enough,
$$
\widehat{\psi}_{n}(B_{c}) \cap \Omega_{G} = \{\c'\}.
$$ From
(\ref{cover-lift}), $G: \widehat{\psi}_{n}(\partial B_{c}) \to
\widehat{\phi}_{n}(F(\partial B_{c}))$ is a $d_{c}:1$ branched
covering map,  where $d_{c}$ is the local degree of $F$ at $c$,
which is equal to the local degree of $G$ at $c'$.  Let $D_{c}$ be
the component of $S^{2} - \widehat{\phi}_{n}(F(\partial B_{c}))$
which contains $G(c')$. It follows that $G:
\widehat{\psi}_{n}(\partial B_{c}) \to \partial D_{c}$ is a
$d_{c}:1$ branched covering map. This allows us to continuously
extend $\widehat{\psi}_{n}$ to the inside of  every $B_{c}$, such
that $\widehat{\psi}_{n}(c) = c'$ and moreover,  $$G(z) =
\widehat{\phi}_{n} \circ F \circ \widehat{\psi}_{n}^{-1}(z)$$ holds
on the whole sphere. It is also easy to see that
$\widehat{\psi}_{n}: S^{2} \to S^{2}$ is a homeomorphism.

From the construction of $\widehat{\phi}_{n}$ and
$\widehat{\psi}_{n}$, it follows  that $d(\widehat{\phi}_{n},
\phi_{n}) < \eta$ and $d(\widehat{\psi}_{n}, \psi_{n}) < \epsilon +
2\nu < \delta_{0} /15$. This completes the proof since $\eta$ and
$\delta_{0}$ can be taken arbitrarily small.

\end{proof}
\begin{figure}
\bigskip
\begin{center}
\centertexdraw { \drawdim cm \linewd 0.02 \move(0 1)

\move(-3 -1) \lcir r:1.4 \move(3 -1) \lcir r:1.4

\move(-3 -1) \lvec(-2.3 0.2) \lvec(2.3 0.2) \lvec(3 -1) \move(-3 -1)
\lvec(-2.3 -2.2) \lvec(2.3 -2.2) \lvec(3 -1)

\move(-2.4 -1) \lvec(-2.2 0.1) \lvec(2.2 0.1) \lvec(2.4 -1)
\move(-2.4 -1) \lvec(-2.2 -2.1) \lvec(2.2 -2.1) \lvec(2.4 -1)
\move(-3.5 -1.1) \htext{$a_{n}'$} \move(-2.2 -1.1) \htext{$a'$}
\move(3.2 -1.1) \htext{$b_{n}'$} \move(2 -1.1) \htext{$b'$}
\move(-2.4 -1) \fcir f:0.5 r:0.07 \move(-3 -1) \fcir f:0.5 r:0.07
\move(3 -1) \fcir f:0.5 r:0.07 \move(2.4 -1) \fcir f:0.5 r:0.07

 }
\end{center}
\vspace{0.2cm} \caption{The resulted curves after the first step}
\end{figure}
\begin{lemma}\label{iso}
There is a $\rho_{0} > 0$ small such that for all $0< \rho <
\rho_{0}$, the maps $\widehat{\phi}_{n}$ and $\widehat{\psi}_{n}$
obtained in Lemma~\ref{pair} are isotopic to each other rel
$P_{F}'$.
\end{lemma}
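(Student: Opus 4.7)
I set $\eta=\widehat{\phi}_n\circ\widehat{\psi}_n^{-1}$ and reformulate the claim as $\eta\simeq\mathrm{id}$ rel $P_G'$. First I verify that $\widehat{\phi}_n$ and $\widehat{\psi}_n$ agree on $P_F'$: on $\partial\Delta$ both equal $h$ by Lemma~\ref{pair}(5); at $0$ and $\infty$ the normalization forces agreement; and for each $p\in P_F'-\overline{\Delta}$, writing $p=F^k(c)$ for some $c\in\Omega_F$, the lifting relation $G\circ\widehat{\psi}_n=\widehat{\phi}_n\circ F$ together with the uniqueness of the $G$-preimage of $\widehat{\phi}_n(F(p))$ lying close to $\psi_n(p)$ (the same uniqueness used in the proof of Lemma~\ref{pair}) forces $\widehat{\psi}_n(p)=\widehat{\phi}_n(p)$. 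Hence $\eta|_{P_G'}=\mathrm{id}$.

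\textbf{Step 1: isotopy rel $P_{G_n}'$.} Since $O_n\subset\partial\Delta$, we have $P_{G_n}'\subset P_G'$ and the new points $P_G'-P_{G_n}'$ lie entirely in $\partial\Delta$. By Lemma~\ref{adjust}(1), $\phi_n\circ\psi_n^{-1}\simeq\mathrm{id}$ rel $P_{G_n}'$. I choose $\rho_0$ smaller than one third of the uniform lower bound on pairwise distances in $P_G'\cup\mathcal{Z}_n\cup\mathcal{P}_n$ and on their distances to $\partial\Delta$ given by Lemmas~\ref{Z-P} and \ref{X-D}. For $\rho<\rho_0$, Lemma~\ref{pair}(3) makes $\eta$ $C^0$-close to $\phi_n\circ\psi_n^{-1}$, and by Step 1 both maps agree on the finite set $P_{G_n}'$. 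A standard small-perturbation argument (an $\alpha$-small isotopy supported in disks around the points of $P_{G_n}'$) yields $\eta\simeq\phi_n\circ\psi_n^{-1}$ rel $P_{G_n}'$, and concatenation produces an isotopy $K_t$ from $\eta$ to $\mathrm{id}$ rel $P_{G_n}'$, which one may further arrange to preserve $\partial\Delta$ setwise since the isotopy of Lemma~\ref{adjust} can be so chosen.

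\textbf{Step 2: upgrading to rel $P_G'$, and main obstacle.} Because $\eta|_{\partial\Delta}=\mathrm{id}$ and $K_1=\mathrm{id}$, the restriction $K_t|_{\partial\Delta}$ is a loop in the space $\mathrm{Homeo}^+(\partial\Delta,O_n^{G_n})$ of orientation-preserving circle homeomorphisms fixing the finite set $O_n^{G_n}$ pointwise. This space is a product of homeomorphism groups of open arcs, each contractible, hence is itself contractible; so this loop is null-homotopic. Applying the Serre-fibration property of the restriction map $\mathrm{Homeo}^+(S^2,P_{G_n}')\to\mathrm{Homeo}^+(\partial\Delta,O_n^{G_n})$ inside a thin collar of $\partial\Delta$ and extending by Alexander's trick outside the collar, I lift the null-homotopy to an ambient homotopy of isotopies $K_{s,t}$ with $K_{s,0}=\eta$, $K_{s,1}=\mathrm{id}$ and $K_{s,t}|_{P_{G_n}'}=\mathrm{id}$ throughout; at $s=1$ the family $K_{1,t}$ additionally fixes $\partial\Delta$ pointwise, producing an isotopy $\eta\simeq\mathrm{id}$ rel $P_{G_n}'\cup\partial\Delta=P_G'$. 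The central technical difficulty is this last lifting: one must choose the $\partial\Delta$-collar thin enough, relative to the geometric bounds from Lemmas~\ref{Z-P} and \ref{X-D}, that it meets $P_{G_n}'$ only in $O_n^{G_n}\subset\partial\Delta$, so that the fibration-lift does not disturb the interior marked points and the Alexander extension matches compatibly across the collar boundary; the smallness of $\rho_0$ is precisely what makes this possible.
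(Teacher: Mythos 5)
Your reformulation as $\eta=\widehat{\phi}_n\circ\widehat{\psi}_n^{-1}\simeq\mathrm{id}$ rel $P_G'$ is correct, and the verification that $\eta|_{P_G'}=\mathrm{id}$ is sound. However, the two main steps each contain a genuine gap, and the overall strategy is genuinely different from the paper's (which reduces to checking that the image curves $\widehat{\phi}_n(\gamma)$ and $\widehat{\psi}_n(\gamma)$ of arcs connecting marked points in $\overline\Delta$ are homotopic, then verifies this explicitly using the three-piece decomposition near the endpoints).

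The first concrete error is the set inclusion $P_{G_n}'\subset P_G'$ at the start of Step~1, which is false. One has $P_{G_n}'\cap\partial\Delta=O_n^{G_n}\subset\partial\Delta\subset P_G$, but the finite parts $P_{G_n}-\partial\Delta$ and $P_G-\partial\Delta$ are forward critical orbits of two \emph{distinct} rational maps: they converge to each other as $n\to\infty$ but are not equal, so in particular $P_{G_n}'\cup\partial\Delta\neq P_G'$. As a consequence, $\eta$ fixes $P_G'$ pointwise but need \emph{not} fix $P_{G_n}'$ pointwise, so the assertion in Step~1 that ``both maps agree on the finite set $P_{G_n}'$'' does not hold, and the small-perturbation isotopy has to be set up with the common finite set $(P_G-\partial\Delta)\cup\{0,\infty\}\cup O_n^{G_n}$ rather than with $P_{G_n}'$; your argument never makes this translation, and the conclusion of Step~2 (``rel $P_{G_n}'\cup\partial\Delta=P_G'$'') inherits the error.

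The second and more serious gap is the parenthetical assertion at the end of Step~1 that the isotopy from $\eta$ to $\mathrm{id}$ rel the finite set ``may be arranged to preserve $\partial\Delta$ setwise since the isotopy of Lemma~\ref{adjust} can be so chosen.'' This is precisely the nontrivial content that the fibration argument in Step~2 is supposed to leverage, and it is not a consequence of Lemma~\ref{adjust} as stated. While $\phi_n,\psi_n,\widehat{\phi}_n,\widehat{\psi}_n$ all map $\partial\Delta$ to $\partial\Delta$, the isotopy between $\phi_n$ and $\psi_n$ rel the finite set $P_{F_n}'$ supplied by Thurston's theorem is not automatically $\partial\Delta$-preserving, and there is no natural symmetrization: if $H_t$ is such an isotopy, its conjugate $\tilde H_t(z)=(H_t(z^*))^*$ is another, but there is no way to ``average'' the two, and one cannot glue interior and exterior halves because $H_t$ fixes $\partial\Delta$ only on the finite set $O_n$. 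Arranging the isotopy to preserve $\partial\Delta$ setwise is essentially equivalent to the statement of the lemma you are trying to prove (it would already yield that the boundary values form a null-homotopic loop in a contractible group, after which the fibration lift is routine). The paper sidesteps this by not trying to upgrade an ambient isotopy at all: it works directly on $\overline\Delta$, where the mapping class group rel $\partial\Delta$ plus finitely many interior marked points is controlled by arc systems, and checks the arc criterion by hand using $\rho$ small. Your fibration framework can in principle be made to work, but as written it assumes the hardest step rather than proving it.
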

\begin{proof}
Since $\widehat{\phi}_{n}|\partial \Delta  =
\widehat{\psi}_{n}|\partial \Delta  = h$, it is sufficient to prove
that the restrictions of $\widehat{\phi}_{n}$ and
$\widehat{\psi}_{n}$ on the unit disk are isotopic to each other rel
$P_{F}' \cap \overline{\Delta}$. This is then equivalent to show
that for any curve segment $\gamma \subset \overline{\Delta}$ which
connects two distinct points $a$ and $b$ in $P_{F}'$, the image
curve segments $\widehat{\phi}_{n}(\gamma)$ and
$\widehat{\psi}_{n}(\gamma)$ are homotopic to each other rel $\{a',
b'\}$ where $a' = \widehat{\phi}_{n}(a) = \widehat{\psi}_{n}(a)$ and
$b' = \widehat{\phi}_{n}(b) = \widehat{\psi}_{n}(b)$.

It is sufficient to consider two cases. In the first case, neither
$a$ nor $b$ is on the unit circle. In the second case, $a$ is on the
unit circle, but $b$ is not on the unit circle.  The proofs are
quite direct. Let us explain the idea only and the reader shall have
no difficulty to supply the details.

Suppose that we are in the first case. Let $\delta > 0$ be such that
for any $z \in P_{F}' - \partial \Delta$,
$$
B_{3\delta}(z) \cap P_{G}' = \{z\}.
$$
Let $a_{n}$ and $b_{n}$ be the two points in $P_{F_{n}}'$ which are
correspond to $a$ and $b$, respectively. Let $\gamma_{n}$ be a curve
segment which connects $a_{n}$ and $b_{n}$ and is close to $\gamma$.
Let $a_{n}' = \phi_{n}(a_{n}) = \psi_{n}(a_{n})$ and $b_{n}' =
\phi_{n}(b_{n}) = \psi_{n}(b_{n})$. By deforming $\gamma$ in its
homotopy class rel $\{a, b\}$ and changing $\gamma_{n}$
correspondingly, we may assume that each of $\phi_{n}(\gamma_{n})$
and $\psi_{n}(\gamma_{n})$ is the union of three curve segments
described as follows. The first piece is a straight segment which
connects $a_{n}'$ and $\partial B_{\delta}(a_{n}')$. The second
piece is some curve segment which does not intersect the
$\delta-$neighborhood of $P_{G}'$ and connects $\partial
B_{\delta}(a_{n}')$ and $\partial B_{\delta}(b_{n}')$. The third
piece is a straight segment connects $\partial B_{\delta}(b_{n}')$
and $b_{n}'$.
\begin{figure}
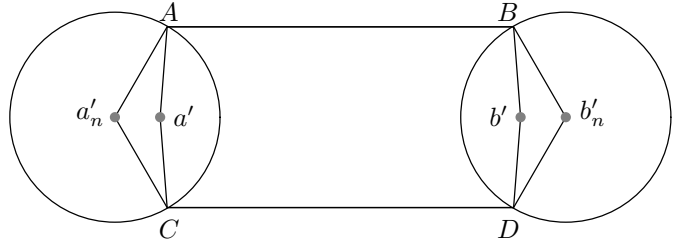

\bigskip
\begin{center}
\centertexdraw { \drawdim cm \linewd 0.02 \move(0 1)

\move(-3 -1) \lcir r:1.4 \move(3 -1) \lcir r:1.4

\move(-3 -1) \lvec(-2.3 0.2) \lvec(2.3 0.2) \lvec(3 -1) \move(-3 -1)
\lvec(-2.3 -2.2) \lvec(2.3 -2.2) \lvec(3 -1)

\move(-2.4 -1) \lvec(-2.3 0.2) \lvec(2.3 0.2) \lvec(2.4 -1)
\move(-2.4 -1) \lvec(-2.3 -2.2) \lvec(2.3 -2.2) \lvec(2.4 -1)
\move(-3.5 -1.1) \htext{$a_{n}'$} \move(-2.2 -1.1) \htext{$a'$}
\move(3.2 -1.1) \htext{$b_{n}'$} \move(2 -1.1) \htext{$b'$}
\move(-2.4 -1) \fcir f:0.5 r:0.07 \move(-3 -1) \fcir f:0.5 r:0.07
\move(3 -1) \fcir f:0.5 r:0.07 \move(2.4 -1) \fcir f:0.5 r:0.07

\move(-2.4 0.3) \htext{$A$} \move(2.1 0.3) \htext{$B$}

\move(-2.4 -2.6) \htext{$C$} \move(2.1 -2.6) \htext{$D$}
 }
\end{center}
\vspace{0.2cm} \caption{The resulted curves after the first step}
\end{figure}

Now in Lemma~\ref{pair}, by taking $0< \rho \ll \delta$ small and
thus $n$ large, we may assume that each of
$\widehat{\phi}_{n}(\gamma)$ and $\widehat{\psi}_{n}(\gamma)$ is the
union of three curve segments described as follows. The first piece
is a  curve segment which connects $a'$ and $\partial
B_{\delta}(a_{n}')$  and is contained in $B_{2 \delta}(a_{n}')$. The
second piece is some curve segment which  connects $\partial
B_{\delta}(a_{n}')$ and $\partial B_{\delta}(b_{n}')$, and is
contained in the $2 \rho$-neighborhood of the corresponding second
piece described as above. The third piece is a curve segment which
connects $\partial B_{\delta}(b_{n}')$ and $b'$ and is contained in
$B_{2\delta}(b_{n}')$.

For an illustration of these curves, see Figure 6.

Now the homotopy is realized by two steps. In the first step, we can
deform $\widehat{\phi}_{n}(\gamma)$ in $\Delta - P_{G}'$ so that the
first and the third pieces are still contained in $B_{2
\delta}(a_{n}')$ and $B_{2 \delta}(b_{n}')$, respectively, but the
second piece coincide with the second piece of $\phi_{n}$.   Then do
the same thing for $\widehat{\psi}_{n}(\gamma)$. For an illustration
of the resulted curves, see Figure 7.

Let us use $[A, B]$ and $[C, D]$ to denote the second pieces of
$\phi_{n}(\gamma)$ and $\psi_{n}(\gamma)$, respectively. Since
$\phi_{n}(\gamma)$ is homotopic to $\psi_{n}(\gamma)$, the curve
segment $[C, D]$ can be deformed to $[A, B]$ in $\Delta -P_{G_{n}}'$
so that $C$ moves to $A$ along $\partial B_{\delta}(a_{n}')$ and $D$
moves to $B$ along $\partial B_{\delta}(b_{n}')$, and moreover, the
deformation can be taken such that it does not intersect the
$\delta/2$-neighborhood of $P_{G_{n}}'$. Since $P_{G_{n}}'$ is close
to $P_{G}'$ as $n$ is large, this deformation does not intersect
$P_{G}'$.   Since
$$B_{2\delta}(a_{n}') \cap (P_{G}'-\{a'\}) =
 B_{2 \delta}(b_{n}') \cap (P_{G}' - \{b'\}) =
\emptyset, $$  the first piece and the third piece of
$\widehat{\phi}_{n}(\gamma)$ can be deformed to the corresponding
piece of $\widehat{\psi}_{n}(\gamma)$ in $B_{2 \delta}(a_{n}')$ and
$B_{2 \delta}(b_{n}')$, respectively.  It is not difficult to see
that  these deformations can be taken carefully so that they can be
glued into a homotopy between $\widehat{\phi}_{n}(\gamma)$ and
$\widehat{\psi}_{n}(\gamma)$ in $\Delta - P_{G}'$.

The second case can be treated in a similar way. We leave it to the
reader.
\end{proof}

\subsection{Proof of Theorem A}
\begin{figure}
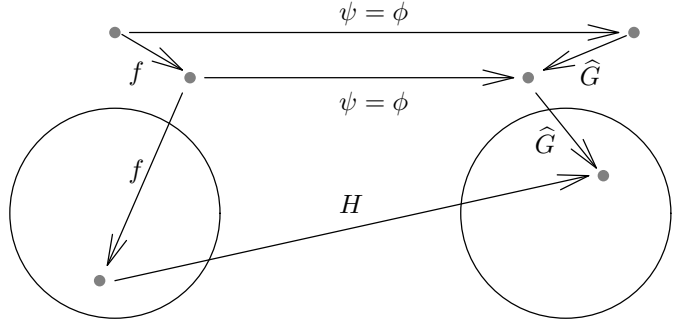

\bigskip
\begin{center}
\centertexdraw { \drawdim cm \linewd 0.02 \move(0 1)

\move(-3 -2.8) \lcir r:1.4 \move(3 -2.8) \lcir r:1.4

\move(-2 -1) \fcir f:0.5 r:0.08 \move(-3 -0.4) \fcir f:0.5 r:0.08

\move(2.5 -1) \fcir f:0.5 r:0.08 \move(3.9 -0.4) \fcir f:0.5 r:0.08

\move(-3.2 -3.7) \fcir f:0.5 r:0.08 \move(3.5 -2.3) \fcir f:0.5
r:0.08

\move(-1.8 -1) \arrowheadtype t:V \avec(2.3 -1)

\move(-2.8 -0.4) \arrowheadtype t:V \avec(3.7 -0.4)

\move(-2.9 -0.42) \arrowheadtype t:V \avec(-2.1  -0.9)

\move(3.8 -0.44) \arrowheadtype t:V \avec(2.7  -0.9)

\move(-2.1 -1.2) \arrowheadtype t:V \avec(-3.1 -3.5)

\move(2.6 -1.2) \arrowheadtype t:V \avec(3.4 -2.2)

\move(-3 -3.7) \arrowheadtype t:V \avec(3.3 -2.3)

\move(0 -0.3)\htext{$\psi = \phi$}

\move(0 -1.5)\htext{$\psi = \phi$} \move(0 -2.8)\htext{$H$}

\move(-2.8 -2.4)\htext{$f$} \move(-2.8 -1.1)\htext{$f$}

\move(3.2 -1.1)\htext{$\widehat{G}$} \move(2.6
-2)\htext{$\widehat{G}$} }
\end{center}
\vspace{0.2cm} \caption{Realize the combinatorial structure in
$\Delta$}
\end{figure}
\subsubsection{Realizing the combinatorics  in the rotation disk}
Let $G$ be the Blaschke product obtained in the last section.  Since
$G|\partial \Delta$ is an analytic critical circle homeomorphism
with $\emph{bounded type}$ rotation number, by Herman-Swiatek's
theorem, $G|\partial \Delta = h \circ R_{\theta}\circ h^{-1}$ where
$h:
\partial \Delta \to \Delta$ is a quasi-symmetric homeomorphism with
$h(1) = 1$. All we need to do now is to follow the standard
procedure to do the quasiconformal surgery on $G$. There are many
places where a detailed description of this surgery can be found(see
for example, \cite{Pe1}, \cite{YZ} and \cite{Z1}).

The first thing we need to take care of is the combinatorial
structure of $f$ in the inside of the $\emph{rotation disk}$, which
is not reflected by the Blaschke product $G$ (see
Remark~\ref{additional}). Recall that
$$
X = \{z \in \Omega_{f} \;\big{|}\;f^{i}(z) \in \Delta -\{0\} \hbox{
for some } i > 0\}.
$$
We may assume  that $X \ne \emptyset$, for otherwise, we just skip
this step.  For $z \in X$, let $i_{z} > 0$ be the least integer such
that $f^{i_{z}}(z) \in \Delta$.  Now we can extend $h: \partial
\Delta \to \partial \Delta$ to a quasiconformal homeomorphism $H:
\Delta \to \Delta$ by using Douady-Earle's extension
theorem\cite{DE}.  By composing $H$ with an appropriate
quasiconformal homeomorphism $\tau: \Delta \to \Delta$ with
$\tau|\partial \Delta = id$, which is still denoted by $H$, we may
assume that $H(0) = 0$ and $$H(f^{i_{z}}(z)) = G^{i_{z}}((\psi(z))$$
 for each $z \in X$(see Figure 8).

\subsubsection{Quasiconformal surgery}
Define a $\emph{modified}$ Blaschke product as follows.
\begin{equation}\label{modi-B}
\widehat{G}(z) =
\begin{cases}
 G(z) & \text{ for $|z| \ge 1$}, \\
 H \circ R_{\theta} \circ H^{-1}(z)& \text{ for $z \in \Delta$}.
\end{cases}
\end{equation}

\begin{lemma}\label{com-equ}
$\widehat{G}$ is $\emph{combinatorially equivalent}$ to $f$ rel
$P_{f} \cap \{\infty\}$.
\end{lemma}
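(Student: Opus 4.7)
The plan is to produce the pair of homeomorphisms $(\phi,\psi)$ witnessing the combinatorial equivalence by gluing: on the closed rotation disk use the quasiconformal model $H$ from the preceding surgery, and on the exterior use the pair $(\widehat{\phi}_n,\widehat{\psi}_n)$ supplied by Lemma~\ref{pair} for a sufficiently large $n$. The gluing is legitimate because Lemma~\ref{pair}(5) and the Herman--Swiatek conjugation give the common boundary value $\widehat{\phi}_n|_{\partial\Delta}=\widehat{\psi}_n|_{\partial\Delta}=H|_{\partial\Delta}=h$, so
\begin{equation*}
\phi := \begin{cases} H & \text{on }\overline{\Delta},\\ \widehat{\phi}_n & \text{on }S^{2}-\Delta,\end{cases}\qquad \psi := \begin{cases} H & \text{on }\overline{\Delta},\\ \widehat{\psi}_n & \text{on }S^{2}-\Delta,\end{cases}
\end{equation*}
are well-defined self-homeomorphisms of $S^{2}$.

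Next I would check that $\widehat{G}\circ\psi=\phi\circ f$ holds as functions on $S^{2}$. On $\overline{\Delta}$ the identity is immediate from $f|_\Delta=R_\theta$ and $\widehat{G}|_\Delta=H\circ R_\theta\circ H^{-1}$. On $S^{2}-\overline{\Delta}$, Lemma~\ref{pair} gives $G=\widehat{\phi}_n\circ F\circ\widehat{\psi}_n^{-1}$ and $\widehat{G}=G$ there; since $F=\sigma\circ f$ with $\sigma$ supported in a small neighbourhood of the finite landing set $\widetilde{X}\subset\Delta$, commutativity at points $z$ with $f(z)$ outside this support is automatic. For the finitely many critical $z\in X$ with $f(z)\in\widetilde{X}$ one must verify $H(f(z))=\widehat{\phi}_n(\sigma(f(z)))$; this is exactly the purpose of post-composing $H$ with the auxiliary homeomorphism $\tau$ in $\S2.5.1$, since the condition $H(f^{i_z}(z))=G^{i_z}(\widehat{\psi}_n(z))$ imposed on $H$ unwinds---via the iterated relation $G^{k}\circ\widehat{\psi}_n=\widehat{\phi}_n\circ F^{k}$ on $\Omega_F$, valid because $\widehat{\phi}_n$ and $\widehat{\psi}_n$ agree on $P_F'$---to the required matching.

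Finally, I would show $\phi\simeq\psi$ rel $P_f\cup\{\infty\}$. On $\overline{\Delta}$ the two maps coincide (both equal $H$), so the constant isotopy suffices. On $S^{2}-\Delta$ they coincide with $\widehat{\phi}_n$ and $\widehat{\psi}_n$, which Lemma~\ref{iso} declares isotopic rel $P_F'$; since $(P_f\cup\{\infty\})\cap(S^{2}-\Delta)\subseteq P_F'$, this exterior isotopy restricts to one rel $P_f\cup\{\infty\}$, and it patches with the constant interior isotopy along $\partial\Delta$ (where all four maps equal $h$).

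The main obstacle is the bookkeeping at the landing points $\widetilde{X}$, where the symmetric surgery $\sigma$ genuinely alters the dynamics of $F$ relative to $f$. The discrepancy must be absorbed by the interior straightening $H$, and arranging $H$ to realise the interior combinatorics of $f$ (rather than merely to conjugate $R_\theta$) is the role of the auxiliary factor $\tau$. Once $H$ is normalised in this way, the iterated compatibility $G^{k}\circ\widehat{\psi}_n=\widehat{\phi}_n\circ F^{k}$ on the postcritical set of $F$ is the last ingredient needed to glue the exterior and interior realisations into a single combinatorial equivalence.
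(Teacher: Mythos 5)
The gluing you propose---$\phi=H$ on $\overline\Delta$, $\phi=\widehat\phi_n$ outside, and likewise for $\psi$---is the paper's $\omega_0$, but the claim that $\widehat G\circ\psi=\phi\circ f$ then holds on all of $S^2$ does not survive scrutiny on the annular preimage $f^{-1}(\Delta)-\overline\Delta$. For $z$ in that set, $\psi(z)=\widehat\psi_n(z)$ lies outside $\Delta$, so $\widehat G(\psi(z))=G(\widehat\psi_n(z))=\widehat\phi_n(F(z))=\widehat\phi_n(\sigma(f(z)))$, whereas $\phi(f(z))=H(f(z))$. Equality therefore requires $H=\widehat\phi_n\circ\sigma$ on the entire open set $f(S^2-\overline\Delta)\cap\Delta$. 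That is a functional identity on an open set, not a finite list of point conditions, and the auxiliary quasiconformal factor $\tau$ of \S2.5.1 is designed only to pin down $H$ at the finitely many landing points $\{f^{i_z}(z):z\in X\}$; it cannot, and need not, force $H$ to coincide with $\widehat\phi_n\circ\sigma$ on an open set (indeed $\widehat\phi_n$ is produced by Thurston pullback and has no reason to match a Douady--Earle extension off a finite set). Your sentence ``commutativity at points $z$ with $f(z)$ outside this support is automatic'' is exactly where the argument breaks: even when $\sigma(f(z))=f(z)$, one still needs $H(f(z))=\widehat\phi_n(f(z))$, which is false generically.

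The paper's proof circumvents this by never asking the naively glued pair to commute. It sets $\omega_0$ equal to your $\phi$, then \emph{lifts} through the dynamics: $\omega_k$ is defined by $\widehat G\circ\omega_k=\omega_{k-1}\circ f$, which exists globally because $\widehat G$ and $f$ have the same degree data outside $\Delta$. This $\omega_1$ agrees with $\widehat\psi_n$ only off $f^{-1}(\Delta)$ and with $H$ on $\Delta$, but is \emph{determined by the lift} on the annulus in between, so the commutation relation holds by construction. The remaining work---the part your proposal omits entirely---is to show that the isotopy classes rel $P_f\cup\{\infty\}$ of $\omega_k$ eventually stabilize. The paper observes that up to homotopy the successive $\omega_k$ can only differ on components of $\bigcup_l f^{-l}(\Delta)$ that meet $P_f$, and since every $x\in P_f$ either has finite forward orbit or satisfies $f^{N+1}(x)\in\overline\Delta$ with $N=|P_f-\partial\Delta|$, only components of $f^{-N-1}(\Delta)$ can intervene; on those $\omega_{N+1}=\omega_{N+2}$, so $(\omega_{N+1},\omega_{N+2})$ is the desired commuting, isotopic pair. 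Without the iteration and the stabilization argument, the lemma is not proved.

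A secondary but real issue is the patching of isotopies: Lemma~\ref{iso} gives an isotopy between $\widehat\phi_n$ and $\widehat\psi_n$ rel $P_F'$, but you need that isotopy to fix $\partial\Delta$ pointwise in order to splice it with the constant isotopy on $\overline\Delta$; that is not a formal consequence of ``isotopic rel $P_F'$'' and has to be arranged (the paper's proof of Lemma~\ref{iso} does work rel $\partial\Delta$, but you should say so rather than treat it as automatic).
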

\begin{proof}
Let $\widehat{\phi}_{n}$ and $\widehat{\psi}_{n}$ be the
homeomorphisms obtained in $\S2.4$. Let $\phi = \widehat{\phi}_{n}$
and $\psi = \widehat{\psi}_{n}$. By Lemma~\ref{pair} and \ref{iso},
$\phi$ and $\psi$ are isotopic to each other rel $P_{F}'$, and $G =
\phi \circ F \circ \psi^{-1}$.
 Define
\begin{equation}
\omega_{0}(z) =
\begin{cases}
 \phi(z) & \text{ for $|z| \ge 1$}, \\
 H(z) & \text{ for $z \in \Delta$}.
\end{cases}
\end{equation}
Since $\widehat{G}$ and $f$ have the same combinatorial structure on
the outside of the unit disk, for $k = 1, 2, \cdots$, we can lift
$\omega_{k-1}$ by the equation
$$
\widehat{G} \circ \omega_{k} =  \omega_{k-1} \circ f
$$
and get a sequence of quasiconformal homeomorphisms $\omega_{n}$.
Note that $\omega_{1} = \psi$ on the outside of $f^{-1}(\Delta)$. It
follows that up to a homotopy, the only possible places where
$\omega_{k-1}$ and $\omega_{k}$ are different are the components of
$\bigcup_{l=1}^{\infty}f^{-l}(\Delta)$ which intersect $P_{f}$. Let
$N = |P_{f} - \partial \Delta|$. It follows that for each $x \in
P_{f}$, either the forward orbit of $x$ under $f$ is eventually
finite, or $f^{N+1}(x) \in \overline{\Delta}$. This implies if a
component of $\bigcup_{l=1}^{\infty}f^{-l}(\Delta)$ intersects
$P_{f}$, it must be one of the components of $f^{-N-1}(\Delta)$. On
the other hand, it is easy to see that $$\omega_{N+1} =
\omega_{N+2}$$ on all the components of $f^{-N-1}(\Delta)$. It
follows that $\omega_{N+1}$ and $\omega_{N+2}$ are
$\emph{combinatorially equivalent}$ to each other  rel $P_{f} \cup
\{\infty\}$. The lemma follows.

\end{proof}

Now let us  define a $\widehat{G}$-invariant complex structure $\mu$
as follows. Let
$$
\mu(z) = \frac{(H^{-1})_{\bar{z}}}{(H^{-1})_{z}}
$$
for $z \in \Delta$. For $z \notin \Delta$, there are two cases. In
the first case, the forward orbit of $z$ falls into the inside of
the unit disk. Let $k > 0$ be the least integer such that
 $G^{k}(z) \in \Delta$.  We define $\mu(z) = (G^{k})^{*}(\mu(G^{k}(z)))$,
 that is, we pull back by $G^{k}$ the complex structure of $H^{-1}$ at
 $G^{k}(z)$ to $z$. In the second case, the forward orbit of $z$ is
 contained in the outside of the unit disk. In this case,  we define
$\mu(z)=0$. By this way we get a $\widehat{G}$-invariant complex
structure $\mu(z)$ on the whole Riemann sphere. Since $\widehat{G}$
is holomorphic outside the unit disk, it follows that
$$
\|\mu\|_{\infty} = \sup_{z \in
\Delta}\bigg{|}\frac{(H^{-1})_{\bar{z}}}{(H^{-1})_{z}}\bigg{|} <1.
$$ By Ahlfors-Bers theorem, there is a quasiconformal homeomorphism
$\Phi: S^{2} \to S^{2}$ such that $\mu_{\Phi} =\mu$ and $\Phi$ fixes
$0$, $1$, and the infinity.   Now let $$g = \Phi \circ \widehat{G}
\circ\Phi^{-1}.$$  It follows that $g$ is a rational map which has a
Siegel disk centered at the origin. Let us denote the Siegel disk by
$D_{g}$. It follows that $\partial D_{g}$ is the image of the unit
circle under $\Phi$, hence  is a quasi-circle which passes through
the critical point $1$ of $g$. This implies that $g \in
R_{\theta}^{geom}$. By Lemma~\ref{com-equ}, we have $$g = \Phi \circ
h_{N+1} \circ f \circ h_{N+2}^{-1} \circ \Phi^{-1}.$$  Note that $
h_{N+2}^{-1} \circ \Phi^{-1}|D_{g}: D_{g} \to \Delta$ is a
holomorphic homeomorphism. Therefore, $g$ $\emph{realizes}$ the
topological branched covering map $f$ in the sense of
Definition~\ref{realize}.   This completes the proof of Theorem A.

\begin{center}
\section{ Combinatorial Rigidity of the Maps in $R_{\theta}^{geom}$}
\end{center}

\subsection{Blaschke Models for Maps in $R_{\theta}^{geom}$}

Let $G$ be a Blaschke product such that $G|\partial \Delta = h \circ
R_{\theta} \circ h^{-1}$ where $h: \partial \Delta \to \partial
\Delta$ is a quasi-symmetric homeomorphism with $h(1) = 1$. Let $H:
\Delta \to \Delta$ be a quasiconformal extension of $h$ to the unit
disk. Let $\widehat{G}$ be the $\emph{modified}$ Blaschke product
defined by (\ref{modi-B}). We say a Siegel rational map $g$ is
$\emph{modeled}$ by the Blaschke product $G$  if there is a
quasiconformal homeomorphism $\phi:S^{2} \to S^{2}$ such that $g =
\phi^{-1} \circ \widehat{G} \circ \phi$. We have
\begin{lemma}\label{3-fir}
For each $g \in R_{\theta}^{geom}$, there is a Blaschke product $G$
which models $g$.
\end{lemma}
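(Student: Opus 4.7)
The plan is to construct $G$ by a quasiconformal surgery that uses the quasi-circle hypothesis on $\partial D_g$: transplant $g$ to a unit-disk model via a quasiconformal extension of the Koenigs linearizer, then symmetrize by Schwarz reflection across the unit circle, and finally straighten via the measurable Riemann mapping theorem.

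For the transplant step, I would let $\ell:D_g\to\Delta$ denote the Koenigs linearizer, so $\ell\circ g=R_\theta\circ\ell$. Because $\partial D_g$ is a quasi-circle, $\ell$ extends continuously to a quasisymmetric homeomorphism $\partial D_g\to\partial\Delta$ and then to a global quasiconformal homeomorphism $\Phi:\mathbb{P}^1\to\mathbb{P}^1$ with $\Phi|_{\overline{D_g}}=\ell$. Setting $F:=\Phi\circ g\circ\Phi^{-1}$ produces a quasi-regular branched covering of $\mathbb{P}^1$ with $F|_{\overline\Delta}=R_\theta$ and with $P_F\setminus\overline\Delta$ finite. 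Letting $\sigma(z)=1/\bar z$, I would then define the symmetric quasi-regular map
\begin{equation*}
\tilde F(z) = \begin{cases} F(z), & |z|\ge 1,\\ \sigma\circ F\circ\sigma(z), & |z|\le 1, \end{cases}
\end{equation*}
which is unambiguous on $\partial\Delta$ since $\sigma|_{\partial\Delta}=\mathrm{id}$ and $F|_{\partial\Delta}=R_\theta$.

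To straighten, I would equip $\mathbb{P}^1$ with the Beltrami coefficient $\mu$ equal to the complex dilatation $\mu_\Phi$ of $\Phi$ on $\mathbb{P}^1\setminus\overline\Delta$ and to $\sigma^*\mu_\Phi$ on $\overline\Delta$. By construction $\mu$ is $\sigma$-invariant, has $\|\mu\|_\infty<1$, and is $\tilde F$-invariant: on the outside because $F=\Phi g\Phi^{-1}$ with $g$ holomorphic gives $F^*\mu_\Phi=\mu_\Phi$, and the invariance transports to the inside via the $\sigma$-symmetries of $\mu$ and $\tilde F$. Solving the Beltrami equation with normalization fixing $0,1,\infty$ yields a quasiconformal $\Psi:\mathbb{P}^1\to\mathbb{P}^1$ with $\mu_\Psi=\mu$; uniqueness forces $\Psi\circ\sigma=\sigma\circ\Psi$. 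The straightened map $G:=\Psi\circ\tilde F\circ\Psi^{-1}$ is then rational and commutes with $\sigma$, hence a Blaschke product.

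To finish, set $h:=\Psi|_{\partial\Delta}$, so that $G|_{\partial\Delta}=h\circ R_\theta\circ h^{-1}$; let $H:\Delta\to\Delta$ be a quasiconformal extension of $h$ fixing $0$ (for instance the Douady--Earle extension), and form the modified Blaschke product $\widehat G$ of (\ref{modi-B}). Define $\phi:\mathbb{P}^1\to\mathbb{P}^1$ piecewise by $\phi=\Psi\circ\Phi$ on $\mathbb{P}^1\setminus D_g$ and $\phi=H\circ\ell$ on $\overline{D_g}$; the two pieces agree on $\partial D_g$ because $\Psi(\Phi(z))=h(\ell(z))=H(\ell(z))$ there, so $\phi$ is a global quasiconformal homeomorphism, and a direct check gives $g=\phi^{-1}\circ\widehat G\circ\phi$ on both halves. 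The main technical point is the simultaneous $\sigma$- and $\tilde F$-invariance of $\mu$: since $\tilde F$ may map a point on one side of $\partial\Delta$ to a point on the other, the $\sigma$-symmetries of both $\tilde F$ and $\mu$ are essential in transporting the invariance of $\mu_\Phi$ under $F$ on the outside to invariance of $\mu$ under $\tilde F$ globally.
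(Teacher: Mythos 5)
Your plan is a direct quasiconformal surgery from $g$ to a Blaschke product, whereas the paper routes through Theorem~A (realize $f=\psi\circ g\circ\psi^{-1}\in R_\theta^{top}$ by some $\widehat g\in R_\theta^{geom}$ modeled by a Blaschke product $G$, then invoke McMullen's rigidity theorem to conjugate $g$ to $\widehat g$). The structural difference matters here, because there is a genuine gap in your key step.

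The gap is in the claimed $\tilde F$-invariance of the reflected Beltrami coefficient $\mu$. Interpret $\mu_\Phi$ on the target sphere as $\mu_{\Phi^{-1}}$, so that $\mu=\mu_{\Phi^{-1}}$ on $\mathbb{P}^1\setminus\overline\Delta$ and $\mu=\sigma^*\mu_{\Phi^{-1}}$ on $\overline\Delta$. Note that $\mu_{\Phi^{-1}}\equiv 0$ on $\Delta$, since $\Phi^{-1}|_{\overline\Delta}=\ell^{-1}$ is holomorphic. Now pick $z\notin\overline\Delta$ with $\tilde F(z)=F(z)\in\Delta$ (such $z$ exist: $g^{-1}(D_g)$ meets the complement of $\overline{D_g}$, since $g$ has critical orbits outside the Siegel disk). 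The relation $\Phi^{-1}\circ F=g\circ\Phi^{-1}$ does give $F^*\mu_{\Phi^{-1}}=\mu_{\Phi^{-1}}$ globally, so the ``correct'' invariant value at $F(z)$ is $\mu_{\Phi^{-1}}(F(z))=0$. But your $\mu$ assigns to $F(z)$ the value $\sigma^*\mu_{\Phi^{-1}}(F(z))$, which is controlled by $\mu_{\Phi^{-1}}(\sigma(F(z)))$ at the \emph{outside} point $\sigma(F(z))$, and there is no reason for this to vanish. Hence $\tilde F^*\mu(z)=F^*(\sigma^*\mu_{\Phi^{-1}})(z)\ne F^*\mu_{\Phi^{-1}}(z)=\mu_{\Phi^{-1}}(z)=\mu(z)$ in general, and the straightened map $\Psi\circ\tilde F\circ\Psi^{-1}$ is not holomorphic. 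The $\sigma$-symmetries of $\tilde F$ and $\mu$ establish consistency between a point and its reflection, but they cannot reconcile the \emph{forward} orbit crossing $\partial\Delta$ from outside to inside; that is precisely where $F$-invariance of $\mu_{\Phi^{-1}}$ and $\sigma$-symmetry of $\mu$ pull in opposite directions.

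Repairing this by naive back-pulling does not obviously work either: to make $\mu$ invariant you would have to redefine $\mu$ along the full backward grand orbit of $\Delta$ under $\tilde F$, and since $\tilde F=\Phi\circ g\circ\Phi^{-1}$ is only quasiregular outside $\overline\Delta$ (not holomorphic), iterated pullbacks of a nonzero Beltrami need not stay uniformly away from $1$ in sup norm, so the measurable Riemann mapping theorem cannot be invoked without a separate estimate. The paper's $\S2.5.2$ surgery does not face this, since there the pullback is by the \emph{holomorphic} map $G$ on the outside of $\Delta$, which preserves $\|\cdot\|_\infty$; and the existence of the Blaschke product $G$ itself is obtained not by surgery but via Thurston's theorem applied to postcritically finite approximants $F_n$ together with the compactness argument of $\S2.3$--$\S2.4$. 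Your final step (gluing $\phi=\Psi\circ\Phi$ outside $D_g$ with $\phi=H\circ\ell$ on $\overline{D_g}$, and verifying $g=\phi^{-1}\circ\widehat G\circ\phi$) is correct conditional on having the Blaschke product $G$; the missing piece is the existence of $G$, which is exactly what the invariance of $\mu$ was supposed to deliver.
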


\begin{proof} Let $D_{g}$ be the Siegel disk of $g$
and $\psi: D_{g} \to \Delta$ be the holomorphic homeomorphism which
conjugates $g|D_{g}$ to the rigid rotation $R_{\theta}: \Delta \to \Delta$.
Since $\partial D_{g}$ is a quasi-circle, we can extend $\psi$ to be
a quasiconformal homeomorphism of the Riemann sphere: $S^{2} \to S^{2}$.
Then $f = \psi \circ g \circ \psi^{-1} \in R_{\theta}^{top}$
is $\emph{realized}$ by $g$. Since $g$ has no Thurston obstructions
outside the Siegel disk, $f$ has no Thurston obstructions outside its
$\emph{rotation disk}$. By Theorem A, there is a
$\widehat{g} \in R_{\theta}^{geom}$ which realizes $f$, and which
can be modeled by some Blaschke product $G$. That is to say,
$\widehat{g} = \phi_{1} \circ \widehat{G} \circ \phi_{1}^{-1}$
where $\phi_{1}:S^{2} \to S^{2}$ is a quasiconformal homeomorphism
and $\widehat{G}$ is the $\emph{modified}$ Blaschke product.

On the other hand, since $g$ and $\widehat{g}$ both $\emph{realize}$
the topological branched covering map $f$, it follows that $g$ and
$\widehat{g}$ are $\emph{combinatorially equivalent}$. Since the
boundaries of the Siegel disks $D_{g}$ and $D_{\widehat{g}}$   are
both quasi-circles, and since $P_{g} - \overline{D_{g}}$ and
$P_{\widehat{g}} - \overline{D_{\widehat{g}}}$ are both finite sets,
it follows that $g$ and $\widehat{g}$ are quasiconformally
equivalent. By a theorem of McMullen (see also \cite{C2}), $g$ and
$\widehat{g}$ are quasiconformally conjugate to each other. Let $
\phi_{2}:S^{2} \to S^{2}$ be a quasiconformal homeomorphism such
that $g  = \phi_{2} \circ \widehat{g} \circ \phi_{2}^{-1}$. We thus
get  $g = \phi_{2} \circ  \phi_{1} \circ \widehat{G} \circ
\phi_{1}^{-1}\circ \phi_{2}^{-1}$.  The lemma follows.

\end{proof}

Let $g \in R_{\theta}^{geom}$ and $D_{g}$ be the Siegel disk. Assume
that  $J_{g}$ has positive measure.   Since $\partial D_{g}$ is a
quasi-circle, it follows that $\bigcup_{k=0}^{\infty}g^{-k}(\partial
D_{g})$ is a zero measure set.  Let $x_{0}$ be a Lebesgue point of
$J_{g} - \bigcup_{k=0}^{\infty}g^{-k}(\partial D_{g})$.  By
Proposition 1.14\cite{L}, $w(x_{0}) \subset P_{g}'$ where $w(x_{0})$
is the $w$-limit set of $x_{0}$ and $P_{g}'$ is the derived set of
$P_{g}$. Since $P_{g} - \partial D_{g}$ is a finite set, it follows
that $g^{k}(x_{0}) \to \partial D_{g}$ as $k \to \infty$. By
Lemma~\ref{3-fir}, $g$ is $\emph{modeled}$ by a Blaschke product
$G$. That is to say, there is a quasiconformal homeomorphism
$\phi:S^{2} \to S^{2}$ such that $g = \phi \circ \widehat{G} \circ
\phi^{-1}$. Let $z_{0} = \phi^{-1}(x_{0})$, and
\begin{equation}
J_{\widehat{G}} = J_{G} - \bigcup_{k=0}^{\infty}G^{-k}(\Delta).
\end{equation}
It follows that $J_{\widehat{G}} = \phi^{-1}(J_{g})$ is a set of
positive measure. Since quasiconformal maps preserve zero-measure
sets, we have
\begin{lemma}\label{Lyu}
$z_{0}$ is Lebesgue point of $J_{\widehat{G}} -
\bigcup_{k=0}^{\infty}G^{-k}(\partial \Delta)$, and $G^{k}(z_{0})
\to \partial \Delta$ as $k \to \infty$.
\end{lemma}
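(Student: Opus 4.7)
The plan is to transport both conclusions along the quasiconformal conjugacy $g=\phi\circ\widehat{G}\circ\phi^{-1}$ supplied by Lemma~\ref{3-fir}. Since $\phi$ maps $\Delta$ onto $D_g$ (the Siegel disks correspond under the conjugacy) and hence $\partial\Delta$ onto $\partial D_g$, one first checks that $\widehat G^{-k}(\partial\Delta)=G^{-k}(\partial\Delta)$ for every $k\ge 0$ (because $G$ preserves $\Delta$, $\partial\Delta$, and the exterior, and $\widehat G$ agrees with $G$ outside $\Delta$), and therefore
$$\phi^{-1}\bigl(J_{g}-\bigcup_{k\ge 0}g^{-k}(\partial D_{g})\bigr)=J_{\widehat G}-\bigcup_{k\ge 0}G^{-k}(\partial\Delta).$$
From this the orbital convergence $G^{k}(z_{0})\to \partial \Delta$ is immediate: by the conjugacy $G^{k}(z_{0})=\phi^{-1}(g^{k}(x_{0}))$; the discussion preceding the lemma already establishes $g^{k}(x_{0})\to\partial D_{g}$ via Proposition~1.14 of \cite{L} together with the finiteness of $P_{g}-\overline{D_{g}}$; and continuity of $\phi^{-1}$ with $\phi^{-1}(\partial D_{g})=\partial\Delta$ closes that half.

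For the Lebesgue density half I would invoke the classical fact that a quasiconformal homeomorphism is absolutely continuous on lines and differentiable almost everywhere with a.e.\ positive Jacobian, so that at almost every point it is infinitesimally affine, and at any such point it maps Lebesgue density points of any measurable set to Lebesgue density points of the image set. The set $E=J_g-\bigcup_{k\ge 0}g^{-k}(\partial D_g)$ has positive measure by the standing assumption together with the fact that $\partial D_g$ is a quasi-circle (hence the union above has measure zero). Consequently the subset of $E$ consisting of points which are simultaneously Lebesgue density points of $E$ and differentiability points of $\phi^{-1}$ with positive Jacobian is still of positive measure, and I may choose $x_0$ from this subset. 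Then $z_0=\phi^{-1}(x_0)$ is automatically a Lebesgue density point of $\phi^{-1}(E)=J_{\widehat G}-\bigcup_{k\ge 0}G^{-k}(\partial\Delta)$.

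The only delicate ingredient is the density-preservation property used in the second paragraph. It is strictly stronger than the mere preservation of null sets emphasized in the author's parenthetical ``quasiconformal maps preserve zero-measure sets'', but it is a standard consequence of the ACL property combined with the almost-everywhere differentiability theorem for quasiconformal maps (see e.g.\ Lehto--Virtanen). Granted this, no additional dynamical input beyond what the preceding paragraph already supplies is needed.
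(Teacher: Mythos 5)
Your transport of the orbital convergence is fine, and the observation that $\widehat G^{-k}(\partial\Delta)$ and $G^{-k}(\partial\Delta)$ agree on the exterior (where $J_{\widehat G}$ lives) is a useful check the paper leaves implicit. The gap is in the density half: you assert that a quasiconformal map carries Lebesgue density points to Lebesgue density points at \emph{every} point of differentiability with positive Jacobian, but that is not a consequence of differentiability alone. Differentiability of $\phi^{-1}$ at $x_0$ only yields $|\phi^{-1}(B_\rho(x_0))|/|B_\rho(x_0)|\to J_{\phi^{-1}}(x_0)$, i.e.\ convergence of the \emph{averaged} Jacobian; it does not rule out concentration of $J_{\phi^{-1}}$ on the small exceptional set $E^{c}\cap B_{\rho}(x_0)$, which has measure $o(\rho^{2})$ but on which the Jacobian could a priori still spend a definite fraction of its integral over $B_\rho(x_0)$. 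To run your argument honestly you would also need $x_0$ to be a Lebesgue point of $J_{\phi^{-1}}^{p}$ for some $p>1$, and then invoke Gehring's higher integrability together with H\"older to get $\int_{E^{c}\cap B_{\rho}(x_0)}J_{\phi^{-1}}=o(\rho^{2})$; that is one more full-measure condition you must add to the intersection before selecting $x_0$, and the cited ``ACL plus a.e.\ differentiability'' package by itself does not deliver it.

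The paper's terse ``since quasiconformal maps preserve zero-measure sets'' points to a shorter route that avoids differentiability altogether and is the one you should prefer: $E=J_{g}-\bigcup_{k\ge 0}g^{-k}(\partial D_{g})$ has positive measure, so $\phi^{-1}(E)=J_{\widehat G}-\bigcup_{k\ge 0}G^{-k}(\partial\Delta)$ also has positive measure by null-set preservation; almost every point of $\phi^{-1}(E)$ is a Lebesgue density point of $\phi^{-1}(E)$ by the Lebesgue density theorem applied \emph{directly on the $G$-side}; and almost every $z\in\phi^{-1}(E)$ satisfies $\omega(\phi(z))\subset P_{g}'$, because the set of $x\in E$ admissible for Lyubich's proposition has full measure in $E$ and full-measure subsets pull back to full-measure subsets under $\phi^{-1}$. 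Choose $z_0$ in the intersection and only then set $x_0=\phi(z_0)$, reversing your order of selection; both halves of the lemma follow with no density-preservation claim at all.
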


\subsection{Contraction Regions of $G^{-1}$}
Let  $c \in \partial \Delta \cap \Omega_{G}$ and $v = G(c)$. Suppose
that the local degree of $G$ at $c$ is $2m+1$ for some integer $m\ge
1$.  For $\delta > 0$ small, denote $U_{\delta}(v) = B_{\delta}(v)
\cap \{z\:\big{|}\:|z| > 1\}$. Then there are exactly $m+1$ inverse
branches of $G$ which map $U_{\delta}(c)$ to $m+1$ domains which are
attached to $c$ from the outside of the unit disk. In this section,
we will show that for each $c \in \Omega_{G} \cap \partial \Delta$,
there exists a region $W_{v} \subset U_{\delta}(v)$ which is
attached to the critical value $v$,  such that when restricted on
$W_{v}$, all these $m+1$ inverse branches of $G$ strictly contract
the hyperbolic metric on some appropriate Riemann surface.

Let $\Omega_{*} = {\Bbb P}^{1} - (\Delta \cup P_{G})$ and
$\Omega^{*} = {\Bbb P}^{1} - (\Delta \cup (G^{-1}(\Delta \cup
P_{G})))$.  Note that $\Omega^{*}$ may not be connected, and in that
case, each component of $\Omega^{*}$ is a hyperbolic Riemann
surface. We use $d\rho_{*} = \lambda_{\Omega_{*}}|dz|$ to denote the
hyperbolic metric of $\Omega_{*}$. To save the symbols, we use the
same notation $\Omega^{*}$ to denote the component with which we are
concerned,  and  $d \rho^{*} = \lambda_{\Omega^{*}}|dz|$ to denote
the hyperbolic metric on that component.  It follows that  $G:
\Omega^{*} \to \Omega_{*}$ is a holomorphic covering map.

Let $r> 0$ be small enough and $B_{r}(c)$ be the disk centered at
$c$ with radius $r$. Then there are exactly $m+1$ domains which are
contained in $$B_{r}(c) \cap \{z\:\big{|}\: |z| > 1\},$$ and which
are mapped to the outside of the unit disk. Each of these domains is
attached to $c$. Moreover, for each of such domains, the boundary of
the domain has an inner angle $\pi/(2m+1)$ at $c$.  Take $0 <
\epsilon < 1/(4m+2)$. Let $R$ and $L$ be the two rays starting from
$c$ such that the angles between $\partial \Delta$ and $R$,
$\partial \Delta$ and $L$, are both equal to $\epsilon \pi$. Let
$S_{\epsilon}^{c}$ be the cone spanned by $R$ and $L$ which is
attached to $c$ from the outside of the unit disk(see Figure 9,
where $m  =2$). Set
$$
\Omega_{\epsilon,r}^{c} = S_{\epsilon}^{c} \cap \Omega^{*} \cap
B_{r}(c).
$$
\begin{figure}
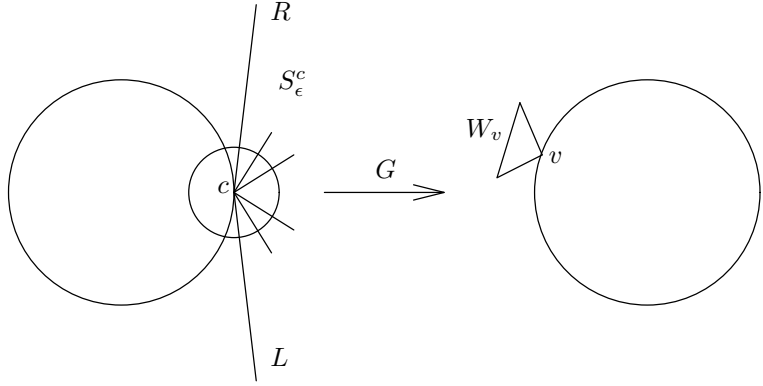

\bigskip
\begin{center}
\centertexdraw { \drawdim cm \linewd 0.02 \move(0 1)

\move(-3.5 -2.5) \lcir r:1.5

\move(3.5 -2.5) \lcir r:1.5

\move(-2 -2.5) \lcir r:0.6

\move(-2 -2.5) \lvec(-1.7 0) \move(-2 -2.5) \lvec(-1.7 -5) \move(-2
-2.5) \lvec(-1.5 -1.7) \move(-2 -2.5) \lvec(-1.2 -2)

\move(-2 -2.5) \lvec(-1.5 -3.3) \move(-2 -2.5) \lvec(-1.2 -3)

\move(2.1  -2) \lvec(1.8 -1.3)\move(2.1 -2) \lvec(1.5 -2.3)\lvec(1.8
-1.3)

\move(-2.2 -2.5) \htext{$c$} \move(2.2 -2.1) \htext{$v$}

\move(1.1 -1.8) \htext{$W_{v}$}

\move(-1.5 -0.2) \htext{$R$} \move(-1.5 -4.8) \htext{$L$}

\move(-1.4 -1.2)\htext{$S_{\epsilon}^{c}$}

\move(-0.8 -2.5) \arrowheadtype t:V \avec(0.8  -2.5) \move(-0.1
-2.3) \htext{$G$}

 }
\end{center}
\vspace{0.2cm} \caption{The contraction region of $G^{-1}$}
\end{figure}

The following lemma says that on $\Omega_{\epsilon,r}^{c}$, $G$
strictly increases the hyperbolic metric in $\Omega_{*}$. The lemma
is a  general version of Lemma 1.11 in [Pe],
\begin{lemma}\label{contra}
There is a $\delta>0$ which depends only on $\epsilon$ such that for
all $r> 0$ small enough and any $c \in \partial \Delta \cap
\Omega_{G}$,   we have
$$\lambda_{\Omega_{*}}(G(x))|G'(x)| \ge (1 +
\delta)\lambda_{\Omega_{*}}(x)$$ where  $x$ is an arbitrary point in
$\Omega_{\epsilon,r}^{c}$.
 \end{lemma}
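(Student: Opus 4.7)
The plan is to pass, via the Schwarz--Pick lemma for the covering map $G:\Omega^{*}\to\Omega_{*}$, from the expansion inequality to a purely geometric comparison of the hyperbolic densities on the nested pair $\Omega^{*}\subsetneq\Omega_{*}$. Since $G:\Omega^{*}\to\Omega_{*}$ is a holomorphic covering, Schwarz--Pick gives the isometry $\lambda_{\Omega_{*}}(G(x))\,|G'(x)|=\lambda_{\Omega^{*}}(x)$ for every $x\in\Omega^{*}$, so the desired bound $\lambda_{\Omega_{*}}(G(x))|G'(x)|\ge(1+\delta)\lambda_{\Omega_{*}}(x)$ is equivalent to
\[
\lambda_{\Omega^{*}}(x)\;\ge\;(1+\delta)\,\lambda_{\Omega_{*}}(x)\qquad(x\in\Omega_{\epsilon,r}^{c}).
\]

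Next I would identify the local picture of $G^{-1}(\Delta)$ at $c$. The symmetry $G(z^{*})=G(z)^{*}$ together with the local degree $2m+1$ force $G^{-1}(\partial\Delta)$ to consist locally at $c$ of $2m+1$ real-analytic arcs through $c$, one of which is $\partial\Delta$ itself and the other $2m$ grouped into $m$ mirror pairs across $\partial\Delta$. These arcs cut the outer disk $B_{r}(c)\cap\{|z|>1\}$ into $2m+1$ sectors of asymptotic opening $\pi/(2m+1)$ that alternately map onto $\Delta$ and onto its complement; the $m$ ``notch'' sectors landing in $\Delta$ are exactly what is removed in passing from $\Omega_{*}$ to $\Omega^{*}$ (for $r$ small, $B_{r}(c)$ meets no other component of $G^{-1}(\Delta\cup P_{G})$). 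The restriction $\epsilon<1/(4m+2)$ is precisely what keeps the open cone $S_{\epsilon}^{c}$ inside the union of the $m+1$ ``good'' sectors, guaranteeing that $\Omega_{\epsilon,r}^{c}=S_{\epsilon}^{c}\cap\Omega^{*}\cap B_{r}(c)$ lies safely in $\Omega^{*}$.

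The heart of the argument is a scale-invariant reduction. Put $\zeta=(z-c)/r$ and let $r\to 0$: the dilates $r^{-1}(\Omega_{*}-c)$ converge in the Carath\'{e}odory kernel sense to the open half-plane $\Pi$ tangent to $\Delta^{c}$ at $c$, while $r^{-1}(\Omega^{*}-c)$ converges to a model domain $\Omega_{\infty}^{c}$ obtained from $\Pi$ by deleting $m$ closed sectors of opening $\pi/(2m+1)$ based at the origin, and $r^{-1}(S_{\epsilon}^{c}-c)$ is already the infinite cone $S_{\epsilon}^{\infty}$ of opening $\pi-2\epsilon\pi$ based at the origin. Both $\Pi$ and $\Omega_{\infty}^{c}$ are invariant under positive real dilations about the origin, so their hyperbolic densities $\lambda_{\Pi}$ and $\lambda_{\Omega_{\infty}^{c}}$ are homogeneous of degree $-1$, and the ratio $\lambda_{\Omega_{\infty}^{c}}(\zeta)/\lambda_{\Pi}(\zeta)$ depends only on $\arg\zeta$. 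This ratio is continuous, strictly greater than $1$ on the interior of each good angular arc and blowing up as $\arg\zeta$ approaches the sides of a notch; therefore on the finite union of closed sub-arcs of directions realised by $S_{\epsilon}^{\infty}\cap\Omega_{\infty}^{c}$ it is bounded below by some constant $1+\delta_{m}>1$ depending only on $\epsilon$ and $m$.

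Finally, by continuity of hyperbolic metrics under Carath\'{e}odory kernel convergence of the rescaled domains, the model inequality transfers to the finite-scale picture: for all sufficiently small $r$ one has $\lambda_{\Omega^{*}}(x)\ge(1+\delta_{m}/2)\lambda_{\Omega_{*}}(x)$ throughout $\Omega_{\epsilon,r}^{c}$. Taking $\delta$ to be half the minimum of $\delta_{m}$ over the finitely many local degrees that arise at critical points of $G$ on $\partial\Delta$ yields a constant depending only on $\epsilon$ once $G$ is fixed. The principal obstacle is exactly this uniformity: both $\lambda_{\Omega^{*}}(x)$ and $\lambda_{\Omega_{*}}(x)$ blow up as $x\to c$, so a naive compactness argument on $\Omega_{\epsilon,r}^{c}$ cannot produce a uniform expansion constant; it is only the degree-$-1$ homogeneity of the two limit metrics that turns the singular comparison into a comparison of two \emph{continuous} functions on a compact set of directions, from which the uniform constant $1+\delta$ can be extracted.
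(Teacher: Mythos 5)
Your proposal is correct and its skeleton coincides with the paper's: both begin with the Schwarz--Pick identity $\lambda_{\Omega_{*}}(G(x))|G'(x)|=\lambda_{\Omega^{*}}(x)$ for the covering $G:\Omega^{*}\to\Omega_{*}$, reducing the claim to $\lambda_{\Omega^{*}}(x)\ge(1+\delta)\lambda_{\Omega_{*}}(x)$, and both then exploit the fact that, near $c$, $\Omega^{*}$ is an angle sector of opening $\pi/(2m+1)$ inside the half-plane that approximates $\Omega_{*}$, so that the density ratio depends essentially only on the angular coordinate $\lambda$ of $x$. Where you differ is the mechanism for extracting a uniform lower bound from this observation. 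The paper writes the explicit asymptotics $\lambda_{\Omega^{*}}(x)\approx\tfrac{1}{\eta\alpha\sin(\lambda\pi/\alpha)}$, $\lambda_{\Omega_{*}}(x)\approx\tfrac{1}{\eta\sin(\lambda\pi)}$ and then bounds the ratio $\frac{\sin\lambda\pi}{\alpha\sin(\lambda\pi/\alpha)}$ below by its value at $\lambda=\epsilon$ (which is $>1$ since $\sin u/u$ is decreasing); you instead rescale by $1/r$, pass to the Carath\'{e}odory kernel limit, use the degree $-1$ homogeneity of the two limit densities to reduce to a continuous angular function on a compact arc of directions, and then transfer back. Your route has the advantage of making transparent exactly why the blow-up of both densities as $x\to c$ does not spoil uniformity (the observation in your final paragraph, which the paper leaves implicit behind its $\approx$ signs), but at the cost of a small step you did not quite write out: the last transfer is most cleanly done by rescaling each $x$ individually to $|x-c|=1$ (rather than by $1/r$), so that the relevant points land on a fixed compact subset of the limit domain where kernel convergence of the metrics is uniform. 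With that minor precision supplied, the two arguments are equivalent in content; yours is more conceptual, the paper's more computational, and neither appeals to any lemma the other does not.
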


\begin{proof}
Assume that $r > 0$ is small.  Take any point $x \in
\Omega_{\epsilon ,r}^{c}$. Note that $\Omega_{\epsilon ,r}^{c}$ may
not be connected. We need only to consider the case that $x$ lies in
a component which has part of its boundary on $R$ or $L$, for in the
other cases, $G^{-1}(\partial \Delta)$ does much more contributions
to the hyperbolic density function $\lambda_{\Omega^{*}}$, and
therefore the value $\delta$ can actually be made bigger. This will
be clear from the following proof.

Since $G: \Omega^{*} \to \Omega_{*}$ is a holomorphic covering map,
we have
$$
\lambda_{\Omega_{*}}(G(x))|G'(x)| = \lambda_{\Omega^{*}}(x).
$$
So it is sufficient to prove that
$$
\lambda_{\Omega^{*}}(x)/\lambda_{\Omega _{*}}(x) \ge 1+ \delta.
$$

Since $r$ is small, when viewed from the point $x$, $\Omega^{*}$ is
approximately an angle domain near the vertex $c$, with
 angle $\alpha \pi$, where $\alpha = 1 /(2m +1)$.
 By taking an appropriate coordinate system, we may write
 $x = c+  \eta e^{i \lambda \pi}$ where $\epsilon < \lambda < \alpha <1$,
 and $0< \eta < r$. Thus we get
$$
\lambda_{\Omega^{*}}(x) \approx(\frac{1}{\alpha})\eta
^{\frac{1}{\alpha}-1} \frac{1}{\eta^{\frac{1}{\alpha}}\sin
\frac{1}{\alpha} \lambda \pi}=\frac{1}{\eta  \alpha \sin
\frac{1}{\alpha} \lambda \pi}.
$$

On the other hand, when viewed from $x$, $\Omega_{*}$ is
approximately the half plane, therefore,
$$
\lambda_{\Omega_{*}}(x) \approx \frac{1}{\eta \sin \lambda \pi}.
$$
This gives us
$$
\lambda_{\Omega^{*}}(x)/\lambda_{\Omega_{*}}(x) \approx \frac{ \sin
\lambda \pi} {\alpha \sin \frac{\lambda\pi}{\alpha}} > \frac{ \sin
\epsilon \pi} {\alpha \sin \frac{\epsilon \pi}{\alpha}} > 1.
$$

\end{proof}

\subsection{ Closed Half Hyperbolic Neighborhood}

Let $I = [a, b] \subset {\Bbb R}$ be an interval segment. Denote
${\Bbb C}_{I} = {\Bbb C}-({\Bbb R} - I)$. For a given $d>0$, the
hyperbolic neighborhood of the interval $I$ in the slit plane ${\Bbb
C}_{I}$ is defined to be the set which consists of all the points
$x$ such that $d_{{\Bbb C}_{I}}(x, I) < d$, where $d_{{\Bbb C}_{I}}$
denotes the hyperbolic distance in  ${\Bbb C}_{I}$. Let us use
$U_{d}(I)$ to denote this hyperbolic neighborhood. It is known that
the set $U_{d}(I)$ is a domain bounded by two Euclidean arcs which
are symmetric about the real line. The exterior angle between the
Euclidean arc and the interval $I$ is uniquely determined by $d$,
and let us denote this angle by $\alpha(d)$ ( for an explicit
formula of $\alpha(d)$, see \cite{Ya}). Such an object was first
introduced by Sullivan to complex dynamics and now becomes a popular
tool in this area.

Now let us adapt this object so that it is suitable for our
situation.  For each arc segment $I \subset \partial \Delta$, Let
$$
\Omega_{I} = {\Bbb P}^{1}- (P_{G} - I).
$$
For any two points $x, y \in \Omega_{I}$, let $d_{\Omega_{I}}(x, y)$
denote the distance between $x$ and $y$ with respect to the
hyperbolic metric on $\Omega_{I}$.  Let
\begin{equation}
H_{d}(I) = \{z \in \Omega_{I}\: | \: d_{\Omega_{I}}(z, I) \le d,
\hbox{and} \: |z| \ge 1\}.
\end{equation}
where $d_{\Omega_{I}}(x, y)$ is the hyperbolic distance between $x$
and $y$ in $\Omega_{I}$.

Let  $A_{\alpha}(I) \subset \{z: |z| > 1\}$  denote the arc segment
of some Euclidean circle such that it has the same ending points as
$I$ and such that the exterior angle between $A_{\alpha}(I)$ and $I$
is equal to $\alpha$. Let $$\gamma_{d}(I) =
\partial H_{d}(I) - I.$$  Note that  $\gamma_{d}(I)$ may not be an
 arc segment of some Euclidean circle.
 But since $P_{G} - \partial \Delta$ is a finite set,
it follows that when $|I|$ is small enough, the set $P_{G} -
\partial \Delta$ will do very little contribution to the hyperbolic
density of the points near the arc $I$, and thus $\gamma_{d}(I)$ is
like the Euclidean arc $A_{\alpha(d)}(I)$. Let us formulate this as
the next lemma and leave the proof to the reader.
\begin{lemma}\label{approx}
For any $\delta >0$, there is an $\epsilon > 0$, such that when $|I|
< \epsilon$, $\gamma_{d}(I)$ lies in between the two Euclidean arcs
$A_{\alpha(d)+\delta}(I)$ and $A_{\alpha(d)-\delta}(I)$.
\end{lemma}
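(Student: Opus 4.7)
The plan is to compare the actual domain $\Omega_{I} = {\Bbb P}^{1} - (P_{G} - I)$ with the idealized domain $\Omega_{I}^{0} = {\Bbb P}^{1} - (\partial \Delta - I)$, in which the $d$-hyperbolic neighbourhood boundary is \emph{exactly} a Euclidean circular arc, and then to show that passing from $\Omega_{I}^{0}$ to $\Omega_{I}$, i.e.\ additionally deleting the finite set $F := P_{G} - \partial \Delta$, perturbs the hyperbolic metric near $I$ by an arbitrarily small amount as $|I|\to 0$.

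First I handle $\Omega_{I}^{0}$. A M\"{o}bius transformation sending $\partial \Delta$ to the real line carries $\Omega_{I}^{0}$ conformally onto a slit plane ${\Bbb C}_{J}$, and the $d$-hyperbolic neighbourhood $U_{d}(J)$ of $J$ there is bounded by a pair of Euclidean circular arcs making exterior angle $\alpha(d)$ with $J$. Since M\"{o}bius maps send circles to circles and preserve angles at the endpoints, pulling back shows that the boundary of the $d$-hyperbolic neighbourhood of $I$ in $\Omega_{I}^{0}$, intersected with $\{|z|>1\}$, is precisely $A_{\alpha(d)}(I)$.

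For the comparison with $\Omega_{I}$ I would argue by a rescaling and Carath\'{e}odory kernel argument. Suppose the lemma fails; then there exist $\delta_{0}>0$ and arcs $I_{n}\subset \partial \Delta$ with $|I_{n}|\to 0$ whose midpoints $c_{n}$ may be assumed to converge to some $c_{\infty}\in \partial \Delta$, and such that $\gamma_{d}(I_{n})$ is not trapped between $A_{\alpha(d)\pm\delta_{0}}(I_{n})$. Apply the affine rescaling $\varphi_{n}(z)=(z-c_{n})/|I_{n}|$. Then $\varphi_{n}(\partial \Delta)$ is a circle of radius $1/|I_{n}| \to \infty$ that flattens locally to a straight line, the arcs $\varphi_{n}(I_{n})$ converge to a fixed interval $J\subset {\Bbb R}$, and the images $\varphi_{n}(F)$ of the fixed finite set $F$ escape to $\infty$ since $F$ is bounded away from $\partial \Delta$. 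Hence the rescaled domains $\varphi_{n}(\Omega_{I_{n}})$ Carath\'{e}odory converge, based at $0$, to the slit plane ${\Bbb C}_{J}$. By the Carath\'{e}odory kernel theorem the hyperbolic densities converge locally uniformly on ${\Bbb C}_{J}$, so $\varphi_{n}(H_{d}(I_{n}))$ converges in the Hausdorff sense to $U_{d}(J)$, and therefore $\varphi_{n}(\gamma_{d}(I_{n}))$ converges to the Euclidean arc that makes exterior angle exactly $\alpha(d)$ with $J$. Unscaling by $\varphi_{n}^{-1}$ contradicts the supposed failure of the lemma.

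The main technical obstacle is the local uniform convergence of the hyperbolic densities under Carath\'{e}odory convergence, where the difference between $\varphi_{n}(\Omega_{I_{n}})$ and the kernel ${\Bbb C}_{J}$ consists of the escaping finite set $\varphi_{n}(F)$. This is standard: monotonicity of the hyperbolic metric in the domain gives $\lambda_{\varphi_{n}(\Omega_{I_{n}})} \ge \lambda_{{\Bbb C}_{J}}$ on any compact set eventually in the kernel, while the fact that an isolated puncture receding to infinite hyperbolic distance perturbs the density only by an exponentially small amount on compact subsets (a consequence of monotonicity against a thrice-punctured sphere comparison) supplies the reverse estimate.
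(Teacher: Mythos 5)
The paper supplies no proof here (``leave the proof to the reader''), so you had to supply your own; the rescaling / kernel-convergence strategy you chose is a legitimate way to do it, and the overall argument is sound. There is a more direct alternative that avoids extracting a contradictory sequence: write $\Omega_{I}^{0}={\Bbb P}^{1}-(\partial\Delta-I)$, observe $\Omega_{I}=\Omega_{I}^{0}-F$ with $F=P_{G}-\partial\Delta$ a finite set at definite Euclidean distance $D_{0}>0$ from the circle, note the $d$-neighbourhood of $I$ in $\Omega_{I}^{0}$ is exactly bounded by $A_{\alpha(d)}(I)$ (your first paragraph), and then quantify that deleting punctures at hyperbolic $\Omega_{I}^{0}$-distance $\ge R$ from the lune of $I$ changes the density on that lune by a factor $1+O(e^{-R})$, with $R\to\infty$ as $|I|/D_{0}\to 0$. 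Either route gives the lemma.

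Two points in your writeup deserve tightening. First, your monotonicity claim $\lambda_{\varphi_{n}(\Omega_{I_{n}})}\ge\lambda_{{\Bbb C}_{J}}$ is not correct as stated: the rescaled domain is \emph{not} a subdomain of ${\Bbb C}_{J}$, since the rescaled circular arc $\varphi_{n}(\partial\Delta-I_{n})$ is not contained in ${\Bbb R}-J$, so there is no inclusion to apply the Schwarz lemma to. What you actually have, and what you should use, is $\Omega_{I_{n}}\subset\Omega_{I_{n}}^{0}$, giving $\lambda_{\Omega_{I_{n}}}\ge\lambda_{\Omega_{I_{n}}^{0}}$ at the pre-rescaled level (this already forces $\gamma_{d}(I)$ onto the $I$-side of $A_{\alpha(d)}(I)$, so one of the two containments in the lemma is free); then pass to the rescaled picture and use that the simply connected domains $\varphi_{n}(\Omega_{I_{n}}^{0})$ Carath\'{e}odory-converge to ${\Bbb C}_{J}$ to get the one-sided limit, with the puncture estimate supplying the other side. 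Second, you invoke ``the Carath\'{e}odory kernel theorem'' for the multiply connected rescaled domains, but the classical statement is about Riemann maps of simply connected regions; what you need is the version for universal covering maps of varying multiply connected domains (Hejhal's theorem on universal covering maps for variable regions, or an equivalent), and that should be flagged or replaced by the double comparison just described, which keeps the simply connected kernel theorem plus a separate puncture estimate. Neither point is fatal, but as written the monotonicity step would not survive a careful read.

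Finally, a small gap: Hausdorff convergence of $\varphi_{n}(H_{d}(I_{n}))$ to $U_{d}(J)$ does not by itself yield Hausdorff convergence of the boundary curves $\varphi_{n}(\gamma_{d}(I_{n}))$ to $A_{\alpha(d)}(J)$. You need to add that the rescaled distance functions $z\mapsto d_{\varphi_{n}(\Omega_{I_{n}})}\bigl(z,\varphi_{n}(I_{n})\bigr)$ converge locally uniformly to $z\mapsto d_{{\Bbb C}_{J}}(z,J)$, and that the latter has no critical points off $J$, so its level set $\{\,\cdot=d\,\}$ is stable under $C^{0}$ perturbation; this is what actually traps $\varphi_{n}(\gamma_{d}(I_{n}))$ between $A_{\alpha(d)\pm\delta}(J)$ for large $n$.
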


Let us fix $d> 0 $ throughout the following discussions.  Let
$$
\Lambda_{G} = \{c \in \Omega_{G} - \partial \Delta \big{|}\: \exists
\: k \ge 1 \hbox{ such that }G^{k}(c) \in \partial \Delta\}.
$$
For any $c \in \Lambda_{G}$, let $k(c) \ge 1$ be the least integer
such that  $G^{k(c)} \in \partial \Delta$. Let
$$
X_{G} = \{G^{k(c)}(c)\:\big{|}\: c \in \Lambda_{G}\}.
$$

\begin{lemma}\label{sch}
Let $J \subset \partial \Delta$ with $J \cap \Omega_{G} =
\emptyset$. Let $I = G(J)$.  Suppose that $I \cap X_{G} =
\emptyset$. Then $V \subset H_{d}(J)$ where $V$ is the connected
component of $G^{-1}(H_{d}(I))$ which is attached to $J$ from the
outside of the unit disk.
\end{lemma}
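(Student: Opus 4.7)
The plan is to realise $\psi=(G|_V)^{-1}\colon H_d(I)\to V$ as a holomorphic contraction from $\lambda_{\Omega_I}$ to $\lambda_{\Omega_J}$, and then integrate along a short path realising $d_{\Omega_I}(G(z),I)$.

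First I would check that $G\colon V\to H_d(I)$ is a conformal isomorphism. The hypothesis $J\cap\Omega_G=\emptyset$ makes $G|_{\partial\Delta}$ send $J$ homeomorphically onto $I$; because $V$ is the component of $G^{-1}(H_d(I))$ attached to $J$ from $\{|z|>1\}$ and $H_d(I)$ is a simply connected Jordan domain bounded by $I\cup\gamma_d(I)$, the proper holomorphic map $G\colon V\to H_d(I)$ has topological degree one along the boundary arc $J\to I$, hence degree one overall, hence is a conformal isomorphism.

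The heart of the argument is the inclusion $G^{-1}(\Omega_I)\subset\Omega_J$; this is exactly where the hypothesis $I\cap X_G=\emptyset$ is invoked. I would verify, for each $p\in P_G-J$, that $G(p)\in P_G-I$. If $p\in\partial\Delta-J$, then injectivity of $G|_{\partial\Delta}$ together with $G^{-1}(I)\cap\partial\Delta=J$ forces $G(p)\in\partial\Delta-I\subset P_G-I$. Otherwise $p\in P_G\setminus\partial\Delta$ has the form $p=G^{k}(c)$ for some critical $c\notin\partial\Delta$ with $k<k(c)$; the Blaschke symmetry preserves the two sides of $\partial\Delta$, so either $G(p)$ stays on the same side as $p$ (and hence is not in $I$) or $G(p)\in\partial\Delta$, which would force $k+1=k(c)$ and place $G(p)$ in $X_G\cap I=\emptyset$, a contradiction. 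In particular $V\subset\Omega_J$.

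With the inclusion $G^{-1}(\Omega_I)\subset\Omega_J$ in hand, I would apply the Schwarz--Pick lemma. Passing to the universal cover $\pi\colon\widetilde{\Omega_I}\to\Omega_I$, the branch $\psi$ lifts by analytic continuation to a single-valued holomorphic map $\widetilde\psi\colon\widetilde{\Omega_I}\to{\Bbb P}^{1}$; the monodromy around removed points of $P_G-I$ is absorbed by the covering, and at any critical value of $G$ lying inside $I$ the selected branch lands on the non-critical preimage in $J$ and so extends holomorphically by the removable singularity theorem. By the previous step the image of $\widetilde\psi$ lies in $G^{-1}(\Omega_I)\subset\Omega_J$, so Schwarz--Pick applied to $\widetilde\psi\colon\widetilde{\Omega_I}\to\Omega_J$ descends, using that $\pi$ is a local isometry, to the pointwise bound $\psi^{*}\lambda_{\Omega_J}\leq\lambda_{\Omega_I}$ on $H_d(I)$. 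Given $z\in V$ with $\omega=G(z)\in H_d(I)$, I would choose for each $\epsilon>0$ a path $\gamma$ in $\Omega_I$ from $\omega$ to some $\eta\in I$ of length at most $d+\epsilon$ and set $\widetilde\gamma=\psi\circ\gamma$; the contraction estimate gives $\ell_{\Omega_J}(\widetilde\gamma)\leq d+\epsilon$, so letting $\epsilon\to 0$ yields $d_{\Omega_J}(z,J)\leq d$, and since $z\in\{|z|\geq 1\}$ we conclude $z\in H_d(J)$.

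The main obstacle I anticipate is the coupled step of establishing $G^{-1}(\Omega_I)\subset\Omega_J$ and lifting $\psi$ single-valuedly: without $I\cap X_G=\emptyset$, a critical orbit from outside $\partial\Delta$ first landing in $I$ would deposit postcritical points of $G$ inside $V$, simultaneously breaking the containment $V\subset\Omega_J$ and preventing the Schwarz--Pick step from yielding the correct inequality between $\lambda_{\Omega_J}$ and $\lambda_{\Omega_I}$.
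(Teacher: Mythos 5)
Your proof is correct and follows essentially the same route as the paper's: the inclusion $G^{-1}(\Omega_I)\subset\Omega_J$ that you establish is exactly the paper's observation that $\widehat\Omega_J={\Bbb P}^1-G^{-1}(P_G-I)\subset\Omega_J$, and your Schwarz--Pick step through the universal cover is the standard expression of the paper's combination ``$G:\widehat\Omega_J\to\Omega_I$ is a covering map, hence a local isometry'' followed by ``$\widehat\Omega_J\subset\Omega_J$ forces $\lambda_{\Omega_J}\le\lambda_{\widehat\Omega_J}$.'' You also correctly isolate that $I\cap X_G=\emptyset$ is the precise hypothesis needed to keep postcritical points that hit the circle from landing in $I$, which is the only place the assumption is used.
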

\begin{figure}
\bigskip
\begin{center}
\centertexdraw { \drawdim cm \linewd 0.02 \move(0 1)

\move(0 -2.5) \larc r:1.7 sd:0 ed:85 \larc r:1.7 sd:95 ed:175 \larc
r:1.7 sd:185 ed:360

\move(-1.7 -2.5) \lcir r:0.5 \move(0 -0.8) \lcir r:0.5 \move(0 -4)

\arrowheadtype t:V
 \move(-0.3 -1.3) \avec(-1.2 -2.2)

 \move(-0.6 -2) \htext{$\Phi$}

 \move(-2.1 -2.65) \htext{$\Sigma^{*}$}

 \move(-0.14 -0.75) \htext{$\widetilde{\Sigma}$}

 }
\end{center}
\vspace{0.2cm} \caption{The map $\Phi: \widehat{\Sigma} \to
\Sigma^{*}$}
\end{figure}
\begin{proof}
Let $\widehat{\Omega}_{J}=  {\Bbb P}^{1} - G^{-1}(P_{G}- I) $. By
the assumption,  it follows that  $G: \widehat{\Omega}_{J} \to
\Omega_{I} $ is a holomorphic covering map.  For any two points $x,
y \in \widehat{\Omega}_{J}$, let  $d_{\widehat{\Omega}_{J}}(x, y)$
be the distance between $x$ and $y$ with respect to the hyperbolic
metric on $\widehat{\Omega}_{J}$. It follows that
$$
V \subset \{z \: \big{|}\: d_{\widehat{\Omega}_{J}}(z, J) \le d, |z|
\ge 1\}.
$$
Since $I \cap X_{G} = \emptyset$, we have $\widehat{\Omega}_{J}
\subset \Omega_{J}$, and therefore $d_{\Omega_{J}}(x, y) <
d_{\widehat{\Omega}_{J}}(x, y)$ for any two points in
$\widehat{\Omega}_{J}$.  This implies that
$$
\{z \: \big{|}\: d_{\widehat{\Omega}_{J}}(z, J) \le d, |z| \ge 1\}
\subset H_{d}(J).
$$
The lemma follows.
\end{proof}

\begin{lemma}\label{sch-a}
Let $d' > d$. Then there is a $\ell > 0$ such that for every $J
\subset \partial \Delta$ satisfying \begin{itemize} \item[1.] $|J|<
\ell$, \item[2.]  $J \cap \Omega_{G} = \emptyset$, \item[3.] $I \cap
X_{G} \ne \emptyset$ where $I = G(J)$, \end{itemize} we have  $V
\subset H_{d'}(J)$ where $V$ is the connected component of
$G^{-1}(H_{d}(I))$ which is attached to $J$ from the outside of the
unit disk.
\end{lemma}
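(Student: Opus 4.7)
The strategy splits in two stages: first, adapt the argument of Lemma~\ref{sch} to bound $V$ in hyperbolic distance $d$ from $J$ inside $\widehat{\Omega}_{J}:={\Bbb P}^{1}-G^{-1}(P_{G}-I)$; second, compare $d_{\widehat{\Omega}_{J}}$ with $d_{\Omega_{J}}$ on $V$, showing that the loss is arbitrarily small as $|J|\to 0$.

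For the first stage, I would exploit the Blaschke symmetry $G^{-1}(\partial\Delta)=\partial\Delta$. Writing $Q=P_{G}-\partial\Delta$ (a finite set lying off the unit circle) and $X_{1}={\Bbb P}^{1}-(\partial\Delta-J)$, one checks
\[
\Omega_{J}=X_{1}-Q,\qquad \widehat{\Omega}_{J}=X_{1}-G^{-1}(Q).
\]
By hypothesis $J\cap\Omega_{G}=\emptyset$, so the inverse branch of $G$ attached to $J$ is a local biholomorphism with bounded distortion near $J$. By Lemma~\ref{approx}, for small $|J|$ the set $H_{d}(I)$ is close to the Euclidean lens $A_{\alpha(d)}(I)$ and has Euclidean diameter $O(|J|)$. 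The only critical value of $G$ that can meet $H_{d}(I)$ is the point $x_{0}\in I\cap X_{G}$, but it lies on the arc $I\subset\partial\Delta$, not in the interior of $H_{d}(I)$, so the inverse branch extends single-valuedly to that interior. Lifting minimizing paths verbatim as in Lemma~\ref{sch} yields $d_{\widehat{\Omega}_{J}}(z,J)\le d$ for every $z\in V$, along with the estimate that $V$ has Euclidean diameter $O(|J|)$.

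For the second stage, observe that $Q$ and $G^{-1}(Q)$ are \emph{fixed} finite subsets of ${\Bbb P}^{1}-\partial\Delta$, sitting at some positive Euclidean distance $\delta_{0}>0$ from $\partial\Delta$, while $V$ is confined to an $O(|J|)$-neighborhood of $J\subset\partial\Delta$. Since the density of $X_{1}$ blows up like $1/\mathrm{dist}(\cdot,\partial\Delta-J)$ near its slit, the hyperbolic distance in $X_{1}$ from any point of $V$ to any point of $Q\cup G^{-1}(Q)$ grows at least like $\log(\delta_{0}/|J|)\to\infty$ as $|J|\to 0$. Invoking the standard fact that removing a finite set of points from a hyperbolic Riemann surface perturbs its hyperbolic density by a factor tending to $1$ as the distance to the removed set tends to infinity, one obtains
\[
\sup_{z\in V}\lambda_{\Omega_{J}}(z)/\lambda_{\widehat{\Omega}_{J}}(z)\longrightarrow 1\qquad(|J|\to 0).
\]
In particular $V\cap Q=\emptyset$ for $|J|$ small, so $V\subset\Omega_{J}$, and integrating the above ratio along any path in $V$ from $z$ to $J$ yields $d_{\Omega_{J}}(z,J)\le(1+o(1))\,d$. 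Choosing $\ell$ small enough that $(1+o(1))\,d\le d'$ gives the desired inclusion $V\subset H_{d'}(J)$.

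The main obstacle is the uniformity of the hyperbolic metric comparison in the second stage: one must make quantitative the fact that $\lambda_{X_{1}-S}/\lambda_{X_{1}}$ is close to $1$ on $V$ for the fixed finite set $S=Q$ or $S=G^{-1}(Q)$, \emph{uniformly in }$J$. Although the slit $\partial\Delta-J$, and hence $X_{1}$ itself, depends on $J$, the shrinking of $J$ pushes $V$ arbitrarily close to that slit, which is precisely what drives the hyperbolic distance from $V$ to the off-circle points of $Q\cup G^{-1}(Q)$ to infinity uniformly. Once this density estimate is in hand, the rest of the argument is routine.
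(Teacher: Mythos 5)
Your route is genuinely different from the paper's, which works inside a fixed small slit disk $\widetilde{\Sigma}=B_{\delta}(x)-(\partial\Delta-I)$ and the univalent inverse branch $\Phi$ defined there, reducing the statement to the comparison $H_{d}(I)\subset\widetilde{H}_{d'}(I)$ of two hyperbolic neighborhoods of $I$ (via Lemma~\ref{approx}) and then pushing forward by the biholomorphism $\Phi$. Two steps in your version do not hold as written. The claimed identity $G^{-1}(\partial\Delta)=\partial\Delta$ is false for the Blaschke products at hand: with $G(z)=\lambda z\prod(z-p_{k})/(1-\overline{p}_{k}z)\cdot\prod(z-q_{k})/(1-\overline{q}_{k}z)$, $|p_{k}|>1$, $|q_{k}|<1$, the map $G$ has a zero at $p_{k}\notin\overline{\Delta}$ and a pole at $1/\overline{p}_{k}\in\Delta$, so $G$ does not preserve $\Delta$ and $G^{-1}(\partial\Delta)$ is a strictly larger curve system. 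Hence $\widehat{\Omega}_{J}\ne X_{1}-G^{-1}(Q)$; $\widehat{\Omega}_{J}$ carries additional slits along $G^{-1}(\partial\Delta-I)\setminus(\partial\Delta-J)$, which run into $\partial\Delta$ at on-circle critical points and can approach $J$. The asserted limit $\sup_{V}\lambda_{\Omega_{J}}/\lambda_{\widehat{\Omega}_{J}}\to1$ is therefore unjustified (those slits can make $\lambda_{\widehat{\Omega}_{J}}$ much larger than $\lambda_{X_{1}}$ near $J$). The one-sided bound you actually use does survive, since $\widehat{\Omega}_{J}\subset X_{1}$ is true in general and one needs only $\lambda_{\Omega_{J}}\le(1+o(1))\lambda_{X_{1}}$ on $V$, but this is a repair rather than what you wrote.

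The more structural gap is in stage~1. ``Lifting minimizing paths verbatim as in Lemma~\ref{sch}'' to obtain $d_{\widehat{\Omega}_{J}}(z,J)\le d$ requires $G\colon\widehat{\Omega}_{J}\to\Omega_{I}$ to be an honest covering, which is precisely where Lemma~\ref{sch} used $I\cap X_{G}=\emptyset$. Under the present hypothesis $I\cap X_{G}\ne\emptyset$, $I$ may contain a critical value $v=G(c)$ with $c\in\Omega_{G}-\partial\Delta$ and $k(c)=1$; then $c\in\widehat{\Omega}_{J}$ is a critical point, $G$ is only a branched cover of $\Omega_{I}$, and Schwarz--Pick gives $G^{*}\lambda_{\Omega_{I}}<\lambda_{\widehat{\Omega}_{J}}$, so the lift of a path of $\Omega_{I}$-length $d$ has $\widehat{\Omega}_{J}$-length exceeding $d$ rather than bounded by it. The slit-disk construction in the paper exists exactly to sidestep this: $\widetilde{\Sigma}$ is simply connected and the inverse branch $\Phi$ attached to $J$ sends $I$ into $J$, never to $c$, so $\Phi$ is a genuine biholomorphism even when $I$ contains a critical value, and the metric comparison collapses to the local estimate of Lemma~\ref{approx}.
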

\begin{proof}
Since $P_{G} - \partial \Delta$ is a finite set, there is a $\delta
> 0$ such that
$$
(B_{\delta}(x)-\partial \Delta) \cap P_{G} = \emptyset.
$$
where $x$ is the mid-point of $I$.  Let
$$
\widetilde{\Sigma} = B_{\delta}(x) - (\partial \Delta - I).
$$
It is clear  that $\widetilde{\Sigma}$ is a simply connected domain.
Since $J \cap \Omega_{G} = \emptyset$ and $\widetilde{\Sigma} - I$
contains no critical value of $G$, there is an inverse branch of
$G$, say, $\Phi$, defined on $\widetilde{\Sigma}$ which maps
$\widetilde{\Sigma}$ to some domain containing $J$. Let
$$\Sigma^{*} = \Phi(\widetilde{\Sigma}).$$ See Figure 10 for an
illustration. Let $d_{\widetilde{\Sigma}}(,)$ and
$d_{\Sigma^{*}}(,)$ denote the hyperbolic distance in
$\widetilde{\Sigma}$ and $\Sigma^{*}$, respectively. Define
$$
\widetilde{H}_{d'}(I) = \{z\in \widetilde{\Sigma}\:\big{|}\:
d_{\widetilde{\Sigma}}(z, I) \le d' \hbox{ and } |z| \ge 1\}
$$
and
$$
H_{d}^{*}(I) = \{z\in \Sigma^{*}\:\big{|}\: d_{\Sigma^{*}}(z, I) \le
d \hbox{ and } |z| \ge 1\}.
$$
Since for $I$ small, when viewed from the points near $I$, the
difference between $\Omega_{I}$ and $\widetilde{\Sigma}$ is small,
it follows that
$$
H_{d}(I) \subset \widetilde{H}_{d'}(I)
$$
provided that $|I|$ is small enough. Since $\Phi: \widetilde{\Sigma}
\to \Sigma^{*}$ is a holomorphic isomorphism and $\Sigma^{*} \subset
\Omega_{J}$,  we have
$$
V  = \Phi(H_{d}(I)) \subset  \Phi(\widetilde{H}_{d'}(I)) =
H_{d'}^{*}(J) \subset H_{d'}(J).
$$
The lemma follows.

\end{proof}

\subsection{ Minimal Neighborhoods}
Let $z_{0}$ be the point in Lemma~\ref{Lyu}. Let $z_{k} = G^{k}
(z_{0})$ for $k \ge 1$.  In $\S3.2$, we show that there exist
regions which are attached to the critical values on the unit
circle, such that in these regions, $G^{-1}$ strictly contracts the
hyperbolic metric in $\Omega_{*}$. Our next step is to show that,
there will be some infinite subsequence of $\{z_{k}\}$ which passes
through these contraction regions. To prove the existence of such
infinite subsequence, we will introduce an object, called
$\emph{minimal neighborhood}$.

Recall that  $G|\partial \Delta = h \circ R_{\theta }\circ h^{-1}$,
where $h: \partial \Delta \to \partial \Delta$ is a quasi-symmetric
homeomorphism with $h(1) = 1$.   Now for each arc $I \subset
\partial \Delta$, we define $\sigma(I) = |(h^{-1}(I)|$.
It follows from the definition that  $\sigma$ is $G$-invariant.

\begin{lemma}\label{geom}
Let $\delta > 0$ be small. Then there exists a $\tau > 0$ such that
for any two arcs $I, J \subset\partial \Delta$ with $I \cap J \ne
\emptyset$ and $|J| < \tau|I|$, we have $\sigma(J) < \delta
\sigma(I)$.
\end{lemma}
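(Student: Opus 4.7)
The plan is to reduce the statement to the fact that $h^{-1}:\partial\Delta\to\partial\Delta$ is quasi-symmetric, which holds because $h$ itself is (quasi-symmetry of circle homeomorphisms is preserved under inversion). Equivalently, there exists a homeomorphism $\eta:[0,\infty)\to[0,\infty)$ with $\eta(0+)=0$ such that for every pair of arcs $A\subset B\subset\partial\Delta$,
\[
\frac{|h^{-1}(A)|}{|h^{-1}(B)|}\le \eta\!\left(\frac{|A|}{|B|}\right).
\]
This characterization can be derived either from the classical three-point definition of quasi-symmetry by a direct compactness argument, or by invoking the Beurling--Ahlfors quasiconformal extension of $h^{-1}$ to $\Delta$ and applying standard boundary distortion estimates.

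Given $\delta>0$, I would choose $\tau>0$ so small that $3\eta(\tau)/(1-\eta(\tau))<\delta$, which is possible since $\eta(\tau)\to 0$. Now, for arcs $I,J\subset\partial\Delta$ with $I\cap J\ne\emptyset$ and $|J|<\tau|I|$, the hypothesis $I\cap J\ne\emptyset$ forces $J$ to sit inside a neighborhood of $I$ of comparable scale, so I can write $J$ as the disjoint union $(J\cap I)\cup A_1\cup A_2$, where $A_1,A_2$ are the (possibly empty) components of $J\setminus I$ lying on either side of $I$. Each of these three pieces has length at most $|J|\le\tau|I|$.

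The inclusion $J\cap I\subset I$ together with the modulus inequality immediately yields $|h^{-1}(J\cap I)|\le\eta(\tau)\,|h^{-1}(I)|$. For each $A_i$, apply the inequality to the inclusion $A_i\subset I\cup A_i$; since $|A_i|/|I\cup A_i|\le\tau$, one obtains $|h^{-1}(A_i)|\le \eta(\tau)\bigl(|h^{-1}(I)|+|h^{-1}(A_i)|\bigr)$, and a trivial rearrangement gives $|h^{-1}(A_i)|\le \bigl(\eta(\tau)/(1-\eta(\tau))\bigr)|h^{-1}(I)|$. Summing the three bounds produces $\sigma(J)\le \bigl(3\eta(\tau)/(1-\eta(\tau))\bigr)\sigma(I)<\delta\,\sigma(I)$, as required.

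The argument is essentially routine once the reformulation through quasi-symmetry is in place, and there is no serious obstacle. The only subtle point to record is the role of the assumption $I\cap J\ne\emptyset$: it localizes $J$ at the same scale as $I$, which is exactly what allows the comparison of $h^{-1}(J)$ and $h^{-1}(I)$ via the quasi-symmetric modulus. Without this localization $J$ could lie in a part of $\partial\Delta$ where $h^{-1}$ is arbitrarily more expanding than it is on $I$, and no such estimate could hold in general.
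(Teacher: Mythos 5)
Your proof is correct, and since the paper states only that ``the proof is easy'' without supplying one, there is nothing in the text to compare against; reducing the statement to the nested-arc modulus inequality for $h^{-1}$ is the natural route. The two facts you invoke are both standard: the inverse of a quasi-symmetric circle homeomorphism is quasi-symmetric (e.g., via the Beurling--Ahlfors extension), and a quasi-symmetric circle map $g$ satisfies $|g(A)|/|g(B)|\le\eta(|A|/|B|)$ for nested arcs $A\subset B$, with $\eta(t)\to 0$ as $t\to 0$ (one can even take $\eta(t)=Ct^{\alpha}$ by iterating the three-point condition). The decomposition of $J$ and the summation of the three bounds are sound, and your closing remark correctly identifies why the hypothesis $I\cap J\ne\emptyset$ is indispensable: without it no multiplicative comparison of $\sigma(J)$ and $\sigma(I)$ is possible.

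One small point about circular geometry worth recording: when $|I|$ is close to $2\pi$ and $J$ contains the short complementary arc $\partial\Delta\setminus I$, the set $J\cap I$ has two components and $J\setminus I=\partial\Delta\setminus I$, so that $I\cup A_i$ is the whole circle rather than a proper arc. This is harmless: apply the nested-arc inequality to each component of $J\cap I$ separately, and for $J\setminus I$ note $|\partial\Delta\setminus I|<|J|<\tau|I|$ and pass to a limit of proper arcs $B\uparrow\partial\Delta$ to get $\sigma(\partial\Delta\setminus I)\le\eta(\tau)\cdot 2\pi=\eta(\tau)\bigl(\sigma(I)+\sigma(\partial\Delta\setminus I)\bigr)$, which rearranges as before. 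The resulting total bound is still $3\eta(\tau)/(1-\eta(\tau))\,\sigma(I)$, so your choice of $\tau$ requires no adjustment.
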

The proof is easy and we shall leave the details to the reader.

\begin{figure}
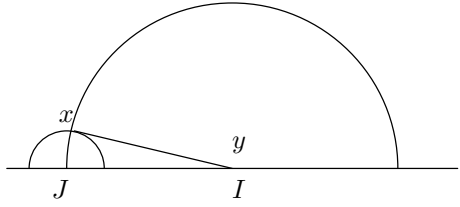

\bigskip
\begin{center}
\centertexdraw { \drawdim cm \linewd 0.02 \move(0 1)

\move(-3 -2.5) \lvec(3 -2.5)

\move(0 -2.5) \larc r:2.2 sd:0 ed:180

\move(-2.2 -2.5) \larc r:0.5 sd:0 ed:180

\move(0 -2.5) \lvec(-2.1 -2)

\move(-2.4 -2.9) \htext{$J$}

\move(0 -2.9) \htext{$I$}

\move(0 -2.3) \htext{$y$}

\move(-2.3 -1.9) \htext{$x$}

 }
\end{center}
\vspace{0.2cm} \caption{$\sigma(J) < \delta\sigma(I)$ when the angle
between $[x, y]$ and $I$ is small enough}
\end{figure}

\begin{lemma}\label{min-arc}
Let $\delta > 0$ be small. Then there is a $\rho > 0$ and an
$\epsilon
> 0$ such that for any $I \subset \partial \Delta$ with $|I|<
\epsilon$ and any $x \in H_{d}(I)$ and  $y \in I$, if the angle
between the segment $[x,y]$ and $\partial \Delta$ is less than
$\rho$, then there is an arc $J \subset \partial \Delta$ such that
\begin{itemize}
\item [1.] $x \in H_{d}(J)$, and
\item [2.] $\sigma(J) < \delta \sigma(I)$.
\end{itemize}
\end{lemma}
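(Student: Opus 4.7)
The plan is to reduce the statement to a Euclidean geometric estimate via Lemma~\ref{approx}, and then translate the resulting Euclidean arc-length bound into a $\sigma$-length bound via Lemma~\ref{geom}.

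Fix a small $\delta_0 > 0$ with $\delta_0 < \alpha(d)/2$, and let $\epsilon_1 > 0$ be the threshold supplied by Lemma~\ref{approx}, so that for any arc $K \subset \partial \Delta$ with $|K| < \epsilon_1$, the curve $\gamma_d(K)$ lies between the Euclidean arcs $A_{\alpha(d) - \delta_0}(K)$ and $A_{\alpha(d) + \delta_0}(K)$. Then $H_d(K) \cap \{|z| \ge 1\}$ is sandwiched between the ``inner'' Euclidean lens bounded by $K$ and $A_{\alpha(d) - \delta_0}(K)$ and the ``outer'' lens bounded by $K$ and $A_{\alpha(d) + \delta_0}(K)$. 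In particular, if $x \in H_d(I) \cap \{|z| \ge 1\}$, then the radial projection $x' = x/|x|$ of $x$ onto $\partial\Delta$ lies in $I$, and there is a constant $C_1 = C_1(d) > 0$ with $|x - y| \le C_1 |I|$ for every $y \in I$ (since the outer lens has diameter at most $C_1 |I|$).

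Now invoke the angle hypothesis. In a flat local chart near $y$ identifying a short arc of $\partial \Delta$ containing $I$ with a segment of the real axis, write $x = y + t(\cos \rho_0, \sin \rho_0)$ with $t = |x - y|$ and $|\rho_0| < \rho$; the Euclidean height of $x$ above $\partial \Delta$ is then at most $t \sin \rho \le C_1 |I| \sin \rho =: H$. Let $J$ be the arc of $\partial \Delta$ centered at $x'$ of length
\[
L = \frac{4 H}{\tan((\alpha(d) - \delta_0)/2)}.
\]
The inner lens bounded by $J$ and $A_{\alpha(d) - \delta_0}(J)$ attains its maximal height $L \tan((\alpha(d) - \delta_0)/2)/2 = 2H$ at its midpoint $x'$. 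Since $x$ lies directly above $x'$ at height at most $H < 2H$, it is strictly inside this inner lens, so by the sandwich property above, $x \in H_d(J)$ provided $|J| < \epsilon_1$.

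Finally, $|J| = L \le C_2 |I| \sin \rho$ with $C_2 = 4 C_1 / \tan((\alpha(d) - \delta_0)/2)$, and $J \cap I \ne \emptyset$ since $x' \in J \cap I$. Given the prescribed $\delta > 0$, let $\tau$ be the constant furnished by Lemma~\ref{geom}. Choosing $\rho > 0$ so small that $C_2 \sin \rho < \tau$, and taking $\epsilon \le \min\{\epsilon_1,\, \epsilon_1 / C_2\}$, we get $|J| < \tau |I| < \epsilon_1$, and Lemma~\ref{geom} yields $\sigma(J) < \delta\, \sigma(I)$, completing the proof. The only real technical obstacle, already absorbed into Lemma~\ref{approx}, is that the points of $P_G - \partial \Delta$ near $I$ perturb the hyperbolic density $\lambda_{\Omega_I}$ slightly from the ideal slit-plane density, so $H_d(I)$ is only approximately bounded by $A_{\alpha(d)}(I)$; this is what forces the two-sided approximation with loss $\delta_0$ and the smallness requirement $|I| < \epsilon$.
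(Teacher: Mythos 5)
Your proof takes essentially the same route as the paper's --- reduce the $\sigma$-bound to a Euclidean arc-length bound via Lemma~\ref{approx}, then close with Lemma~\ref{geom} --- but the middle step is organized differently and, I would say, more cleanly. The paper first reduces to the ``worst case'' $x \in \gamma_{d}(I)$ and argues qualitatively that, since $\gamma_{d}(I)$ is trapped between the Euclidean arcs $A_{\alpha(d)\pm\delta_{0}}(I)$, a small angle to some $y \in I$ forces $x$ to sit near an endpoint $a$ of $I$; it then takes $J$ centered at $a$ with $|J| = O(d(x,a))$ and observes $d(x,a)/|I| \to 0$ as $\rho \to 0$. You instead bound the Euclidean height of $x$ above $\partial\Delta$ directly, $d(x,\partial\Delta) \le |x-y|\sin\rho \le C_{1}|I|\sin\rho$ --- valid for any $x \in H_{d}(I)$, not only for boundary points --- and center $J$ at the radial projection $x'$ with $|J|$ proportional to that height. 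This dispenses with the worst-case reduction and produces explicit constants; both arguments then feed $|J|/|I| \lesssim \sin\rho$ into Lemma~\ref{geom}. One small wrinkle you should note: the claim $x' = x/|x| \in I$ can fail when $\alpha(d)+\delta_{0} > \pi/2$, since the outer Euclidean lens then bulges slightly past the endpoints of $I$. In that case $x'$ lies within $O(H)$ of an endpoint of $I$, and since $|J|$ is already of size $\Theta(H)$, centering $J$ at the point of $I$ nearest to $x$ rather than at $x'$ restores the required intersection $J \cap I \ne \emptyset$ at the cost of a constant; the paper's own proof glosses over the analogous bookkeeping, so this is a matter of presentation rather than a real gap.
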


\begin{proof}
We may consider the worst case, that is, $x \in \gamma_{d}(I)$. See
Figure 11 for an illustration. If $\epsilon$ is small, by
Lemma~\ref{approx}, $\gamma_{d}(I)$ lies in between two Euclidean
arcs which have the same ending points as the arc $I$. So if $\rho$
is small enough, $x$ must be close to one of the end points of $I$.
On the other hand, by Lemma~\ref{approx} again, if $x \in
\gamma_{d}(I)$ is close to one of the end points of $I$, say $a$, it
must be contained in $H_{d}(J)$ for some $J$ with $$|J| = O(d(x,
a))$$ and $a$ being the middle point of $J$. Clearly, as $\rho \to
0$,
$$d(x, a)/|I| \to 0.$$ It follows that by taking $\rho$ small,
$|J|/|I|$ can be as small as wanted, and hence by Lemma~\ref{geom},
$\sigma(J) < \delta \sigma (I)$. Lemma~\ref{min-arc} follows.
\end{proof}

Let $k \ge 0$ be an integer. Define
\begin{equation}
\Phi_{k} = \{I \subset \partial \Delta\big{|}\: z_{k} \in
H_{d}(I)\},
\end{equation}
and
\begin{equation}\label{leng}
l_{k} = \inf \{\sigma(I)\big{|}\: I \in \Phi_{k}\}.
\end{equation}
\begin{remark}\label{obt-min}
Note that by  taking a limit of a convergent subsequence of the
intervals, the value $l_{k}$ in (\ref{leng}) can be obtained by some
interval $I \in \Phi_{k}$.
\end{remark}

Since $z_{k} \to \partial \Delta$, we have
\begin{lemma}\label{tozero}
$l_{k} \to 0$ as $k \to \infty$.
\end{lemma}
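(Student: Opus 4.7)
The plan is to exhibit, for each large $k$, an explicit arc $I_k\in\Phi_k$ whose $\sigma$-measure tends to $0$, and then conclude $l_k\le\sigma(I_k)\to 0$. Let $w_k\in\partial\Delta$ be a closest point to $z_k$, and set $\delta_k=|z_k-w_k|$. Because $z_0\notin\bigcup_{k\ge 0}G^{-k}(\Delta)$, the iterate $z_k$ lies strictly outside $\overline{\Delta}$, so $|z_k|>1$ and the segment from $w_k$ to $z_k$ is radial. By Lemma~\ref{Lyu}, $\delta_k\to 0$. Choose a constant $C=C(d)>0$ to be fixed below, and let $I_k\subset\partial\Delta$ be the arc of length $\ell_k=C\delta_k$ centered at $w_k$.

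First I would verify that $z_k\in H_d(I_k)$ for all sufficiently large $k$. By Lemma~\ref{approx}, given any small $\epsilon>0$ there is $\eta>0$ such that when $|I|<\eta$ the curve $\gamma_d(I)$ lies between $A_{\alpha(d)-\epsilon}(I)$ and $A_{\alpha(d)+\epsilon}(I)$. For this we need to know that the finite set $P_G-\partial\Delta$ is at positive distance from $\partial\Delta$, which holds because it is a finite set disjoint from $\partial\Delta$; in particular, for $k$ large, $w_k$ (and hence the whole arc $I_k$) is uniformly bounded away from $P_G-\partial\Delta$, so Lemma~\ref{approx} applies. An elementary Euclidean computation shows that for an arc $I$ of length $\ell$ centered at $w$, the Euclidean cap bounded by $I$ and $A_{\alpha}(I)$ reaches maximal height approximately $(\ell/2)\tan(\alpha/2)$ above the midpoint. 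Hence, as soon as $C>4/\tan(\alpha(d)/4)$, say, the radial point $z_k$ at height $\delta_k=\ell_k/C$ above $w_k$ lies inside $H_d(I_k)$ for all $k$ large. This gives $I_k\in\Phi_k$, so $l_k\le\sigma(I_k)$.

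It remains to observe that $\sigma(I_k)\to 0$. Recall $\sigma(I)=|h^{-1}(I)|$, where $h:\partial\Delta\to\partial\Delta$ is the Herman--\'Swi\k{a}tek conjugacy (a homeomorphism), so $h^{-1}$ is uniformly continuous. Since $\ell_k=C\delta_k\to 0$, we get $\sigma(I_k)\to 0$, and therefore $l_k\to 0$ as required. The only non-trivial point is the uniformity in $k$ of the approximation in Step~1, which is exactly what the positive distance from $\partial\Delta$ to $P_G\setminus\partial\Delta$ provides, so there is no real obstacle; the argument is essentially an explicit Euclidean construction combined with the geometric comparison in Lemma~\ref{approx} and the continuity of $h^{-1}$.
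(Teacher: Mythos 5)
Your argument is correct. The paper treats this lemma as immediate from Lemma~\ref{Lyu} and offers no proof; your explicit construction of arcs $I_k$ of length comparable to the distance from $z_k$ to $\partial\Delta$, centered at the foot of the radial segment through $z_k$, together with the sagitta estimate furnished by Lemma~\ref{approx} and the uniform continuity of $h^{-1}$, is precisely the elementary geometric justification that the paper leaves implicit.
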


\begin{definition}\label{key-s}
For each $n$, we define $0 \le m(n) \le n$ to be the least integer
such that
\begin{equation}
l_{m(n)} = \min \{l_{k} \big{|} \: 0 \le k \le n \}.
\end{equation}
\end{definition}

The following two lemmas follow directly from  the definition of
$m(n)$ and Lemma~\ref{tozero}.
\begin{lemma}
$m(n) \le m(n+1)$, and  $m(n) \to \infty$ as $n \to \infty$.
\end{lemma}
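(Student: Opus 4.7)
The statement is really a formal consequence of the definition of $m(n)$ as the \emph{least} index at which the running minimum of $\{l_0,\dots,l_n\}$ is attained, together with Lemma~\ref{tozero}. My plan is to verify monotonicity case-by-case, then deduce divergence by a standard boundedness-plus-monotonicity contradiction.

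For monotonicity, set $m=m(n)$ and $\mu_n=\min\{l_0,\dots,l_n\}=l_m$. I would distinguish three cases according to how $l_{n+1}$ compares with $\mu_n$. If $l_{n+1}>\mu_n$, then $\mu_{n+1}=\mu_n=l_m$ and the least index achieving it on $\{0,\dots,n+1\}$ is still $m$, so $m(n+1)=m(n)$. If $l_{n+1}=\mu_n$, then again $\mu_{n+1}=l_m$ and since $m\le n<n+1$, the least index achieving the minimum remains $m$, giving $m(n+1)=m(n)$. Finally, if $l_{n+1}<\mu_n$, the new minimum is attained uniquely (relative to $\{0,\dots,n\}$) at index $n+1$, so $m(n+1)=n+1>m(n)$. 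In every case $m(n+1)\ge m(n)$.

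For divergence, suppose to the contrary that $m(n)$ is bounded. Since $\{m(n)\}$ is a nondecreasing sequence of nonnegative integers, it must eventually stabilize at some value $M$, i.e.\ $m(n)=M$ for all $n\ge N_0$. By the defining property of $m(n)$, this forces $l_M\le l_k$ for every $k\ge N_0$. Now observe that $l_M>0$: indeed, by Remark~\ref{obt-min} the infimum in the definition of $l_M$ is attained by some $I\in\Phi_M$, and since $I$ is a genuine arc in $\partial\Delta$ and $\sigma$ is the pullback of Lebesgue measure by the quasisymmetric conjugacy $h^{-1}$, we have $\sigma(I)>0$. But Lemma~\ref{tozero} gives $l_k\to 0$, so for $k$ large enough $l_k<l_M$, contradicting the previous inequality. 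Hence $m(n)\to\infty$.

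There is really no obstacle here; the only nontrivial input is Lemma~\ref{tozero}, which was already established, and the strict positivity of $l_M$, which is immediate from the quasisymmetric nature of $h$. The lemma is purely a bookkeeping statement used to organize the subsequence $\{z_{m(n)}\}$ of nearest returns to $\partial\Delta$, and the work in the subsequent sections (where one exploits the existence of $\tau(n)<m(n)$ and the contraction supplied by Lemma~\ref{contra}) is where the genuine difficulty will lie.
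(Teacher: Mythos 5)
Your proof is correct and is exactly the elementary bookkeeping argument the paper has in mind — the paper offers no proof at all, stating only that this lemma "follows directly from the definition of $m(n)$ and Lemma~\ref{tozero}," and your three-case check of monotonicity plus the stabilization argument for divergence is precisely what that remark is gesturing at. The one place you could have been slightly more explicit is why $l_M>0$: the real reason is that $z_M\notin\partial\Delta$ (since $z_0$ was chosen off $\bigcup_k G^{-k}(\partial\Delta)$), and by Lemma~\ref{approx} the neighborhoods $H_d(I)$ shrink to points of $\partial\Delta$ as $|I|\to 0$, so $\Phi_M$ contains no arbitrarily short intervals; your appeal to Remark~\ref{obt-min} and the quasisymmetry of $h$ reaches the same conclusion and is fine.
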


\begin{lemma}\label{min-neig}
For each $m(n)$, there is an open arc, say $I_{m(n)} \subset
\partial \Delta$, which may not be unique,   such that
$\sigma(I_{m(n)}) = l_{m(n)}$ and $z_{m(n)} \in H_{d}(I_{m(n)})$.
\end{lemma}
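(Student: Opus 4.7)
The plan is to invoke Remark~\ref{obt-min}, which already asserts that the infimum $l_{m(n)}$ is realized by some arc in $\Phi_{m(n)}$; the content of the lemma is essentially this attainment, together with the freedom to take the realizing arc to be open. Accordingly, I would set up a direct compactness argument for the infimum and then address openness as a cosmetic matter.

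First I would take a minimizing sequence $\{I^{j}\}_{j \ge 1} \subset \Phi_{m(n)}$ with $\sigma(I^{j}) \downarrow l_{m(n)}$. Each arc $I^{j}$ is determined by its ordered pair of endpoints on the compact space $\partial \Delta$, so by passing to a subsequence I may assume these endpoints converge and hence $I^{j}$ converges in the Hausdorff metric to some closed arc $I_{\infty} \subset \partial \Delta$. Next, by the definition $\sigma(I) = |h^{-1}(I)|$ with $h$ a homeomorphism, Hausdorff convergence of $I^{j}$ to $I_{\infty}$ gives Hausdorff convergence of $h^{-1}(I^{j})$ to $h^{-1}(I_{\infty})$, and Lebesgue measure is continuous under such convergence of arcs, yielding $\sigma(I_{\infty}) = l_{m(n)}$.

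The step requiring the most care is passing the condition $z_{m(n)} \in H_{d}(I^{j})$ to the limit, i.e., showing $d_{\Omega_{I_{\infty}}}(z_{m(n)}, I_{\infty}) \le d$. The Riemann surfaces $\Omega_{I^{j}} = {\Bbb P}^{1} - (P_{G} - I^{j})$ differ from $\Omega_{I_{\infty}}$ only by the inclusion or exclusion of at most the two endpoints of $I_{\infty}$, so they converge in the Carath\'eodory kernel sense to $\Omega_{I_{\infty}}$. The hyperbolic densities then converge locally uniformly on any compact subset of $\Omega_{I_{\infty}}$ disjoint from $P_{G}$, so the closed hyperbolic-distance inequality persists in the limit; the side condition $|z_{m(n)}| \ge 1$ is clearly preserved. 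This produces an arc $I_{\infty} \in \Phi_{m(n)}$ with $\sigma(I_{\infty}) = l_{m(n)}$.

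Finally, to obtain an \emph{open} arc as claimed, I would take $I_{m(n)}$ to be the interior of $I_{\infty}$. Removing the two endpoints changes neither $\sigma(I_{\infty})$ (Lebesgue measure ignores points) nor the hyperbolic neighborhood in any essential way, except in the degenerate case that an endpoint happens to lie in $P_{G}$; since $P_{G} \cap \partial \Delta$ is a single forward orbit, this can be avoided from the outset by an arbitrarily small perturbation of the minimizing sequence, which changes $\sigma$ by an arbitrarily small amount. The main obstacle in the whole argument is thus the Carath\'eodory-type convergence of hyperbolic metrics under Hausdorff convergence of the defining arcs, which, while standard, is the only point that is not entirely formal.
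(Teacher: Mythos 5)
Your proposal takes essentially the same compactness route as the paper: the paper's one-line proof invokes Remark~3.6, which is precisely the assertion that the infimum $l_{m(n)}$ is realized by passing to a convergent subsequence of arcs, and you spell out the attendant convergence of $\sigma$ and of the hyperbolic-distance condition. One small inaccuracy in the openness discussion: $P_{G}\cap\partial\Delta$ is the whole unit circle (the boundary critical orbit is dense and $P_{G}$ is closed), not a single countable orbit, so the endpoints of $I_{\infty}$ always lie in $P_{G}$; the openness of $I_{m(n)}$ is nevertheless cosmetic because the hyperbolic metric lives on the interior of $\Omega_{I}$, and that interior is unchanged whether or not the two endpoints are included in $I$.
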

\begin{proof}
As mentioned in Remark~\ref{obt-min}, by taking a  convergent
subsequence of the intervals, we can get an interval $I\subset
\partial \Delta$ such that $\sigma(I) = l_{m(n)}$ and $I \in
\Phi_{m(n)}$. Let us denote this interval by $I_{m(n)}$. By the
minimal property of $I_{m(n)}$, it follows that
$d_{\Omega_{I_{m(n)}}}(z_{m(n)}, I_{m(n)}) = d$ and therefore,
$z_{m(n)} \in \gamma_{d}( I_{m(n)}) \subset H_{d}( I_{m(n)})$.
 \end{proof}

We call the region  $H_{d}(I_{m(n)})$ in Lemma~\ref{min-neig}  a
$\emph{minimal neighborhood} $ associated to the number $m(n)$. From
the proof of Lemma~\ref{min-neig},  $z_{m(n)} \in
\gamma_{d}(I_{m(n)})$.

\begin{lemma}\label{fin-lem}
There exist  $\epsilon > 0$ and $r > 0$ and an increasing sequence
of integers $\{\tau(n)\}$ such that for all $n$ large enough,
$z_{\tau(n)} \in \Omega_{\epsilon, r}^{c}$ for some $c \in
\Omega_{G} \cap \partial \Delta$.
\end{lemma}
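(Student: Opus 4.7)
The plan is to define $\tau(n) = m(n)-1$ (extracting a strictly increasing subsequence at the end) and to show that pulling back the minimal neighborhood $H_d(I_{m(n)})$ by one step along the orbit of $z_0$ forces $z_{m(n)-1}$ into the contraction cone $\Omega_{\epsilon,r}^{c}$ associated with some critical point $c \in \Omega_G \cap \partial\Delta$.

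First I would apply Lemma~\ref{min-arc} to the pair $(z_{m(n)}, I_{m(n)})$. By Lemma~\ref{min-neig}, $\sigma(I_{m(n)}) = l_{m(n)}$ realizes the infimum in $\Phi_{m(n)}$, so for any $J \subset \partial\Delta$ with $z_{m(n)} \in H_d(J)$ we have $\sigma(J) \ge \sigma(I_{m(n)})$, which rules out the conclusion of Lemma~\ref{min-arc}. The contrapositive then yields a uniform $\rho > 0$, independent of $n$, such that every segment $[z_{m(n)}, y]$ with $y \in I_{m(n)}$ makes angle at least $\rho$ with $\partial\Delta$. So $z_{m(n)}$ is angularly separated from the unit circle when viewed from any point of $I_{m(n)}$.

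Next, let $J_n \subset \partial\Delta$ be the preimage arc of $I_{m(n)}$ selected by the branch of $G^{-1}$ carrying $z_{m(n)}$ to $z_{m(n)-1}$. The $G$-invariance of $\sigma$ gives $\sigma(J_n) = l_{m(n)}$. I claim $J_n$ must contain some critical point $c_n \in \Omega_G \cap \partial\Delta$. Suppose not. If $I_{m(n)} \cap X_G = \emptyset$, Lemma~\ref{sch} shows directly that $z_{m(n)-1} \in H_d(J_n)$, so $J_n \in \Phi_{m(n)-1}$ and $l_{m(n)-1} \le \sigma(J_n) = l_{m(n)}$. If $I_{m(n)} \cap X_G \ne \emptyset$, Lemma~\ref{sch-a} places $z_{m(n)-1}$ in $H_{d'}(J_n)$ for a fixed $d'>d$, and re-applying Lemma~\ref{min-arc} at level $m(n)-1$ shrinks this back to an arc $J_n'$ with $z_{m(n)-1} \in H_d(J_n')$ and $\sigma(J_n') \le l_{m(n)}$. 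In either case $l_{m(n)-1} \le l_{m(n)}$, contradicting the definition of $m(n)$ as the \emph{least} integer realizing $\min\{l_k : 0 \le k \le n\}$ (which forces $l_j > l_{m(n)}$ for every $j < m(n)$). Hence $c_n$ exists.

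Finally, the local expansion $G(w) - G(c_n) = A(w - c_n)^{2m_n+1}(1 + o(1))$ shows that the inverse branch attached to $J_n$ maps a sector of opening $\pi$ at any point of the interior of $I_{m(n)}$ into a sector of opening $\pi/(2m_n+1)$ at $c_n$, multiplying angular separation from $\partial\Delta$ by the factor $1/(2m_n+1)$. The angular bound $\rho$ from the first step therefore guarantees that $z_{m(n)-1}$ sits in the cone $S_\epsilon^{c_n}$ for $\epsilon := \rho/(2M+1)$, where $M$ is the maximal local degree over the finite set $\Omega_G \cap \partial\Delta$. Choosing $r$ smaller than the common radius of convergence of the local inverse branches then gives $z_{m(n)-1} \in \Omega_{\epsilon,r}^{c_n}$, and a strictly increasing subsequence can be extracted from $\{m(n)-1\}$ because $m(n) \to \infty$ by Lemma~\ref{tozero}. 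The main obstacle is the case $I_{m(n)} \cap X_G \ne \emptyset$: Lemma~\ref{sch-a} only provides $H_{d'}$ rather than $H_d$, so reaching a contradiction with the minimality of $l_{m(n)}$ requires carefully combining Lemma~\ref{sch-a} with Lemma~\ref{min-arc} to retract back to $H_d$ without losing the bound on $\sigma$, which is the most delicate bookkeeping in the argument.
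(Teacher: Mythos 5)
Your proposal sets $\tau(n) = m(n)-1$, i.e.\ pulls back exactly one step from the minimal time $m(n)$. This is a genuinely different --- and unfortunately incorrect --- route from the paper's. In the paper, $\tau(n)$ is a \emph{dynamical} quantity, $m(n)-k(n)$ or $m(n)-M$, where the chain $J_0 = I_{m(n)}, J_1, \dots, J_{M-1}$ is pulled back until either the orbit point $z_{m(n)-l}$ escapes the thickened half-neighbourhood $H_{d'}(J_l)$ (Case~1, at $l = k(n)$) or the arc $J_{M-1}$ captures a critical value (Case~2). Until one of these events occurs, each $z_{m(n)-l}$ stays inside $H_{d'}(J_l)$ for an arc $J_l$ containing no critical point; such a point need not lie in any cone $\Omega_{\epsilon, r}^{c}$. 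So the choice $\tau(n)=m(n)-1$ simply fails whenever the escape or capture time exceeds $1$.

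The error is concentrated in the claim that $J_n$ must contain a critical point. Your contradiction argument requires $z_{m(n)-1}$ to lie in the pull-back component $V_1$ of $H_d(I_{m(n)})$ attached to $J_n$; but $z_{m(n)-1}$ is only known to be \emph{some} preimage of $z_{m(n)}$, and it may lie in a different component of $G^{-1}(H_d(I_{m(n)}))$ attached to one of the other out-sectors of a nearby circle critical point. This divergence of branches is exactly what the paper's Case~1 detects via the condition $z_{m(n)-k(n)}\notin H_{d'}(J_{k(n)})$. When it occurs, Lemma~\ref{sch} tells you nothing about $z_{m(n)-1}$, so $l_{m(n)-1}\le l_{m(n)}$ does not follow and no contradiction arises. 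Your phrase ``the preimage arc of $I_{m(n)}$ selected by the branch of $G^{-1}$ carrying $z_{m(n)}$ to $z_{m(n)-1}$'' already conceals this: that branch need not carry $I_{m(n)}$ to a subarc of $\partial\Delta$ at all.

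There is a second gap in the sub-case $I_{m(n)}\cap X_G\neq\emptyset$. Lemma~\ref{sch-a} only gives $z_{m(n)-1}\in H_{d'}(J_n)$ with $d'>d$, which does not put $J_n$ into the family $\Phi_{m(n)-1}$, since that family is defined with the fixed constant $d$. You propose to ``re-apply Lemma~\ref{min-arc}'' to shrink back to $H_d$, but Lemma~\ref{min-arc} produces a smaller arc only under a small-angle hypothesis on $z_{m(n)-1}$, which is precisely what the minimality of the index would be trying to rule out; the inference does not close. In the paper this is handled not at a single step but along the whole chain, where the bounded-type hypothesis ensures Lemma~\ref{sch-a} is invoked at most a uniform number $K$ of times, so that $d'$ does not degrade as the chain grows --- a bound that is invisible in a one-step argument.
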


The idea of the proof is based on the following fact: for $r>0$
small and $c \in \Omega_{G} \cap \partial \Delta$, if $y, y' \in
B_{r}(c)$ such that $G(y) = G(y')$, then $\alpha$ and $\alpha'$ can
not be both small, where $\alpha$ is the angle between $\partial
\Delta$ and the straight segment $[c, y]$, and $\alpha'$ is the
angle between $\partial \Delta$ and the straight segment $[c, y']$.
\begin{proof}
To fixed the ideas, let $\epsilon > 0$ and  $r> 0$  be two small
numbers and  $N$ be a large integer.  By taking $N$ large, we may
assume that when $n \ge N$, the interior of the $\emph{minimal
neighborhood}$ $H_{d}(I_{m(n)})$ does not intersect $P_{G}$. We may
also assume that $I_{m(n)}$ does not contain any critical point of
$G$. Since otherwise, by Lemma~\ref{min-arc} and the minimal
property of $m(n)$, it follows that the angle between the straight
segment $[c, z_{m(n)}]$ and $\partial \Delta$ has a uniform positive
lower bound, and this will imply the lemma. Take $n \ge N$. Let
$$
M = \min\{k\ge 1\:\big{|}\: \exists\:\: c \in \Omega_{G} \cap
\partial \Delta \hbox{ such that } G^{k}(c) \in I_{m(n)}\}.
$$
For $0 \le l \le M$, let $J_{l}\subset \partial \Delta $ be the arc
segment  such that
$$
G^{l}(J_{l}) = I_{m(n)}.
$$

Since $\theta$ is of bounded type, there exists a number $K$ which
depends only on $\theta$ and $|X_{G}|$ such that the number of the
intervals $J_{l}$, $1 \le l \le M$, which intersect $X_{G}$ is not
more than $K$(Note that $M$ can be arbitrarily large if $I_{m(n)}$
is small). Moreover, by taking $n$ large enough and thus $I_{m(n)}$
is small, we may also assume that each $J_{l}$, $0 \le l \le M$,
contains at most one of the points in $X_{G}$. For $0 \le l \le
M-1$, let $V_{l}$ denote the pull back of $H_{d}(I_{m(n)})$ by
$G^{l}$ along the orbit $\{J_{l}\}_{0 \le l \le M-1}$. It follows
that for any $d' > d$, there is an $\eta> 0$, such that
$$V_{l} \subset H_{d'}(J_{l})$$ for all $0 \le l \le M-1$ provided
that $|I_{m(n)}| < \eta$. This is because the number of $J_{l}$, $0
\le l \le M$, which intersect $X_{G}$, is not more than $K$, and
thus we need only apply Lemma~\ref{sch-a} at most $K$ times, and for
all other $J_{l}$, we can apply Lemma~\ref{sch}, in which case $d$
is not increased. From now on, let us fix a $d' > d$ and suppose
that $n$ is large enough such that $|I_{m(n)}| < \eta$.
\begin{figure}
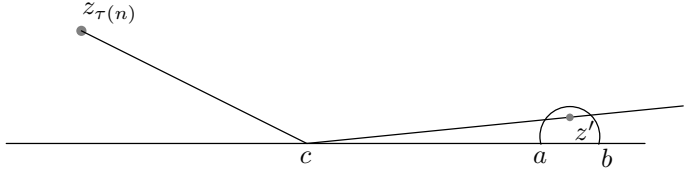

\bigskip
\begin{center}
\centertexdraw { \drawdim cm \linewd 0.02 \move(0 1)

\move(-4 -4) \lvec(4.5 -4) \move(0 -4) \lvec(5 -3.5) \move(3.55
-3.95)\htext{$z'$} \move(3.5 -3.65) \fcir f:0.5 r:0.05

\move(3.5 -3.9) \larc r:0.4 sd:-15 ed:195 \move(0 -4) \lvec(-3 -2.5)
\fcir f: 0.5 r:0.07  \move(-3 -2.4) \htext{$z_{\tau(n)}$} \move(-0.1
-4.25) \htext{$c$}

\move(3 -4.25) \htext{$a$}\move(3.9 -4.3) \htext{$b$}

}

\end{center}
\vspace{0.2cm} \caption{$|a - b| \ll |c - b|$}
\end{figure}

Now there are two cases.  In the first  case,  there is some $1 \le
k(n)\le M-1$ such that
$$
z_{m(n) - l} \in H_{d'}(J_{l})
$$ holds for all $0 \le l \le k(n)-1$,  but
$$
z_{m(n) - k(n)} \notin H_{d'}(J_{k(n)}).
$$

In the second case,
$$
z_{m(n) - l} \in H_{d'}(J_{l})
$$
for all $0 \le l \le M-1$.

In the first case, let $J_{k(n)} = [a, b]$ where $b$ is such that
$|a - c| < |b - c|$.  Let $z' \in H_{d'}(J_{k(n)})$ be such that
\begin{equation}\label{bro}
G(z') = G(z_{m(n) - k(n)}) = z_{m(n) - k(n) + 1}\in
H_{d'}(J_{k(n)-1}).
\end{equation}
Since $$\sigma(J_{k(n) -1}) = \sigma(I_{m(n)}) \hbox{ and }z_{m(n) -
k(n) +1} \in H_{d'}(J_{k(n)- 1}),$$ it follows that when $n$ is
large, $z_{m(n) - k(n) + 1}$ is near $\partial \Delta$ and thus
$m(n) - k(n) + 1$ is large. In particular, $z_{m(n) - k(n)}$ is near
$\partial \Delta$. Since the restriction of $G$ on $\partial \Delta$
is a homeomorphism, it follows from (\ref{bro})that there is some $c
\in \Omega_{G} \cap
\partial \Delta$ such that both $z'$ and $z_{m(n) - k(n)}$ belong to
a small neighborhood of $c$. Let $\tau(n) = m(n) - k(n)$. The first
case is now separated into two subcases (i) and (ii).

In  subcase (i), $|a - b|$ is small compared with $|b - c|$. Then
the angle between the straight segment $[c, z']$ and the unit circle
is small. It follows that the angle between the  straight segment
$[c, z_{\tau(n)}]$ and the unit circle can not be small(in this
case, it is at least about $\pi/(2m+1)$ where $2m+1 \ge 3$ is the
degree of $G$ at $c$, see Figure 12).

\begin{figure}
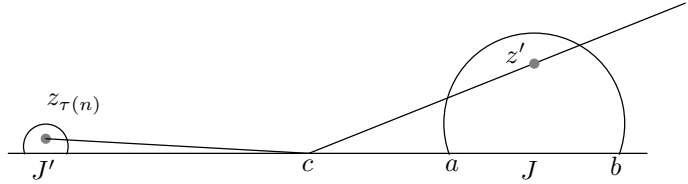

\bigskip
\begin{center}
\centertexdraw { \drawdim cm \linewd 0.02 \move(0 1)

\move(-4 -4) \lvec(4.5 -4) \move(0 -4) \lvec(5 -2) \move(2.6
-2.8)\htext{$z'$} \move(3 -2.8) \fcir f:0.5 r:0.07

\move(3 -3.6) \larc r:1.2 sd:-20 ed:200 \move(0 -4) \lvec(-3.5 -3.8)
\fcir f: 0.5 r:0.07  \move(-3.5 -3.5) \htext{$z_{\tau(n)}$}
\move(-3.5 -3.9) \larc r:0.3 sd:-20 ed:200

\move(-0.1 -4.25) \htext{$c$}

\move(2.8 -4.35) \htext{$J$} \move(-3.7 -4.35) \htext{$J'$}
\move(1.8 -4.25) \htext{$a$}\move(4 -4.3) \htext{$b$} }

\end{center}
\vspace{0.2cm} \caption{$\sigma(J') \ll \sigma(J)$}
\end{figure}
In subcase (ii), there is a uniform $ 0 < k < 1$ such that $|a - b|
> k|c - b|$. See Figure 13 for an illustration.
Since $G(z)$ is like $G(c) + \mu (z - c)^{2 m +1}$ in $B_{r}(c)$,
where $\mu \ne 0$ is some constant, it follows that when $r$ is
small,
$$
|c - z_{\tau(n)}| \asymp |c - z'|.
$$
Note that if the angle between the straight segment $[c,
z_{\tau(n)}]$ and the unit circle were small, there would be an arc
segment $J' \subset
\partial \Delta$ such that $z_{\tau(n)} \in H_{d}(J')$, and
\begin{equation}\label{Eu-len}
|J'| \ll |c - z_{\tau(n)}| \asymp |c - z'| \preceq |c - b| \preceq
|J|.
\end{equation}
From (\ref{Eu-len}) and  Lemma~\ref{geom}, we have
$$
\sigma(J') \ll  \sigma(J) = \sigma(I_{m(n)})
$$
provided that $|J'|$ is small enough. But this contradicts with the
minimal property of $m(n)$.

In the second case, $J_{M-1}$ contains a critical value $v = G(c)$
with $c \in \Omega_{G} \cap \partial \Delta$,
 and $z_{m(n)-M+1} \in H_{d'}(J_{M-1})$. As in the first case,
 as $n$ is large, $\sigma(J_{M-1}) = \sigma(I_{m(n)})$ is small, and
 therefore, $z_{m(n)-M+1}$ is close to $\partial \Delta$. It follows
 that $m(n) - M +1$ is large provided $n$ is large. Let $\tau(n) =
 m(n) - M$.
Now we claim that the angle between the segment of $[z_{\tau(n)+1},
v]$ and the unit circle $\partial \Delta$ must have a positive lower
bound, which is independent of $n$. In fact, if this were not true,
 by Lemma~\ref{min-arc}, we would have a $J \subset
\partial \Delta$ small such that
$$
z_{\tau(n)+1} \in H_{d}(J)
$$
but
$$
\sigma(J) < \sigma(J_{M-1}) = \sigma(I_{m(n)}).
$$
But this  contradicts with the minimal property of $m(n)$ and the
claim is proved.  From the claim, it follows that  $[z_{\tau(n)},
c]$ and the unit circle has a uniform lower bound $\epsilon> 0$,
which is independent of $n$.

Now we get a sequence of integers $\tau(n)$ such that the angle
between $\partial \Delta$ and $[c, z_{\tau(n)}]$  has a positive
lower bound independent of $n$. Since $\{\tau(n)\}$ is unbounded by
the proof above, we may assume that $\tau(n)$ is an increasing
sequence by taking a subsequence. This completes the proof of the
lemma.

\end{proof}

\subsection{Pull back argument}

For a subset $X \subset \Omega_{*}$, we use
$\hbox{Diam}_{\Omega_{*}}(X)$ to denote the diameter of $X$ with
respect to the hyperbolic metric $d \rho_{*}$ of $\Omega_{*}$.  For
any subset $E$ of the complex plane, we use $\hbox{area}(E)$, and
$\hbox{Diam}(E)$ to denote the area and the diameter of $E$ with
respect to the Euclidean metric respectively. Let $\{\tau(n)\}$ be
the sequence obtained in Lemma~\ref{fin-lem}.

\begin{lemma}\label{pull-b}
There exist $K_{1}, K_{2}, K_{3} >0$ independent of $n$, such that
for every $n$ large enough,  there are open simply connected domains
$C^{n} \subset B^{n} \subset A^{n} \subset   \Omega_{*}$ satisfying
\begin{itemize}
\item [1.] $z_{\tau(n)} \in B^{n}$,
\item [2.] $G(C^{n}) \subset \Delta$,
\item [3.] $\mod(A^{n}- B^{n}) \ge K_{1}$,
\item [4.] $\hbox{area}(C^{n})/\hbox{Diam}(B^{n})^{2} \ge K_{2}$,
\item [5.] $d_{\Omega_{*}}(A^{n})  \le K_{3}$.
\end{itemize}
\end{lemma}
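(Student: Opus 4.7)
\emph{Plan.} I will construct $A^n, B^n, C^n$ by pulling back standard hyperbolic balls through a local ``conformal straightening'' of $G$ near $c$. Since $\Omega_G \cap \partial \Delta$ is finite, by passing to a subsequence we may assume the critical point produced by Lemma~\ref{fin-lem} is the same $c$ for every $n$. Choose a local conformal chart near $c$ sending $c \mapsto 0$, with $\partial \Delta$ locally the real axis and the exterior of $\overline \Delta$ locally the upper half-plane $\mathbb{H}$. Near $0$, $G(z) \sim v + \alpha\, z^{2m+1}$ (where $v = G(c)$), so the preimages of $\partial \Delta$ through $c$ are $2m+1$ analytic arcs tangent at $0$ to the rays $\arg z = k\pi/(2m+1)$. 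These rays partition a small half-disk in $\mathbb{H}$ at $0$ into $2m+1$ sectoral regions: $m+1$ \emph{outside-preserving sectors} (images in $U_\delta(v)$) and $m$ \emph{inside wedges} (images in $\Delta \cap B_\delta(v) \subset G^{-1}(\Delta)$). Writing $z_{\tau(n)} = d_n e^{i\phi_n}$, this point lies in some outside sector $S$; pick an inside wedge $W$ adjacent to $S$ (to be specified below), let $L$ be the separating preimage-ray, and form the \emph{double sector} $\mathcal{D} := S \cup L \cup W$, of angular opening $2\pi/(2m+1)$.

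On $\mathcal{D}^\circ$ the map $G$ is a conformal isomorphism onto $B_\delta(v) \setminus \{v\}$. Composing with the appropriate branch of the power map $\psi(z) = z^{(2m+1)/2}$ (after a rotation placing $\mathcal{D}$'s angular bisector along a fixed direction) yields a conformal isomorphism $\psi \colon \mathcal{D}^\circ \to \mathbb{H}$ sending $L$ to the positive imaginary axis, $S$ to the first quadrant, and $W$ to the second. Put $\tilde z_n := \psi(z_{\tau(n)})$; then $|\tilde z_n| = d_n^{(2m+1)/2}$. For constants $\rho_a > \rho_b > 0$ (to be pinned down in terms of the angle bounds below), define
$$
\tilde A^n := B_{\mathbb{H}}(\tilde z_n, \rho_a), \quad \tilde B^n := B_{\mathbb{H}}(\tilde z_n, \rho_b), \quad \tilde C^n := \tilde B^n \cap \{\Re w < 0\},
$$
and set $A^n := \psi^{-1}(\tilde A^n)$, $B^n := \psi^{-1}(\tilde B^n)$, $C^n := \psi^{-1}(\tilde C^n) \subset W$. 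Each is simply connected in $\mathcal{D}^\circ \subset \Omega_*$ (for $n$ large, so that the finite set $P_G \setminus \overline\Delta$ is avoided); $z_{\tau(n)} \in B^n$ and $G(C^n) \subset G(W) \subset \Delta$.

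Properties (3) and (5) are then immediate from conformal invariance of modulus and the domain-monotonicity of the hyperbolic metric: $\mod(A^n \setminus B^n) = \mod(\tilde A^n \setminus \tilde B^n) =: K_1 > 0$, depending only on $\rho_a - \rho_b$; and $d_{\Omega_*}(A^n) \le d_{\mathcal{D}}(A^n) = d_{\mathbb{H}}(\tilde A^n) = 2\rho_a =: K_3$. Property (4) is the main computation: from $(\psi^{-1})'(w) = \tfrac{2}{2m+1}\, w^{2/(2m+1) - 1}$ one has $|(\psi^{-1})'(\tilde z_n)| \asymp d_n^{-(2m-1)/2}$, and since $\tilde B^n$ lies at bounded hyperbolic distance from $\tilde z_n$ (hence has bounded Euclidean-shape ratio relative to its distance from $0$), Koebe distortion on $\tilde B^n$ is bounded. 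A direct change-of-variables calculation then gives $\hbox{Diam}(B^n) \asymp d_n$ and $\hbox{area}(C^n) \asymp d_n^2\,\sin^2(\arg \tilde z_n)$, so it remains to obtain a uniform lower bound on $\sin(\arg \tilde z_n)$.

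The main obstacle is precisely this uniform angle bound, namely confining $\arg \tilde z_n$ to a compact subinterval of $(0, \pi/2)$. The proof of Lemma~\ref{fin-lem} actually produces \emph{two} angle bounds on the orbit: (i) the angle at $c$ between $[c, z_{\tau(n)}]$ and $\partial \Delta$, which (by definition of $\Omega_{\epsilon,r}^c$) gives $\phi_n \in [\epsilon\pi, \pi - \epsilon\pi]$, bounding $\phi_n$ away from $\partial \Delta$; and (ii) the bound on the angle at $v$ between $[v, z_{\tau(n)+1}]$ and $\partial \Delta$, which via $G(z) - v \sim \alpha(z-c)^{2m+1}$ translates to $\phi_n$ being uniformly bounded away from the preimage rays bounding $S$. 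Choosing the adjacent wedge $W$ to be the one closer to $z_{\tau(n)}$ interchanges the roles of these bounds (trading the distance from $\partial \Delta$ with the distance from $L$), so that in every case $\arg \tilde z_n$ is confined to a fixed $[\beta_1, \beta_2] \subset (0, \pi/2)$. Finally, choosing $\rho_b$ large enough in terms of $\beta_1, \beta_2$ ensures $\tilde B^n$ intrudes a definite area into the second quadrant, yielding $K_2 > 0$.
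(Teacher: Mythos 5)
Your route is genuinely different from the paper's. The paper defines $A^n, B^n, C^n$ explicitly as annular sectors in polar coordinates $(r,\theta)$ centered at $c$: $B^n$ spans a \emph{fixed} angular interval containing the whole outside sector $S$ plus a slab of the adjacent inside wedge $V$, and $C^n = B^n \cap V$ is that fixed slab; properties (3), (4), (5) are then checked by direct computation. You instead straighten the double sector $\mathcal D = S \cup L \cup W$ by a power map to $\mathbb{H}$ and pull back Euclidean (= hyperbolic) balls. What your route buys: (3) and (5) become automatic from conformal invariance of modulus and domain monotonicity of the hyperbolic metric, which is cleaner than the paper's left-to-the-reader estimates. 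What it costs: your $C^n = \psi^{-1}(\tilde B^n \cap \{\Re w < 0\})$ can collapse if $\tilde z_n$ is too close to the positive real axis, so (4) now requires a uniform lower bound on $\arg \tilde z_n$, whereas the paper's construction sidesteps this entirely by choosing a $C^n$ of fixed angular width independent of the position of $z_{\tau(n)}$ inside $S$.

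Your proposed justification of the angle bound contains an error. You claim Lemma~\ref{fin-lem} provides a lower bound on the angle between $[v, z_{\tau(n)+1}]$ and $\partial\Delta$. That claim appears in the paper's proof only in case 2 of Lemma~\ref{fin-lem}, and it is actually \emph{false} in case 1 subcase (i): there, by construction, $z'$ (the sibling preimage of $z_{\tau(n)+1}$) lies nearly tangentially to $\partial\Delta$ at $c$, and since $G(z)-v \sim C(z-c)^{2m+1}$ sends tangential directions at $c$ to tangential directions at $v$, this forces $\arg(z_{\tau(n)+1}-v)$ near $\{0,\pi\}$, i.e. $[v, z_{\tau(n)+1}]$ nearly tangent to $\partial\Delta$. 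So your ``bound (ii)'' cannot be invoked uniformly. The good news is that your wedge-choice trick together with the cone bound (i) already suffices without it: if $z_{\tau(n)}$ is angularly close to a preimage ray bounding $S$, take $W$ on that side, making $\beta_n$ near $\pi/2$; if $z_{\tau(n)}$ is angularly close to $\partial\Delta$, the cone constraint $z_{\tau(n)} \in \Omega^c_{\epsilon,r}$ forbids it; otherwise $\beta_n$ sits in the interior of $(0,\pi/2)$. You should replace the appeal to bound (ii) by this direct case analysis. There are also two smaller slips worth fixing: $G|_{\mathcal D^\circ}$ is a conformal isomorphism onto a slit disk $B_\delta(v) \setminus I$ (where $I$ is one of the two boundary arcs of $\partial\Delta$ at $v$), not onto the non-simply-connected punctured disk $B_\delta(v)\setminus\{v\}$; and $\psi(z) = z^{(2m+1)/2}$ only \emph{asymptotically} straightens $\mathcal D^\circ$ (the preimage arcs are merely tangent to rays at $c$), so either use the actual Riemann map and its known power-law asymptotics at the corner, or shrink to a straight sub-double-sector, exploiting $z_{\tau(n)} \to c$.
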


\begin{figure}
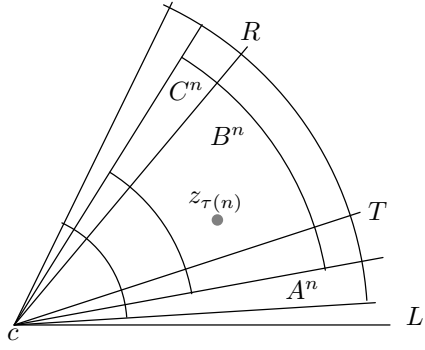

\bigskip
\begin{center}
\centertexdraw { \drawdim cm \linewd 0.02 \move(0 1)

\move(-1 -4) \lvec(4 -4) \move(-1 -4) \lvec(1.1 0.3) \move(-1 -4)
\lvec(3.8 -3.7)

\move(-1 -4)\lvec(3.9 -3.1)

\move(-1 -4) \lvec(3.6 -2.5)

\move(-1 -4) \lvec(1.5 0)

\move(-1 -4) \lvec(2.1 -0.3)

\move(1.3 -2.5)\htext{$z_{\tau(n)}$}

\move(1.7 -2.6) \fcir f:0.5 r:0.08

\move(1.6 -1.6) \htext{$B^{n}$}

\move(1.05 -1) \htext{$C^{n}$}

\move(-1 -4) \larc r:4.7 sd:4 ed:65

\move(-1 -4) \larc r:4.2 sd:10 ed:58

\move(-1 -4) \larc r:2.4 sd:10 ed:58

\move(-1 -4) \larc r:1.5 sd:4 ed:65

\move(2.6 -3.65) \htext{$A^{n}$}

\move(2 -0.2) \htext{$R$} \move(3.7 -2.6) \htext{$T$}

\move(4.2 -4) \htext{$L$}

\move(-1.1 -4.2) \htext{$c$}

}

\end{center}
\vspace{0.2cm} \caption{Construction of $(A_{n}, B_{n}, C_{n},
z_{\tau(n)})$}
\end{figure}

\begin{proof}
 Assume   $d(z_{\tau(n)}, \partial \Delta)$ is  small
enough.  Let $2m+1$ be the local degree of $G$ at $c$ where $m\ge 1$
is some integer. As in the proof of Lemma~\ref{fin-lem}, for $r>0$
small, there are $m+1$ domains which are attached to $c$ and
contained in $B_{r}(c)$ and which are mapped into the outside of the
unit disk. There are two of such domains which are tangent with the
unit disk at $c$. To fix the discussions, let us assume that
$z_{\tau(n)}$ lies in one of these two domains, say $U$. All the
other cases can be treated in the same way.  We also know that there
are $m$ domains, which are contained in $B_{r}(c)$ and attached to
$c$ from the outside of the unit disk, and which are mapped into the
inside of the unit disk. Let $V$ be one of these domains such that
$V$ is adjacent to $U$.  Let $L$ and $R$ be the two half rays which
are tangent with $U$ at $c$. In a small neighborhood of
$z_{\tau(n)}$, $\partial U$ is approximately the union of  two
straight segments starting from $c$ and which lie on $R$ and $L$,
respectively. To simplify the notation, we still use $R$ and $L$ to
denote them. Suppose that the angle between $R$ and $L$ is $\alpha
\pi$. Let $T$ be the straight segment between $R$ and $L$ and which
is on the boundary of $\Omega_{\epsilon,r}^{c}$(see Figure 14).  By
assumption, the angle between $T$ and $L$ is $\epsilon \pi$ where
$0< \epsilon < \alpha < \frac{1}{2}$. For convenience, we use the
polar coordinate system formed by $(c, L)$. by Lemma~\ref{fin-lem},
$z_{\tau(n)} \in \Omega_{\epsilon, r}^{c}$, therefore, we have
$$
z_{\tau(n)} = r_{0} e^{\lambda \pi}.
$$
for some $\epsilon < \lambda < \alpha$ and $0< r_{0} < r$. Now let
$A^{n}$ be the region bounded by
$$
\frac{1}{4}\epsilon \pi \le \theta \le (\alpha+2\epsilon)\pi,
$$ and
$$
r_{0}/2 \le r \le 3r_{0}/2.
$$

Let $B^{n}$ be the region bounded by
$$
\frac{1}{2}\epsilon \pi \le \theta \le (\alpha+\epsilon)\pi,
$$ and
$$
3r_{0}/4 \le r \le 5r_{0}/4.
$$
Let $C^{n} = B \cap V$.
It is not difficult to check that for the domains defined above,
there are constants $K_{i}>0, 1 \le i \le 3$ such that the
conditions in the Lemma are all satisfied. We leave the details to
the reader.
\end{proof}

Let us prove Theorem B now. By taking $n$ large enough, we may
assume that $A^{n} \cap P_{G} = \emptyset$. Now let us  consider the
pull back of $(A^{n}, B^{n}, C^{n}, z_{\tau(n)})$ along the orbit
$\{z_{k}\}$. For $0 \le l < \tau(n)$, let us  denote the connected
component of $G^{l-\tau(n)}(A^{n})$ containing $z_{l}$ by
$A^{n}_{l}$. Then $A_{0}^{n}$ is the connected component of
$G^{-\tau(n)}(A^{n})$ which  contains $z_{0}$, and $A_{\tau(k)}^{n}$
is the connected component of $G^{\tau(k)-\tau(n)}(A^{n})$ which
contains $z_{\tau(k)}$ for $1  \le k < n$. We use $B_{0}^{n}$ and
$C_{0}^{n}$ to denote the subdomains of $A_{0}^{n}$ which are the
pull backs of $B^{n}$ and $C^{n}$ by $G^{-\tau(n)}$. It follows that
$C_{0}^{n} \subset B_{0}^{n} \subset A_{0}^{n}$.

Since $G^{-1}$ contracts the hyperbolic metric  in $\Omega_{*}$,  we
have for all $0\le l < \tau(n)$,
\begin{equation}\label{f-1}
Diam_{\Omega_{*}}( A^{n}_{l}) \le K_{3}
\end{equation}
where $K_{3}$ is the constant in (5) of Lemma 3.12.

By Lemma~\ref{fin-lem}, there is an $N_{0} >0$ such that when $k >
N_{0}$, $z_{\tau(k)} \in \Omega_{\epsilon, r}^{c}$ for some $c \in
\Omega_{G}\cap \partial \Delta$. Since $z_{k} \to \partial \Delta$
and $\tau(k) \to \infty$, by (\ref{f-1}), there is an $N_{1}$ and an
$0 < \eta < 1$ such that  for all $k \ge N_{1}$,
\begin{equation}\label{s-2}
A^{n}_{\tau(k)} \subset \Omega_{\eta \epsilon, r}^{c}.
\end{equation}
From Lemma~\ref{contra} and (\ref{s-2}), it follows that  there is a
$\delta
>0$ independent of $n$ such that for every $k$ with
$\max\{N_{0},N_{1}\} \le k\le n$,
\begin{equation}\label{t-3}
\hspace{1.5cm}{Diam_{\Omega_{*}}(A^{n}_{\tau(k)}) \le (1-\delta)
Diam_{\Omega_{*}}(A^{n}_{\tau(k) + 1}).}
\end{equation}
Since $\{\tau(k)\}$ is an infinite sequence, by (\ref{t-3}), it
follows that as $n \to \infty$, $Diam (A_{0}^{n}) \to 0$ and hence
$Diam(B_{0}^{n}) \to 0$ as $n\to \infty$. On the other hand, by (3),
(4) of Lemma~\ref{pull-b} and Koebe's distortion theorem, we get a
constant $0< C < \infty$ such that for all $n$ large enough,  the
following conditions hold:
\begin{itemize}
\item[1.] $z_{0} \in B_{0}^{n}$, and
\item[2.] $C^{n}_{0} \subset B^{n}_{0}$, and
\item[3.] $area C^{n}_{0} \ge C diam(B^{n}_{0})^{2}$
\end{itemize}

By (2) of Lemma~\ref{pull-b}, $C^{n}_{0} \subset
\bigcup_{k=0}^{\infty}G^{-k}(\overline{\Delta})$. This implies that
$z_{0}$ is not a Lebesgue point of $J_{\widehat{G}}$, which is a
contradiction. The proof of the zero measure statement of Theorem B
is completed.

Now let us prove the rigidity statement of Theorem B.
\begin{lemma}
Let $f \in R_{\theta}^{top}$ and suppose that $f$ has no Thurston
obstructions outside the $\emph{rotation disk}$, and is $realized$
by two maps $g, h \in R_{\theta}^{geom}$.  Then there exist two
quasiconformal homeomorphisms of the sphere $\phi_{1}$ and
$\phi_{2}$ such that
\begin{itemize}
\item[1.] $\phi_{1}$ and  $\phi_{2}$ are
$\emph{combinatorially equivalent}$ to each other rel $P_{g}$, and
\item [2.] $\phi_{1}|D_{g}  = \phi_{2}|D_{g}$ are holomorphic
on the Siegel disk, and
\item[3.] For each super-attracting periodic point $x$ of $g$,
there is a neighborhood of $x$, say $U_{x}$, such that
$\phi_{1}|U_{x} = \phi_{2}|U_{x}$ are holomorphic, and
\item [4.] $g = \phi^{-1}_{1} \circ h \circ \phi_{2}$.
\end{itemize}
\end{lemma}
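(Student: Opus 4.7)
The plan is to extract the combinatorial equivalence from the two realizations and then upgrade it to quasiconformal while building in the required holomorphic behavior on the Siegel disk and near super-attracting cycles.

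First, I would unpack the realization hypothesis. Since $g$ realizes $f$, Definition~\ref{realize} supplies homeomorphisms $\psi_1^g,\psi_2^g:(S^2,P_f)\to(S^2,P_g)$ isotopic rel $P_f$ with $f=(\psi_1^g)^{-1}\circ g\circ \psi_2^g$ and with $\psi_1^g|_\Delta=\psi_2^g|_\Delta:\Delta\to D_g$ holomorphic. The same applies to $h$ with maps $\psi_1^h,\psi_2^h$. Composing, set $\alpha_i=\psi_i^h\circ(\psi_i^g)^{-1}$ for $i=1,2$. Then $h=\alpha_1\circ g\circ \alpha_2^{-1}$, the pair $\alpha_1,\alpha_2$ is isotopic rel $P_g$, and $\alpha_1|_{D_g}=\alpha_2|_{D_g}:D_g\to D_h$ is a holomorphic conjugacy of the two rigid rotations. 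This gives the combinatorial equivalence and property (2) at the topological level.

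Next I would promote $\alpha_1,\alpha_2$ to quasiconformal homeomorphisms. Because $\partial D_g$ and $\partial D_h$ are quasi-circles by the definition of $R_\theta^{geom}$, the holomorphic map $D_g\to D_h$ extends to a quasi-symmetric map of $\partial D_g$ onto $\partial D_h$. Using Douady--Earle's extension (or any barycentric quasiconformal extension), I can replace $\alpha_1$ outside $\overline{D_g}$ by a quasiconformal homeomorphism in the same isotopy class rel $P_g\cup\overline{D_g}$; doing the standard iterative lift through $g$ and $h$ a finite number of times (recall $P_g-\overline{D_g}$ is finite, so only finitely many components of pull-backs meet $P_g$) then yields quasiconformal $\alpha_1,\alpha_2$ on the whole sphere with the equation $h=\alpha_1\circ g\circ\alpha_2^{-1}$ preserved and $\alpha_1|_{\overline{D_g}}=\alpha_2|_{\overline{D_g}}$ still the holomorphic conjugacy. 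This gives properties (1), (2), and (4).

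The remaining work is to arrange (3) at super-attracting cycles. For each super-attracting periodic point $x$ of $g$ of period $p$, Böttcher's theorem provides a local holomorphic coordinate conjugating $g^p$ near $x$ to $z\mapsto z^{d_x}$, and similarly for $h^p$ near the corresponding point $\alpha_1(x)$, with matching local degrees by the combinatorial equivalence. I would precompose and postcompose $\alpha_1$ (and correspondingly adjust $\alpha_2$ via the functional equation so that $h\circ\alpha_2=\alpha_1\circ g$ is maintained) inside small disks $U_x$ supported away from the rest of $P_g$, so that in Böttcher coordinates both $\alpha_1$ and $\alpha_2$ become the identity on $U_x$, hence holomorphic and coinciding there. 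Finally, spread this by pulling back through $g$ on all preperiodic components feeding into these cycles, which only requires finitely many adjustments before the images reach the Siegel disk or stabilize.

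The main obstacle I expect is the bookkeeping in this last step: the local modifications near super-attracting cycles must be simultaneously compatible with the functional equation $h\circ\alpha_2=\alpha_1\circ g$, with the isotopy rel $P_g$, with the already-fixed holomorphic structure on $D_g$, and with equality $\phi_1=\phi_2$ on the neighborhoods $U_x$. Since the relation $\phi_1\circ g=h\circ\phi_2$ forces pre-image-by-pre-image alterations whenever one touches a super-attracting cycle, one must check that the chain of modifications terminates (which it does, because the grand orbit of a super-attracting periodic point meets $P_g$ in only finitely many components not contained in $\overline{D_g}$) and that the global dilatation stays finite throughout. Once these compatibilities are verified, the resulting $\phi_1,\phi_2$ satisfy all four conclusions.
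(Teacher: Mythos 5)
The paper states this lemma with the note ``The proof is easy and we leave the details to the reader,'' so there is no printed argument to compare against; on its own merits your outline is sound and follows the natural route. The three-stage decomposition is exactly right: (i) composing the two realizations, $\alpha_i = \psi_i^h \circ (\psi_i^g)^{-1}$, immediately yields the pair $(\alpha_1,\alpha_2)$ with $h = \alpha_1\circ g\circ\alpha_2^{-1}$, isotopy rel $P_g$, and the holomorphic conjugacy $\alpha_1|_{D_g}=\alpha_2|_{D_g}$ of the two Siegel rotations; (ii) since $\partial D_g$ and $\partial D_h$ are quasi-circles, a conformal map between quasi-disks extends quasi-symmetrically to the boundary, and one can replace $\alpha_1$ outside $\overline{D_g}$ by a quasiconformal representative in the same class rel $P_g$ keeping the holomorphic piece intact; (iii) normalizing in B\"ottcher coordinates near the super-attracting cycles secures property (3), and the computation you carry out --- that $\alpha_2 = h^{-1}\circ\alpha_1\circ g$ coincides with $\alpha_1$ and equals the B\"ottcher conjugacy on a smaller disk --- is correct.

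A few points could be tightened. In step (ii) the ``iterative lift a finite number of times'' is unnecessary: a single lift $\alpha_2 = h^{-1}\circ\alpha_1\circ g$ already produces a quasiconformal homeomorphism (since $g,h$ are holomorphic, lifting preserves the Beltrami coefficient), agreeing with $\alpha_1$ on $\overline{D_g}$ and isotopic to $\alpha_1$ rel $P_g$ by lifting the isotopy through the two rational maps (which fix the critical values). Also, the barycentric extension of the boundary map by itself need not lie in the correct mapping class of $(S^2-\overline{D_g},\, P_g-\overline{D_g})$; you should record that one post-composes with a quasiconformal mapping-class element of $(S^2-\overline{D_h},\, P_h-\overline{D_h})$ fixing $\partial D_h$ pointwise to adjust both the images of $P_g-\overline{D_g}$ and the isotopy class. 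Finally, the closing ``spread by pulling back through $g$ on all preperiodic components'' is not needed for this lemma and is a bit misplaced --- that spreading is exactly what the subsequent pull-back iteration $\phi_k\mapsto\phi_{k+1}$ in Theorem B's proof accomplishes on its own, using only the holomorphicity of $\phi_1$ on $D_g$ and on the neighborhoods $U_x$.
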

The proof is easy and we leave the details to the reader.

Now for $k \ge 2$, since $g$ and $h$ are $\emph{combinatorially
equivalent}$, we can lift $\phi_{k}$ by  the equation
$$
g =  \phi_{k}^{-1} \circ h \circ \phi_{k+1}.
$$
and get $\phi_{k+1}$. In this way we get a sequence of
quasiconformal homeomorphisms $\{\phi_{k}\}$ of the sphere such that
$\phi_{k}|P_{g} = \phi_{k+1}|P_{g}$ for all $k \ge 1$. Let $\mu_{k}$
be the dilation of $\phi_{k}$. Since both $g$ and $h$ are rational
maps, it follows that $\|\mu_{k}\|_{\infty} < K < 1$ where $K$ is
some constant independent of $k$. Since any periodic Fatou component
of $g$ must either be the Siegel disk, or a super-attracting
periodic Fatou component, it follows that $\mu_{k} \to 0$ on the
Fatou set of $g$. Since the Julia set of $g$ has zero Lebesgue
measure, and $\phi_{k}|P_{g} = \phi_{k+1}|P_{g}$ for all $k \ge 1$,
it follows that $\phi_{k}$ converges to the same M\"{o}bius map.  We
complete the proof of Theorem B.


\begin{center}
\section{Quadratic Rational Maps with Bounded Type Siegel Disks}
\end{center}

\subsection{Quadratic Siegel Rational Maps}

Let $g$ be a quadratic rational map which has a $\emph{bounded
type}$ Siegel disk. Up to a M\"{o}bius conjugation, we may assume
that the center of the Siegel disk is at the origin and $g(\infty) =
\infty$. Then $g$ has the following normalized form,
\begin{equation}\label{norml}
g(z) = \frac{az^{2} + e ^{2 \pi  i \theta }z } {bz + 1}
\end{equation}

From Riemann-Huiwitz formula, it follows that any quadratic rational
map has exactly two distinct critical points. Through a M\"{o}bius
conjugation, we may   further assume that $1$ is one of the critical
points of $g$, that is, $g'(1) = 0$.   By a simple calculation, this
is equivalent to
\begin{equation}
a(b+2) +e^{2 \pi i \theta} = 0
\end{equation}
Let us denote the other critical point of $g$, which is different
from $1$,  by $c_{g}$.

\begin{lemma}
Let $\Sigma$ be the space of all the normalized quadratic Siegel
rational maps $g$ such that $g(0) = 0$, $g(\infty) = \infty$, and
$g'(1) = 0$. Then the map $\rho: g \to c_{g}$ is a homeomorphism
between $\Sigma$ and $\widehat{\Bbb C}-\{0, 1, -1\}$.
\end{lemma}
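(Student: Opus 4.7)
The plan is to parametrize $\Sigma$ by a single complex coefficient, write $c_{g}$ as an explicit M\"{o}bius function of that coefficient, and then match the excluded values $\{0,1,-1\}$ with the parameters at which the normalization degenerates.

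First I would differentiate the normalized form (\ref{norml}) to obtain
$$g'(z)=\frac{abz^{2}+2az+e^{2\pi i\theta}}{(bz+1)^{2}},$$
so that the two critical points of $g$ are the roots of $abz^{2}+2az+e^{2\pi i\theta}=0$. The condition $g'(1)=0$ reads $a(b+2)+e^{2\pi i\theta}=0$, which expresses $a=-e^{2\pi i\theta}/(b+2)$ in terms of $b$; thus $\Sigma$ is parametrized by the single complex parameter $b$. By Vieta the sum of the two critical points equals $-2/b$, so the critical point distinct from $1$ is
$$c_{g}=-\frac{b+2}{b}.$$
This is a M\"{o}bius transformation of $\widehat{\Bbb C}$ in $b$, hence a global homeomorphism of $\widehat{\Bbb C}$, with inverse $c\mapsto b=-2/(c+1)$.

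Next I would identify the admissible parameters. Three values of $b$ fail to give a genuine map in $\Sigma$: $b=-2$ makes $a$ undefined; $b=-1$ forces the numerator and denominator of $g$ to share the common factor $(z-1)$, dropping the degree and coalescing the two critical points at $z=1$; and $b=\infty$ forces $a=0$, again producing a degenerate rational map. Under the M\"{o}bius map $b\mapsto -(b+2)/b$ these three values are carried exactly to $c=0,1,-1$ respectively, matching the excluded triple in the statement. For every other $b$ the resulting $g$ has degree two, fixes $0$ and $\infty$, and has multiplier $e^{2\pi i\theta}$ at the origin; because $\theta$ is of bounded type, hence Brjuno, the linearization at $0$ exists automatically and so $g\in\Sigma$.

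Finally, the topology on $\Sigma$ is the one induced from coefficient convergence; under the parameterization this corresponds to convergence of $b$ in $\widehat{\Bbb C}\setminus\{-2,-1,\infty\}$. Bicontinuity of $\rho$ thus reduces to bicontinuity of the explicit M\"{o}bius map $b\mapsto -(b+2)/b$, which is immediate. The only real content of the argument is the algebraic identification of the three degenerate parameters with $\{0,1,-1\}$; everything else is routine.
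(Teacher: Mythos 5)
Your proposal is correct and takes essentially the same route as the paper: differentiate the normalized form, read off both critical points from the quadratic numerator, and solve algebraically for the parameters in terms of $c_{g}$. Packaging $c_{g}=-(b+2)/b$ explicitly as a M\"{o}bius transformation of $b$ is a mild tidying of the paper's computation (which directly solves $a=-e^{2\pi i\theta}(1+c_{g})/2c_{g}$, $b=-2/(1+c_{g})$), but the underlying argument — Vieta, the exclusion of $c_{g}\in\{0,1,-1\}$ corresponding to the degenerate parameters, and continuity in both directions including the point at infinity — is the same.
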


\begin{proof}
 Since $g'(0) = e^{2 \pi i\theta}$ and the two critical points of $g$
 must be distinct from each other, it follows that $c_{g} \ne 0 , 1$.
 By a simple calculation, we get
\begin{equation}\label{g-dir}
g'(z) = \frac{abz^{2} + 2az + e^{2 \pi i \theta}}{(bz+1)^{2}}.
\end{equation}

From (\ref{g-dir}) and $g'(1) = g'(c_{g}) = 0$, it follows that $1$
and $c_{g}$ are the two roots of the quadratic polynomial equation
\begin{equation}\label{w-d}
abz^{2} + 2az + e^{2 \pi i \theta} = 0.
\end{equation}
This implies that $c_{g} \ne -1$. In  fact, if $c_{g} = -1$, we have
$2/b  = -(1 + c_{g}) = 0$, and hence $b = \infty$.  This is a
contradiction.

Now for $c_{g} \ne 0, 1$, and $-1$, we can solve $a = -e^{2 \pi i
\theta}(1+ c_{g})/2c_{g}$ and $b = -2/(1+ c_{g})$. Therefore, $g$ is
uniquely determined by $c_{g}$,  and we have
\begin{equation}\label{g-c}
g(z) = \frac{-e^{2 \pi i \theta}(1 +c_{g})^{2} z^{2} + 2 e^{2 \pi i
\theta}c_{g}(1 + c_{g})z}{-4c_{g}z + 2c_{g}(1+ c_{g})}.
\end{equation}
In particular, as $c_{g} \to \infty$, $a \to  -e^{2 \pi i
\theta}/2$, $b \to 0$ and hence $g(z) \to -e^{2 \pi i \theta}
z^{2}/2 + e^{2 \pi i \theta} z  = g_{\infty}(z) \in \Sigma$. The
lemma follows.

\end{proof}

Now for each $c \in \widehat{\Bbb C} - \{0, 1, -1\}$,  we use
$g_{c}$ to denote the normalized quadratic Siegel rational map which
has $1$ and $c$ as its critical points.

\begin{figure}
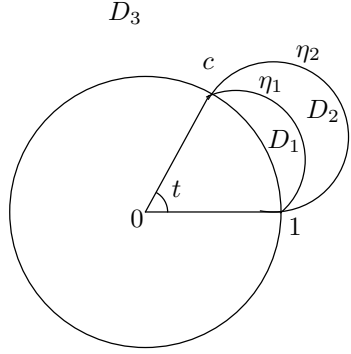

\bigskip
\begin{center}
\centertexdraw { \drawdim cm \linewd 0.02 \move(0 1)

\move(0 -3) \lcir r:1.8 \lvec(1.8 -3) \move(0 -3) \lvec(0.85 -1.45)

\move(1.2 -2.3) \larc r:0.92 sd:-50 ed:115

\move(0.75 -1.1) \htext{$c$}

 \move(1.7 -2) \larc r:1 sd:-100 ed:150

\move(2 -1) \htext{$\eta_{2}$}

\move(1.62 -2.2) \htext{$D_{1}$}

\move(2.13 -1.8) \htext{$D_{2}$}

\move(1.5 -1.42) \htext{$\eta_{1}$}

\move(-0.5  -0.5) \htext{$D_{3}$}

\move(-0.2 -3.2) \htext{$0$}

\move(1.9  -3.3) \htext{$1$}

\move(0 -3) \larc r:0.3 sd:0 ed:60 \move(0.35 -2.8) \htext{$t$}

 }
\end{center}
\vspace{0.2cm} \caption{Combinatorics of $f_{t} \in
R_{\theta}^{geom}$ for $0 < t < 2 \pi$}
\end{figure}

\subsection{Branched Covering Maps $f_{t} \in R_{\theta}^{top}$}
\subsubsection{Branched covering maps $f_{t}$ and Siegel rational
maps $g_{c(t)}$}
In this section, we will construct a family of topological branched
covering maps $f_{t} \in R_{\theta}^{top}, 0 < t < 2 \pi$. This
family of topological branched covering maps will provide models of
a continuous family of quadratic Siegel rational maps $g_{c(t)} \in
R_{\theta}^{geom}$, $0 < t < 2 \pi$, where $c(t), 0< t < 2\pi$ is a
continuous curve in the critical parameter plane. Later we will see
that this curve plays a fundamental role in the proof of Theorem C.

\begin{df}
For each $0< t < 2\pi$, let $c \in \partial \Delta$ such that the
angle spanned by $1$ and $c$ is $t$. Let $\eta_{1}$, $\eta_{2}$ be
two curve segments connecting $1$ and $c$ as indicated in Figure 15.
Let $D_{1}$ be the domain bounded by $\eta_{1}$ and the arc from $1$
to $c$, anticlockwise, and $D_{2}$ denote the domain bounded by
$\eta_{1}$ and $\eta_{2}$. Let $D_{3}$ denote the domain which
contains the infinity and which is bounded by $\eta_{2}$ and the arc
from $c$ to $1$, anticlockwise. Let $f_{t} \in R_{\theta}^{top}$ be
a topological branched covering map defined as follows:
$(f_{t}|\Delta)(z)  =  e^{2 \pi i\theta}z$, and $f_{t}: D_{1} \to
S^{2} - \Delta, D_{2} \to \Delta,  D_{3} \to S^{2} - \Delta$ are all
homeomorphisms. Moreover,  $f_{t}(\infty) = \infty$.
\end{df}

\begin{dff}
For each $0< t < 2\pi$, let $c \in \partial \Delta$ such that the
angle spanned by $1$ and $c$ is $t$. Let $\eta_{1}$, $\eta_{2}$ be
two curve segments connecting $1$ and $c$ as indicated in Figure 16.
Let $D_{1}$ denote the domain bounded by $\eta_{1}$ and the arc from
$1$ to $c$ anticlockwise. Let $D_{2}$ denote the domain bounded by
$\eta_{1}$ and $\eta_{2}$. Let $D_{3}$ denote the domain bounded by
$\eta_{2}$ and the arc from $c$ to $1$ anticlockwise. Define
$\tilde{f}_{2 \pi - t}$ as follows: $(\tilde{f}_{2\pi - t}|\Delta(z)
=   e^{2 \pi i\theta}z$, and $\tilde{f}_{2\pi - t}: D_{2} \to
\Delta, D_{1} \to S^{2} - \Delta,  D_{3} \to S^{2} - \Delta$ are all
homeomorphisms. Moreover, $\tilde{f}_{2 \pi - t}(\infty) = \infty$.
\end{dff}
\begin{figure}
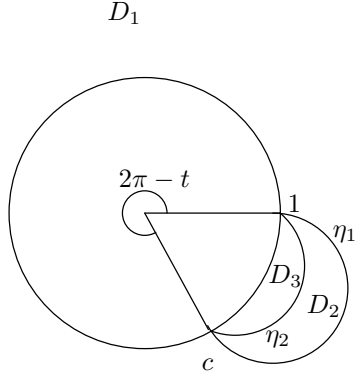

\bigskip
\begin{center}
\centertexdraw { \drawdim cm \linewd 0.02 \move(0 1)

\move(0.75 -5.1) \htext{$c$}

 \move(0 -3) \lcir r:1.8 \lvec(1.8 -3)
\move(0 -3) \lvec(0.85 -4.55)

\move(1.2 -3.7) \larc r:0.92 sd:-110 ed:50

\move(1.9  -3) \htext{$1$} \move(1.7 -4) \larc r:1 sd:-150 ed:90

\move(1.6 -4.8) \htext{$\eta_{2}$}

\move(1.64 -4) \htext{$D_{3}$}

\move(2.15 -4.4) \htext{$D_{2}$}

\move(2.5 -3.4) \htext{$\eta_{1}$}

\move(-0.5  -0.5) \htext{$D_{1}$} \move(-0.35 -2.7) \htext{$2\pi -
t$}

\move(0 -3) \larc r:0.3 sd:0 ed:300

 }
\end{center}
\vspace{0.2cm} \caption{Combinatorics of $\widetilde{f}_{2\pi -t}
\in R_{\theta}^{geom}$ for $0 < t < 2 \pi$}
\end{figure}

Since any simple closed curve $\gamma \subset S^{2} - \Delta$ is
$\emph{peripheral}$, it follows that $f_{t}$($\widetilde{f}_{t}$)
has no Thurston obstructions outside the $\emph{rotation disk}$
$\Delta$ for all $0< t < 2\pi$. By Theorem A and Theorem B, we have
\begin{lemma}\label{rel-t}
For each $0< t< 2\pi$, there is a unique $c(t)(\widetilde{c}(t)) \in
{\Bbb C}-\{0, 1, -1\}$ such that $g_{c(t)}(g_{\widetilde{c}(t)})$
realizes $f_{t}(\widetilde{f}_{t})$ in the sense that $$f_{t} =
\phi^{-1} \circ g_{c(t)} \circ \psi(\widetilde{f}_{t} = \phi^{-1}
\circ g_{\widetilde{c}(t)} \circ \psi)$$ where $\phi$ and $\psi:
S^{2} \to S^{2}$ are homeomorphisms which fix $0$, $1$, and the
infinity, and are isotopic to each other rel $P_{f}$.
\end{lemma}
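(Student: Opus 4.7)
The plan is to verify the hypothesis of Theorem A for $f_t$ and $\widetilde{f}_{2\pi - t}$, extract a realizing map using Theorems A and B, and then read off the parameter $c(t)$ via the normalization recipe from the parametrization lemma of $\S4.1$.

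First I would check that $f_t$ has no Thurston obstructions outside $\Delta$. Both critical points of $f_t$ lie on $\partial\Delta$, and $f_t$ preserves $\partial\Delta$ acting there as rigid rotation, so $P_{f_t} \subset \partial\Delta$ and $P_{f_t}' = P_{f_t} \cup \{0,\infty\}$. Any simple closed curve $\gamma \subset S^2 - \overline{\Delta}$ bounds two open disks: one contains $\overline{\Delta}$ and hence every point of $P_{f_t}\cup\{0\}$, while the other is disjoint from $\overline{\Delta}$ and contains at most the single marked point $\infty$. Therefore $\gamma$ is peripheral in $S^2 - P_{f_t}'$, so $f_t$ has no Thurston obstructions outside the rotation disk; the identical argument handles $\widetilde{f}_{2\pi - t}$.

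Next I would apply Theorem A to produce some $g \in R_\theta^{geom}$ realizing $f_t$, together with Theorem B to obtain uniqueness of $g$ up to Möbius conjugation. Now comes the normalization step. The realization $f_t = \phi_1^{-1}\circ g\circ\phi_2$, with $\phi_1|_\Delta = \phi_2|_\Delta$ holomorphic and $\phi_1\simeq\phi_2$ rel $P_{f_t}'$, picks out three distinguished points on the sphere of $g$: the center $\phi_1(0)$ of the Siegel disk $D_g$; the fixed point $\phi_1(\infty)$ of $g$ (the isotopy rel $P_{f_t}'$ forces $\phi_1(\infty)=\phi_2(\infty)$, and conjugating $f_t(\infty)=\infty$ yields $g(\phi_1(\infty))=\phi_1(\infty)$); and the critical point $\phi_1(1)\in\partial D_g$ coming from the critical point $1$ of $f_t$. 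These three points are distinct because the first lies in the interior of $D_g$, the third on its boundary, and the second lies in the Fatou set outside $\overline{D_g}$ (since $\infty$ lies outside $\overline{\Delta}$). The unique Möbius transformation taking them to $0$, $\infty$, and $1$ respectively conjugates $g$ into the normalized form (\ref{norml}) with $g'(1)=0$, i.e., into $\Sigma$. The parametrization lemma of $\S4.1$ then identifies this normalized map as $g_{c(t)}$ for a unique $c(t)\in\widehat{\mathbb C}-\{0,1,-1\}$; finally, $c(t)\neq\infty$ because the other critical point $c$ of $f_t$ lies on $\partial\Delta$, so its image $\phi_1(c)$ lies on $\partial D_{g_{c(t)}}$, which is a bounded quasi-circle in $\mathbb C$.

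Uniqueness of $c(t)$ follows because the normalization recipe (center of $D_g$, fixed point arising from $\infty$, critical point arising from $1$) is intrinsic to the realization data, so any Möbius-conjugate realizing map yields the same normalized $g_{c(t)}$; the construction of $\widetilde c(t)$ from $\widetilde f_{2\pi-t}$ is entirely parallel. I expect the only nontrivial point to be verifying that the three normalization points are genuinely distinct and that $\phi_1(\infty)$ is actually a fixed point of $g$ distinct from the Siegel center; both reduce to careful bookkeeping using the fact that $\phi_1$ and $\phi_2$ agree on $P_{f_t}'$, which already contains $\infty$ and $0$.
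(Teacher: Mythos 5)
Your proof is correct and follows essentially the same route the paper takes (observe that every simple closed curve in $S^2 - \overline{\Delta}$ is peripheral rel $P_{f_t}'$ so there are no Thurston obstructions, then cite Theorems A and B for existence and rigidity). You helpfully spell out the Möbius normalization that pins $0$, $\infty$, $1$ to the Siegel center, the fixed point, and the marked critical point, and the reason $c(t)\neq\infty$; the paper leaves all of this implicit in the phrase ``By Theorem A and Theorem B.''
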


\noindent $\bold{Inner \:angle\:between \: the
\:two\:critical\:points.\:}$Let $g_{c} \in R_{\theta}^{geom}$ and
$D$ be the Siegel disk of $g_{c}$ centered at the origin such that
$\partial D$ passes through both the two critical points $1$ and
$c$.   Let $\phi: D \to \Delta$ be the holomorphic map which
conjugates $g_{c}|D$ to the rigid rotation $R_{\theta}$ on $\Delta$.
Since $\partial D$ is a quasi-circle, it follows that $\phi$ can be
homeomorphically extended to $\partial D \to \partial \Delta$.   We
use $A_{c}$ to denote the angle from $\phi(1)$ to $\phi(c)$
anticlockwise. We call it the $\emph{inner angle}$ between $1$ and
$c$.

\begin{remark}\label{exp}
Let $g_{c} \in R_{\theta}^{geom}$ such that both of the critical
points of $g_{c}$ are on the boundary of the Siegel disk. Suppose
that the inner angle between $1$ and $c$ is $t$. Let $D_{c}$ be the
Siegel disk of $g_{c}$.  Then  $g_{c}$ realizes $f_{t}$ for some $0<
t< 2\pi$ in the sense of Lemma~\ref{rel-t}, if and only if the
boundary of the bounded component of $g_{c}^{-1}(S^{2} -
\overline{D_{c}})$ contains the part of the boundary  of $D_{c}$,
which connects $1$ to $c$ anticlockwise. By contrary, $g_{c}$
realizes $\tilde{f}_{t}$ for some $0< t< 2\pi$ in the sense of
Lemma~\ref{rel-t}, if and only if  the boundary of the bounded
component of $g_{c}^{-1}(S^{2} - \overline{D_{c}})$ contains the
boundary arc of the Siegel disk, which connects  $1$ to $c$
clockwise.
\end{remark}

\subsubsection{Some basic facts about $f_{t}$}
It is useful to find the M\"{o}bius transformations which conjugate
a normalized quadratic Siegel rational map to an another normalized
one. Let $g_{c}$ be a normalized quadratic Siegel rational map given
by (\ref{norml}). There are two cases.

In the first case, $g_{c}$ has exactly two fixed points $0$ and
$\infty$. By a simple calculation, this is equivalent to that
$c_{g}$ is one of the two roots of the following equation,
$$
c^{2} + (4e^{-2 \pi i \theta} + 2)c + 1 = 0.
$$
It follows that in this case, there are exactly two normalized
quadratic Siegel rational maps which have exactly two fixed points
$0$ and $\infty$, and which are conjugate to each other by $z \to z
/c_{g}$.

In the second case, $g_{c}$ has exactly three distinct fixed points
$0, \infty$, and some complex value $p$. Let $\phi$ be a M\"{o}bius
transformation such that $\phi \circ g_{c} \circ \phi^{-1}$ has the
normalized form.   Then $\phi$ is determined by one of the following
four conditions,
\begin{itemize}
\item [1.] $\phi = id$.
\item [2.] $\phi(0) = 0$, $\phi(1) = 1$, and $\phi(p)= \infty$,
\item [3.] $\phi(z)  =  z/c_{g}$,
\item [4.] $\phi(0) = 0$, $\phi(c_{g}) = 1$, and $\phi(p) = \infty$.
\end{itemize}

Let us collect some basic facts about the topological branched
covering maps $f_{t}$,  $0< t < 2 \pi$,  which can be easily seen
from Figure 15 and 16. The rigorous proofs of these facts are not
difficult and shall be left to the reader.

Fact 1. Let $0 < t < 2\pi$. Suppose that  $g_{c(t)} \in
R_{\theta}^{geom}$ $\emph{realizes}$ $f_{t}$.   Then $g_{c(t)}$ has
exactly  three distinct fixed points, $0, \infty$, and $p$.

Fact 2. Let $g_{\tilde{c}(t)} \in R_{\theta}^{geom}$
$\emph{realizes}$ the topological branched covering map
$\tilde{f}_{t}$ for $0< t< 2\pi$.  Then $g_{\tilde{c}(t)}$ is
conjugate to $g_{c(t)}$ by the M\"{o}bius map determined by (2)
above.

Fact 3. By just exchanging the positions of $1$ and $c$ in Figure
15, with all the other topological data being fixed, we will get an
another new topological branched covering map in $R_{\theta}^{top}$
indicated by Figure 16. This new topological branched covering map
models the Siegel rational map $g_{\tilde{c}(2 \pi -t)} \in
R_{\theta}^{geom}$. It is clear that the maps $g_{\tilde{c}(2 \pi
-t)}$ and $g_{c(t)}$ are conjugate to each other by the M\"{o}bius
map determined by the condition (3).

Fact 4. If we compose the last two conjugations in either order(that
is, we may first change $f_{t}$ to $\tilde{f}_{t}$, and then
exchange the positions of $1$ and $c$ in $\tilde{f}_{t}$ and finally
get $f_{2 \pi - t}$, or we first exchange the positions of $1$ and
$c$ in $f_{t}$ and get $\tilde{f}_{2 \pi - t}$ and then change it to
$f_{2 \pi - t}$), we will get the same topological branched covering
map $f_{2 \pi - t} \in R_{\theta}^{top}$ which models the Siegel
rational map $g_{c(2\pi - t)} \in R_{\theta}^{geom}$. It follows
that  $g_{c(2 \pi -t)}$ is conjugate to $g_{c(t)}$ by the M\"{o}bius
map determined by the condition (4).


\subsection{A Distortion Lemma}
For each $0< t < 2\pi$, suppose that $f_{t}$ is $\emph{realized}$ by
a Siegel rational map  $g_{c(t)} \in R_{\theta}^{geom}$. By
Lemma~\ref{3-fir}, there is a Blaschke product, say $G_{t}$, which
$\emph{models}$ $g_{c(t)}$.  The main purpose of this section is to
prove
\begin{lemma}\label{distortion}
There is a constant $1 < K < \infty$ which depends only on $\theta$
such that  for every Siegel rational map in $R_{\theta}^{geom}$
which is modeled by $f_{t}$ for some $0< t < 2 \pi$, the boundary of
the Siegel disk is a $K-$quasi-circle.
\end{lemma}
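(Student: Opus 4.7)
The plan is to reduce the lemma to a uniform bound on the quasi-symmetric constants of the boundary conjugacies coming from the Blaschke models, and then to establish this uniform bound via a compactness argument for the family $\{G_t\}_{0<t<2\pi}$.

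For each $0 < t < 2\pi$, Lemma~\ref{3-fir} provides a Blaschke product $G_t$ modelling $g_{c(t)}$ and a quasiconformal homeomorphism $\Phi_t: S^{2} \to S^{2}$ conjugating $g_{c(t)}$ to the modified Blaschke product $\widehat{G}_t$. By the surgery construction of \S2.5, $\Phi_t$ is the Ahlfors--Bers integrating map of the $\widehat{G}_t$-invariant Beltrami differential supported on $\bigcup_{k \geq 0} G_t^{-k}(\Delta)$ whose essential supremum equals $\|\mu_{H_t^{-1}}\|_\infty$, where $H_t: \Delta \to \Delta$ is the Douady--Earle extension of the quasi-symmetric conjugacy $h_t: \partial \Delta \to \partial \Delta$ between $G_t|\partial \Delta$ and the rigid rotation $R_\theta$. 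Thus $\partial D_{g_{c(t)}} = \Phi_t(\partial \Delta)$ is a $K_t$-quasi-circle with $K_t = (1+\|\mu_{H_t}\|_\infty)/(1-\|\mu_{H_t}\|_\infty)$, and by the standard property of the Douady--Earle extension \cite{DE}, $\|\mu_{H_t}\|_\infty$ is controlled by the quasi-symmetric constant $M_t$ of $h_t$. Hence it suffices to prove that $M_t$ is bounded above by a constant depending only on $\theta$.

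To obtain such a uniform bound, I would first show that the family $\{G_t : 0 < t < 2\pi\}$ of degree-$3$ symmetric Blaschke products lies in a compact subset of $\mathcal{R}_3$. This amounts to a repetition of the arguments in \S2.3: the postcritical set $P_{G_t}$ is contained in $\partial \Delta \cup \{0, \infty\}$; the only freely varying geometric data is the position of the critical points $1, e^{it} \in \partial \Delta$; and the number of postcritical points outside $\overline{\Delta}$ equals $1$ independently of $t$. Adapting the proofs of Lemmas~\ref{Z-P}, \ref{X-D}, and \ref{com-F} to the continuous parameter $t$ in place of the discrete sequence $n$ yields uniform lower bounds on the mutual distances between points of $P_{G_t} \cup \mathcal{Z}_{G_t} \cup \mathcal{P}_{G_t}$ and on their distance to $\partial \Delta$. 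Given this compactness, a contradiction argument delivers the uniform bound on $M_t$: if $M_{t_n} \to \infty$ for some sequence $t_n$, extract a subsequence with $G_{t_n} \to G_*$; then $G_*|\partial \Delta$ is an analytic critical circle map with rotation number $\theta$, which by the Herman--Swiatek theorem is quasi-symmetrically conjugate to $R_\theta$ with some finite constant, and continuity of this constant under real-analytic perturbation (see \cite{Pe2}) produces a contradiction.

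The principal obstacle is the analysis near the degenerate limits $t \to 0^+$ and $t \to 2\pi^-$, where the two critical points of $G_t$ coalesce on $\partial \Delta$ and the local degree of $G_t$ at the coincidence point jumps; with the standard normalization one risks leaving $\mathcal{R}_3$. To handle this I would exploit the involutions $c \mapsto 1/c$ and $t \mapsto 2\pi - t$ coming from Facts 3--4 in \S4.2, which M\"obius-conjugate $g_{c(t)}$ to $g_{c(2\pi - t)}$ and hence preserve the Siegel-disk quasi-circle constant, reducing $t$ near $2\pi$ to $t$ near $0$. Near $t = 0$ itself, the sector of $\partial \Delta$ between the two critical points shrinks to a point, so the quasi-symmetric distortion of $h_t$ contributed by that vanishing sector stays bounded, while on the complementary arc the geometry is uniformly controlled by the compactness argument above. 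Combining the interior compactness with these endpoint estimates yields the claimed uniform constant $K = K(\theta)$.
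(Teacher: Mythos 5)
Your plan is a genuinely different route from the paper's, and it has gaps that would need to be filled before it can work.

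The paper does not prove the lemma by compactness of $\{G_t\}$. Instead it reduces Lemma~\ref{distortion} to Lemma~\ref{reduced} — the statement that there is a \emph{M\"obius normalization} $\sigma_t$ of $\partial\Delta$ such that $\sigma_t\circ h_t$ is uniformly quasisymmetric — and uses the conformal naturality of the Douady--Earle extension to pass from this normalized bound to a bound on $\|\mu_{H_t}\|_\infty$. The uniform quasisymmetry of $\sigma_t\circ h_t$ is then proved not by compactness but by a combinatorial distortion argument: Sublemma 1 constructs the M\"obius normalization, Sublemma 2 translates cross-ratio bounds into geodesic-length bounds, Lemma~\ref{chang-zhou} controls the distortion of cross-ratios of circle arcs under pullback (with the key point that critical pullbacks occur only a bounded number of times, thanks to bounded intersection multiplicity), and Lemma~\ref{metric-dis} derives de Faria--de Melo style real bounds from this. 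The whole argument is uniform in $t$ from the outset and never invokes precompactness of the family $\{G_t\}$.

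Your proposal has three concrete problems. First, the reduction step is stated too crudely: you claim it suffices to bound the quasisymmetric constant $M_t$ of $h_t$ itself. The paper only requires a bound on the quasisymmetric constant of $\sigma_t\circ h_t$ for a suitable M\"obius automorphism $\sigma_t$ of the disk, which is a strictly weaker demand; the Douady--Earle conformal naturality is exactly what lets one transfer this weaker bound to $\|\mu_{H_t}\|_\infty$. The paper's explicit introduction of $\sigma_t$ in Sublemma 1 is a signal that $h_t$ alone need not be uniformly quasisymmetric as $t\to 0^+$ or $t\to 2\pi^-$, so the task you set yourself may be impossible. Second, $\{G_t\}_{0<t<2\pi}$ is \emph{not} contained in a compact subset of $\mathcal{R}_3$: as $t\to 0$ or $t\to 2\pi$ the two double critical points of $G_t$ on $\partial\Delta$ coalesce, which a degree-$3$ Blaschke product cannot accommodate without degenerating. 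You acknowledge this, but the involution $t\mapsto 2\pi-t$ only converts one endpoint into the other, and the remark that ``the quasi-symmetric distortion of $h_t$ contributed by that vanishing sector stays bounded'' is asserted, not argued; it is precisely the content that Sublemma 1 and Lemma~\ref{chang-zhou} are designed to establish, and they do so only after passing to the M\"obius normalization. Third, even for $t$ in a compact subinterval, adapting Lemmas~\ref{Z-P}, \ref{X-D}, \ref{com-F} from \S 2.3 is not immediate: those lemmas concern the approximating sequence $\{G_n\}$ produced from $F_n$ (finite postcritical sets $P_n$), not a continuous one-parameter family whose postcritical set meets $\partial\Delta$ in a dense set, and the proofs rely structurally on the $F_n$-data. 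A compactness route is salvageable on $[\epsilon, 2\pi-\epsilon]$ if one replaces those lemmas with appropriate replacements, but the hard part — the degeneration near $t=0$ and $t=2\pi$ — is exactly what your plan leaves open and what the paper's M\"obius normalization plus cross-ratio machinery handles.
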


In the procedure of the quasiconformal surgery in $\S2.5.2$, if we
 just take $H$ to be the Douady-Earle extension of $h$ and do
not require that $H(0) = 0$, then by the conformal natural property
of Douady-Earle extension, we can reduce Lemma~\ref{distortion} to
the following lemma. For $0< t < 2\pi$, let $h_{t}: \partial \Delta
\to
\partial \Delta$ be the quasisymmetric homeomorphism such that
$h_{t}(1) = 1$ and $$G_{t}|\partial \Delta = h_{t} \circ R_{\theta}
\circ h_{t}^{-1}.$$
\begin{lemma}\label{reduced}
There is a uniform $1 <K < \infty$, such that for every $0< t < 2
\pi$, there is a M\"{o}bius map $\sigma_{t}$ which fixes $1$ and
 maps the unit circle to itself with orientation preserved,
such that the map $\sigma_{t} \circ h_{t}$ is a $K-$quasisymmetric
homeomorphism.
\end{lemma}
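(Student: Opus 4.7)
The plan is to reduce Lemma~\ref{reduced} to a uniform application of the Herman--Swiatek theorem on a $t$-dependent M\"obius renormalization of the Blaschke models $G_t$ of \S3.1, and then to verify that the renormalized family is precompact among analytic critical circle maps of rotation number $\theta$.

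First I would set up the normalization. Each $G_t$ is a degree-$3$ Blaschke product that fixes $0$ and $\infty$ and whose two critical points in $\overline{\Delta}$ both lie on $\partial\Delta$ (one at $1$, the other at a point $\zeta_t \in \partial\Delta\setminus\{1\}$ dictated by the combinatorics of $f_t$). The orientation-preserving M\"obius automorphisms of $\Delta$ fixing the boundary point $1$ form a $2$-real-parameter subgroup; I choose $\sigma_t$ in this subgroup so that the conjugate $\widetilde G_t = \sigma_t \circ G_t \circ \sigma_t^{-1}$ satisfies two standardizing dynamical conditions, for instance pinning the second critical point at a preassigned point $\zeta_\star \in \partial\Delta\setminus\{1\}$ and fixing one additional parameter such as $\widetilde G_t(1)$. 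Writing $\widetilde h_t = \sigma_t \circ h_t$, the relation $G_t|_{\partial\Delta} = h_t \circ R_\theta \circ h_t^{-1}$ becomes $\widetilde G_t|_{\partial\Delta} = \widetilde h_t \circ R_\theta \circ \widetilde h_t^{-1}$, so the lemma reduces to showing uniform quasisymmetry of $\widetilde h_t$.

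Second, I would establish precompactness of $\{\widetilde G_t\}_{0<t<2\pi}$ inside the space of degree-$3$ Blaschke products with critical set $\{1,\zeta_\star\} \subset \partial\Delta$ whose restriction to $\partial\Delta$ has rotation number $\theta$. Using the involutions $z \mapsto 1/z$ and $t \leftrightarrow 2\pi - t$ from Facts~3 and~4 of~\S4.2, it is enough to work with $t$ in a compact subinterval of $(0,\pi]$. Once $0$, $\infty$, $1$, $\zeta_\star$ and the extra normalizing parameter are fixed, the Blaschke coefficients lie in a bounded region of parameter space, and any subsequential limit is again a degree-$3$ Blaschke product with the same critical configuration whose restriction to $\partial\Delta$ has rotation number $\theta$, using continuity of the rotation number on families of circle homeomorphisms. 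In particular no degeneration to lower degree can occur.

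Third, I would apply Herman--Swiatek in the quantitative form used in~\cite{Pe2}: for an analytic critical circle homeomorphism of bounded type rotation number $\theta$, the quasisymmetric conjugacy to $R_\theta$ exists and its quasisymmetric constant depends only on $\theta$ and on the analytic data of the map (domain of analyticity, upper and lower bounds for the derivative away from the critical points, orders of the critical points). On the precompact family $\{\widetilde G_t\}$ all of this data is uniformly bounded, which yields a uniform $K = K(\theta)$ such that every $\widetilde h_t$ is $K$-quasisymmetric. The main obstacle will be the precompactness step at the endpoint $t \to 0^+$, where the two critical points of the unnormalized $G_t$ collide. Under the proposed normalization the second critical point is pinned at $\zeta_\star$, so the degeneration is pushed onto the normalizer $\sigma_t$, which could in principle fail to stay bounded in $\mathrm{Aut}(\Delta)$. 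Ruling this out will require combining the uniqueness statement in Theorem~B with a direct estimate on the hyperbolic distance in $\Delta$ between the two critical points of $G_t$, showing that this distance remains bounded away from $0$ as $t \to 0^+$; equivalently, one must verify that the limiting map at $t=0$ is a genuine degree-$3$ Blaschke product with a single critical point of multiplicity $2$ on $\partial\Delta$ rather than a degenerate map of lower dynamical complexity.
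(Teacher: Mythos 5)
Your proposal reduces the lemma to precompactness of a renormalized family $\{\widetilde G_t\}$ plus a quantitative Herman--Swiatek, but the precompactness step is where the argument breaks, and not in a way that can be patched by the final paragraph's suggestion. Each $G_t$ is a degree-$3$ Blaschke product carrying two distinct critical inflections on $\partial\Delta$, at $1$ and at some $\zeta_t$, and these collide as $t\to 0^+$ (and as $t\to 2\pi^-$): in the linearizing coordinate the second critical point sits at angle $t$, so if $h_t$ were uniformly quasisymmetric (which is exactly the conclusion being sought) one would have $\zeta_t\to 1$, and conversely if $\zeta_t$ stayed bounded away from $1$ along a subsequence, any nondegenerate limit $G_0$ would be a degree-$3$ Blaschke product with two distinct circle critical points whose linearized positions coincide at $1$ -- an impossibility. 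Thus the angular (equivalently hyperbolic) separation of the two critical points does \emph{not} stay bounded below; it must tend to $0$, and the proposed "direct estimate" that it stays bounded away from $0$ is false. Pinning $\zeta_t$ at $\zeta_\star$ then forces the normalizer $\sigma_t$ to leave every compact subset of $\mathrm{Aut}(\Delta)$, and you have no control on $\widetilde G_t$ in that limit. The degeneration is real: the family exits the stratum of Blaschke products with two distinct circle critical points and accumulates on the class $\mathcal{B}$ with a single cubic critical point at $1$ and a symmetric off-circle pair, so the "analytic data" (number and orders of the critical inflections) changes discontinuously, and the version of quantitative Herman--Swiatek you invoke does not apply uniformly across this degeneration without an argument that is essentially equivalent to what must be proved.

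The paper takes a different route precisely to sidestep this: it makes no compactness claim about $\{G_t\}$. Sublemma~1 chooses the M\"obius renormalization $\sigma_t$ so that a specific $4$-tuple of dynamically defined points on $\partial\Delta$ is uniformly spread both before and after applying $\tau_t^{-1}=(\sigma_t\circ h_t)^{-1}$, and the choice is justified by a soft contradiction argument with short geodesics (Theorem~A.1 et seq.), not by compactness of the maps. Sublemma~2, Lemma~\ref{chang-zhou}, and Lemma~\ref{metric-dis} then run a direct cross-ratio/pullback argument (a Swiatek-style real a priori bound) to show that the closest-return intervals $[z,G_t^{q_n}(z)]$ and $[G_t^{-q_n}(z),z]$ are $K$-comparable with $K=K(\theta)$, uniformly in $t$. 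The uniform quasisymmetry of $\sigma_t\circ h_t$ drops out of these real bounds and the bounded-type hypothesis via the continued-fraction structure, with no appeal to a limiting map and therefore no trouble at the endpoints of the parameter interval. If you want to salvage a compactness-flavored argument, you would need to formulate and prove a Herman--Swiatek bound that is uniform over the closure of the family, including the boundary stratum $\mathcal{B}$; that is roughly as hard as what the paper does directly, and your current write-up does not supply it.
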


\begin{remark}
Let $d \ge 3$ be an integer and $0< \theta < 1$ be a bounded type
irrational number. Let $B_{d}^{\theta}$ denote the family of all the
Blaschke products such that the restriction of every $B \in
B_{d}^{\theta}$ to the unit circle is a critical circle
homeomorphism of rotation number $\theta$.  By using Buff-Cheritat's
Relative Schwartz lemma, it was recently proved that the above bound
$K$ actually exists for all the maps in $B_{d}^{\theta}$  and $K$
depends only on $\theta$ and $d$\cite{Zh3}.
\end{remark}

\begin{sub1}
There exist $0< \delta_{0} < 2 \pi$ and $0< \epsilon_{0} < 2 \pi$
such that for any $0< t < 2\pi$, there exist four distinct points
$x_{1}, x_{2}, x_{3}, x_{4} \in \partial \Delta$ and a M\"{o}bius
map $\sigma_{t}$ which maps the unit circle to itself and preserves
the orientation, such that the arc length of each component of
$\partial \Delta - \{x_{1}, x_{2},x_{3}, x_{4}\}$ is $\ge
\delta_{0}$,  and the arc length of each component of $\partial
\Delta  - \{\tau_{t}^{-1}(x_{1}), \tau^{-1}_{t}(x_{2}),
\tau^{-1}_{t}(x_{3}), \tau^{-1}_{t}(x_{4})\}$ is  $\ge
\epsilon_{0}$, where $\tau_{t} =  \sigma_{t} \circ h_{t}$.
\end{sub1}

\begin{proof}
Take $l$ large enough such that $\{l \theta\} < \pi/2$, where
$\{\cdot\}$ is used to denote the fraction part of a number. Let $I$
be an arc segment with minimal arc length such that $|h_{t}^{-1}(I)|
= \{l \theta\}$. Let $L$ and $R$ be the two adjacent arc segments of
$I$ on $\Bbb T$ such that $$|h_{t}^{-1}(L)| = |h_{t}^{-1}(R)| = \{l
\theta\}.$$ We now claim that there exists an $1 < M < \infty$ which
does not depend on $t$ such that one of the following two
inequalities hold:
$$
|L| \le M |I| \hbox{ or } |R| \le M |I|.
$$
Let us prove the claim now. Assume that it is not true. Then there
is a sequence $t_{n} \in (0, 2 \pi)$ such that for each $n$, there
exist three adjacent intervals $L_{n}, I_{n}$ and $R_{n}$ in $\Bbb
T$ so that
\begin{equation}\label{ass-equ}
|h_{t_{n}}^{-1}(L_{n})|  = |h_{t_{n}}^{-1}(I_{n})| =
|h_{t_{n}}^{-1}(R_{n})| = \{l \theta\},
\end{equation}
but both of the above two inequalities do not hold. By passing to a
subsequence, we may assume that $|R_{n}|/|I_{n}| \to \infty$ and
$|L_{n}|/|I_{n}| \to \infty$. Take $n$ large enough. Let
$\Pi_{t_{n}}$ be the set of the critical points of $G_{t_{n}}$. Let
$$
X_{n} =  \widehat{\Bbb C}-\big{(} (\partial \Delta  - (R_{n} \cup
L_{n})) \cup \bigcup_{1 \le i \le
l}G_{t_{n}}^{i}(\Pi_{t_{n}})\big{)},
$$
and
$$
Y_{n} = G_{t_{n}}^{-l}(X_{n}).
$$
It follows that $$G_{t_{n}}^{l}: Y_{n} \to X_{n}$$ is a holomorphic
covering map.

Since $I_{n}$ has a large space around it in $L_{n}\cup I_{n} \cup
R_{n}$, it follows that there is a short simple closed geodesic
$\gamma_{n} \subset X_{n}$ which separates $I_{n}$ and $\partial
\Delta - L_{n}\cup I_{n} \cup R_{n}$. We thus get that
$\|\gamma_{n}\|_{X_{n}} \to 0$ as $n \to \infty$. Let $\xi_{n}$
denote the component of $G_{t_{n}}^{-l}(\gamma_{n})$ which
intersects the unit circle. It follows that $\xi_{n}$ is also a
short simple closed geodesic, which is symmetric about the unit
circle. Moreover, $\|\xi_{n}\|_{Y_{n}} \to 0$ as $n \to \infty$.
Most importantly, by (\ref{ass-equ}), it follows that
$$
G_{t_{n}}^{l}(I_{n}) = L_{n}
$$
and therefore  the geodesic $\xi_{n}$ separates $L_{n}$ and $R_{n}$.
But since $|R_{n}|/|I_{n}| \to \infty$ and $|L_{n}|/|I_{n}| \to
\infty$, it follows that  the length of any simple closed geodesic
which separates $L_{n}$ and $R_{n}$ has a positive lower bound. This
is a contradiction and the claim has been proved.

Now we may assume that $|L| \le M |I|$(the case that $|R|< M|I|$ can
be treated in the same way).  Let $$S = \partial \Delta  - L\cup I
\cup R.$$ By the choice of $l$ and $I$, it follows that
$|h_{t}^{-1}(S)|
> \{l\theta\}$ and hence $|S| > |I|$. Let $z \in \Delta$ be the
point which lies in the straight line which passes through the
origin and the middle point of $I$ such that $d(z, I) = |I|$. Define
the M\"{o}bius map $\sigma_{t}$ such that $\sigma_{t}(1) = 1$,
$\sigma_{t}(z) = 0$ and $\sigma_{t}({\Bbb T}) = {\Bbb T}$.  Let
$t_{1}, t_{2}, t_{3}$ and $t_{4}$ be the end points of the interval
of $L$, $I$ and $R$. Let $x_{1}, x_{2}, x_{3}$ and $x_{4}$ be the
images of $t_{1}, t_{2}, t_{3}$ and $t_{4}$ under the map
$\sigma_{t}$. It follows that there is a uniform $\delta_{0}> 0$
such that each component of $\partial \Delta - \{x_{1}, x_{2},
x_{3}, x_{4}\}$ has arc length $\ge \delta_{0}$(To get this, one can
consider the cross ratio of the four end points of   the intervals
$L$, $I$, $R$, and $S$. Use the fact that $|I| \le |L| \le M |I|,
|I| \le |R|$, and $|I| \le |S|$  and that M\"{o}bius maps preserve
cross ratios). Let $\tau_{t} = \sigma_{t} \circ h_{t}$. Then the arc
length of each component of $$\partial \Delta  -
\{\tau_{t}^{-1}(x_{1}), \tau^{-1}_{t}(x_{2}), \tau^{-1}_{t}(x_{3}),
\tau^{-1}_{t}(x_{4})\}$$ is  $\ge \epsilon_{0} = \{l\theta\}$. The
proof of Sublemma 1 is completed.

\end{proof}


To simplify the notations, in the following we use $G_{t}$ and
$h_{t}$ instead of $\sigma_{t} \circ G_{t} \circ \sigma_{t}^{-1}$
and $\sigma_{t}\circ h_{t}$, and assume that there exist $0<
\delta_{0} < 2 \pi$ and $0< \epsilon_{0} < 2 \pi$ such that for any
$0< t < 2\pi$, there exist four distinct points $x_{1}, x_{2},
x_{3}, x_{4} \in
\partial \Delta$ such that the arc length of each component of
$\partial \Delta - \{x_{1}, x_{2},x_{3}, x_{4}\}$ is $\ge
\delta_{0}$,  and the arc length of each component of $\partial
\Delta  - \{h_{t}^{-1}(x_{1}), h^{-1}_{t}(x_{2}), h^{-1}_{t}(x_{3}),
h^{-1}_{t}(x_{4})\}$ is  $\ge \epsilon_{0}$, where $h_{t}:
\partial \Delta \to \partial \Delta$ is the quasisymmetric
homeomorphism such that $h_{t}(1) = 1$ and $G_{t}|\partial \Delta =
h_{t} \circ R_{\theta} \circ h_{t}^{-1}$.

Let $J \subset I \subset \Bbb T$ such that both the components of $I
- J$, say $R$ and $L$, are  non-trivial arc segments. Define
$$
C(I, J) = \frac{|I||J|}{|R||L|}.
$$
The value $C(I, J)$ measures the space around $J$ in $I$. Let $$X =
\widehat{\Bbb C} - (\partial \Delta - R \cup L).$$ Let
$\gamma\subset X$ be the simple closed geodesic which separates $J$
and $\partial \Delta - I$. The proof of the following lemma is
direct, and we shall leave the details to the reader:
\begin{sub2}
Let $\delta, C > 0$. Then there exists a $\lambda(\delta, C) > 0$
dependent only on $\delta$ and $C$ such that if $|\partial \Delta -
I|> \delta$ and $\|\gamma\|_{X} \le C$, then  $C(I, J) \le
\lambda(\delta, C)$. Moreover,  if $|\partial \Delta - I|> \delta$
and $C(I, J) \le C$, then $\|\gamma\|_{X} \le \lambda(\delta, C)$.
\end{sub2}

\begin{remark}\label{trans-re}
Sublemma 2 implies that the existence of the upper bound of the
length of the simple closed geodesic which separates $J$ and
$\partial \Delta - I$ is equivalent to the existence of some
definite space around $J$ inside $I$ provided that $\partial \Delta
- I$ is not too small.
\end{remark}

Given a collection of arc segments $$\mathcal{I} = \{I^{k} \subset
\partial \Delta, k \in \Lambda\},$$ the $\emph{intersection
multiplicity}$ of $\mathcal{I}$ is defined to be the largest integer
$n \ge 0$ such that there exist $n$ distinct arc segments in
$\mathcal{I}$ whose intersection is not empty.

For an arc segment $I \subset
\partial \Delta$, we use $I^{k}_{t} \subset \partial \Delta$ to
denote the component of $G_{t}^{-k}(I)$ which lies in the unit
circle. In particular, $I_{t}^{0} = I$.

\begin{lemma}\label{chang-zhou}
For each $K>0$, $l \ge 1$ and $\rho > 0$, there is a constant
$\lambda(K, l, \rho)>0$ , which is independent of $t$,  such that
for any arc segments $M \subset T \subset \partial \Delta$, if the
following three conditions are satisfied,
\begin{itemize}\item[1.]
$C(T,M) < K$, \item[2.] the intersection multiplicity of
$\{T^{i}_{t}, i=0,1, \cdots, N\}$ is  less than $l$, \item[3.]
$|\partial \Delta - T_{t}^{i}| > \rho$ for $0 \le i \le N$,
\end{itemize}
then $C(T^{N}_{t}, M^{N}_{t}) < \lambda(K,l, \rho)$.
\end{lemma}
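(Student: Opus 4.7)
The plan is to translate the combinatorial quantity $C(T,M)$ into a hyperbolic geodesic length via Sublemma 2, pull the geodesic back along the circle branch of $G_t^{-N}$, and then translate back. Write $R,L$ for the two components of $T\setminus M$ and set $X=\widehat{\mathbb{C}}\setminus(\partial\Delta\setminus(R\cup L))$, with $X_N=\widehat{\mathbb{C}}\setminus(\partial\Delta\setminus(R^N_t\cup L^N_t))$ defined analogously. Hypothesis (1) together with hypothesis (3) at $i=0$ and the first half of Sublemma 2 give $\|\gamma\|_X\le C_0(K,\rho)$ for the simple closed $X$-geodesic $\gamma$ separating $M$ from $\partial\Delta\setminus T$. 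If I produce a simple closed $X_N$-geodesic $\gamma_N$ separating $M^N_t$ from $\partial\Delta\setminus T^N_t$ with $\|\gamma_N\|_{X_N}$ bounded purely in terms of $K,l,\rho$, the second half of Sublemma 2 (with hypothesis (3) at $i=N$) will yield the desired $\lambda(K,l,\rho)$.

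Consider the branched cover $G_t^N:Y\to X$, where $Y=G_t^{-N}(X)\subset X_N$. Every critical point of $G_t$ lies on $\partial\Delta$ (since $G_t|\partial\Delta$ is a critical circle homeomorphism), so every iterated critical value of $G_t^N$ lies on $\partial\Delta$ as well. Writing $V$ for the critical-value set of $G_t$, the $G_t^N$-critical values landing inside $X$ form the set $E=\{G_t^j(v'):v'\in V,\;0\le j\le N-1,\;G_t^j(v')\in R\cup L\}$. The crucial combinatorial observation is that $G_t^j(v')\in R\cup L\subset T$ if and only if $v'\in T^j_t$, and by hypothesis (2) each $v'\in V$ lies in at most $l-1$ of the arcs $T^0_t,\ldots,T^{N-1}_t$. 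Hence $|E|\le(l-1)|V|$, a bound uniform in $N$ and in $t$, since the degree of $G_t$ (and thus $|V|$) is fixed.

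Setting $X'=X\setminus E$ and $Y'=Y\setminus G_t^{-N}(E)$, the restricted map $G_t^N:Y'\to X'$ is a genuine holomorphic covering and hence a local hyperbolic isometry. By Theorem A.3 applied to the inclusion $X'\hookrightarrow X$ with $|X\setminus X'|=|E|\le(l-1)|V|$, the $X'$-geodesic $\gamma'$ homotopic to $\gamma$ satisfies $\|\gamma'\|_{X'}\le C_1(l)\|\gamma\|_X\le C_1(l)C_0(K,\rho)$. Because $G_t^N|\partial\Delta$ is a homeomorphism, the inverse branch taking $T$ to $T^N_t$ is uniquely determined on the circle and lifts $\gamma'$ to a simple closed curve $\tilde\gamma\subset Y'\subset X_N$ of the same hyperbolic length in $Y'$; since $Y'\subset X_N$, the hyperbolic metric on $X_N$ is dominated by that on $Y'$, so the $X_N$-geodesic $\gamma_N$ homotopic to $\tilde\gamma$ has $\|\gamma_N\|_{X_N}\le\|\gamma'\|_{X'}\le C_1(l)C_0(K,\rho)$, and $\gamma_N$ separates $M^N_t$ from $\partial\Delta\setminus T^N_t$ because $\tilde\gamma$ does, by the circle inverse of $G_t^N$. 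The second half of Sublemma 2 completes the proof with $\lambda(K,l,\rho):=\lambda(\rho,C_1(l)C_0(K,\rho))$. The main obstacle is the uniform combinatorial bound $|E|\le(l-1)|V|$: since $N$ is unrestricted in the hypotheses, the dependence on $N$ (and hence on $t$) must be eliminated entirely through the intersection multiplicity, which is exactly the role of hypothesis (2); the rest of the argument is a standard Riemann-surface pullback modulo the topological verification that the lift $\tilde\gamma$ inherits the correct separation property from the circle branch.
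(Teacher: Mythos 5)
Your proof is correct in outline and takes a genuinely different route from the paper's. The paper pulls the geodesic back one application of $G_t$ at a time, distinguishing at each step $k$ whether $T^k_t$ contains a critical value of $G_t$; in the bad case it punctures the surface at (at most two) critical values to turn the degree-$3$ branched cover into an honest covering and accepts a uniform multiplicative loss, and the intersection-multiplicity hypothesis bounds the number of bad steps by $2(l-1)$. You instead pull back all at once through $G_t^N$, using the key observation that every critical value of $G_t^N$ lies on $\partial\Delta$ (because in this family both critical points of $G_t$ are circle points), so that the critical values landing in $X$ form a set $E\subset R\cup L$, and the intersection-multiplicity hypothesis gives $|E|\le 2(l-1)$ at one stroke, uniformly in $N$ and $t$. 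This one-shot version is cleaner and in fact slightly tighter: because all branching of $G_t^N$ occurs over $\partial\Delta$, the preimage component of the half-disk bounded by $\gamma'_+$ and the arc of $T$ through $M$, attached to the arc of $T_t^N$ through $M_t^N$, maps down with degree $1$ by Riemann--Hurwitz, so the selected lift covers $\gamma'$ exactly once and is an isometry; the paper instead absorbs a per-step factor of $3$. Two minor points: your ``if and only if'' ($G_t^j(v')\in R\cup L \Leftrightarrow v'\in T^j_t$) is really only an implication (the forward one), which is all you use; and the multiplicative length comparison you attribute to Theorem A.3 literally holds only for geodesics shorter than $\log(\sqrt2+1)$, whereas $\|\gamma\|_X$ is merely bounded by $C_0(K,\rho)$ --- the paper's proof shares this imprecision, and both are patched by the standard compactness argument covering the ``bounded but not short'' range.
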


\begin{proof}
 Let $M \subset T \subset \partial \Delta$. Let $c_{t}^{i}, i=1,2$
 be the two critical points and $v_{t}^{i}, i=1,2$  the two critical
 values of $G_{t}$. For a given  $ 0 \le k \le n$, there are two cases.

 In the first case, $T_{t}^{k}$ contains some critical value of
 $G_{t}$. Set
$$
A_{k} = (\partial \Delta - T_{t}^{k})  \cup M_{t}^{k} \cup
(T_{t}^{k} \cap \{ v_{t}^{1}, v_{t}^{2}\}),
$$
and
$$
B_{k} = (\partial \Delta - T_{t}^{k})  \cup M_{t}^{k}.
$$

Now let us consider the following three hyperbolic Riemann surfaces,
\begin{equation}
X_{k} = {\Bbb P}^{1} - A_{k},
\end{equation}

\begin{equation}
Y_{k} = {\Bbb P}^{1} - B_{k},
\end{equation}
and
\begin{equation}
Z_{k} = {\Bbb P}^{1} - G_{t}^{-1}(A_{k}).
\end{equation}

By the assumption that $C(T, M) <K$ and $|\partial \Delta - T|
> \rho$, it follows from Sublemma 2 that there is a simple closed geodesic in
$Y_{0}$ which separates $M$ and $\partial \Delta - T$ whose
hyperbolic length has an upper bound which depends only on $K$.

Since $ Y_{k}-X_{k} \subset \{v_{t}^{1}, v_{t}^{2}\}$ is a finite
set,  it follows  that there is a uniform constant $1 < C < \infty$
such that for the simple  closed geodesic $\xi \subset Y_{k}$, there
is a simple closed geodesic $\xi' \subset X_{k}$ which is homotopy
to $\xi$ in $Y_{k}$, such that $l_{X_{k}}(\xi') < Cl_{Y_{k}}(\xi)$.

Let  $\eta \subset Y_{k}$ be the simple closed geodesic  which
separates $M_{t}^{k}$ and $\partial \Delta - T_{t}^{k}$. Take a
simple closed geodesic $\eta' \subset X_{k}$ such that $\eta'$ is
homotopy to $\eta$ in  $Y_{k}$ and such that $l_{X_{k}}(\eta') <
Cl_{Y_{k}}(\eta)$ where $C > 0$ is the uniform constant above. Let
$\eta''$ be the simple closed geodesic in $Z_{k}$  which separates
$\partial \Delta - T_{t}^{k+1}$ and $M _{t}^{k+1}$  such that the
image of $\eta''$ under $G_{t}$ covers $\eta'$. Since $$G_{t}: Z_{k}
\to X_{k}$$ is a holomorphic covering map of degree $3$,  it follows
that $l_{Z_{k}}(\eta'')  \le 3l_{X_{k}}(\eta')$.  Therefore, we have
\begin{equation}
l_{Y_{k+1}}(\eta'') < l_{Z_{k}}(\eta'') \le 3 l_{X_{k}}(\eta') <
3Cl_{Y_{k}}(\eta).
\end{equation}

In the second case, $T_{t}^{k}$ does not contain any critical value
of $G_{t}$. Let  $\eta \subset Y_{k}$ be a simple closed geodesic
which separates $M_{t}^{k}$ and $\partial \Delta - T_{t}^{k}$. It
follows that  there is a simple closed geodesic $\eta' \subset
Z_{k}$  which separates $\partial \Delta - T_{t}^{k+1}$ and
$M_{t}^{k+1}$  such that the image of $\eta'$ under $G_{t}$ covers
$\eta$ exactly one time. It follows that
\begin{equation}
l_{Y_{k+1}}(\eta') < l_{Z_{k}}(\eta') = l_{Y_{k}}(\eta).
\end{equation}

 Since the $\emph{intersection multiplicity}$ of $\{T_{t}^{k}\}$ is $l$,
and $G_{t}$ has only two critical values,  it follows that, when $k$
runs through $0, 1, \cdots, N-1$, case 1 can happen at most $2l$
times. Therefore, there is a simple closed geodesic which separates
$\partial \Delta - T_{t}^{N}$ and $M_{t}^{N}$ whose length has an
upper bound dependent only on $K$ and $l$. Note that $|\partial
\Delta - T_{t}^{N}| > \rho$. The lemma then follows from Sublemma 2
and Remark~\ref{trans-re}.
\end{proof}


Let $I = [a, b] \subset \partial \Delta$.  We use $|a-b|$ or $|I|$
to denote the Euclidean length of the arc $I$.  For $K > 1$, we say
two intervals $I, J \subset \partial \Delta$ are $K-$comparable if
$K^{-1} < |I|/|J| < K$. Let $p_{n}/q_{n}, n=1, 2, \cdots$ be the
convergents of $\theta$.
\begin{lemma}\label{metric-dis}
There is a constant $K>1 $ which is  only dependent on $\theta$ such
that for all $0 < t < 2 \pi$, $z \in \partial\Delta$  and $n \ge 1$,
the following two inequalities hold,
\begin{equation}\label{first-ine}
1/K \le \frac{|G_{t}^{-q_{n}}(z) - z|}{|G_{t}^{q_{n}}(z) - z|} \le K
\end{equation}
and
\begin{equation}\label{second-ine}
1/K \le \frac{|G_{t}^{q_{n+1}}(z) - z|}{|G_{t}^{q_{n}}(z) - z|} \le
K.
\end{equation}
\end{lemma}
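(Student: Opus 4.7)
The plan is to derive the two inequalities from uniform real bounds for the family of circle maps $\{G_t|\partial\Delta : 0 < t < 2\pi\}$, established via a pullback argument using Lemma~\ref{chang-zhou} with base case provided by Sublemma 1.

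First I translate the problem to the rigid-rotation side. Setting $w = h_t^{-1}(z)$, the conjugacy $G_t|\partial\Delta = h_t \circ R_\theta \circ h_t^{-1}$ gives $G_t^k(z) = h_t(R_\theta^k(w))$, and since $R_\theta$ is an isometry we have exactly $|R_\theta^{-q_n}(w) - w| = |R_\theta^{q_n}(w) - w|$, while the bounded type hypothesis on $\theta$ yields the uniform ratio bound $|R_\theta^{q_{n+1}}(w) - w|/|R_\theta^{q_n}(w) - w| \asymp 1/a_{n+1}$ with constants depending only on $\theta$. Thus both inequalities of the lemma reduce to showing that $h_t$ has uniformly bounded distortion on the three-point configurations $\{R_\theta^{-q_n}(w), w, R_\theta^{q_n}(w)\}$ and $\{w, R_\theta^{q_n}(w), R_\theta^{q_{n+1}}(w)\}$, with constants independent of $t$.

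Next I proceed by induction on $n$ to establish this uniform distortion bound. Sublemma 1, applied after the normalization described just before Sublemma 2, provides the base case: there are four uniformly spaced points on $\partial\Delta$ whose $h_t$-preimages are also uniformly spaced, which by standard cross-ratio considerations gives uniform control at some initial scale $n_0 = n_0(\theta)$. For the inductive step I apply Lemma~\ref{chang-zhou} to transport the distortion bound from the coarser scale $q_{n-1}$ to the finer scale $q_n$. Concretely, choose $T \subset \partial\Delta$ to be an interval of the coarser dynamical partition that contains the relevant three-point configuration, and $M \subset T$ to be the subinterval between two of these three points; then $C(T,M)$ is bounded by the inductive hypothesis. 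For $N$ of order $q_{n-1}$, the pullback $(T_t^N, M_t^N)$ recovers the corresponding configuration at scale $q_n$ by virtue of the combinatorial structure of the dynamical partition, which is determined solely by $\theta$ and is therefore the same for every $G_t$.

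The main obstacle is verifying, uniformly in $t$, the two hypotheses of Lemma~\ref{chang-zhou}. The intersection multiplicity of $\{T_t^i : 0 \leq i \leq N\}$ is bounded by a constant $l(\theta)$, this being the combinatorial statement that the dynamical partition of a bounded type rotation has bounded overlap at every scale; the bound transfers to $G_t$ because the conjugating homeomorphism $h_t$ preserves combinatorics. The lower bound $|\partial\Delta - T_t^i| > \rho$ is more delicate: a priori a pullback could wrap almost all of $\partial\Delta$, so this condition has to be established simultaneously with the distortion bound itself, in the form of a bootstrapping induction. One arranges the two inductive hypotheses (uniform $C$-bound and uniform $\rho$-gap) to reinforce each other, using the fact that the combinatorics force each $T_t^i$ to be comparable to a fundamental partition element at its scale, and the inductive distortion bound forces this comparability to hold with uniform constants. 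Once both hypotheses are secured, Lemma~\ref{chang-zhou} together with Sublemma 2 and Remark~\ref{trans-re} closes the inductive step, propagates the uniform distortion bound to all scales, and yields the two ratios of Lemma~\ref{metric-dis} with a constant $K$ depending only on $\theta$.
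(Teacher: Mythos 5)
Your proposal is correct in its opening reduction but takes a genuinely different route from the paper's, and the route has a gap. Your inductive scheme (base case from Sublemma 1, inductive step transporting a cross-ratio bound from scale $q_{n-1}$ to scale $q_n$ via Lemma~\ref{chang-zhou}) does not close. The trouble is exactly where you flag things as ``more delicate'': you take $T$ at scale $q_{n-1}$ and $M\subset T$ at scale $q_n$ and assert that ``$C(T,M)$ is bounded by the inductive hypothesis,'' but $C(T,M)$ is a mixed-scale cross-ratio, and bounding it requires comparing lengths at scale $n-1$ to lengths at scale $n$ --- which is precisely what inequality (\ref{second-ine}) asserts. Likewise, the two complementary pieces $T\setminus M$ may a priori be very small, so the gap hypothesis $|\partial\Delta - T_t^i|>\rho$ of Lemma~\ref{chang-zhou} is not supplied by the scale-$(n-1)$ hypothesis either. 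Your sentence about the ``$C$-bound and $\rho$-gap reinforcing each other'' names the problem without solving it; that reinforcement is the heart of the Herman--Swiatek real a priori bounds, and it cannot be secured simply by invoking Lemma~\ref{chang-zhou}. As written the induction is circular.

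The paper's proof avoids inter-scale induction entirely. At each scale $n$, it takes the point $x\in\partial\Delta$ minimizing $|G_t^{q_n}(y)-y|$. By minimality both neighbors $[G_t^{-q_n}(x),x]$ and $[G_t^{q_n}(x),G_t^{2q_n}(x)]$ are at least as long as $M=[x,G_t^{q_n}(x)]$, so with $T=[G_t^{-q_n}(x),G_t^{2q_n}(x)]$ one gets $C(T,M)\le 3$ for free, with no a priori input. Lemma~\ref{chang-zhou} is then applied only within that single scale: once with $N=jq_n$, $1\le j\le 5$, to produce a ladder of six $L$-comparable intervals around $x$, and once more with $N=i+jq_n$, $0\le i<2q_{n+1}$, to carry the bound to an arbitrary $z$. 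At that fixed scale, the intersection multiplicity is bounded by the bounded-type hypothesis and the $\rho$-gap is guaranteed by the normalization secured just before Sublemma 2 (and the restriction to $n$ large), not by an inductive hypothesis. Finally, (\ref{second-ine}) is derived from (\ref{first-ine}) by a short combinatorial argument using bounded type, rather than being proved simultaneously. Your rotation-side translation in the first paragraph is correct and worth keeping; the induction should be replaced by this extremal-point device.
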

The idea of the proof is taken from $\S3$ of \cite{dFdM}.
\begin{proof}
 Let $M$  be an integer  such that
$$
|h_{t}^{-1}(x) - h_{t}^{-1}(G_{t}^{q_{n}}(x))| < \epsilon_{0}/3
$$
holds for all $n \ge M$  and $0< t < 2 \pi$ where $\epsilon_{0}$ is
the number in Sublemma 1. It is sufficient to prove that there is a
$K > 1$ such that the above two inequality hold for all $n \ge M$
and $0< t < 2 \pi$. The case for $n < M$ then follows by noting the
fact that $\theta$ is of $\emph{bounded type}$.

Take $x \in \partial \Delta$ such that it attains the minimum of
$|G_{t}^{q_{n}}(y) - y|$. Then $[x, G_{t}^{q_{n}}(x)]$ has a
definite space around it inside $[G_{t}^{-q_{n}}(x),
G_{t}^{2q_{n}}(x)]$.  Let $M = [x, G_{t}^{q_{n}}(x)]$ and $T =
[G_{t}^{-q_{n}}(x), G_{t}^{2q_{n}}(x)]$. Since $\theta$ is of
bounded type, the $\emph{intersection multiplicity}$ for
$\{T_{t}^{k}, 0 \le k \le 5q_{n}\}$ has a uniform upper bound
dependent only on $\theta$. Applying Lemma~\ref{chang-zhou} to the
intervals $M \subset T$ and $N = q_{n}, 2q_{n}, 3q_{n}, 4q_{n}$, and
$5q_{n}$, respectively. Note that the $\emph{multiplicity}$ of the
corresponding collection of intervals is bounded above by some
constant dependent only on $\theta$. It follows that the six
intervals
 $[G_{t}^{-5q_{n}}(x), G_{t}^{-4q_{n}}(x)]$,
 $[G_{t}^{-4q_{n}}(x), G_{t}^{-3q_{n}}(x)]$,
$[G_{t}^{-3q_{n}}(x), G_{t}^{-2q_{n}}(x)]$, $[G_{t}^{-2q_{n}}(x),
G_{t}^{-q_{n}}(x)]$, $[G_{t}^{-q_{n}}(x), x]$ and $[x,
G_{t}^{q_{n}}(x)]$ are $L-$comparable with each other, where $L$ is
a constant dependent only on $\theta$.  Let $l$ be the minimum of
the length of these six intervals.

For any $z \in \partial \Delta $, it follows from the property of
the closed returns that there is an $0 \le i < 2q_{n+1}$ such that
$G_{t}^{i}(z)
 \in [G_{t}^{-5q_{n}}(x), G_{t}^{-4q_{n}}(x)]$.

Let us prove (\ref{first-ine}) first.  There are two cases. In the
first case, there is some $1 \le j \le 3$ such that $[
G_{t}^{i+jq_{n}}(z), G_{t}^{i+(j+1)q_{n}}(z)]$ has length less than
$l/2$.  Then
$$
[G_{t}^{i+jq_{n}}(z), G_{t}^{i+(j+1)q_{n}}(z)]
$$ has a definite space around it inside $[ G_{t}^{i+(j-1)q_{n}}(z),
G_{t}^{i+(j+2)q_{n}}(z)]$.  Let $$M = [G_{t}^{i+jq_{n}}(z),
G_{t}^{i+(j+1)q_{n}}(z)] \hbox{ and } T = [ G_{t}^{i+(j-1)q_{n}}(z),
G_{t}^{i+(j+2)q_{n}}(z)].$$  Apply Lemma~\ref{chang-zhou} to the
intervals $M \subset T$ and $N = i+ jq_{n}$.  Again note that the
$\emph{multiplicity}$ of the corresponding collection of intervals
is bounded above by some constant dependent only on $\theta$. We
thus get a definite space around $[z, G_{t}^{q_{n}}(z)]$ inside $[
G_{t}^{-q_{n}}(z), G_{t}^{2q_{n}}(z)]$. This proves
(\ref{first-ine}) in the first case.

In the second case, for each $j =1, 2, 3$, $[ G_{t}^{i+jq_{n}}(z),
G_{t}^{i+(j+1)q_{n}}(z)]$ has length not less than $l/2$. It follows
that the interval $[ G_{t}^{i+2q_{n}}(z), G_{t}^{i+3q_{n}}(z)]$ has
definite space around it inside the interval $[ G_{t}^{i+q_{n}}(z),
G_{t}^{i+4q_{n}}(z)]$.  As before, by applying
Lemma~\ref{chang-zhou}   we get a definite space around $[z,
G_{t}^{q_{n}}(z)]$ inside $[ G_{t}^{-q_{n}}(z), G_{t}^{2q_{n}}(z)]$.
This proves (\ref{first-ine}) in the second case.

Now let us prove (\ref{second-ine}). Let $b = \sup\{a_{k}\} <
\infty$ where $[a_{1}, \cdots, a_{n}]$ is the continued fraction of
$\theta$. Note that $[G_{t}^{-q_{n+1}}(z), z] \subset
[G_{t}^{q_{n}}(z), z]$, so from (\ref{first-ine}), we have
$$
|G_{t}^{q_{n+1}}(z)- z| \le K|G_{t}^{-q_{n+1}}(z)- z| <
K|G_{t}^{q_{n}}(z)- z|,
$$
and this implies the right hand of (\ref{second-ine}). To prove the
left hand, Note that
$$
[G_{t}^{q_{n}}(z), z] \subset \bigcup_{0 \le i \le
b}[G_{t}^{-iq_{n+1}}(z), G_{t}^{-(i+1)q_{n+1}}(z)],
$$
This implies that
$$
|G_{t}^{q_{n}}(z)- z|  <  \sum_{0 \le i\le b}|G_{t}^{-iq_{n+1}}(z)-
G_{t}^{-(i+1)q_{n+1}}(z)|.
$$
Applying (\ref{first-ine}) again, we have
$$
|G_{t}^{-iq_{n+1}}(z)- G_{t}^{-(i+1)q_{n+1}}(z)| \le
K^{i+1}|G_{t}^{q_{n+1}}(z)- z|
$$
for each $0 \le i \le b$. Therefore, we get
$$
|G_{t}^{q_{n}}(z)- z|   <  \sum_{0 \le i\le
b}K^{i+1}|G_{t}^{q_{n+1}}(z)- z|.
$$
By  modifying the value $K$, (\ref{second-ine})  follows.

\end{proof}

It is the time to prove Lemma~\ref{reduced}.
\begin{proof} We need
only to prove that there is an $M
>1$ dependent only on $\theta$ such that for any $x \in \partial
\Delta$ and $0< \delta < 2\pi$, the following inequality hold for
all $0< t < 2 \pi$,
$$
\frac{1}{M} <
\frac{|h_{t}(x+\delta)-h_{t}(x)|}{|h_{t}(x-\delta)-h_{t}(x)|} < M.
$$

Now for given $\delta$ and $x$, let us take $k \ge 1$ to be the
least integer such that one of the intervals $[x-\delta, x]$ and
$[x, x+\delta]$ contains $[G_{t}^{-q_{k}}(x),x]$ or $[x,
G_{t}^{q_{k}}(x)]$. Without loss of generality, Let us suppose
$[G_{t}^{-q_{k}}(x), x] \subset [x-\delta, x]$.  From the definition
of $k$, $[x-\delta , x] \subset [G_{t}^{-q_{k-2}}(x), x]$. Since
$\theta$ is of bounded type, by Lemma~\ref{metric-dis}, it follows
that $[ G_{t}^{-q_{k}}(x), x]$ and $[x-\delta, x]$ are
$L-$comparable where $1< L < \infty$ is some constant dependent only
on $\theta$. On the other hand, by the definition of $k$, we have
$[x, x+\delta] \subset [x, G_{t}^{q_{k-1}}(x)]$. By
Lemma~\ref{metric-dis}, $[x, G_{t}^{q_{k-1}}(x)]$ and $[
G_{t}^{-q_{k}}(x), x]$ are $K-$comparable for some $1 < K < \infty$
dependent only on $\theta$.  Therefore, $[x, x+\delta]$ and $[x,
G_{t}^{q_{k-1}}(x)]$ are $KL-$comparable. So we have
$$
|G_{t}^{q_{k-1}}(x) - x| < KL \delta.
$$ By
Lemma~\ref{metric-dis} again, there is an $\epsilon  > 0$ dependent
only on $\theta$ such that
$$
|G_{t}^{q_{k}}(x) -x| > (1+ \epsilon)|G_{t}^{q_{k+2}}(x) - x|,
$$
holds for all $x \in \partial \Delta$. Take $l > 1$ to be the least
integer such that $KL < (1+ \epsilon)^{l}$. It follows that $l$
depends only on $\theta$ and
$$
|G^{q_{k-1}}(x) - x| > ( 1 + \epsilon)^{l}|x - G^{q_{k+2l -1}}(x)|.
$$
It follows that $[x, G_{t}^{q_{k+2l-1}}(x)] \subset [x, x+\delta]$.
We then get
\begin{equation}\label{inc-1}
[x, G_{t}^{q_{k+2l-1}}(x)] \subset [x, x+\delta] \subset [x,
G_{t}^{q_{k-1}}(x)],
\end{equation}
and
\begin{equation}\label{inc-2}
[G_{t}^{-q_{k}}(x), x] \subset [x-\delta, x] \subset
[G_{t}^{-q_{k-2}}(x), x].
\end{equation}
Now for $x \in {\Bbb R}$, let $\{x\} \in (-1/2, 1/2)$ be the number
such that $x - \{x\} \in {\Bbb Z}$. From (\ref{inc-1}) and
(\ref{inc-2}), we have

$$
|\{q_{k+2l-1}\theta\}| \le |h_{t}(x+\delta)-h(x)| <
|\{q_{k-1}\theta\}|,
$$
and
$$
|\{q_{k}\theta\}| \le |h_{t}(x-\delta)-h(x)| < |\{q_{k-2}\theta\}|,
$$
Now the lemma follows from the assumption that $\theta$ is of
$\emph{bounded type}$.

\end{proof}

\subsection{Quadratic Siegel Rational Maps Modeled by $f_{\alpha}$}

In this section we will determine all the critical parameters $c$
such that $g_{c} \in R_{\theta}^{geom}$ and  the boundary of the
Siegel disk  of $g_{c}$ passes through both of the critical points.

\begin{lemma}\label{c-curve}
For $0< t< 2\pi$, let $g_{c(t)}$ be the Siegel rational map which
realizes $f_{t}$ in the sense of Lemma~\ref{rel-t}. Then $c(t)$ is
continuous in $(0, 2 \pi)$.
\end{lemma}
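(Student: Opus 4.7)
The plan is to argue by contradiction via a quasiconformal compactness argument, invoking the uniqueness statement of Theorem B. Fix $t_{0} \in (0, 2\pi)$ and a sequence $t_{n} \to t_{0}$. By compactness of $\widehat{\Bbb C}$, pass to a subsequence so that $c(t_{n}) \to c^{*} \in \widehat{\Bbb C}$; it then suffices to show $c^{*} = c(t_{0})$.

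Since each $g_{c(t)}$ realizes $f_{t}$, the quasiconformal surgery of $\S 2.5$ (together with Lemma~\ref{3-fir}) produces a quasiconformal homeomorphism $\Phi_{t}: S^{2} \to S^{2}$, normalized to fix $\{0, 1, \infty\}$, with $g_{c(t)} = \Phi_{t} \circ \widehat{G}_{t} \circ \Phi_{t}^{-1}$, where $\widehat{G}_{t}$ is the associated modified Blaschke product. By Lemma~\ref{distortion}, the Siegel disk boundaries $\partial D_{g_{c(t)}}$ form a uniformly $K$-quasicircle family, so the surgery can be carried out with $\Phi_{t}$ having dilatation $K'$ uniform in $t$. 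The compactness of normalized $K'$-qc maps then allows me to pass to a further subsequence on which $\Phi_{t_{n}} \to \Phi_{*}$ uniformly. A parallel compactness argument (following $\S 2.3$, in particular Lemma~\ref{com-F}) provides a subsequential limit $\widehat{G}_{t_{n}} \to \widehat{G}_{*}$ uniformly, where $\widehat{G}_{*}$ is a degree-$2$ modified Blaschke product. Since $f_{t_{n}} \to f_{t_{0}}$ uniformly on $S^{2}$ (the curves $\eta_{1}, \eta_{2}$ of Figure 15 depend continuously on $t$), the limit
$$
g^{*} := \Phi_{*} \circ \widehat{G}_{*} \circ \Phi_{*}^{-1} = \lim_{n\to\infty} g_{c(t_{n})}
$$
belongs to $R_{\theta}^{geom}$ and realizes $f_{t_{0}}$ in the sense of Lemma~\ref{rel-t}.

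Next, I identify $g^{*}$. If $c^{*} \in \widehat{\Bbb C} - \{0, 1, -1\}$, the explicit formula (\ref{g-c}) shows that $g_{c(t_{n})} \to g_{c^{*}}$ uniformly as quadratic rational maps, so $g^{*} = g_{c^{*}}$; the uniqueness statement of Theorem B then forces $c^{*} = c(t_{0})$. To rule out the three degenerate values, observe that at $c = 1$ a common factor $1 - z$ cancels in (\ref{g-c}), leaving the degree-$1$ linear map $z \mapsto e^{2\pi i\theta}z$; at $c = 0$ the coefficient $a$ in (\ref{norml}) blows up and $g_{c}$ has no continuous spherical limit; at $c = -1$ a pole of $g_{c}$ collides with the fixed point $0$, again destroying uniform convergence on $S^{2}$. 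In each case the algebraic behavior of $g_{c(t_{n})}$ is incompatible with the uniform qc convergence above to a degree-$2$ branched cover of $S^{2}$, and therefore $c^{*} \notin \{0, 1, -1\}$.

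The principal obstacle is obtaining the uniform bound on the dilatation of $\Phi_{t}$; this is exactly what Lemma~\ref{distortion} provides, and its proof (reduced via Lemma~\ref{reduced} to the uniform quasisymmetry of $h_{t}$ after a M\"{o}bius normalization) is the main technical work of $\S 4.3$. Given this uniform bound, the remainder of the argument is standard normal-family compactness for qc maps together with the rigidity supplied by Theorem B.
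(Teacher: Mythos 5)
Your proposal shares the paper's broad strategy (quasiconformal compactness together with the rigidity from Theorem B), but at the crucial step it departs from the paper and leaves a genuine gap. The assertion that ``a parallel compactness argument (following $\S 2.3$, in particular Lemma~\ref{com-F}) provides a subsequential limit $\widehat{G}_{t_n}\to\widehat{G}_*$ uniformly, where $\widehat{G}_*$ is a degree-$2$ modified Blaschke product'' is not justified: Lemmas~\ref{Z-P}, \ref{X-D}, \ref{com-F} concern the single Thurston pull-back sequence $\{G_m\}$ attached to one fixed branched cover $F$ and rest on the contraction of that pull-back; they say nothing about a family of Blaschke products modeling a varying sequence $f_{t_n}$. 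Worse, the argument is circular at exactly this point. What you wish to prove is that $c(t_n)$ does not degenerate, which, through the uniform qc conjugacy $\Phi_{t_n}$, is the same as the non-degeneration of $\widehat{G}_{t_n}$; you cannot simply invoke compactness of $\widehat{G}_{t_n}$ as an input. There is also a smaller issue in the normalization: demanding $\Phi_t(0)=0$, $\Phi_t(1)=1$, $\Phi_t(\infty)=\infty$ forces $H_t(0)=0$, but the Douady--Earle extension of $h_t$ need not satisfy this, and composing with a correcting self-map of $\Delta$ can spoil the uniform dilatation bound of Lemma~\ref{distortion} unless one controls where $H_t(0)$ sits.

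The paper avoids all of this by arguing directly in the $c$-parameter plane rather than in the Blaschke model. It rules out $c(t_k)\to 0,\infty$ because $g_c$ then acquires an attracting fixed point at $\infty$, incompatible with both critical points lying on $\partial D_{g_c}$; it rules out $c(t_k)\to\pm1$ by combining the explicit formula (\ref{g-c}) (so $g_{c(t_k)}\to e^{2\pi i\theta}z$ on compacta away from $1$) with the uniform $K$-quasicircle bound of Lemma~\ref{distortion}, which forces Carath\'eodory convergence $D_k\to\Delta$ and hence drives the inner angle to $0$ or $2\pi$, contradicting $t_k\to t_0\in(0,2\pi)$ (the case $c(t_k)\to-1$ reduces to $c(t_k)\to 1$ by the M\"obius conjugation sending the third fixed point to $\infty$). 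Only after compactness of $\{c(t_k)\}$ is secured in this way does the paper invoke Remark~\ref{exp} to identify the realized combinatorics and Theorem B for uniqueness. If you want to keep your Blaschke-model formulation, you must supply a genuine compactness statement for the family $\{G_{t_n}\}$ (for instance by controlling the separation of its two circle critical points via Lemma~\ref{reduced}), rather than citing Lemma~\ref{com-F}, and you must also make the claim that $g^*$ realizes $f_{t_0}$ precise via the criterion of Remark~\ref{exp}; the appeal to uniform convergence $f_{t_n}\to f_{t_0}$ does not by itself propagate the isotopy condition in Definition~\ref{realize}.
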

\begin{proof}
Let $t_{k} \to t$ for some $0< t< 2\pi$. We first claim  that the
sequence $\{c(t_{k})\}$ is contained in some compact set of ${\Bbb
C} - \{0, 1, -1\}$. Let us prove the claim now.

 Note that for each $0< t_{k}< 2\pi$,
$P_{f_{t_{k}}} =
\partial \Delta$ does not contain the infinity, and that the
infinity is fixed by $f_{t_{k}}$.  Following the same steps in the
proof of Theorem A, we can construct a Blaschke product, say
$G_{k}$, to model $f_{t_{k}}$. Write
\begin{equation}
G_{k}(z) = \lambda_{k} z \frac{z - p_{k}}{1-
\overline{p_{k}}z}\frac{z-q_{k}}{1 - \overline{q_{k}}z},
\end{equation}
where $|\lambda_{k}| = 1$ is some constant and $|p_{k}| >1, |q_{k}|<
1$.  In particular, by the construction, $G_{k}'(1) = 0$ for all $k
\ge 0$.

Let $h_{k}: \partial \Delta \to \partial \Delta$ be the
quasi-symmetric homeomorphism  such that $h_{k}(1) = 1$ and
$$
G_{k}|\partial \Delta = h_{k} \circ R_{\theta} \circ h_{k}.
$$
Let $H_{k}: \Delta \to \Delta$ be the Douady-Earle extension of
$h_{k}$. By Lemma~\ref{reduced} and the conformal natural property
of Douady-Earle extension,  there exists a uniform $0< \delta < 1$
which depends only on $M$ such that
$$
\sup_{z \in
\Delta}\bigg{|}\frac{(H_{k}^{-1})_{\bar{z}}}{(H_{k}^{-1})_{z}}\bigg{|}
\le \delta.
$$
Define
$$
\widehat{G}_{k}(z) =
\begin{cases}
 G_{k}(z) & \text{ for $|z| \ge 1$}, \\
 H_{k} \circ R_{\theta} \circ H_{k}^{-1}(z)& \text{ for $z \in \Delta$}.
\end{cases}
$$

Now as in the proof of Theorem A, we can pull back the complex
structure of $H_{k}^{-1}$ by $G_{k}$ and get a
$\widehat{G}_{k}-$invariant complex structure $\mu_{k}$ on the whole
sphere. Let $\phi_{k}$ be the quasiconformal homeomorphism of the
sphere which solves the Beltrami equation given by $\mu_{k}$ such
that $\phi_{k}(1) = 1$, $\phi_{k}(\infty) = \infty$, and
$\phi_{k}(0) = H(0)$. Then $\phi_{k}^{-1} \circ \widehat{G}_{k}
\circ \phi_{k}$ is a Siegel rational map in $R_{\theta}^{geom}$
which $\emph{realizes}$ $f_{t_{k}}$ in the sense of
Lemma~\ref{rel-t}.  We thus have
$$
g_{c(t_{k})} = \phi_{k}^{-1}
\circ \widehat{G}_{k} \circ \phi_{k}.
$$

Since when $|c|$ is large enough, $g_{c}$ has an attracting fixed
point at the infinity, and when $|c|$ is small enough, $g_{c}$ has
an attracting fixed point at the origin,
 by passing to a convergent subsequence, we may assume that
either $c(t_{k}) \to 1$ or $c(t_{k}) \to -1$.

First let us assume that $c(t_{k}) \to 1$.  From (\ref{norml}) and a
direct calculation, it follows that $g_{c(t_{k})} \to e^{2 \pi i
\theta} z$ uniformly in any compact set of the complex plane which
does not contain $1$. Let $D_{k}$ denote the Siegel disk of
$g_{c(t_{k})}$. By Lemma~\ref{distortion},  $\partial D_{k}$ is a
$K-$quasi-circle for some uniform $K$.  Therefore,  $D_{k} \to
\Delta$ in the Carath\'{e}odory sense. This implies that as $k \to
\infty$, the inner angle between $1$ and $c(t_{k})$ either converges
to $0$ or converges to $2\pi$. This contradicts with the assumption
that $t_{k} \to t$ for some $0< t< 2\pi$.

Now let us assume that $c(t_{k}) \to -1$.  Let
$$
g_{c(t_{k})} = \frac{a_{k} z^{2} + e ^{2 \pi i \theta} z}{b_{k}z +
1}.
$$
From (\ref{w-d}), we  get
$$
a_{k} \to 0 \hbox{ and } b_{k} \to \infty
$$
as $c(t_{k}) \to -1$.  Let $p_{k}$ be the fixed point of
$g_{c(t_{k})}$ which is distinct from $0$ and the infinity. By a
direct calculation, we have
$$
p_{k} = \frac{1 - e^{2 \pi i \theta}}{a_{k} - b_{k}}.
$$
We thus have  $p_{k} \to 0$ as $c(t_{k}) \to -1$.

 Let $\psi_{k}$ be
the M\"{o}bius transformation which maps $0$ to $0$, $1$ to $1$, and
$p_{k}$ to the infinity.  It follows that
$$
\psi_{k}(z) = \frac{z(1 - p_{k})}{z - p_{k}}.
$$
Consider the map
$$
g_{\tilde{c}(t_{k})} = \psi_{k} \circ g_{c(t_{k})} \circ
\psi_{k}^{-1}
$$
where
$$
\tilde{c}(t_{k})  = \psi_{k}(c(t_{k})) = \frac{c(t_{k})(1 -
p_{k})}{c(t_{k}) - p_{k}}.
$$
Since $p_{k} \to 0$ as $c(t_{k}) \to -1$, it follows that
$$
\tilde{c}(t_{k}) \to 1
$$ as $c(t_{k}) \to -1$.  Since the conjugation map $\psi_{k}$
preserves the inner angle, that is, the inner angle between $1$ and
$\tilde{c}(t_{k})$ is the same as that between $1$ and
$c(t_{k})$(Compare with Fact 2 in $\S4.2.2$), from the conclusion we
just obtained above, it follows that the inner angle between $1$ and
$\tilde{c}(t_{k})$ either converges to $0$ or converges to $2\pi$
also. We get a contradiction again. The claim has been proved.

Now let us prove that the sequence $\{c(t_{k})\}$ is convergent. By
passing to a subsequence, we may assume that $c(t_{k}) \to c \in
{\Bbb C} - \{0, 1, -1\}$. Since $\partial D_{k}$ is a
$K-$quasi-circle for every $k \ge 1$, it follows that the boundary
of the Siegel disk of $g_{c}$ is a quasi-circle also,  and moreover,
the inner angle between $1$ and $c$ is equal to $$\lim_{k \to
\infty} t_{k} = t.$$ Since $g_{c_{k}} \to g_{c}$ uniformly in any
compact set of the complex plane, by Remark~\ref{exp}, it follows
that $g_{c}$ $\emph{realizes}$ $f_{t}$ in the sense of
Lemma~\ref{rel-t}. Since such $c$ must be unique by Theorem B, it
follows that any convergent subsequence of $c(t_{k})$ converges to
the same limit. The lemma follows.
\end{proof}

\begin{lemma}\label{dic-c}
$\lim_{t\to 0} c(t)= 1$ or $-1$.
\end{lemma}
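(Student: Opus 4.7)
The plan is to argue by contradiction. Suppose $c(t) \not\to 1$ and $c(t) \not\to -1$ as $t \to 0^{+}$; then there is a sequence $t_k \to 0^{+}$ and a subsequential limit $c_0 \in \widehat{\Bbb C}$ with $c_0 \notin \{1,-1\}$. I would show $c_0$ cannot be any of $0$, $\infty$, or a point of ${\Bbb C}-\{0,1,-1\}$, yielding a contradiction.

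First I would reproduce the compactness argument used in the proof of Lemma~\ref{c-curve}: build the Blaschke model $G_k$ for $f_{t_k}$ as in Theorem A, so that $g_{c(t_k)} = \phi_k^{-1}\circ\widehat{G}_k\circ\phi_k$. By Lemma~\ref{reduced} (applied after conjugating by $\sigma_{t_k}$) combined with the conformal naturality of the Douady--Earle extension, the Beltrami coefficient $\mu_k$ of $\phi_k$ has $\|\mu_k\|_\infty\le\delta<1$ with $\delta$ independent of $k$. Hence $\{\phi_k\}$ is a normal family of $K$-quasiconformal maps, and passing to a subsequence we may assume $c(t_k)\to c_0\in\widehat{\Bbb C}$. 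The exclusion $c_0=0$ is immediate since $0$ is the Siegel center of every $g_{c(t_k)}$, while by Lemma~4.1 the critical points are distinct from $0$ and in fact stay a definite distance away because they lie on a $K$-quasicircle surrounding $0$ with controlled modulus.

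Next I would rule out the generic case $c_0\in{\Bbb C}-\{0,1,-1\}$. If this holds, then by the explicit formula~(\ref{g-c}) we have $g_{c(t_k)}\to g_{c_0}$ locally uniformly. By Lemma~\ref{distortion} the Siegel disks $D_k=D_{g_{c(t_k)}}$ have boundaries that are uniformly $K$-quasicircles and pass through $1$ and $c(t_k)$; normalizing via the linearizing maps $\psi_k:D_k\to\Delta$ (which, after fixing $\psi_k(1)=1$, form a normal family of $K$-quasiconformal maps on the sphere), one extracts a limit $\psi_0:D_0\to\Delta$ where $D_0$ is the Siegel disk of $g_{c_0}$, and $\partial D_0$ is a $K$-quasicircle passing through both $1$ and $c_0$. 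The inner angle $A_{c(t_k)}=t_k$ between the two critical points is a continuous function of the boundary data under this normal family convergence, so the inner angle at $c_0$ on $\partial D_0$ is $\lim_k t_k=0$. Since $\partial D_0$ is a Jordan curve, a vanishing inner angle forces $1=c_0$, contradicting $c_0\ne 1$.

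Finally, I would dispose of $c_0=\infty$ by invoking Fact~4 of \S4.2.2: the involution $c\mapsto 1/c$ is realized dynamically by the M\"obius conjugation $z\mapsto z/c$, which sends $g_{c(t_k)}$ to a normalized Siegel rational map with critical points $1$ and $1/c(t_k)\to 0$, reducing $c_0=\infty$ to the already-excluded case $c_0=0$. Combining the three exclusions yields $c_0\in\{1,-1\}$ as claimed. The main obstacle is ensuring continuity of the inner angle under the Carath\'eodory/Hausdorff limit of the Siegel disks; this is exactly where the uniform $K$-quasicircle bound from Lemma~\ref{distortion} is indispensable, since it prevents the two marked points $1$ and $c(t_k)$ from being separated by a pinching of $\partial D_k$ and guarantees that the linearizing maps $\psi_k$ extend to equicontinuous homeomorphisms of $\overline{D_k}$.
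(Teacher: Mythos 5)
Your overall plan is the same as the paper's: after the compactness step, rule out $c_0\in\{0,\infty\}$ separately, and show that a finite subsequential limit $c_0\in{\Bbb C}-\{0,1,-1\}$ is impossible by passing to a limit of the linearizing maps $\psi_k:D_k\to\Delta$ and observing that the vanishing inner angle forces $\psi_0(c_0)=\psi_0(1)=1$, whence $c_0=1$. The generic finite case is handled in essentially the same way as in the paper. (The paper first conjugates by $z\mapsto 1/(z-p)$ to push the disks $D_k$ into a fixed compact subset of ${\Bbb C}$ before extracting the limit of the linearizers; you skip this, so you should justify that the $D_k$ are uniformly bounded before invoking normality.)

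The actual gap is the exclusion of $c_0=0$. You call it ``immediate'' because the critical points lie on a $K$-quasicircle ``surrounding $0$ with controlled modulus,'' but a $K$-quasicircle through $1$ that encloses $0$ can pass arbitrarily close to $0$ while $K$ stays bounded --- even a round circle does it (take the circle of radius $1-a$ centered at $a$ with $a\nearrow 1/2$) --- so the quasicircle constant alone does not keep $c(t_k)$ away from $0$. The argument the paper actually relies on is dynamical: the multiplier of $g_c$ at its fixed point $\infty$ equals $4c/\bigl(e^{2\pi i\theta}(1+c)^2\bigr)$, which tends to $0$ both as $c\to 0$ and as $c\to\infty$; an attracting fixed point at $\infty$ must attract a critical orbit, yet both critical points of $g_{c(t_k)}$ lie on $\partial D_{c(t_k)}$, which is contained in the Julia set. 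This one observation disposes of $c_0\in\{0,\infty\}$ simultaneously; your otherwise-elegant reduction of $c_0=\infty$ to $c_0=0$ via the involution $c\mapsto 1/c$ is then unnecessary and, as written, inherits the same gap.
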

\begin{proof}
Let us prove it by contradiction. Since when $|c|$ is large enough,
$g_{c}$ has an attracting fixed point at the infinity, and when
$|c|$ is small enough, $g_{c}$ has an attracting fixed point at the
origin,  we may assume that there is a sequence $t_{k} \to 0$ such
that $c(t_{k}) \to c$ for some $c \in {\Bbb C} - \{0, 1, -1\}$. Let
$D_{c}$ and $D_{c(t_{k})}$ denote respectively the Siegel disks of
$g_{c}$ and $g_{c(t_{k})}$, which are centered at the origin.  Since
every $\partial D_{c(t_{k})}$ is a $K-$quasi-circle passing through
$1$ and $c(t_{k})$ and $c(t_{k}) \to c$, it follows that there is a
$\delta > 0$ such that $$B_{\delta}(0) \subset D_{c(t_{k})}$$ for
all $t_{k}$.  Let $p \ne 0$ be such that $g_{c}(p) = 0$. Then there
is a  $r > 0$ such that
$$
B_{r}(p) \cap D_{c} = B_{r}(p) \cap D_{c(t_{k})} = \emptyset
$$
for all $t_{k}$. Let $\phi(z) = 1/(z - p)$. Set
$$
T_{k}(z) = \phi \circ g_{c(t_{k})} \circ \phi^{-1}  \hbox{ and }
T(z) = \phi \circ g_{c} \circ \phi^{-1}.
$$
Denote the corresponding Siegel disks of $T_{k}$ and $T$  by
$D_{T_{k}}$ and $D_{T}$, respectively. Clearly, as $k \to \infty$,
$T_{k} \to T$ uniformly with respect to the spherical metric, and
moreover, there is a compact set $E$ of the complex plane such that
$D_{T} \subset E$ and $D_{T_{k}} \subset E$ for all $k \ge 1$. Since
every $\partial D_{c(t_{k})}$ is a $K-$quasi-circle for some uniform
$1< K < \infty$ by Lemma~\ref{distortion},  it follows that every
$\partial D_{T_{k}}$ is a $K-$quasi-circle.  Let $h_{k}:\Delta \to
D_{T{k}}$  be the univalent map such that $h_{k}'(0) > 0$ and
$h_{k}^{-1} \circ T_{k} \circ h_{k} = R_{\theta}$.  Since $\partial
D_{T{k}}$ is a uniform $K-$quasi-circle, by passing to a convergent
subsequence, we may assume that $h_{k}$ uniformly converges to $h$
on $\overline{\Delta}$ such that $h^{-1} \circ T \circ h =
R_{\theta}$. This implies that $\partial D_{T}$ is a quasi-circle
also and  passes through both the two critical points of $T$. In
particular, the $\emph{inner angle}$ of the two critical points of
$T$  must be $0$ or $2\pi$, and therefore, the two critical points
of $T$ coincide.  It follows that $1 = c$. This is a contradiction
and the lemma follows.

\end{proof}

\begin{lemma}\label{two-ends}
$\{\lim_{t\to 0} c(t), \lim_{t\to 2 \pi} c(t)\} = \{1, -1\}$.
\end{lemma}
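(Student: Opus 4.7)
The plan is to use Fact~4, which converts the combinatorial symmetry $t \leftrightarrow 2\pi - t$ into an explicit M\"obius conjugation between $g_{c(t)}$ and $g_{c(2\pi - t)}$, and thereby reduce the statement to a direct $0/0$ limit computation via Taylor expansion. By Lemma~\ref{dic-c}, $\lim_{t \to 0} c(t) \in \{1, -1\}$; the formula derived below will in particular force $\lim_{t \to 2\pi^{-}} c(t)$ to exist and to take the opposite value, giving the lemma.

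By Fact~4, $g_{c(2\pi - t)} = \phi_{t} \circ g_{c(t)} \circ \phi_{t}^{-1}$ where $\phi_{t}$ is the M\"obius map determined by $\phi_{t}(0) = 0$, $\phi_{t}(c(t)) = 1$, and $\phi_{t}(p(t)) = \infty$, with $p(t)$ denoting the third fixed point of $g_{c(t)}$. Since $\phi_{t}$ sends the critical set $\{1, c(t)\}$ of $g_{c(t)}$ to the critical set $\{1, c(2\pi - t)\}$ of $g_{c(2\pi - t)}$ and already maps $c(t)$ to $1$, it must map the remaining critical point $1$ to $c(2\pi - t)$. An elementary calculation of the M\"obius map then gives
\[
\phi_{t}(z) = \frac{(c(t) - p(t))\, z}{c(t)(z - p(t))}, \qquad c(2\pi - t) = \phi_{t}(1) = \frac{c(t) - p(t)}{c(t)(1 - p(t))}.
\]

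To exploit this, I use the normalized form~(\ref{norml}), for which $a(t) = -e^{2\pi i\theta}(1 + c(t))/(2c(t))$ and $b(t) = -2/(1 + c(t))$, so that the third fixed point is $p(t) = (1 - e^{2\pi i\theta})/(a(t) - b(t))$. If $\lim_{t \to 0} c(t) = 1$, I write $c(t) = 1 + \epsilon(t)$ with $\epsilon(t) \to 0$; a routine Taylor expansion yields $a(t) - b(t) = (1 - e^{2\pi i\theta})(1 - \epsilon(t)/2) + O(\epsilon(t)^{2})$, hence $p(t) = 1 + \epsilon(t)/2 + O(\epsilon(t)^{2})$, and substitution into the displayed formula gives
\[
c(2\pi - t) = \frac{\epsilon(t)/2 + O(\epsilon(t)^{2})}{-\epsilon(t)/2 + O(\epsilon(t)^{2})} \longrightarrow -1.
\]
Conversely, if $\lim_{t \to 0} c(t) = -1$, writing $c(t) = -1 + \epsilon(t)$ yields $p(t) = (1 - e^{2\pi i\theta})\epsilon(t)/2 + O(\epsilon(t)^{3}) \to 0$, and the same substitution produces $c(2\pi - t) \to 1$. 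In either case $\{\lim_{t \to 0} c(t),\, \lim_{t \to 2\pi} c(t)\} = \{1, -1\}$, as required. The only (mild) obstacle is careful bookkeeping in the two Taylor expansions; all the conceptual content is packaged into Fact~4, which upgrades the topological symmetry between $f_{t}$ and $f_{2\pi - t}$ into an explicit M\"obius conjugation of the realizing rational maps.
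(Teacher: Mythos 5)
Your proof is correct and rests on the same underlying Möbius computation as the paper's; the difference is purely in how the formula is packaged. The paper composes Facts~2 and~3 (rather than invoking Fact~4 directly) to obtain the $t$-independent closed form $c(2\pi - t) = \dfrac{-\lambda c(t) + 2 - \lambda}{(\lambda - 2)c(t) + \lambda}$ with $\lambda = e^{2\pi i\theta}$, whose values at $c(t) = 1$ and $c(t) = -1$ are the non-degenerate numbers $-1$ and $1$; the conclusion is then read off by substitution. You instead write $c(2\pi - t) = \phi_t(1) = \dfrac{c(t) - p(t)}{c(t)(1 - p(t))}$ with $p(t)$ the third fixed point, and since $p(t) \to 1$ as $c(t) \to 1$ this is a $0/0$ form that you resolve by Taylor expansion. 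Your expansions check out (including the slightly surprising $O(\epsilon^3)$ remainder for $p$ near $c = -1$, which does come from the vanishing of the quadratic coefficient). It is worth noticing, though, that setting $N = 4c - \lambda(1+c)^2$ gives $c - p = c(c-1)(2 - \lambda - \lambda c)/N$ and $c(1 - p) = c(c-1)\bigl((\lambda - 2)c + \lambda\bigr)/N$, so the $(c-1)$ factors cancel and your ratio simplifies to precisely the paper's Möbius formula; performing that cancellation first would remove the indeterminate form and the Taylor-expansion bookkeeping entirely. Both approaches also correctly invoke Lemma~\ref{dic-c} for the dichotomy $\lim_{t\to 0}c(t) \in \{1, -1\}$, so you lose nothing in rigor --- only a bit of economy.
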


\begin{proof}
For $0< t <2\pi$, let $\tilde{c}(t)$ be the critical parameter such
that $g_{\tilde{c}(t)}$ realizes $\tilde{f}_{t}$ in the sense of
Lemma~\ref{rel-t}. Let $p_{t}$ be the fixed point of $g_{c(t)}$
which is distinct from $0$ and the infinity. From Fact 2 in
$\S4.2.2$, it follows that the M\"{o}bius transformation
$$
\phi_{t}(z)  =  \frac{(1-p_{t})z}{z - p_{t}}
$$
conjugates $g_{t}$ to $g_{\tilde{c}(t)}$. By a direct calculation,
we get
$$
\tilde{c}(t) = \phi_{t}(c(t)) = \frac{(e^{2 \pi  i\theta} -2)c(t) +
e^{2 \pi i\theta}}{- e^{2 \pi i\theta} c(t) + 2 -e^{2 \pi i\theta}}.
$$
From Fact 3 in $\S4.2.2$, it follows that the M\"{o}bius
transformation
$$
\psi_{t}(z) =  z / \tilde{c}(t)
$$
conjugates $g_{\tilde{c}(t)}$ to $g_{c(2 \pi -t)}$. In particular,
\begin{equation}\label{inv}
c(2\pi - t) =  1/\tilde{c}(t).
\end{equation}
By Lemma~\ref{dic-c}, either $\lim_{t \to 0 }c(t) = 1$,  or $\lim_{t
\to 0 }c(t) = -1$. If $\lim_{t \to 0 }c(t) = 1$, then $$\lim_{t \to
2 \pi}c(t) = \lim_{t \to 0}c(2\pi - t) = \lim_{t \to 0}
1/\tilde{c}(t) = \lim_{t \to 0}\frac{-e^{2 \pi i \theta} c(t) + 2
-e^{2 \pi i \theta}}{ (e^{2 \pi i \theta} -2)c(t) + e^{2 \pi i
\theta}}  = -1.$$

If $\lim_{t \to 0 }c(t) = -1$, then $$\lim_{t \to 2 \pi}c(t) =
\lim_{t \to 0}c(2\pi - t) = \lim_{t \to 0} 1/\tilde{c}(t) = \lim_{t
\to 0}\frac{-e^{2 \pi i \theta} c(t) + 2 -e^{2 \pi i \theta}}{ (e^{2
\pi i \theta} -2)c(t) + e^{2 \pi i \theta}}  = 1.$$
Lemma~\ref{two-ends} follows.
\end{proof}

From Lemma~\ref{c-curve} and Lemma~\ref{two-ends}, it follows that
$c(t), 0 < t< 2\pi$ is a continuous curve segment which does not
intersect with itself and which connects $1$ and $-1$. By using the
same argument, the same conclusion can be derived for the curve
$\tilde{c}(t), 0< t < 2 \pi$.  Let  $\gamma = \{c(t)\big{|}0< t <
2\pi\}$ and $\gamma' = \{\tilde{c}(t)\big{|}0< t< 2\pi\} =
\{1/c(t)\big{|}0< t < 2 \pi\}$. It is clear that except the two end
points, $\gamma$ does not intersect $\gamma'$(This is simply because
for $0< t, t' <2 \pi$, $g_{c(t)}$ and $g_{\tilde{c}(t')}$
$\emph{realize}$ different topological models, which are indicated
by Figure 15 and Figure 16, respectively).  It follows that $$\xi =
\gamma \cup \gamma' \cup \{1, -1\}$$ is a simple closed curve. From
(\ref{inv}),  the map $c \to 1/c$ preserves the curve $\xi$ but
reverses its orientation.  It follows that $\xi$ separates $0$ and
the infinity. We summarize these as follows:
\begin{lemma}\label{par-c}
The curve $\xi= \gamma \cup \gamma' \cup \{1, -1\}$ is a simple
closed curve which separates $0$ and the infinity. Moreover, $\xi$
is invariant under the map $z \to 1/z$.
\end{lemma}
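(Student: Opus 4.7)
The plan is to assemble Lemma~\ref{par-c} from the three ingredients already in hand: Lemma~\ref{c-curve} (continuity of $c$), Lemma~\ref{two-ends} (limits at the endpoints), and equation~(\ref{inv}). First, Lemma~\ref{c-curve} gives continuity of $t\mapsto c(t)$ on $(0,2\pi)$, and injectivity is immediate since $g_{c(t)}$ realizes $f_t$ in the sense of Lemma~\ref{rel-t}, so the inner angle between its two critical points on $\partial D_{c(t)}$ recovers $t$; combined with Lemma~\ref{two-ends} this exhibits $\gamma$ as an embedded arc in $\mathbb{C}\setminus\{0\}$ joining $1$ and $-1$. Applying the identical construction with $\tilde f_t$ in place of $f_t$ yields the analogous statement for $\gamma'$.

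Second, I would verify that $\gamma\cap\gamma'\subset\{1,-1\}$. Suppose $c_0=c(t)=\tilde c(t')$ for some $t,t'\in(0,2\pi)$. Then the single rational map $g_{c_0}$ simultaneously realizes $f_t$ and $\tilde f_{t'}$ in the sense of Lemma~\ref{rel-t}. By Remark~\ref{exp}, the $f_t$ realization identifies the bounded component of $g_{c_0}^{-1}(S^{2}-\overline{D_{c_0}})$ as the region whose closure meets $\partial D_{c_0}$ along the arc from $1$ to $c_0$ taken \emph{anticlockwise}, whereas the $\tilde f_{t'}$ realization forces the same component to meet the complementary \emph{clockwise} arc. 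A bounded component is uniquely determined by $g_{c_0}$, so these two identifications are incompatible. Hence $\xi=\gamma\cup\gamma'\cup\{1,-1\}$ is a Jordan curve in $\mathbb{C}\setminus\{0\}$ avoiding $\infty$.

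Third, formula~(\ref{inv}) reads $c(2\pi-t)=1/\tilde c(t)$, so the M\"{o}bius involution $\iota(z)=1/z$ sends $\tilde c(t)\in\gamma'$ to $c(2\pi-t)\in\gamma$; in particular $\iota(\gamma')=\gamma$ and, by symmetry, $\iota(\gamma)=\gamma'$. Since $\iota$ fixes $\pm 1$, it preserves $\xi$ setwise. Orienting $\xi$ by increasing $t$ along $\gamma$ and then along $\gamma'$, the reparametrization $t\mapsto 2\pi-t$ shows that $\iota$ reverses this orientation. Now $\iota$ is a M\"{o}bius map, hence an orientation-preserving self-homeomorphism of $S^{2}$; an orientation-preserving homeomorphism that fixes a Jordan curve setwise while reversing its orientation must interchange the two connected components of the complement. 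Since $\iota(0)=\infty$ and $\{0,\infty\}\cap\xi=\emptyset$, the points $0$ and $\infty$ lie in opposite components of $\widehat{\mathbb{C}}-\xi$, which is exactly the claim that $\xi$ separates them.

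The single nonformal point in the argument is the second step, where one must read off from Remark~\ref{exp} that the two arcs of $\partial D_{c_0}$ distinguished by the $f_t$ and $\tilde f_{t'}$ models genuinely lie on opposite sides of $\{1,c_0\}$ and hence cannot both bound the same component of $g_{c_0}^{-1}(S^{2}-\overline{D_{c_0}})$; once this combinatorial incompatibility is confirmed, everything else reduces to the continuity, endpoint, and symmetry data already proved.
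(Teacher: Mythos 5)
Your proof follows the paper's route exactly: assemble $\xi$ from the embedded arcs $\gamma$ and $\gamma'$, verify disjointness away from $\pm 1$ via the incompatible topological models ($f_{t}$ versus $\tilde{f}_{t'}$, unpacked through Remark~\ref{exp}), and deduce separation from (\ref{inv}) together with the orientation reversal of $z\mapsto 1/z$ on $\xi$. The only difference is that you spell out details the paper states tersely --- injectivity of $t\mapsto c(t)$ via the inner-angle invariant, and the topological fact that an orientation-preserving self-homeomorphism of $S^{2}$ stabilizing a Jordan curve while reversing its boundary orientation must interchange the two complementary components --- both of which are correct and make the argument fully explicit.
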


\begin{figure}
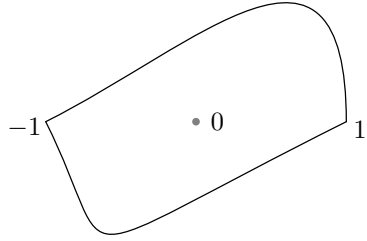

\bigskip
\begin{center}
\centertexdraw { \drawdim cm \linewd 0.02 \move(0 1)

\move(-2 -1) \clvec(0 0)(2 2)(2 -1)

\move(-2 -1) \clvec(-1 -3)(-2 -3)(2 -1) \move(-2.5 -1.2)
\htext{$-1$}

\move(2.1 -1.2) \htext{$1$}

\move(0 -1) \fcir f:0.5 r:0.05 \move(0.2 -1.1) \htext{$0$}

 \move(2 -2.5)
 }
\end{center}
\vspace{0.2cm} \caption{Critical parameters determined by $f_{t}$
and $\tilde{f}_{t}$ for $0< t < 2 \pi$}
\end{figure}

\subsection{Quadratic Siegel Rational Maps with One Finite Critical Orbit }
In this section, we consider all those quadratic rational maps which
have a fixed Siegel disk of rotation number $\theta$ and a critical
point with finite forward orbit. The aim of this section is to show
that such Siegel rational maps belong to $R_{\theta}^{geom}$. That
is, for any such map, the another critical point must lie in the
boundary of the Siegel disk which is a quasi-circle. Before we state
the result, let us introduce some notations first.

Let $0\le m < n$ be integers and $t \in {\Bbb C}$.  Let us define
$Z_{m, n}^{s}$ to be the set of all the quadratic rational maps $g$
such that
\begin{itemize}
\item[1.]  $g'(1) = g'(c) = 0$,
\item[2.] $g^{m}(c)= g^{n}(c)$,
\item[3.] $g$ fixes $0$ and the infinity,
\item[4.] $g'(0) = s$.
\end{itemize}
Recall that $\lambda = e^{2 \pi  i\theta}$. Define $R_{m,n}^{\theta}
= Z_{m, n}^{\lambda} \cap R_{\theta}^{geom}$. The main result of
this section is as follows.
\begin{lemma}\label{main-lem}
$Z_{m,n}^{\lambda} = R_{m,n}^{\theta}$.
\end{lemma}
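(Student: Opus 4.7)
The plan. The inclusion $R_{m,n}^{\theta}\subset Z_{m,n}^{\lambda}$ is immediate from the definition. For the reverse, fix $g\in Z_{m,n}^{\lambda}$, and aim to show $g\in R_\theta^{geom}$. Since $g'(0)=\lambda=e^{2\pi i\theta}$ and $\theta$ is of bounded type, Siegel's theorem gives a Siegel disk $D_g$ at $0$ with rotation number $\theta$. By Remark~\ref{GS}, $\partial D_g$ contains a critical point of $g$; the critical point $c$ has finite forward orbit and $\partial D_g$ has no periodic points under the irrational rotation $g|\partial D_g$, so $c\notin\partial D_g$ and hence $1\in\partial D_g$. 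The forward orbit of $1$ then lies on $\partial D_g\subset\overline{D_g}$, and $P_g-\overline{D_g}$ reduces to the finite set $\{g^k(c):0\le k\le n\}\cup\{\infty\}$.

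Next I would construct a topological model $f\in R_\theta^{top}$ for $g$ by collapsing $D_g$ through its prime-end compactification and gluing in $(\Delta,z\mapsto\lambda z)$ using the Siegel linearization $\phi:D_g\to\Delta$. The resulting branched covering $f:S^2\to S^2$ satisfies $f|\Delta(z)=\lambda z$, has $1\in\partial\Delta\cap\Omega_f$, a second critical point $\tilde c$ outside $\overline{\Delta}$ with $f^m(\tilde c)=f^n(\tilde c)$, and is combinatorially equivalent to $g$ on $S^2-\overline{D_g}$. By McMullen's theorem \cite{McM1}, $g$ carries no Thurston obstructions lying in $\widehat{\Bbb C}-\overline{D_g}$; since the collapse sets up a bijection between homotopy classes of simple closed curves in $S^2-P_f$ outside $\Delta$ and those in $\widehat{\Bbb C}-P_g$ outside $D_g$, this transfers to the non-existence of Thurston obstructions for $f$ outside $\Delta$. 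Theorem A then provides a Siegel rational map $\tilde g\in R_\theta^{geom}$ realizing $f$, which after M\"obius normalization fixing $0$, $\infty$ and the critical point $1$ belongs to $R_{m,n}^\theta\subset Z_{m,n}^\lambda$.

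The final step is to match $g$ with $\tilde g$. Since $\tilde g$ realizes $f$ via a pair of homeomorphisms that are holomorphic on the rotation disk, and $g$ is obtained from $f$ by reversing the prime-end collapse (with the Siegel linearization providing a holomorphic identification of the disks), a pullback argument produces a $g$-invariant Beltrami differential $\mu$ on $\widehat{\Bbb C}$: copy the standard complex structure on $D_{\tilde g}$ through the combinatorial conjugacy to $D_g$, pull back through grand orbits of $g$, and set $\mu=0$ on the Julia set. Bounded-type rotation together with the Herman--Swiatek and zero-measure (Theorem B) inputs control $\|\mu\|_{\infty}<1$, so Ahlfors--Bers yields a quasiconformal map $\Phi$ with $\Phi\circ g\circ\Phi^{-1}\in R_\theta^{geom}$ realizing $f$. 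Theorem B's uniqueness then forces $\Phi\circ g\circ\Phi^{-1}=\tilde g$ up to M\"obius, and since $\Phi$ carries $\partial D_g$ onto the quasi-circle $\partial D_{\tilde g}$, $\partial D_g$ is itself a quasi-circle; hence $g\in R_\theta^{geom}\cap Z_{m,n}^\lambda=R_{m,n}^\theta$. The principal obstacle is the construction of the invariant $\mu$ without prior knowledge that $\partial D_g$ is a Jordan quasi-circle: the potentially wild prime-end boundary requires careful pullback near $\partial D_g$, and the bound $\|\mu\|_{\infty}<1$ hinges on the Herman--Swiatek quasi-symmetric conjugacy controlling the circle dynamics induced by $g|\partial D_g$.
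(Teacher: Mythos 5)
Your proposal takes a genuinely different route from the paper, and it contains a real gap. The paper proves the lemma by a pure \emph{counting} argument, never analyzing the Siegel boundary of an arbitrary $g\in Z_{m,n}^{\lambda}$: it chains $|Z_{m,n}^{\lambda}|\le |Z_{m,n}^{s}|$ (a Rouch\'e perturbation of the multiplier into the disk, Lemma~\ref{hol-two}), $|Z_{m,n}^{s}| = 2|Q_{m,n}|$ (polynomial-like restriction and the Mandelbrot parameterization, Lemma~\ref{Count-M}), and $2|Q_{m,n}|\le |R_{m,n}^{\theta}|$ (mating each $q\in Q_{m,n}$ with $z^{2}+\lambda z$ to produce pairwise combinatorially distinct unobstructed models, realized by Theorem~A and counted with Lemma~\ref{Count-R}), and since $R_{m,n}^{\theta}\subset Z_{m,n}^{\lambda}$ are finite, equality is forced. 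You instead attack a fixed $g\in Z_{m,n}^{\lambda}$ directly and try to promote a combinatorial equivalence to a quasiconformal conjugacy by a Beltrami pullback.

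The gap in your route is exactly the one you flag at the end, and it is not a technicality but a circularity. To obtain a $g$-invariant Beltrami differential with $\|\mu\|_{\infty}<1$, you need to transport the complex structure from $D_{\tilde g}$ to $D_{g}$ in a quasiconformally controlled way across the grand orbit; but the transport is carried by a \emph{combinatorial} equivalence (a homeomorphism up to isotopy rel a finite set), which near $\partial D_{g}$ gives no dilatation bound whatsoever when $\partial D_{g}$ may be wild or even non-locally-connected. The conclusion of the lemma --- that $\partial D_{g}$ is a quasi-circle --- is precisely what would make your pullback tame, so your argument presupposes what it seeks to prove. A subsidiary soft spot: your step ``$c\notin\partial D_{g}$ because $\partial D_{g}$ has no periodic points under the irrational rotation $g|\partial D_{g}$'' implicitly treats $g|\partial D_{g}$ as a circle rotation, which again assumes Jordan-curve structure on $\partial D_{g}$. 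The paper's counting argument sidesteps both issues cleanly by applying Theorem~A only to explicit mating models, whose boundaries are quasi-circles by construction, and then forcing equality of finite sets.
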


Before the proof of Lemma~\ref{main-lem}, let us prove a few lemmas.
\begin{lemma}\label{hol-two}
Let  $ 0\le m < n$ be two integers. Then for any $\epsilon>0$, there
is some $0< |s| < 1$, such that for any quadratic rational map
$g_{c} \in Z_{m,n}^{\lambda}$, there is a quadratic rational map $g
\in Z_{m,n}^{s}$ such that $d(g, g_{c}) < \epsilon$.
\end{lemma}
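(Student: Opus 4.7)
My plan is to view $Z_{m,n}^{s}$ as the zero set of an algebraic equation in the parameter $c$ whose coefficients depend polynomially on $s$, and then invoke continuous dependence of roots to pass from the Siegel value $s = \lambda$ to nearby values with $|s| < 1$.

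By the normalization of Section 4.1, a quadratic rational map $g$ satisfying conditions (1), (3), (4) is uniquely determined by the pair $(s, c)$, where $s = g'(0)$ and $c \neq 1$ is the second critical point. Explicitly $g_{s,c}(z) = (a z^{2} + s z)/(b z + 1)$ with $a = -s(1+c)/(2c)$ and $b = -2/(1+c)$, so the iterate $g_{s,c}^{k}(c)$ is a rational function of $(s,c)$ for every $k \geq 0$. Clearing denominators, the critical orbit relation $g^{m}(c) = g^{n}(c)$ becomes a polynomial equation $P(s,c) = 0$ whose coefficients are polynomial in $s$, and $Z_{m,n}^{s}$ is precisely the set of roots $c \in \widehat{\Bbb C} - \{0,1,-1\}$ of $P(s,\cdot)$.

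Now fix $g_{c_{0}} \in Z_{m,n}^{\lambda}$; then $c_{0}$ is a root of the polynomial $P(\lambda,\cdot)$ lying in $\widehat{\Bbb C} - \{0,1,-1\}$. By the argument principle, for every sufficiently small disk $B_{\delta}(c_{0})$ disjoint from $\{0,1,-1\}$ there exists a neighborhood $U$ of $\lambda$ in $\Bbb C$ such that $P(s,\cdot)$ has a root $c(s) \in B_{\delta}(c_{0})$ for every $s \in U$, giving $g_{s,c(s)} \in Z_{m,n}^{s}$. Moreover, on the set obtained by bounding $c$ away from $\{0,1,-1\}$ (allowing $c = \infty$, at which $g_{s,c}$ degenerates continuously to a polynomial as already observed in the proof of Lemma~4.1), the map $(s,c) \mapsto g_{s,c}$ is uniformly continuous in the spherical sup-metric $d$. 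Shrinking $U$ and $\delta$, we can arrange $d(g_{s,c(s)}, g_{c_{0}}) < \epsilon$. Since $Z_{m,n}^{\lambda}$ is finite (the zero set of a nontrivial polynomial in $c$), we repeat this for each $g_{c_{0}} \in Z_{m,n}^{\lambda}$ and intersect the finitely many neighborhoods $U$ with the punctured half-disk $\{s : 0 < |s| < 1\}$; because $|\lambda| = 1$, this intersection is nonempty, and any $s$ in it satisfies the conclusion simultaneously for every element of $Z_{m,n}^{\lambda}$.

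The main technical point I expect to need care with is ruling out that a root of $P(\lambda,\cdot)$ gets pushed into the excluded set $\{0,1,-1\}$ under small perturbation of $s$, or equivalently that the polynomial $P(s,c)$ drops in degree as $s \to \lambda$. Both issues are sidestepped by tracking each root within a fixed small disk around $c_{0}$ that is disjoint from the excluded set, rather than attempting any bijection between $Z_{m,n}^{s}$ and $Z_{m,n}^{\lambda}$. The only statement used is the local one: nearby roots exist, counted with multiplicity, in any prescribed disk.
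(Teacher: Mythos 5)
Your approach is recognisably the same strategy as the paper's Case~1: express the critical orbit relation as a zero locus of a function holomorphic in $(s,c)$ and invoke Rouch\'e (argument principle) to continue the root from $s = \lambda$ to nearby $s$ with $|s| < 1$. The $c = \infty$ case is also treated similarly in spirit.

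However, the reduction ``clearing denominators, $g^m(c) = g^n(c)$ becomes a polynomial equation $P(s,c) = 0$'' has a genuine gap. The set $Z_{m,n}^{\lambda}$ contains maps $g_{c_0}$ for which the critical orbit passes through $\infty$ at some step $l \le m$, i.e.\ $b(\lambda, c_0) g_{c_0}^{\,l-1}(c_0) + 1 = 0$; since $\infty$ is fixed, such a map satisfies $g_{c_0}^m(c_0) = g_{c_0}^n(c_0) = \infty$ and so belongs to $Z_{m,n}^\lambda$. But at such a $c_0$ the rational expression $F_{s,c}^m(c) - F_{s,c}^n(c)$ has a \emph{pole} (in $c$) rather than a zero, so $c_0$ is in general \emph{not} a root of your cleared-denominator polynomial $P(\lambda,\cdot)$, and the argument principle applied to $P$ sees nothing there. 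This is exactly the content of Case~2 in the paper's proof, where instead of $F_{s,t}^n(t) - F_{s,t}^m(t)$ one perturbs the \emph{auxiliary} holomorphic function $b(s,t) F_{s,t}^{\,l-1}(t) + 1$, whose zero at $(\lambda, c_0)$ correctly encodes the condition and moves continuously with $s$. Your proof as written silently assumes the forward orbit of $c$ stays finite up to step $n$, which is strictly smaller than $Z_{m,n}^{\lambda}$. A secondary (fixable) wrinkle is the converse direction: roots of $P(s,\cdot)$ near $c_0$ must be checked to be \emph{genuine} solutions rather than spurious zeros introduced by the denominator-clearing; your ``track the root inside a small disk around $c_0$'' remark handles this for Case~1, but is moot once you acknowledge the missing Case~2 requires a different function entirely.
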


\begin{proof}
  For $s \ne 0$, and $t \ne 0, 1, -1$, consider the function
\begin{equation}\label{FST}
F_{s, t}(z) = \frac{a(s,t)z^{2} + s z}{b(s, t)z + 1}
\end{equation}
where $a(s, t) = -s(1+ t)/2t$ and $b(s,t) = -2/(1+ t)$. It follows
that $F_{s,t}'(0) =s$, and $F_{s, t}'(1) = F_{s, t}'(t) = 0$. There
are three cases.

Case 1. $c \ne \infty$, and $g_{c}^{m}(c) = g^{n}_{c}(c) \ne
\infty$.  It is clear that $F_{\lambda, c}(z) = g_{c}(z)$. It
follows that there is an open neighborhood of $\lambda$, say $U$,
and an open neighborhood of $c$, say $V$, such that both the
functions $F_{s, t}^{n}(t)$ and $ F_{s, t}^{m}(t)$ are holomorphic
for $(s, t) \in U \times V$. In particular, by taking $V$ smaller,
we can assume that as $s \to \lambda$, $F_{s, t}^{n}(t) \to
F_{\lambda, t}^{n}(t)$ and $F_{s, t}^{m}(t) \to F_{\lambda,
t}^{m}(t)$ uniformly for $t \in V$. Since $F_{\lambda,
t}^{n}(t)-F_{\lambda, t}^{m}(t)$ has a zero at $c \in V$, it follows
from Rouch\'{e} theorem that for every small $r > 0$, there is a
$\delta > 0$, such that for every $s\in B_{\delta}(\lambda)$, there
is a point $c_{s} \in B_{r}(c)$ such that $F_{s, c_{s}}^{n}(c_{s}) -
F_{s, c_{s}}^{m}(c_{s}) = 0$. Since $|\lambda| = 1$,  for any $s$
close to $\lambda$ with $|s| < 1$, one can take $c_{s}$ close to $c$
such that $F_{s, c_{s}}^{n}(c_{s}) - F_{s, c_{s}}^{m}(c_{s}) = 0$.
The lemma in this case follows by taking  $g(z) = F_{s, c_{s}}(z)$.

Case 2. $c \ne \infty$, $g_{c}^{l}(c) = \infty$ for some $1 \le l
\le m$. We may assume that  $l$ is the least positive integer such
that $g_{c}^{l}(c) = \infty$. From (\ref{FST}), it follows that
$$b(\lambda, c)g_{c}^{l-1}(c) + 1  = 0.$$ Then  instead of
considering  the function $F_{s, t}^{n}(t) -  F_{s, t}^{m}(t)$, this
time we consider the function $b(s, t)F_{s, t}^{l-1}(t) + 1$. The
lemma in this case then follows by  using the same argument as in
the proof of the first case. The reader shall have no difficulty to
supply the details.

Case 3. $c = \infty$. In this case, just take $g = sg_{c}$ where $s$
is any number close enough to $\lambda$ with $|s| < 1$.

\end{proof}

\begin{lemma}\label{tem}
Let $0< |s| < 1$. Then every   $f \in Z_{m,n}^{s}$ has exactly three
distinct fixed points $0, \infty$ and some complex value $p$.
\end{lemma}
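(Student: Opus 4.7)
The plan is to argue by contradiction: suppose $f\in Z_{m,n}^{s}$ has fewer than three distinct fixed points. Since $f$ is a quadratic rational map, its set of fixed points has cardinality three counted with multiplicity. The multiplier of $f$ at $0$ equals $s$, and because $0<|s|<1$ we have $s\neq 1$, so $0$ is always a simple fixed point. Thus the defect must occur at $\infty$, which forces $f$ to have multiplier $1$ at $\infty$. In the parametrization $a=-s(1+c)/(2c)$, $b=-2/(1+c)$ from the preceding analysis, a short calculation shows that the multiplier at $\infty$ equals $b/a=4c/(s(1+c)^{2})$, so the assumption translates into the algebraic constraint $s(1+c)^{2}=4c$, and expanding $f$ in the coordinate $w=1/z$ exhibits $\infty$ as a parabolic fixed point with a single attracting petal.

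Next I would use the standard fact that every attracting or parabolic cycle of Fatou components contains a critical point in its immediate basin. The attracting basin of $0$ (multiplier $s\neq 0$) contains a critical point, and the immediate parabolic basin at $\infty$ contains another. Since these basins are disjoint and $f$ has only the two critical points $1$ and $c$, one lies in each. If $c$ lay in the parabolic basin, its forward orbit would converge to $\infty$ through an infinite sequence of Fatou points (as $\infty$ lies in the Julia set), contradicting $f^{m}(c)=f^{n}(c)$. Hence $1$ occupies the parabolic basin and $c$ occupies the immediate attracting basin of $0$. The Fatou-Shishikura inequality ($2d-2=2$ in degree two) is already saturated by the attracting cycle $\{0\}$ and the parabolic cycle $\{\infty\}$; consequently $f$ admits no other non-repelling cycle, so the finite orbit of $c$ cannot terminate on a super-attracting periodic orbit, on any repelling cycle (since $c$ lies in the Fatou set), nor on the Julia point $\infty$. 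The only remaining possibility is $f^{j}(c)=0$ for some $j\geq 1$.

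The final and hardest step is to derive a contradiction from the three simultaneous conditions $s(1+c)^{2}=4c$, $|s|<1$, and $f^{j}(c)=0$. My approach would be to pass to the Blaschke product model of the immediate basin of $0$: the basin is simply connected and contains a single critical point, so $f$ restricted to it is conformally conjugate to a degree-two Blaschke product $B(w)=w(w-\alpha)/(1-\bar\alpha w)$ with $B'(0)=-\alpha$ and $|\alpha|=|s|$. Under this conjugacy, $f^{j}(c)=0$ becomes $B^{j}(w_{c})=0$ where $w_{c}$ is the interior critical point of $B$. One then verifies that combining this Blaschke constraint with the rigid parametric relation $s=4c/(1+c)^{2}$ forces $|s|\geq 1$; the low-$j$ cases can be handled by direct elimination (for instance, $j=1$ and $j=2$ force $c=1$, while $j=3$ forces $c=-1/4$ and $s=-16/9$), and the general case would follow by an inductive Schwarz-lemma style estimate showing that the constraint $w_{c}\in B^{-j}(0)\cap\mathbb{D}$ together with the algebraic link $s=4c/(1+c)^{2}$ is incompatible with $|\alpha|=|s|<1$. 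This incompatibility between the attracting Blaschke geometry of the basin of $0$ and the parabolic structure at $\infty$ is the main obstacle; once it is established, the contradiction is complete and the lemma follows.
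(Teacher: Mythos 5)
Your outline follows the paper's argument in structure, and in fact spells out more of the work: the paper's proof is a single short paragraph asserting that each critical orbit converges to a fixed point and calling this a contradiction with $f^{m}(c)=f^{n}(c)$, without explicitly addressing the case you correctly isolate as the crux --- namely, that the orbit of $c$ might converge to $0$ by landing on it, $f^{j}(c)=0$. Your reduction to that case (via Fatou--Shishikura and the disjointness of the two immediate basins) is right. The genuine gap is the last step: the ``Schwarz-lemma style estimate'' that would rule out $B^{j}(w_{c})=0$ is announced but not carried out, and the plan to feed the parabolic relation $s=4c/(1+c)^{2}$ into the Blaschke analysis is a red herring --- that relation concerns the structure at $\infty$ and is irrelevant inside the attracting basin. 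What you actually need is a fact about attracting fixed points with $0<|s|<1$ alone, and it is simpler than your sketch suggests.

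Here is the completion. Write the degree-two Blaschke model as $B(w)=e^{i\theta}w\,\frac{w-a}{1-\bar{a}w}$ with $B'(0)=-e^{i\theta}a=s$, so $|a|=|s|$ and $B^{-1}(0)=\{0,a\}$. Since $B(w)/w$ is a degree-one Blaschke product, $|B(w)|<|w|$ strictly for $0<|w|<1$, so the modulus along any nonzero orbit is strictly decreasing. Solving $\bar{a}w^{2}-2w+a=0$ gives the interior critical point $w_{c}=\bigl(1-\sqrt{1-|a|^{2}}\,\bigr)/\bar{a}$; setting $\sigma=\sqrt{1-|a|^{2}}\in(0,1)$ one finds $|w_{c}|=\sqrt{(1-\sigma)/(1+\sigma)}$ while $|a|=\sqrt{(1-\sigma)(1+\sigma)}$, whence $|w_{c}|<|a|=|s|$. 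Now suppose $B^{j}(w_{c})=0$ with $j\ge 1$ minimal. Then $B^{j-1}(w_{c})$ is a nonzero point of $B^{-1}(0)$, i.e.\ $B^{j-1}(w_{c})=a$, so $|B^{j-1}(w_{c})|=|s|$. But the Schwarz contraction forces $|w_{c}|\ge|B^{j-1}(w_{c})|=|s|$, contradicting $|w_{c}|<|s|$. Hence $c$ cannot be prefixed to $0$, its orbit is infinite, and the lemma follows --- by an argument that never uses $s=4c/(1+c)^{2}$ at all.
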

\begin{proof}In fact, if this were not true, then the infinity
would be a double root of $f(z) - z$ and hence a parabolic fixed
point of $f$. Therefore, one of the forward critical orbit
approaches to the infinity and the other one approaches to the
origin.   This is a contradiction with the assumption that $f^{m}(c)
= f^{n}(c)$ for some $0 \le m < n$.  The lemma follows.
\end{proof}

\begin{lemma}\label{Count-Z}
Let $0< |s| < 1$. Then  for every $f \in Z_{m, n}^{s}$, the
conformal equivalent class $[f]$ of $f$ contains exactly two
elements in $Z_{m,n}^{s}$.
\end{lemma}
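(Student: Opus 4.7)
The plan is to determine all Möbius transformations $\phi$ for which $g=\phi\circ f\circ \phi^{-1}$ lies in $Z_{m,n}^s$, and then to show that up to the evident symmetries of $f$ this yields exactly two distinct conjugates. The crucial preliminary observation is a dichotomy between the two critical points of $f$: the critical point $1$ has infinite forward orbit under $f$. Indeed, since $f$ has an attracting (but not super-attracting) fixed point at $0$ of multiplier $s$ with $0<|s|<1$, the map $f$ cannot be post-critically finite; otherwise, by Fatou's classification of periodic Fatou components, every attracting periodic Fatou component of a post-critically finite rational map would be super-attracting, contradicting the non-super-attracting nature of the basin of $0$. As the orbit of $c$ is finite by the hypothesis $f^m(c)=f^n(c)$, the orbit of $1$ must be infinite.

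Now suppose $g=\phi\circ f\circ\phi^{-1}\in Z_{m,n}^s$, with critical points $\{1,c'\}$ where $g^m(c')=g^n(c')$. Since $g^k\circ\phi=\phi\circ f^k$, the critical point $\phi(1)$ of $g$ has infinite $g$-orbit while $\phi(c)$ has finite $g$-orbit. Applying the same dichotomy to $g$ itself (which likewise has $g'(0)=s$ with $0<|s|<1$), the critical point $1$ of $g$ has infinite $g$-orbit while $c'$ has finite $g$-orbit; matching these by orbit length forces $\phi(1)=1$ and $\phi(c)=c'$. Since $g$ fixes $0$ and $\infty$, the points $\phi^{-1}(0)$ and $\phi^{-1}(\infty)$ are fixed points of $f$, hence elements of $\{0,\infty,p\}$ by Lemma~\ref{tem}; moreover, $g'(0)=s$ forces $\phi^{-1}(0)$ to be a fixed point of $f$ whose multiplier equals $s$.

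In the generic case where the three multipliers $s$, $\mu_\infty$, $\mu_p$ of $f$ are pairwise distinct, $\phi^{-1}(0)=0$ is forced, and $\phi$ is then pinned down by the binary choice $\phi^{-1}(\infty)\in\{\infty,p\}$. The two choices give the identity (yielding $g=f$) and $\phi(z)=(1-p)z/(z-p)$ (yielding $g=f_{c'}$ with $c'=(1-p)c/(c-p)$), and these two conjugates are distinct because $c'=c$ would force $c=1$. The main technical subtlety I expect will be the non-generic possibility $\mu_\infty=s$ (and the symmetric case $\mu_p=s$): a direct coefficient computation from the normalized form shows $\mu_\infty=s$ is equivalent to the identity $b=as$, in which case $f$ commutes with the involution $\sigma(z)=c/z$ that swaps $\{0,\infty\}$ and $\{1,c\}$. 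This extra symmetry pairs the four a priori admissible $\phi$'s into two cosets, so that the number of distinct conjugates remains two. The case $\mu_\infty=\mu_p=s$ is excluded by the holomorphic fixed-point index formula $\sum_i(1-\mu_i)^{-1}=1$ combined with $|s|<1$, which would force $s=-2$.
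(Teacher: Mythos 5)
Your approach is structurally the same as the paper's: pin down the conjugating M\"{o}bius map $\phi$ by matching critical points (forcing $\phi(1)=1$) and fixed points, then observe that $\phi$ is determined by the binary choice $\phi^{-1}(\infty)\in\{\infty,p\}$. Where you diverge is the justification of $\phi^{-1}(0)=0$: the paper asserts that $0$ is the unique non-repelling fixed point of $g$, while you argue via multipliers and explicitly flag the potential degeneracy $\mu_\infty=s$ (and symmetrically $\mu_p=s$). Flagging it is a good instinct, but your proposed resolution --- that the involution pairs the four admissible $\phi$'s into two cosets --- does not hold. Every admissible $\phi$ must satisfy $\phi(1)=1$, whereas $\phi\sigma(1)=\phi(c)\ne 1$ since $c\ne 1$; so right-multiplication by $\sigma$ carries the set of admissible $\phi$'s entirely off itself, and the four candidates sit in four distinct $\mathrm{Aut}(f)$-cosets. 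If this case could occur it would produce four elements of $[f]\cap Z_{m,n}^{s}$, not two.

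The correct resolution --- which you nearly had in hand --- is that the degenerate case is vacuous. You correctly verified that when $\mu_\infty=s$ (equivalently $b=as$, which together with $a(b+2)=-s$ gives $c=1/a^{2}$), the involution $\sigma(z)=c/z$ commutes with $f$ and swaps $1\leftrightarrow c$. But an automorphism of $f$ preserves orbit cardinality: the $f$-orbit of $c=\sigma(1)$ is the $\sigma$-image of the $f$-orbit of $1$, so the two orbits are either both finite or both infinite. This contradicts the dichotomy you established in your first paragraph (orbit of $1$ infinite, orbit of $c$ finite because $f^{m}(c)=f^{n}(c)$). Hence $\mu_\infty=s$ is impossible for $f\in Z_{m,n}^{s}$ with $0<|s|<1$, and the case $\mu_p=s$ is excluded the same way after first conjugating $p$ to $\infty$ by $z\mapsto(1-p)z/(z-p)$. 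With both degeneracies eliminated, $0$ is the unique fixed point of $f$ (and of $g$) with multiplier $s$, so $\phi^{-1}(0)=0$ holds unconditionally, and your generic-case argument finishes the proof.
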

\begin{proof}
 Let $g \in [f]$ such that $g \ne f$.  Assume that
 $g  = \phi \circ f \circ \phi^{-1}$ for some M\"{o}bius map $\phi$.
 Let $p\ne 0, \infty $ be the fixed point of $f$. Since $g^{m}(c)= g^{n}(c)$,
 it follows that  $g$  has exactly one non-repelling fixed point
 which is the origin. This implies that  the forward orbit $\{g^{k}(1)\}$
 is the only infinite critical orbit of $g$.   Therefore, we get
 $\phi(0) = 0$ and $\phi(1)= 1$. Now let $0, \infty, q$ be the three
 fixed points of $g$. It follows that $\{\phi(\infty), \phi(p)\} = \{\infty, q\}$.
 Note that  $\phi(\infty) \ne \infty$, for otherwise $\phi = id$ and hence $g = f$,
 which contradicts with  the assumption that $g \ne f$.   It follows that
 $\phi(p) = \infty$.  This implies  that $\phi$ is uniquely
 determined by $f$ and the lemma follows.
\end{proof}
By using the same argument as in the proof of Lemma~\ref{tem}, one
can show that  every $f \in R_{m,n}^{\theta}$ also has  three
distinct fixed points. Then using the same argument as in the proof
of Lemma~\ref{Count-Z},  one has
\begin{lemma}\label{Count-R}
For every $f \in R_{m, n}^{\theta}$, the conformal equivalent class
$[f]$ of $f$ contains exactly two elements in $R_{m,n}^{\theta}$.
\end{lemma}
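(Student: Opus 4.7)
The plan is to mimic, step by step, the proofs of Lemmas~\ref{tem} and \ref{Count-Z}, taking care only of the two small points where the setting differs (we now work in $R^{\theta}_{m,n}$ rather than $Z^{s}_{m,n}$, so $0$ is a Siegel fixed point rather than an attracting one, and the statements about finite critical orbits have to be reinterpreted accordingly).

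First I will verify that every $f \in R^{\theta}_{m,n}$ has exactly three distinct fixed points, namely $0$, $\infty$, and some $p \in \mathbb{C}^{*}$. Arguing as in Lemma~\ref{tem}, if the polynomial $f(z)-z$ had a double root at $\infty$, then $\infty$ would be a parabolic fixed point, so one of the two critical orbits would have to converge to $\infty$. But the critical point $1$ lies on $\partial D_{f}$ and its orbit stays on the invariant quasi-circle $\partial D_{f}$, while the other critical point $c$ has eventually periodic orbit by $f^{m}(c) = f^{n}(c)$; in neither case can a critical orbit accumulate at $\infty$, a contradiction.

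Second, I will copy the counting argument of Lemma~\ref{Count-Z}. Let $g = \phi \circ f \circ \phi^{-1}$ be another element of $R^{\theta}_{m,n}$ in the class $[f]$, with $\phi$ a M\"obius map. Since $0$ is the unique Siegel (and hence unique non-repelling) fixed point of both $f$ and $g$, we must have $\phi(0) = 0$. The critical point $1$ is characterised in both $f$ and $g$ as the unique critical point whose forward orbit is infinite (the other one satisfies $f^{m}(c)=f^{n}(c)$), so $\phi(1) = 1$. Writing $\{0,\infty,p\}$ for the fixed points of $f$ and $\{0,\infty,q\}$ for those of $g$, we have $\{\phi(\infty),\phi(p)\} = \{\infty,q\}$; since $\phi$ fixes $0$ and $1$, the case $\phi(\infty)=\infty$ forces $\phi = \mathrm{id}$ and hence $g=f$, while the remaining case $\phi(p)=\infty$, $\phi(\infty)=q$ determines $\phi$ uniquely by its values at $0,1,p$. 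Thus $[f] \cap R^{\theta}_{m,n}$ contains at most two elements: $f$ itself and the conjugate by this uniquely determined $\phi$.

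Finally I will check that this second conjugate really does lie in $R^{\theta}_{m,n}$. By construction $g$ is a quadratic rational map with $g'(0)=\lambda$, fixing $0$ and $\infty$, and with critical set $\phi(\{1,c\}) = \{1, \phi(c)\}$; the eventual periodicity $g^{m}(\phi(c)) = g^{n}(\phi(c))$ is immediate from conjugation, and $\phi(\partial D_{f})$ is a quasi-circle bounding the Siegel disk $\phi(D_{f})$ of $g$, so $g \in R^{\theta}_{m,n}$. The main (and only) obstacle is verifying that the distinguishing property ``$1$ is the critical point with infinite orbit'' holds symmetrically for $f$ and $g$; this reduces to noting that the finite-orbit critical point $c$ of $f$ cannot coincide with $1$ (the degrees of $f$ force two distinct simple critical points) and that the Siegel boundary $\partial D_{g}$ necessarily passes through the critical point $1$ of $g$ by Remark~\ref{GS}, which is exactly what forces $\phi(1)=1$ above.
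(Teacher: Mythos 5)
Your proof is correct and follows essentially the same route as the paper, which for this lemma simply transfers the arguments of Lemmas~\ref{tem} and~\ref{Count-Z} to the Siegel setting; your adaptations (replacing ``the orbit of $1$ converges to the attracting fixed point $0$'' by ``the orbit of $1$ stays on the rotation-invariant quasi-circle $\partial D_f$'', and characterising $1$ as the critical point on $\partial D_f$ rather than the one with infinite orbit escaping to $0$) are exactly the modifications the paper intends. One small caveat worth flagging: your parenthetical ``(and hence unique non-repelling)'' is not literally forced, since the eventually periodic critical point $c$ could a priori be a superattracting fixed point; but the conclusion $\phi(0)=0$ is still valid because $0$ is the unique fixed point with multiplier $e^{2\pi i\theta}$ (equivalently, the unique Siegel fixed point), so the argument is sound.
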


For $|s| < 1$, let $R_{s}$ be the set which consists of all the
quadratic rational maps $f$ such that $f(0) = 0$ and $f'(0) = s$.
  For each $f \in R_{s}$, the map $f$ restricted to a
suitable neighborhood of its Julia set is polynomial-like of
quadratic with connected Julia set and hence is hybrid equivalent to
a unique quadratic polynomial $z^{2} + c$ for some $c \in M$ where
$M$ is the Mandelbrot set.  This induces a homeomorphism between the
set of the conformal equivalent classes of $R_{s}$, say $M_{s}$, and
the Mandelbrot set $M$(see \cite{GK}, or the proof of Lemma 8.5,
\cite{Mi}).  Let $Q_{m,n}$ be the set of all the quadratics
$q_{c}(z) = z^{2} + c$ such  that $q_{c}^{m}(0) = q_{c}^{n}(0)$.  It
follows from Lemma~\ref{Count-Z},  that
\begin{lemma}\label{Count-M}
For $0< |s| < 1$ and any integers $0 \le m < n$,  $|Z_{m,n}^{s}| = 2
|Q_{m,n}|$.
\end{lemma}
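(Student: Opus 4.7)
The plan is to combine Lemma~\ref{Count-Z} with the homeomorphism $\Theta : M_s \to M$ furnished by polynomial-like hybrid equivalence. Since Lemma~\ref{Count-Z} shows that each conformal equivalence class $[f] \in M_s$ that meets $Z_{m,n}^s$ contributes exactly two distinct elements, one has
\[
|Z_{m,n}^s| \;=\; 2\,\bigl|\{[f] \in M_s : [f] \cap Z_{m,n}^s \ne \emptyset\}\bigr|,
\]
and the task reduces to showing that $\Theta$ restricts to a bijection between the set on the right and $Q_{m,n}$.

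For the forward direction, suppose $f \in Z_{m,n}^s$ has critical points $1$ and $c$ with $f^m(c) = f^n(c)$. The fixed point $0$ is attracting because $|s|<1$, so by Fatou's theorem its immediate basin contains a critical point of $f$; since $c$ has a finite forward orbit it cannot be attracted to $0$ in the topological-limit sense without already landing on $0$, so $1$ is the critical point in the basin of $0$ and $c$ is the free critical point of the polynomial-like restriction of $f$. The hybrid conjugacy $\Psi$ from this restriction to $q_w$ carries $c$ to the unique finite critical point $0$ of $q_w$ and conjugates dynamics on the filled Julia set. In particular
\[
q_w^m(0) \;=\; \Psi(f^m(c)) \;=\; \Psi(f^n(c)) \;=\; q_w^n(0),
\]
so $w = \Theta([f])$ belongs to $Q_{m,n}$.

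For the reverse direction, fix $w \in Q_{m,n}$ and let $[f_0] = \Theta^{-1}(w)$, represented by $f_0 \in R_s$. Under the hybrid conjugacy, the finite critical point $0$ of $q_w$ is identified with a critical point $\gamma$ of $f_0$ satisfying $f_0^m(\gamma) = f_0^n(\gamma)$, while the second critical point of $f_0$ lies in the immediate basin of $0$. Exactly as in the proof of Lemma~\ref{tem}, the preperiodic relation for $\gamma$ rules out $\infty$ being a double fixed point, so $f_0$ has three distinct fixed points $0, p_1, p_2$. Applying a M\"obius transformation $\phi(z) = az/(bz+1)$ that fixes $0$, sends $p_1$ to $\infty$, and sends the non-preperiodic critical point of $f_0$ to $1$ produces an element of $Z_{m,n}^s$ lying in $[f_0]$, showing that this class meets $Z_{m,n}^s$.

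The main technical point is the correct identification of critical points under the hybrid conjugacy: one must verify that the critical point of $f$ which corresponds to $0 \in K_{q_w}$ is precisely the preperiodic critical point $c$ (and not $1$), and conversely that the preperiodic side of the correspondence singles out a distinguished critical point of $f_0$ to send to $1$. Both rest on the attracting-basin observation above, namely that a critical point with finite forward orbit cannot accumulate at the attracting fixed point $0$ unless it already maps to $0$, and in either situation it plays the role of the free critical point for the polynomial-like restriction. With this identification secured, the forward and reverse assignments are mutually inverse, establishing the bijection with $Q_{m,n}$, and the lemma then follows from Lemma~\ref{Count-Z}.
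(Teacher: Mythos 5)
Your overall strategy matches the paper's, which is essentially a single sentence invoking Lemma~\ref{Count-Z} together with the straightening homeomorphism $M_{s}\cong M$: factor out by conformal equivalence (each class contributes exactly two maps by Lemma~\ref{Count-Z}) and claim that the straightening map $\Theta$ carries the relevant classes bijectively onto $Q_{m,n}$. You are supplying the bijection the paper leaves implicit, and your reverse direction is fine.

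The forward direction, however, has a genuine gap. You note that $c$, having a finite forward orbit, cannot be attracted to $0$ ``in the topological-limit sense without already landing on $0$,'' and then conclude that $1$ must be the critical point in the immediate basin. But this dichotomy does not rule out $f^{k}(c)=0$ for some $k\le m$. In that case $f^{m}(c)=f^{n}(c)=0$, so $f\in Z_{m,n}^{s}$, yet $c$ lies in the basin of $0$; one then finds that both critical orbits converge to $0$, the Julia set is a Cantor set, $[f]$ lies outside the connectedness locus on which $\Theta$ (and the hybrid conjugacy $\Psi$) are defined, and the chain $q_{w}^{m}(0)=\Psi(f^{m}(c))=\cdots$ is vacuous because $c\notin K_{f}$. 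This is not hypothetical: for $m\ge 2$, imposing $g^{2}(c)=0$ in the normalized family reduces to $s(1+c)^{2}=4$, which has two solutions in $c$; a direct check (take $s=1/4$, $c=3$, $b=-1/2$, $a=-1/6$) shows the orbit of $c$ is $3\mapsto 3/2\mapsto 0$ while the orbit of $1$ also tends to $0$, so $Z_{m,n}^{s}$ really contains conformal classes with no image in $Q_{m,n}$. The paper's blanket assertion that every $f\in R_{s}$ has a polynomial-like restriction with connected Julia set is likewise too strong, so this is an issue inherited from the paper rather than one you introduced; but as written your bijection cannot be closed without either excluding such maps from $Z_{m,n}^{s}$ or accounting for them on the $Q_{m,n}$ side.
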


Now let us prove Lemma~\ref{main-lem}.
\begin{proof}
It suffices to show that $|Z_{m,n}^{\lambda}| \le
|R_{m,n}^{\theta}|$. By Lemma~\ref{hol-two}, we have
$|Z_{m,n}^{\lambda}| \le |Z_{m,n}^{s}|$ for some $0< |s|<1$. Note
that each element $f$ in $Q_{m,n}$ induces a topological branched
covering map $\tilde{f}$ in $R_{\theta}^{top}$ by topologically
mating itself with $z^{2} + \lambda z$(for one way of the
construction of such mating, see $\S7$ of \cite{Mi}). Clearly, the
resulted map $\tilde{f}$ has no Thurston obstructions outside the
$\emph{rotation disk}$. Moreover, if $f_{1}, f_{2}$ are two
different elements in $Q_{m,n}$, then the two maps $\tilde{f}_{1},
\tilde{f}_{2}$ in $R_{\theta}^{top}$ induced by $f_{1}$ and $f_{2}$
belong to different combinatorial classes. This, together with
Theorem A and Lemma~\ref{Count-R}, implies that  $  2 |Q_{m,n}|\le
|R_{m,n}^{\theta}|$. It follows from Lemma~\ref{hol-two} and
\ref{Count-M} that $|Z_{m,n}^{\lambda}| \le |Z_{m,n}^{s}| = 2
|Q_{m,n}| \le |R_{m,n}^{\theta}|$. The lemma follows.

\end{proof}


\subsection{Critical Parameterization}

In this section, we will give a critical parameterization of the
space of all the Blaschke products in the following form,
\begin{equation}\label{def-B}
B_{p,q}(z) = z\frac{z-p}{1-\overline{p}z}\frac{z-q}{1-\overline{q}z}
\end{equation}
where $|p| > 1$, $|q|<1$.

\begin{lemma}\label{p-q}
For any compact set $K \subset {\Bbb C}-\partial \Delta$, there is a
$\delta > 0$, such that in either of the following two cases,
\begin{itemize}
\item[1.] $p \in K, |p| > 1$, and dist$(q, \partial \Delta) < \delta$, or
\item[2.] $q \in K, |q| < 1$, and dist$(p, \partial \Delta) < \delta$,
\end{itemize}
$B_{p, q}$ has at least two distinct critical points in $\partial
\Delta$.
\end{lemma}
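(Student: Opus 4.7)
The plan is to reduce the whole problem to counting zeros of a single real-analytic function on the unit circle. Since $B_{p,q}$ preserves $\partial\Delta$, write $B_{p,q}(e^{i\phi})=e^{i\Psi(\phi)}$; then the critical points of $B_{p,q}$ on $\partial\Delta$ are precisely the zeros of $\Psi'(\phi)$. A short logarithmic-derivative calculation (using $\tfrac{d}{d\phi}\arg\tfrac{e^{i\phi}-a}{1-\bar{a}e^{i\phi}}=\tfrac{1-|a|^2}{A_a(\phi)}$) gives
$$\Psi'(\phi)=1+\frac{1-|p|^2}{A_p(\phi)}+\frac{1-|q|^2}{A_q(\phi)},\qquad A_a(\phi):=1+|a|^2-2|a|\cos(\phi-\arg a)>0,$$
and the Poisson-kernel identity $\int_0^{2\pi}(1-|a|^2)/A_a\,d\phi=2\pi\,\mathrm{sgn}(1-|a|^2)$ yields $\int_0^{2\pi}\Psi'\,d\phi=2\pi$ under the standing assumption $|p|>1,\,|q|<1$. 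Because this total integral is strictly positive, any point at which $\Psi'$ is strictly negative forces $\Psi'$ to change sign at least twice around $\partial\Delta$, producing at least two distinct zeros, i.e., at least two distinct critical points of $B_{p,q}$ on $\partial\Delta$. The whole task therefore reduces to exhibiting one $\phi^*$ with $\Psi'(\phi^*)<0$ under the hypothesis of either case.

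Case~2 is easy: take $\phi^*=\arg p$. Then $A_p(\arg p)=(|p|-1)^2$, so $(1-|p|^2)/A_p(\arg p)=-(|p|+1)/(|p|-1)\to-\infty$ as $|p|\to 1^+$, while $(1-|q|^2)/A_q(\arg p)$ stays uniformly bounded since $q\in K$ is compactly separated from $\partial\Delta$. Hence $\Psi'(\arg p)<0$ once $\delta$ is small enough.

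Case~1 is the delicate one, because the naive choice $\phi^*=\arg p$ can fail if $\arg q$ happens to be close to $\arg p$: then $(1-|q|^2)/A_q(\arg p)$ also blows up and can swamp the negative first two terms. My plan is to resolve this potential resonance by a two-candidate trick. Compactness of $K\subset\mathbb{C}-\partial\Delta$ gives $|p|\ge 1+\eta$ uniformly for some $\eta>0$, so $\alpha:=\arccos(1/(1+\eta))>0$ and one checks that $1+(1-|p|^2)/A_p(\phi)$ is strictly negative, with a uniform upper bound less than zero, throughout the arc $|\phi-\arg p|\le\alpha/2$. The two candidates $\phi^*\in\{\arg p+\alpha/2,\,\arg p-\alpha/2\}$ lie an angle $\alpha$ apart, so at least one of them satisfies $|\phi^*-\arg q|\ge\alpha/2$; at that choice, $A_q(\phi^*)\ge 2|q|(1-\cos(\alpha/2))$ is bounded below, forcing $(1-|q|^2)/A_q(\phi^*)\to 0$ as $1-|q|\to 0$, and $\Psi'(\phi^*)<0$ once $\delta$ is sufficiently small. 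The hardest point is exactly this resonance $\arg q\approx\arg p$; once it is sidestepped by the two-candidate trick, the remaining constants are uniform in $p\in K$ (respectively $q\in K$), and a single $\delta$ depending only on $K$ works.
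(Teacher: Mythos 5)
Your proof is correct and takes a genuinely different, and in fact cleaner, route than the paper's. Both arguments start from the same formula for $\Psi'(\phi)=1+\frac{1-|q|^2}{A_q(\phi)}+\frac{1-|p|^2}{A_p(\phi)}$ and the Poisson-kernel identity $\int_0^{2\pi}\Psi'\,d\phi=2\pi$, but the structure of the deduction is different. The paper argues by contradiction: it extracts a sequence $(p_k,q_k)\to(p,e^{i\alpha})$ and tries to exhibit three explicit test angles $\beta_1<\alpha_k<\beta_2$ with $T'(\beta_1)<0$, $T'(\alpha_k)>0$, $T'(\beta_2)<0$, then invokes the intermediate value theorem. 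Incidentally, the paper's displayed hypothesis $\frac{|p|^2-1}{|1-\bar pe^{i\beta_2}|^2}<1$ gives $T'(\beta_2)>0$, not $<0$, so the three-anchor IVT argument as literally written does not close; the intended conclusion is still true precisely because $\int\Psi'=2\pi>0$, which is the global periodicity observation you make explicit. Your proof makes that observation the centerpiece (negative somewhere plus positive average on a circle already forces two sign changes, hence two zeros), eliminating the need for the third test angle and the limiting subsequence altogether. You handle the one genuine difficulty, the resonance $\arg q\approx\arg p$ in Case~1, by the two-candidate trick on the uniformly negative arc $|\phi-\arg p|\le\alpha/2$; the paper handles it implicitly by choosing $\beta_1,\beta_2$ straddling $\alpha$. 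What your version buys is a direct, quantitative argument (one could in principle read off an explicit $\delta(K)$), and it repairs the sign slip in the paper at no extra cost; what the paper's compactness/contradiction framing buys is a shorter write-up once the sign bookkeeping is done carefully.
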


\begin{proof}
Let $T_{p,q}(\alpha) = - i \log B_{p, q}(e^{ i\alpha})$ for $0 \le
\alpha \le 2 \pi$. Then
\begin{equation}\label{dir-for}
T_{p,q}'(\alpha) = 1 + \frac{1 - |q|^{2}}{|1 - \bar{q}e^{i
\alpha}|^{2}} + \frac{ 1-|p|^{2}}{|1 - \bar{p}e^{ i \alpha}|^{2}}.
\end{equation}

Let us assume that we are in the first case and the second case can
be proved in the same way. Suppose that the lemma were not true.  By
passing to a convergent subsequence, we may assume that there exist
a sequence $p_{k} \to p \in K$ and a sequence $q_{k} \to e^{i\alpha}
\in
\partial \Delta$ such that $B_{p_{k}, q_{k}}$ has at most one
critical point in the unit circle.  since
$$\int_{0}^{2 \pi} \frac{ |p|^{2}-1}{|1 - \bar{p}e^{ i
\alpha}|^{2}}d\alpha = 2 \pi,$$ it follows that there exist
$\beta_{1}< \alpha < \beta_{2}$ such that
\begin{equation}\label{two-ines}
\frac{|p|^{2}-1}{|1 - \bar{p}e^{i
\beta_{1}}|^{2}}> 1, \hbox{\:and\:} \frac{|p|^{2}-1}{|1 -
\bar{p}e^{i \beta_{2}}|^{2}}< 1.
\end{equation}

Note that as $q_{k} \to e^{i \alpha}$, $\alpha_{k} \to \alpha$ where
$a_{k} = \arg (q_{k})$. By a simple calculation, it is easy to see
that
$$
\frac{1 - |q_{k}|^{2}}{|1 - \bar{q_{k}}e^{ i t}|^{2}} \to 0
$$ uniformly on any closed sub-interval of $[0, 2 \pi]$ which
does not contain $\alpha$, and
$$
\frac{1 - |q_{k}|^{2}}{|1 - \bar{q_{k}}e^{ i \alpha_{k}}|^{2}} \to
\infty.
$$
It follows from (\ref{dir-for}) and (\ref{two-ines}) that for all
$k$ large enough, we have
$$
T'_{p_{k}, q_{k}}(\alpha_{k})
> 0, T'_{p_{k}, q_{k}}(\beta_{1}) <  0, \hbox{ and }T'_{p_{k},
q_{k}}(\beta_{2}) <  0.
$$
Since $\beta_{1} < \alpha_{k} < \beta_{2}$ for all $k$ large enough,
the proof of the first case can thus be completed by using the
Immediate Value Theorem.

\end{proof}
\begin{lemma}\label{p-q-t}
For any compact set $K \subset \Bbb C - \partial \Delta$, there is a
$\delta > 0$ such that if $B_{p, q}$ has a critical point in $K$,
then $d(p, \partial \Delta) L \delta$ and  $d(q, \partial \Delta) >
\delta$.
\end{lemma}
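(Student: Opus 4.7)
The plan is by contradiction. Assume the conclusion fails; then one extracts sequences $(p_k,q_k)$ with $|p_k|>1$, $|q_k|<1$, and critical points $c_k\in K$ of $B_{p_k,q_k}$, satisfying $\min\{d(p_k,\partial\Delta),d(q_k,\partial\Delta)\}\to 0$. Passing to subsequences, $c_k\to c_0\in K\subset{\Bbb C}-\partial\Delta$, and each of $p_k,q_k$ converges in $\widehat{\Bbb C}$. The critical points of $B_{p,q}$ are exactly the roots of the explicit polynomial
\[
P_{p,q}(z)=\bar p\bar q\,z^{4}-2(\bar p+\bar q)z^{3}+\bigl[3+(p+q)(\bar p+\bar q)-|p|^{2}|q|^{2}\bigr]z^{2}-2(p+q)z+pq,
\]
obtained by clearing denominators in $B_{p,q}'(z)=0$. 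The strategy is to show, in each degeneration scenario, that the appropriate limit of $P_{p_k,q_k}$ has all four of its roots on $\partial\Delta$, thereby forcing $c_0\in\partial\Delta$ and a contradiction.

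To prove $d(q_k,\partial\Delta)\not\to 0$, I would suppose $q_k\to q_0\in\partial\Delta$ and split into three sub-cases on the limit of $p_k$. If $p_k\to p_0$ with $|p_0|>1$, then using $|q_0|^{2}=1$ one verifies the factorization
\[
P_{\infty}(z)=\bar p_{0}\bar q_{0}(z-q_{0})^{2}\Bigl(z^{2}-\frac{2}{\bar p_{0}}z+\frac{p_{0}^{2}}{|p_{0}|^{2}}\Bigr),
\]
whose roots are $q_{0}$ (double, arising from the collision of the zero $q_k$ and the pole $1/\bar q_k$) and $(p_{0}/|p_{0}|^{2})(1\pm i\sqrt{|p_{0}|^{2}-1})$, both of modulus one. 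If instead $p_k\to p_{0}\in\partial\Delta$ as well, a direct expansion gives $p_{0}q_{0}\,P_{\infty}(z)=((z-p_{0})(z-q_{0}))^{2}$, with all four roots $p_{0},q_{0}\in\partial\Delta$. If $p_k=r_ke^{i\alpha_k}\to\infty$ with $\alpha_k\to\alpha$, the rescaled polynomial $r_k^{-1}P_{p_k,q_k}$ converges to a polynomial $Q(z)$ satisfying $q_{0}e^{i\alpha}Q(z)=(z-q_{0})^{2}(z^{2}+e^{2i\alpha})$, with roots $q_{0}$ (double) and $\pm ie^{i\alpha}$. In each sub-case $c_0$ is forced onto $\partial\Delta$, contradicting $c_0\in K$.

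To prove $d(p_k,\partial\Delta)\not\to 0$ an analogous analysis is needed with the roles of $p$ and $q$ interchanged. The asymmetry in the parameter conditions $|p|>1$ versus $|q|<1$ is significant: in the case $p_k\to p_{0}\in\partial\Delta$ while $q_k\to q_{0}$ with $|q_{0}|<1$, the direct analogue of the factorization above leaves two limiting roots of $P_\infty$ off the unit circle, at the locations $q_{0}(1\pm\sqrt{1-|q_{0}|^{2}})/|q_{0}|^{2}$, so the polynomial computation alone does not yield the contradiction. Closing this case requires the Blaschke pairing $c\leftrightarrow 1/\bar c$ of the critical set together with Lemma~\ref{p-q} (which supplies at least two critical points on $\partial\Delta$) in order to constrain the off-circle pair and rule out that either member persists in the fixed compact set $K$ as $p_k\to\partial\Delta$. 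Making this location argument precise is the main technical obstacle of the proof.
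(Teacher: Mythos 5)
The paper's own proof of this lemma is one sentence: if \emph{both} $p$ and $q$ tend to $\partial\Delta$, then (after passing to a subsequence) the two M\"{o}bius factors degenerate to unimodular constants, so on any compact $K\subset{\Bbb C}-\partial\Delta$ one has $B_{p,q}(z)\to p_0q_0\,z$ and hence $B_{p,q}'\to p_0q_0\neq 0$ uniformly; thus no critical point can persist in $K$. That argument establishes only a \emph{disjunction} --- not both of $d(p,\partial\Delta)$, $d(q,\partial\Delta)$ can be small when a critical point lies in $K$ --- and the disjunction is exactly what is invoked downstream, in the proof of Lemma~\ref{par-S} (``$p_c$ and $q_c$ can not be both close to $\partial\Delta$''). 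The garbled inequality symbol in the printed statement obscures this, but the proof and the use agree. You are reading the lemma as a \emph{conjunction} and trying to prove that strictly stronger statement, which is why your argument is so much heavier than the paper's.

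The conjunction is in fact false, and the sub-case you flag as the ``main technical obstacle'' is a genuine counterexample, not a gap to be filled. When $q_k\to q_0$ with $|q_0|<1$ while $p_k\to p_0\in\partial\Delta$, your factorization correctly leaves two simple roots of the limit polynomial off the circle, at $(1\pm\sqrt{1-|q_0|^2})/\bar q_0$; these are precisely the critical points of the limiting degree-two Blaschke product $-p_0\,z\,(z-q_0)/(1-\bar q_0 z)$, and neither is a pole of the limit map, so by Hurwitz's theorem $B_{p_k,q_k}$ has, for all large $k$, an honest critical point converging to $(1-\sqrt{1-|q_0|^2})/\bar q_0\in\Delta$. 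Take $K$ to be a small closed disk about that point: one gets a family with a critical point in $K$ and $d(p_k,\partial\Delta)\to 0$, so no version of $d(p,\partial\Delta)>\delta$ can hold, and neither Lemma~\ref{p-q} nor the pairing $c\leftrightarrow 1/\bar c$ can repair it. Your treatment of the $q$-direction (all four limit roots on $\partial\Delta$ in each of your three sub-cases) is correct and does prove the stronger bound $d(q,\partial\Delta)>\delta$, but for what the paper actually uses only the both-degenerate case is required, and there the rigid-rotation limit settles it directly with none of the degree-four polynomial machinery. (A small separate gap in your $p_k\to\infty$ sub-case: after dividing by $r_k=|p_k|$, the $z^2$-coefficient of $r_k^{-1}P_{p_k,q_k}$ still contains $r_k(1-|q_k|^2)$, which need not converge without a further subsequence or rescaling.)
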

\begin{proof}
This is because as $p$ and $q$ approach $\partial \Delta$,  by
passing to a subsequence, $B_{p, q}$ converges to a rigid rotation
uniformly in any compact set $K \subset \Bbb C - \partial \Delta$.
\end{proof}
\begin{definition}\label{the set B}
Let $\mathcal{B}$  be the set which consists of all the Blaschke
products $B_{p,q}$ satisfying the following three properties:
\begin{itemize}
\item[1.]  $B_{p, q}$ has a double critical point at $1$,
\item[2.] the other two critical points $c$ and $\frac{1}{\overline{c}}$
are symmetric about the unit circle such that  $c \in \widehat{\Bbb
C} - \overline{\Delta}$,
\item[3.]  $|p| > 1$ and $|q| < 1$.
\end{itemize}
\end{definition}

\begin{lemma}
Let $B_{p, q} \in \mathcal{B}$. Then $B_{p, q}|\partial \Delta:
\partial \Delta \to \partial \Delta$ is a homeomorphism which
preserves the orientation.
\end{lemma}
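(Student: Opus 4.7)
The plan is to pass to the real-valued angular lift $T(\alpha) := -i\log B_{p,q}(e^{i\alpha})$, continuously defined on $\mathbb{R}$, exactly as in the proof of Lemma~\ref{p-q}. The conclusion that $B_{p,q}|\partial\Delta$ is an orientation-preserving homeomorphism of the circle is equivalent to $T$ being strictly increasing on $[0,2\pi]$ with $T(2\pi)-T(0)=2\pi$. I will deduce this from two ingredients: a degree-$1$ computation and the identification of all circle-critical points.

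For the degree I would apply the argument principle to $B_{p,q}$ restricted to the open unit disk. The zeros of $B_{p,q}$ inside $\Delta$ are $0$ and $q$, while the only pole inside $\Delta$ is $1/\bar p$, since $|p|>1$ places $p$ outside $\overline\Delta$ and $|q|<1$ places $1/\bar q$ outside $\overline\Delta$ (and the remaining pole at $\infty$ is also exterior). The winding number of $\alpha\mapsto B_{p,q}(e^{i\alpha})$ about the origin is therefore $2-1=1$, which gives $T(2\pi)-T(0)=2\pi$.

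For the monotonicity I would use Definition~\ref{the set B} to enumerate all critical points. By Riemann--Hurwitz, the degree-$3$ rational map $B_{p,q}$ has exactly four critical points counted with multiplicity, and by the definition of $\mathcal{B}$ these are precisely accounted for by the double critical point at $1$ together with the symmetric pair $c,\,1/\bar c$. Since $c\in\widehat{\Bbb C}-\overline\Delta$, and consequently $|1/\bar c|<1$ (or $1/\bar c=0$ if $c=\infty$), neither member of the pair lies on $\partial\Delta$; hence $z=1$ is the unique critical point of $B_{p,q}$ on the unit circle, and it has local degree $3$. Differentiating the identity $B_{p,q}(e^{i\alpha})=e^{iT(\alpha)}$ yields $T'(\alpha)=e^{-iT(\alpha)}e^{i\alpha}B'_{p,q}(e^{i\alpha})$, so $|T'(\alpha)|=|B'_{p,q}(e^{i\alpha})|$ and the zero set of $T'$ on $[0,2\pi)$ coincides with the set of critical points of $B_{p,q}$ on $\partial\Delta$. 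Thus $T'$ vanishes on $[0,2\pi)$ only at $\alpha=0$, where it has a zero of order exactly $2$, and in particular $T'$ does not change sign there.

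Putting the pieces together, $T'$ has constant sign on $(0,2\pi)$, and the positive integral $\int_0^{2\pi}T'(\alpha)\,d\alpha=T(2\pi)-T(0)=2\pi>0$ forces this sign to be positive. Hence $T$ is strictly increasing with total variation $2\pi$, and this descends to the desired orientation-preserving homeomorphism $B_{p,q}|\partial\Delta\to\partial\Delta$. The only delicate step I anticipate is the critical-point bookkeeping of the previous paragraph; the expansion $B_{p,q}(z)\sim z/(\bar p\bar q)$ as $z\to\infty$ shows that $\infty$ is noncritical when both $p,q\neq 0$, so Riemann--Hurwitz is indeed saturated by the four listed points, and every remaining step is formal.
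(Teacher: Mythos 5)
Your proof is correct and follows essentially the same route as the paper's: the degree-$1$ computation (which you obtain via the argument principle, and the paper obtains by integrating the displayed formula for $T'_{p,q}$ — these are the same computation) combined with the observation from Definition~\ref{the set B} that $1$ is the only critical point of $B_{p,q}$ on $\partial\Delta$, forcing $T'$ to have constant (hence positive) sign. The one stylistic difference is that the paper argues by contradiction ("if not a homeomorphism there would be two distinct circle critical points"), while you argue directly that $T'$ is nonvanishing on $(0,2\pi)$; your remark that $T'$ vanishes to order exactly $2$ at $\alpha=0$ is correct but not needed, since continuity and periodicity of $T'$ already give constant sign on $(0,2\pi)$ once the absence of other circle critical points is established.
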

\begin{proof}
Since $$\int_{0}^{2\pi}T_{p,q}'(\alpha)d\alpha = 2\pi,$$ it follows
that the topological degree of $B|\partial \Delta: \partial \Delta
\to \partial \Delta$ is $1$.   If $B|\partial \Delta$ is not a
homeomorphism, then $B|\partial \Delta$ would have two distinct
critical points.  This is a contradiction with the definition of
$\mathcal{B}$. The fact that $B|\partial \Delta$ preserves the
orientation also follows. The proof of the lemma is completed.
\end{proof}

\noindent $\bold{Critical\:Parameterization\:of\:}$$\mathcal{B}$.
Let $B_{p, q} \in \mathcal{B}$ and let $w = p+q, v = pq$. Assume
that $c \ne \infty$. Therefore, $q \ne 0$ and hence $v \ne 0$.  By a
direct calculation, we get
$$
B_{p,q}'(z) = \frac{\overline{v}z^{4} -2 \overline{w}z^{3} + (3 +
|w|^{2}-|v|^{2})z^{2} -2wz + v}{(\overline{v}z^{2} - \overline{w}z +
1)^{2}}
$$

The numerator of $B_{p,q}'(z)$ can be written into
$$
\overline{v}(z^{4} -\frac{2\overline{w}}{\overline{v}}z^{3} +
(\frac{3}{\overline{v}} +
\frac{|w|^{2}}{\overline{v}}-\frac{|v|^{2}}{\overline{v}})z^{2} -
\frac{2w}{\overline{v}}z + \frac{v}{\overline{v}}) = \overline{v}(z
- 1)^{2}(z - c)(z - \frac{1}{\overline{c}}).
$$
It follows that $v/\bar{v} = c/\bar{c}$ and hence $v/c$ is a real
number. So we have either $v = c|v|/|c|$ or $v = -c|v|/|c|$. Set $t
= (2 + c + \frac{1}{\overline{c}})/2$ and $s = 1 +
\frac{c}{\overline{c}}+2(c + \frac{1}{\overline{c}})$. By comparing
the coefficients of the two polynomials in the above equation, it
follows that $\frac{\overline{w}}{\overline{v}}= t$ and hence
$|w|^{2} = |t|^{2}|v|^{2}$. It also follows that
$\frac{3}{\overline{v}} + \frac{|w|^{2}}{\overline{v}} - v = s$.
This gives us
\begin{equation} \label{v-s}
\frac{3}{\overline{v}} + |t|^{2}v - v =s.
\end{equation}
Note that $\overline{c}s = 2(1+ |c|^{2}) + c + \overline{c} = 1 +
|c|^{2} + |1+c|^{2} > 0$, so if $v = c|v|/|c|$, from (\ref{v-s}) we
get
\begin{equation}\label{t-v-s-1}
(|t|^{2}-1)|v|^{2} -|s||v|+ 3 = 0,
\end{equation}
and if $v = -c|v|/|c|$,  we get
\begin{equation}\label{t-v-s-2}
(|t|^{2}-1)|v|^{2} +|s||v|+ 3 = 0.
\end{equation}

Since $|s|^{2} - 12(|t|^{2}-1) = |c + \frac{1}{c} -2|^{2} >0$ for
all $c \ne 1$, it follows that for $|t|^{2} -1 \ge 0$,
(\ref{t-v-s-2}) has no positive solutions and $|v|$ must satisfy
(\ref{t-v-s-1}). Therefore, $v = c|v|/|c|$, and
\begin{equation}\label{d-1}
|v| = \frac{|s| - |c+\frac{1}{c}-2|}{2(|t|^{2}-1)},
\end{equation}
or
\begin{equation}\label{d-2}
|v| = \frac{|s| + |c+\frac{1}{c}-2|}{2(|t|^{2}-1)}.
\end{equation}

For $|t|^{2} -1 < 0$, we have two cases. In  the first cases, $v =
-c|v|/|c|$ and $|v|$ satisfies (\ref{t-v-s-2}). Since $|s|^{2} -
12(|t|^{2}-1) = |c + \frac{1}{c} -2|^{2} >0$, there is only one
positive solution of (\ref{t-v-s-2}) given by
\begin{equation}\label{d-3}
|v| = \frac{-|s| - |c+\frac{1}{c}-2|}{2(|t|^{2}-1)},
\end{equation}
In the second case,  $v = c|v|/|c|$, and $|v|$ satisfies
(\ref{t-v-s-1}). Since $|s|^{2} - 12(|t|^{2}-1) = |c + \frac{1}{c}
-2|^{2} >0$, there is only one positive solution of (\ref{t-v-s-1})
given by
\begin{equation}\label{d-4}
|v| = \frac{|s| - |c+\frac{1}{c}-2|}{2(|t|^{2}-1)}.
\end{equation}

\begin{ex1}  Let $c = 2$. Then $t = \frac{9}{4}$ and $s = 7$.
Since $|t|^{2} - 1> 0$, it follows that $v  = c|v|/|c|$. By
(\ref{d-1}) and (\ref{d-2}), we have $|v|  =\frac{4}{5}$, or $|v| =
\frac{12}{13}$. Then we have two cases:

Case 1.  $v = \frac{4}{5}$, and $w = \frac{9}{5}$.  $\{p, q\} = \{1,
\frac{4}{5}\}$.

Case 2. $v = \frac{12}{13}$, and $w = \frac{27}{13}$.  $\{p, q\} =
\{1.432575\cdots, 0.644348\cdots\}$.
\end{ex1}

\begin{ex2} Let $c = -2$. Then $t = -\frac{1}{4}$ and $s = -3$.
Since $|t|^{2} - 1<0$, we have again two cases. In the first case,
$v = c|v|/|c|$ and in the second case, $v = -c|v|/|c|$.  By
(\ref{d-3}) and (\ref{d-4}), we get $ v = -4/5$ or $ v = 4$.

Case 1.  $v = -\frac{4}{5}$ and $w = \frac{1}{5}$.
 $\{p, q\} = \{1,  -\frac{4}{5}\}$.

Case 2. $v = 4$ and $w = -1$.
 $\{p, q\} =\{ -0.5 + 1.936491i, -0.5 - 1.936491i\}$.
\end{ex2}
\begin{remark}\label{solution}
For any $c$ with $|c| > 1$, let $(p_{c}, q_{c})$ be one of the
solutions obtained above such that $|p_{c}| > 1$ and $|q_{c}| < 1$.
Then $B_{p_{c}, q_{c}}$ has exactly a double critical point at $1$
and two distinct critical points at $c$ and $1/\overline{c}$.
\end{remark}
Recall that $ t = (2+ c + \frac{1}{\overline{c}})/2$.  By a direct
calculation, it follows that the curve
\begin{equation}\label{gamma}
\gamma = \{c\:\big{|} \:|t|^{2}-1 = 0, |c| > 1\} =  \{re^{it}
\:\big{|}\:r + r^{-1} + 4\cos (t) = 0, r > 1\}
\end{equation}
separates $\widehat{\Bbb C} - \overline{\Delta}$  into two
components.  Let us denote them by $U$ and $V$ respectively (see
Figure 18). Define  $$\Phi: \mathcal{B} \to \widehat{{\Bbb C}}-
\overline{\Delta}$$ by $\Phi(B_{p, q}) = c$.

\begin{lemma}\label{par-S}
The map $\Phi$ is a homeomorphism between $\mathcal{B}$ and $U$.
\end{lemma}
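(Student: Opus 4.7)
The plan is to prove that $\Phi$ is a continuous bijection $\mathcal{B} \to U$ by combining the explicit algebraic analysis carried out immediately before the lemma with a careful case analysis of the inequality $|p| > 1 > |q|$, and then to upgrade this to a homeomorphism using the properness supplied by Lemmas \ref{p-q} and \ref{p-q-t}.

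Continuity of $\Phi$ is immediate because the two non-$1$ critical points of $B_{p,q}$ are analytic functions of $(p,q)$, being roots of the numerator polynomial appearing in the expression for $B'_{p,q}$. For the bijection, recall from the discussion preceding the lemma that, given $c \in \widehat{\Bbb C} - \overline{\Delta}$, the unordered pair $\{p,q\}$ must consist of the two roots of $z^{2} - wz + v = 0$, where $w = \overline{t}\,v$, $v$ lies on the real line through the origin and $c$, and $|v|$ satisfies one of the quadratics (\ref{t-v-s-1}) or (\ref{t-v-s-2}). Depending on the sign of $|t|^{2} - 1$, which distinguishes $U$ from $V$ via the definition of $\gamma$ in (\ref{gamma}), this yields at most two candidate unordered pairs $\{p,q\}$ for each $c$. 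The key technical step is to apply a direct algebraic criterion (for instance a Schur--Cohn-type test) to $z^{2} - wz + v$ to verify that exactly one candidate satisfies $|p|>1$ and $|q|<1$ when $c \in U$, that neither does when $c \in V$, and that on $c \in \gamma$ the quadratic in $|v|$ collapses to a linear equation whose unique solution $|v| = 3/|s|$ forces $|p| = 1$ or $|q| = 1$, violating the strict inequalities. Examples 1 and 2 already exhibit both regimes: $c = 2 \in U$ admits exactly one valid pair, while $c = -2 \in V$ admits none. This establishes simultaneously that $\Phi(\mathcal{B}) = U$ and that $\Phi$ is injective; continuity of the inverse is then manifest from the explicit formulas (\ref{d-1})--(\ref{d-4}).

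Finally, to rule out pathological behavior and conclude that $\Phi$ is a homeomorphism, I would invoke properness. Lemma \ref{p-q-t} ensures that if $(p_{n},q_{n}) \in \mathcal{B}$ is a sequence with $\Phi(B_{p_{n},q_{n}})$ remaining in a compact subset of $U$, then $(p_{n},q_{n})$ stays uniformly away from $\partial \Delta$ in both coordinates, while Lemma \ref{p-q} rules out the asymmetric escape in which only one of $|p_{n}|-1$ and $1-|q_{n}|$ collapses. The main obstacle is computational rather than conceptual: organizing cleanly the bookkeeping of the root-separation criterion across the two sign choices $v = \pm c|v|/|c|$ and the two branches of $|v|$ appearing in (\ref{d-1})--(\ref{d-4}), and especially treating the boundary curve $\gamma$, where the quadratic in $|v|$ degenerates and one must verify that strict admissibility $|p|>1>|q|$ fails.
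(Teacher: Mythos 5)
Your proposal takes a genuinely different route from the paper's. You propose to verify directly, via a Schur--Cohn-type root-location criterion applied to $z^{2}-wz+v$, that the solution branches coming from (\ref{d-1})--(\ref{d-4}) yield a pair with $|p|>1>|q|$ precisely when $c\in U$ and exactly once, and you want to use Lemmas~\ref{p-q} and~\ref{p-q-t} only afterwards, as a properness statement to upgrade a continuous bijection to a homeomorphism. The paper instead avoids the algebraic verification entirely: it anchors the solution branches at the two explicitly computed points $c=2$ and $c=-2$ (Examples~1 and~2), continues the pair $(p_{c},q_{c})$ along curves $\eta,\eta',\eta''$ in parameter space using the continuous formulas (\ref{d-1})--(\ref{d-4}), and then derives contradictions from the dichotomy ``either both $p_{c},q_{c}$ stay away from $\partial\Delta$, or the Blaschke product picks up extra critical points on the circle'' supplied by Lemma~\ref{p-q} together with Remark~\ref{solution}. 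Thus Lemma~\ref{p-q} does the heavy lifting in the paper, whereas in your plan it plays a secondary role.

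There is a genuine gap, and you partly acknowledge it yourself when you call the main obstacle ``computational rather than conceptual.'' The Schur--Cohn verification is not carried out, and it is not as routine as the phrasing suggests: for a quadratic with complex coefficients $w,v$, the condition for one root strictly inside and one strictly outside the unit circle is not captured by a clean one-line inequality, and you would have to run it through all four branches (\ref{d-1})--(\ref{d-4}), with the sign alternatives $v=\pm c|v|/|c|$, and show the count comes out right over the whole of $U$, $V$, and $\gamma$. In particular, your assertion that on $\gamma$ the degenerate value $|v|=3/|s|$ ``forces $|p|=1$ or $|q|=1$'' is stated without proof and is the crux of the boundary case; the paper does not establish this directly, but reaches it only obliquely, as the endpoint of a contradiction-by-continuation starting from a point where it has been checked by hand. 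Without actually performing that algebra (or substituting the paper's continuation mechanism for it), the proposal does not yet constitute a proof of surjectivity, injectivity, or the exclusion of $\gamma\cup V$ from the image of $\Phi$. The properness step at the end is fine as far as it goes, but it cannot compensate for the missing verification in the middle.
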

In the four cases of the two examples above, we see that only Case 2
of Example 1 produces the desired Blaschke product $B_{p, q}$ which
satisfies $|p|> 1$ and $|q| < 1$. In the following, we will use
continuation method to show that along this branch, all the other
critical parameters in $U$ can produce a unique desired Blaschke
product $B_{p, q}$, and that along all the other three branches, the
solution pairs $\{p, q\}$ obtained do not satisfy the condition
$|p|> 1$ and $|q|< 1$, that is, either one of them lies in the unit
circle, or both of them belong to the outside of the unit disk.
\begin{proof}
It is clear that $\Phi$ is continuous. First let us prove that for
any $B_{p, q} \in \mathcal{B}$, $\Phi(B_{p,q}) \in U$.  Assume that
this is not true. Let $\Phi(B_{p, q}) = c_{0}$. There are two cases.
\begin{figure}
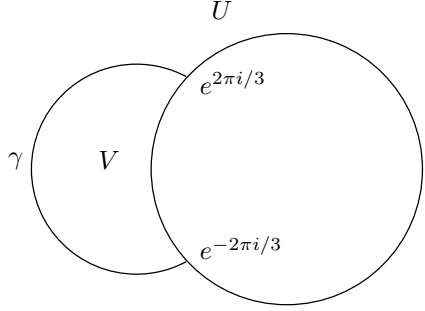

\bigskip
\begin{center}
\centertexdraw { \drawdim cm \linewd 0.02 \move(0 1)

\move(0 -3) \lcir r:1.8

\move(-2 -3) \larc r:1.4 sd:62 ed:298

\move(-1.15 -2) \htext{$e^{2 \pi i/3}$}

\move(-1.15 -4.2) \htext{$e^{-2 \pi i/3}$}

\move(-2.5 -3) \htext{$V$} \move(-3.7 -3) \htext{$\gamma$} \move(-1
-1) \htext{$U$}

}
\end{center}
\vspace{0.2cm} \caption{The critical parameter space $U$}
\end{figure}

In the first case, $c_{0} \in \gamma$ where $\gamma$ is the open
curve segment defined in (\ref{gamma}). That is to say,
$$
|t|^{2} -1 = |(2+ c_{0} + \frac{1}{\overline{c}_{0}})/2|^{2} - 1 =
0.
$$
It follows that $|v|$ must satisfy (\ref{t-v-s-1}), which  is
degenerated to a linear equation in this case. So $|v|$ can be
computed as the limit of (\ref{d-1}) or (\ref{d-2}) by letting $c
\to c_{0}$ from the inside of $U$.  It is easy to see that in
(\ref{d-2}), $|v|$ approaches to the infinity as $c$ approach to
$c_{0}$ (the numerator has a positive lower bound but the
denominator goes to zero). It thus follows that in this case, $|v|$
must be equal to the limit of (\ref{d-1}) as $c$ approaches to
$c_{0}$ from the inside of $U$.  Take a curve segment $\eta \subset
U$ which connects $c_{0}$ and the point $2$ such that
$$
d(\eta, \partial \Delta) > 0.
$$
See Figure 19 for an illustration.
\begin{figure}
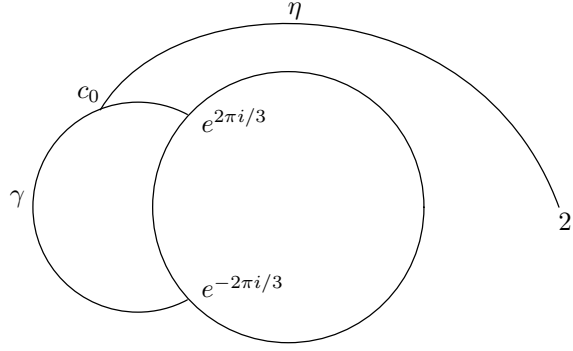

\bigskip
\begin{center}
\centertexdraw { \drawdim cm \linewd 0.02 \move(0 1)

\move(0 -3) \lcir r:1.8

\move(-2 -3) \larc r:1.4 sd:62 ed:298

\move(3.6 -3) \clvec(2.5 0)(-1.5 0 )(-2.5 -1.7)

\move(-2.8 -1.6) \htext{$c_{0}$} \move(3.6 -3.3) \htext{$2$}
\move(-1.15 -2) \htext{$e^{2 \pi i/3}$}

\move(-1.15 -4.2) \htext{$e^{-2 \pi i/3}$}

 \move(-3.7 -3) \htext{$\gamma$}
\move(0 -0.5) \htext{$\eta$}

}
\end{center}
\vspace{0.2cm} \caption{The continuation of the pair $\{p_{c},
q_{c}\}$ along $\eta$}
\end{figure}

For each $c \in \eta$, denote the corresponding values $s, t, v, w,
p, q$ by $$s_{c}, t_{c}, v_{c}, w_{c}, p_{c}, \hbox{ and  } q_{c},$$
respectively. Since $|t_{c}|^{2} - 1 > 0$, $v_{c}$ satisfies
(\ref{t-v-s-1}). We thus have $v_{c} = c |v_{c}|/|c|$. For $c \in
\eta$, we solve $|v_{c}|$ by (\ref{d-1}) and get $w_{c}$ by the
relation
$$\bar{w}_{c}/\bar{v}_{c} = t_{c} = (2 + c + 1/\bar{c})/2.$$
Now we solve the pair  $p_{c}, q_{c}$
which are the two solutions of the quadratic equation $$x^{2} -
w_{c} + v_{c} = 0.$$ Clearly, $p_{c}$ and  $q_{c}$ depend
continuously on $c$. From Case 1 of Example 1, it follows that
$\{p_{2}, q_{2}\} = \{1, 4/5\}$. We now claim that there is a
$\delta > 0$, such that for each $c \in \eta$, either $d(p_{c},
\partial \Delta) > \delta$, or $d(q_{c},
\partial \Delta)
> \delta$. In fact, if this were not true, then we would have a sequence
$\{c_{k}\} \subset \eta$ such that $$p_{c_{k}} \to \partial \Delta
\hbox{ and }q_{c_{k}} \to \partial \Delta.$$ By passing to a
convergent subsequence, it follows from (\ref{def-B}) that there is
some real constant $\alpha$ such that $$B_{p_{c_{k}}, q_{c_{k}}} \to
e^{i\alpha} z$$ uniformly in any compact subset of ${\Bbb C}-
\partial \Delta$.   In
particular, $$d(c_{k}, \partial \Delta) \to 0$$ as $k \to \infty$.
But this is a contradiction with $d(\eta,
\partial \Delta) > 0$.  The claim has been proved.

\begin{figure}
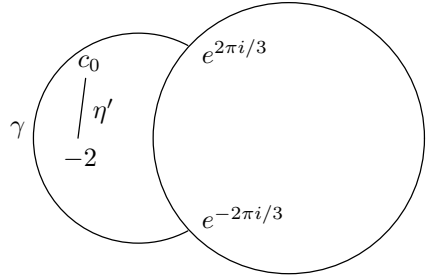

\bigskip
\begin{center}
\centertexdraw { \drawdim cm \linewd 0.02 \move(0 1)

\move(0 -3) \lcir r:1.8

\move(-2 -3) \larc r:1.4 sd:62 ed:298

 \move(-2.8 -2.1)
\htext{$c_{0}$}

\move(-2.7 -2.2) \lvec(-2.8 -3)

\move(-1.15 -2) \htext{$e^{2 \pi i/3}$}

\move(-1.15 -4.2) \htext{$e^{-2 \pi i/3}$}

\move(-3 -3.4) \htext{$-2$}
 \move(-3.7 -3) \htext{$\gamma$}

\move(-2.6 -2.8) \htext{$\eta'$}

}
\end{center}
\vspace{0.2cm} \caption{The continuation of the pair $\{p_{c},
q_{c}\}$ along $\eta'$}
\end{figure}
Since $|p_{c_{0}}| > 1$ and $|q_{c_{0}}|<1$, and $\{p_{2}, q_{2}\} =
\{1, 4/5\}$, it follows that there is a sequence $\{c_{k}\} \subset
\eta$ such that either $p_{c_{k}}$ is contained in some compact set
in the outside of the unit disk and $q_{c_{k}}$ lies in the inside
of the unit disk and approaches $\partial \Delta$, or $q_{c_{k}}$ is
contained in some compact set in the inside of the unit disk and
$p_{c_{k}}$ lies in the outside of the unit disk and approaches
$\partial \Delta$. But by Lemma~\ref{p-q}, both of the two
possibilities imply that $B_{p_{c_{k}}, q_{c_{k}}}$ has two distinct
critical points on $\partial \Delta$ for all $k$ large enough. This
is a contradiction with Remark~\ref{solution}.

In the second case, $c_{0} \in V$. Then we take a curve segment
$\eta' \subset V$ which connects $-2$ and $c$. See Figure 20 for an
illustration.  There are two curves of $\{p_{c}, q_{c}\}$ which are
determined by the two choices of $\{p_{-2}, q_{-2}\}$ in  Example 2
respectively.

For the first choice, $\{p_{-2}, q_{-2}\} = \{1, 4/5\}$. We can get
a contradiction by using the same argument as in the proof of the
first case.

 For the second choice, $\{p_{-2},
q_{-2}\} = \{-0.5+ 1.936491 i, -0.5-1.936491 i\}$. So $|p_{-2}| =
|q_{-2}| > 1$.  Since $|p_{c_{0}}| > 1$ and $|q_{c_{0}}| < 1$, and
since $p_{c}$ and $q_{c}$ can not be both close to $\partial \Delta$
with $|p_{c}| > 1$ and $|q_{c}|< 1$(otherwise  we get a
contradiction by Lemma~\ref{p-q-t} and Remark~\ref{solution}), there
would be a sequence $\{c_{k}\} \subset \eta'$ such that either
$p_{c_{k}}$ is contained in some compact set in the outside of the
unit disk and $q_{c_{k}}$ lies in the inside of the unit disk and
approaches $\partial \Delta$, or $q_{c_{k}}$ is contained in some
compact set in the inside of the unit disk and $p_{c_{k}}$ lies in
the outside of the unit disk and approaches $\partial \Delta$. Again
by Lemma~\ref{p-q}, both of the two possibilities imply that
$B_{p_{c_{k}}, q_{c_{k}}}$ has two distinct critical points on
$\partial \Delta$ for all $k$ large enough. This is a contradiction
with Remark~\ref{solution}.

The above argument implies that $\Phi(\mathcal{B}) \subset U$. Next
we need to prove that for each $c_{0} \in U$, there is a $B_{p, q}
\in \mathcal{B}$ such that $\Phi(B_{p,q}) = c_{0}$. In fact, since
$U$ is simply connected, we can take a curve segment, say $\eta''
\subset U$ to connect the point $2$ and $c_{0}$. For each $c \in
\eta''$, we solve $|v_{c}|$ by (\ref{d-2}) and then get $v_{c} = c
|v_{c}|/|c|$, and a continuous curve of $\{p_{c}, q_{c}\}$.  From
Case (2) of Example 1, we have that $|p_{2}|> 1$ and $|q_{2}|<1$. We
claim that $|p_{c_{0}}|
> 1$ and $|q_{c_{0}}| < 1$. Suppose this were not true.   Then
the same argument as above will induce a contradiction again. This
implies that $\Phi(\mathcal{B}) = U$.
\begin{figure}
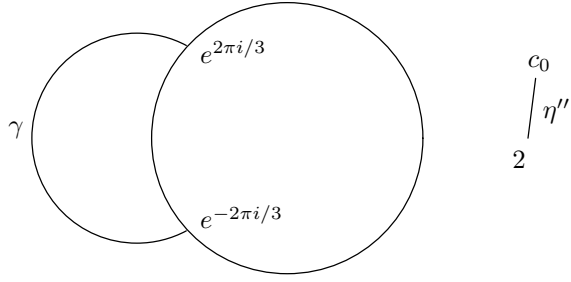

\bigskip
\begin{center}
\centertexdraw { \drawdim cm \linewd 0.02 \move(0 1)

\move(0 -3) \lcir r:1.8

\move(-2 -3) \larc r:1.4 sd:62 ed:298

 \move(3.2 -2.1)
\htext{$c_{0}$}

\move(3.3 -2.2) \lvec(3.2 -3)

\move(-1.15 -2) \htext{$e^{2 \pi i/3}$}

\move(-1.15 -4.2) \htext{$e^{-2 \pi i/3}$}

\move(3 -3.4) \htext{$2$}
 \move(-3.7 -3) \htext{$\gamma$}

\move(3.4 -2.8) \htext{$\eta''$}

}
\end{center}
\vspace{0.2cm} \caption{The continuation of the pair $\{p_{c},
q_{c}\}$ along $\eta''$}
\end{figure}
Finally let us show that $\Phi$ is injective. Assume that for some
$c \in U$,  we have two different pairs $\{p, q\}$ and $\{p', q'\}$
such that $|p|> 1, |q|< 1$, $|p'|>1, |q'| < 1$ and $\Phi(B_{p,q}) =
\Phi(B_{p',q'}) = c$.  Take a curve segment $\eta \subset U$ which
connects $c$ and the point $2$. Then we have two curves of pairs
$\{p_{c}, q_{c}\}$, $c \in \eta$. It follows that one of them  is
determined by (\ref{d-1}), along which we get $\{p_{2}, q_{2}\} =
\{1, 4/5\}$, which is  Case (1) of Example 1. Now the same argument
above will induce a contradiction again. This proves that $\Phi$ is
injective.

Finally let us show that $\Phi^{-1}$ is continuous also. In fact,
for each $c \in U$, compute $|v_{c}|$ by (\ref{d-2}) and get $v_{c}$
by
$$ v_{c} =  c |v_{c}|/|c|.$$  Then we get $w_{c}$ by the
relation
$$
\bar{w}_{c}/\bar{v}_{c} = t_{c} = (2 + c + 1/\bar{c})/2.
$$
Now  the pair $p_{c}, q_{c}$ is determined by the solutions of the
quadratic equation
$$
x^{2} - w_{c} + v_{c} = 0.
$$
By Case (2) of Example 1, and the same argument as before, it
follows that one of the two solutions lies in the outside of the
unit disk, and the other one lies in the inside of the unit disk.
Let $p_{c}$ be the one such that $|p_{c}| > 1$ and $q_{c}$ be the
other one such that $|q_{c}| < 1$. It is clear that $p_{c}$ and
$q_{c}$  depend continuously on $c$.

Note that as $c \to \infty$, by solving (\ref{d-2}) we get $p_{c}
\to 3$ and $q_{c} \to 0$. We can thus define $\Phi(B_{3, 0}) =
\infty$. This completes the proof of the lemma.
\end{proof}

By Proposition 11.7 of \cite{KH}(and see also $\S9$ of \cite{Z1}),
we have
\begin{lemma}\label{G-p-q}
For each $B_{p, q} \in \mathcal{B}$, there is a unique $t \in [0, 2
\pi)$ such that the rotation number of $e^{it}B_{p,q}|\partial
\Delta$ is $\theta$. Moreover, $t$ depends continuously on $B_{p,
q}$.
\end{lemma}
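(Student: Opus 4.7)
The plan is to view $t\mapsto e^{it}B_{p,q}$ as a one-parameter family of orientation-preserving critical circle homeomorphisms and invoke the classical Poincar\'{e} rotation number theory for such families.

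First I would observe that, by the lemma preceding Definition~\ref{the set B}, $B_{p,q}|\partial\Delta:\partial\Delta\to\partial\Delta$ is an orientation-preserving homeomorphism of degree $1$; the only point of $\partial\Delta$ where its derivative vanishes is $z=1$, where $B_{p,q}$ has local degree $3$. Hence for every $t\in[0,2\pi)$ the map $\Phi_t=e^{it}B_{p,q}|\partial\Delta$ is again an orientation-preserving critical circle homeomorphism, so its rotation number $\rho(t)\in{\Bbb R}/{\Bbb Z}$ is well defined. Lifting $\Phi_t$ to ${\Bbb R}$ and using the additive parameter $t$, one sees that the lifts form a family translating uniformly with $t$; this is precisely the setting of Proposition~11.7 of \cite{KH}. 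Consequently $\rho:[0,2\pi)\to[0,1)$ is continuous and weakly monotone, and as $t$ runs over a full period the rotation number increases by exactly $1$, so $\rho$ is surjective. In particular there exists at least one $t$ with $\rho(t)=\theta$.

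Next I would establish uniqueness. The set $\rho^{-1}(\theta)$ is a closed connected subset of $[0,2\pi)$ by monotonicity and continuity; if it were a nondegenerate interval $[t_1,t_2]$, then the classical ``plateau implies rational rotation number'' statement (on such a plateau every $\Phi_t$ has a periodic orbit whose period witnesses a rational value of $\rho$) would force $\theta$ to be rational, contradicting the hypothesis that $\theta$ is of bounded type. Hence $\rho^{-1}(\theta)$ is a single point $t(B_{p,q})$.

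Finally, for continuous dependence on $B_{p,q}$, I would use that $(p,q)\mapsto B_{p,q}|\partial\Delta$ is continuous in the $C^0$ topology on $\partial\Delta$, together with the fact that the rotation number of an orientation-preserving circle homeomorphism depends continuously on the map in the $C^0$ topology. Combined with the strict monotonicity of $\rho$ at the irrational value $\theta$ established above, continuous dependence of the unique solution $t(B_{p,q})$ follows from a standard implicit-function-style argument: if $B_{p_n,q_n}\to B_{p,q}$ but $t(B_{p_n,q_n})\not\to t(B_{p,q})$, pass to a subsequential limit $t_\infty$ and use joint continuity of the rotation number to get $\rho_{B_{p,q}}(t_\infty)=\theta$, contradicting uniqueness. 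I expect the only mildly delicate point to be the ``no plateaus at irrational values'' step, but since $\Phi_t$ is a genuine homeomorphism (not merely a degree-one map with a flat spot), this follows from the standard Poincar\'{e}--Denjoy dichotomy used in \cite{KH}.
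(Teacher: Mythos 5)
Your proof matches the paper's, which disposes of this lemma by citing Proposition~11.7 of \cite{KH} (and \S9 of \cite{Z1}); what you write is essentially an expansion of the content of that proposition, so the overall route is the same. The structure is right: $\Phi_t=e^{it}B_{p,q}|\partial\Delta$ is a monotone Arnold-type family of critical circle homeomorphisms, its rotation number is continuous and nondecreasing in $t$ and increases by one over a full period (giving existence), $\rho^{-1}(\theta)$ is a point because $\theta$ is irrational (giving uniqueness), and joint $C^{0}$-continuity of $\rho$ together with uniqueness gives the continuous dependence on $B_{p,q}$.

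One caveat on the parenthetical in your uniqueness step. You justify ``plateau $\Rightarrow$ rational rotation number'' by claiming that on a plateau every $\Phi_t$ has a periodic orbit witnessing a rational $\rho$. But on a hypothetical plateau at $\theta$ irrational, each $\Phi_t$ would, by assumption, have irrational rotation number and hence \emph{no} periodic orbit; so the periodic orbit you invoke is precisely what cannot exist under the hypothesis you set up, and the parenthetical reads as circular. The correct content of the cited proposition is the direct assertion that for a strictly increasing family of lifts $\{F_t\}$, one has $\rho(F_t) > \rho(F_{t_0})$ for $t>t_0$ whenever $\rho(F_{t_0})$ is irrational, which rules out plateaus at irrational values without any appeal to periodic orbits. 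Since you cite the proposition rather than reprove it, this affects only the explanatory aside and not the validity of your argument.
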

Let us denote $e^{it}B_{p,q}$ by $G_{p,q}$.
\subsection{ The Cross Ratio Function $\lambda(k,l,m,n)$}
For any four distinct points $z_{1}, z_{2}, z_{3}, z_{4}$, their
cross ratios have several definitions. Since the properties
established in this sections are true for any of them, let us simply
use the same notation $C(z_{1}, z_{2}, z_{3},z_{4})$ to denote them.
For $ 0 \le k < l < m < n$, set
$$
\lambda_{k,l,m,n}(c)= C(g_{c}^{k}(1),
g^{l}_{c}(1),g_{c}^{m}(1),g_{c}^{n}(1))
$$
and
$$
\alpha(k, l, m, n) = C(e^{2 \pi ik\theta}, e^{2 \pi il\theta}, e^{2
\pi im\theta}, e^{2 \pi in\theta}).
$$

Let $\xi \subset \widehat{\Bbb C}$ be the simple closed curve in
Lemma~\ref{par-c}. Let  $\Omega_{0}$ and  $\Omega_{\infty}$ denote
the bounded and unbounded components of $\widehat{\Bbb C} -\xi$,
respectively. For $R > 0$ large enough such that $\xi \subset
\{z\big{|}|z| < R\}$, let $U_{R} = \{|z| > R\}$ and $\Omega_{R} =
\Omega_{\infty} - \overline{U_{R}}$.

\begin{lemma}\label{no-finite}
Let $c \in \Omega_{\infty}$. Then the forward orbit of $1$ under
$g_{c}$ is not finite.
\end{lemma}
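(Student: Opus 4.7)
The plan is to argue by contradiction. Suppose $c \in \Omega_\infty$ and $g_c^m(1) = g_c^n(1)$ for some integers $0 \le m < n$. The first step is a reduction to $R_\theta^{geom}$. I would conjugate $g_c$ by the Möbius transformation $\phi(z) = z/c$ to obtain $g_{1/c} \in \Sigma$; under $\phi$ the critical points $1$ and $c$ of $g_c$ go to $1/c$ and $1$ respectively, and the hypothesis transforms into $g_{1/c}^m(1/c) = g_{1/c}^n(1/c)$. Hence $g_{1/c} \in Z_{m,n}^\lambda = R_{m,n}^\theta$ by Lemma~\ref{main-lem}, so in particular $g_{1/c} \in R_\theta^{geom}$ and $\partial D_{g_{1/c}}$ is a quasi-circle. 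By Remark~\ref{GS} this boundary contains a critical point; since $1/c$ has finite orbit while $\partial D_{g_{1/c}}$ supports a dense irrational-rotation orbit, the boundary point must be $1$. Pulling back by $\phi^{-1}$ yields $c \in \partial D_{g_c}$.

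The second step excludes the possibility $c \in \xi$. For any $c^* \in \xi = \gamma \cup \tilde\gamma \cup \{\pm 1\}$, the map $g_{c^*}$ has both critical points on $\partial D_{g_{c^*}}$, on which $g_{c^*}$ is topologically conjugate to the rigid rotation $R_\theta$. Because $\theta$ is irrational the orbit $\{g_{c^*}^k(1)\}_{k \ge 0}$ has pairwise distinct terms, contradicting $g_c^m(1) = g_c^n(1)$. Hence $c$ must lie in $\widehat{\mathbb{C}} \setminus \xi = \Omega_0 \cup \Omega_\infty$.

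The third and decisive step rules out $c \in \Omega_\infty$. The guiding observation is that for $c$ in a neighborhood of $\infty$ inside $\Omega_\infty$, the family $g_c$ converges locally uniformly on $\mathbb{C}\setminus\{0\}$ to the polynomial $g_\infty(z) = -\tfrac{\lambda}{2}z^2 + \lambda z$, whose Siegel disk is a bounded set. Combined with the uniform $K$-quasi-circle bound from Lemma~\ref{distortion}, the Siegel disks $D_{g_c}$ stay within a fixed compact region as $c$ ranges near $\infty$; since $|c|$ is large, $c$ cannot lie on $\partial D_{g_c}$ there. The set $S = \{c \in \Omega_\infty : c \in \partial D_{g_c}\}$ is closed in $\Omega_\infty$ by continuity of the quasi-circle boundaries, and its complement contains a neighborhood of $\infty$. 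Using the holomorphic motion of $\partial D_{g_c}$ over the J-stable component $\Omega_\infty$ (and the structural role of $\xi$ as the bifurcation locus), one concludes $S = \emptyset$, contradicting Step~1.

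The main obstacle is this last step: globalizing ``$c \notin \partial D_{g_c}$" from a neighborhood of $\infty$ to all of $\Omega_\infty$ requires establishing J-stability of the family on the component $\Omega_\infty$ of $\widehat{\mathbb{C}} \setminus \xi$, and extracting from it a holomorphic motion of the Siegel boundary whose trace on the diagonal $\{(c,c)\}$ is the locus $S$. The preceding steps, by contrast, are straightforward consequences of Lemma~\ref{main-lem}, Remark~\ref{GS}, and the irrationality of $\theta$.
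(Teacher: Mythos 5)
Your Steps 1 and 2 are correct. Conjugating by $z \mapsto z/c$ transports the finite critical orbit of $g_c$ to the free critical point of $g_{1/c}$, placing $g_{1/c}$ in $Z_{m,n}^\lambda = R_{m,n}^\theta$ by Lemma~\ref{main-lem}; and your further observation that the quasi-circle boundary of $D_{g_{1/c}}$ must pass through the critical point $1$ (not $1/c$, since a boundary point carries a dense rather than finite orbit) is a valid and useful refinement, yielding $c \in \partial D_{g_c}$. The paper performs the same initial conjugation but then goes a different way.

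Step 3, however, has a genuine gap which you flag, and it is more than merely ``requiring J-stability'': as written it is circular. The uniform $K$-quasicircle bound of Lemma~\ref{distortion} is established only for parameters on $\xi$, that is for maps realizing $f_t$ or $\tilde f_t$, so it cannot be invoked to conclude that $D_{g_c}$ stays within a fixed compact set for $c$ near $\infty$, nor to argue that $S=\{c : c \in \partial D_{g_c}\}$ is closed ``by continuity of the quasi-circle boundaries.'' More fundamentally, J-stability of the family over $\Omega_\infty$ and the role of $\xi$ as the bifurcation locus are (at this point in the paper) unproved statements that follow from Theorem~C, the very result this lemma is needed to prove.

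The paper's argument avoids the problem entirely via the Blaschke-product parameterization. After reducing to $g_u \in R_\theta^{geom}$ with $u=1/c \in \Omega_0$, it models $g_u$ by a Blaschke product $G_{p,q}$ with $B_{p,q} \in \mathcal{B}$, locates the corresponding parameter $c_0=\Phi(B_{p,q})$ in the connected domain $U$ of Lemma~\ref{par-S}, and draws a path $\gamma:[0,1]\to U$ from $c_0$ to $\infty$. Applying the quasiconformal surgery to the corresponding one-parameter family of Blaschke products $G_s$ produces a continuous curve $\mathbf{S}(s)$ of critical parameters from $u \in \Omega_0$ to $\infty \in \Omega_\infty$ (continuity uses Zakeri's argument from \cite{Z1}). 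Since every $B_{p,q} \in \mathcal{B}$ has a single double critical point on $\partial\Delta$ at $1$, with the remaining critical points strictly off the circle, the surgery output for each $s$ has its Siegel boundary passing through the critical point $1$ only, so $\mathbf{S}(s)\notin\xi$ for all $s$. But a continuous curve from $\Omega_0$ to $\Omega_\infty$ must cross $\xi$, a contradiction. This route needs only the connectedness of $U$, the critical parameterization of $\mathcal{B}$, and the structural fact about the critical points of Blaschke products in $\mathcal{B}$; it never invokes stability or a holomorphic motion of Siegel boundaries.
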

\begin{proof}
Let us prove it by contradiction. Assume that $g_{c}^{k}(1) =
g_{c}^{l}(1)$ for some integers $0 \le k < l$. Let $u = 1/c$. It
follows that  $u \in \Omega_{0}$ and
$$
g_{u}^{k}(u) = g_{u}^{l}(u).
$$
By Lemma~\ref{main-lem}, $g_{u} \in R_{\theta}^{geom}$ and hence is
modeled by some Blaschke product $G_{p,q} = e^{it}B_{p,q}$ where $t
\in [0, 2 \pi)$ and $B_{p,q} \in \mathcal{B}$(see
Lemma~\ref{G-p-q}). Let $$c_{0} = \Phi(B_{p,q}).$$  Let $U$ be the
parameter space in Lemma~\ref{par-S}. Take a continuous curve
$\gamma: [0, 1] \to U$ such that $\gamma(0) = c_{0}$ and $\gamma(1)
= \infty$.  For each $s \in [0, 1]$, let $B_{s} =
\Phi^{-1}(\gamma(s))$ and $G_{s}$ be the corresponding Blaschke
product determined by $B_{s}$(see Lemma~\ref{G-p-q}).  We thus get a
continuous family of Blaschke products $G_{s}$, $0 \le s \le 1$. Now
for each $0\le s \le 1$, we may perform a quasiconformal surgery on
$G_{s}$ as described in the proof of Lemma~\ref{c-curve} and get a
Siegel rational map $g_{c(s)}$. This surgery induces a surgery map
\begin{equation}\label{Zakeri-S}
{\mathbf{S}}: [0, 1] \to \widehat{\Bbb C} -\{0, 1, -1\}.
\end{equation}
by  ${\mathbf{S}}(s) = c(s)$. It is clear that $${\mathbf{S}}(0) = u
\hbox{ and } {\mathbf{S}}(1) = \infty.$$

By using Zakeri's argument(see $\S12$ of \cite{Z1}), one can show
that the map $\mathbf{S}$ is continuous in $[0, 1]$. That is,
${\mathbf{S}}(s), 0 \le s \le 1$ is a continuous curve connecting
$u$ and the infinity. Since $u \in \Omega_{0}$, by
Lemma~\ref{par-c}, the curve ${\mathbf{S}}(s), 0 \le s \le 1$
intersects $\xi$ at some point.

But on the other hand, since $G_{s}$ has exactly one double critical
point at $1$ and the other two critical points do not lie in the
unit circle for every $0\le s \le 1$, it follows that
${\mathbf{S}}(s)$ does not lies in the boundary of the Siegel disk.
In particular, the curve ${\mathbf{S}}(s), 0 \le s \le 1$ does not
intersect $\xi$. This is a contradiction. The lemma follows.

\end{proof}

From Lemma~\ref{no-finite}, it follows that $\lambda_{k,l,m,n}$ is
holomorphic and has no zeros in $\Omega_{R}$.
\begin{lemma}
$\lambda_{k, l, m, n}(c)$ can be continuously extended to $\partial
\Omega_{R}$. \end{lemma}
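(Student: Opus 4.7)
The plan is to establish continuity of $\lambda_{k,l,m,n}$ at each point of $\partial\Omega_R$. The boundary decomposes as $\partial\Omega_R = \xi \cup \{|z|=R\}$, and the circle $\{|z|=R\}$ sits inside the open set $\Omega_\infty$ where $\lambda_{k,l,m,n}$ is already holomorphic (since by Lemma~\ref{no-finite} the iterates $g_c^k(1)$ are pairwise distinct for $c\in\Omega_\infty$). So the real content is extending across $\xi$, and I would split this into the generic case $c_0\in\xi\setminus\{1,-1\}$ and the two corner points $c_0=\pm 1$.

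For $c_0\in\xi\setminus\{1,-1\}$, the point lies on either $\gamma$ or $\gamma'$ in the decomposition of Lemma~\ref{par-c}, so $g_{c_0}\in R_\theta^{geom}$ realizes one of $f_t$ or $\widetilde{f}_t$. In particular the critical point $1$ sits on the quasi-circle $\partial D_{c_0}$, which is invariant and on which $g_{c_0}$ is topologically conjugate to the irrational rotation $R_\theta$; hence the forward orbit $\{g_{c_0}^j(1)\}_{j\ge 0}$ consists of pairwise distinct points of $\partial D_{c_0}$. Since the coefficients of $g_c$ in the normalized form (\ref{g-c}) are rational in $c$, the map $c\mapsto g_c^j(1)$ is continuous (in the spherical metric) on $\widehat{\Bbb C}\setminus\{0,1,-1\}$ for each fixed $j$. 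Therefore, as $c\to c_0$ through $\Omega_R$, each of the four iterates $g_c^k(1),g_c^l(1),g_c^m(1),g_c^n(1)$ converges to the corresponding iterate of $g_{c_0}$, and these four limits are distinct finite points; this gives the continuous extension $\lambda_{k,l,m,n}(c)\to C(g_{c_0}^k(1),g_{c_0}^l(1),g_{c_0}^m(1),g_{c_0}^n(1))$.

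For $c_0=1$, the explicit formula (\ref{g-c}) degenerates after cancellation to the M\"obius map $g_1(z)=e^{2\pi i\theta}z$; nevertheless, for each fixed $j$, $g_c^j(1)$ is a rational function of $c$, hence extends continuously (with values in $\widehat{\Bbb C}$) to $c=1$. A direct computation gives $g_c(1)=e^{2\pi i\theta}(1+c)/(2c)\to e^{2\pi i\theta}$, and induction on $j$ (using that $e^{2\pi ij\theta}\neq 1$ for $j\ge 1$, so each composition is evaluated away from the point of cancellation) yields $g_c^j(1)\to e^{2\pi ij\theta}$. Since $\theta$ is irrational these limits are pairwise distinct, so the extension is $\lambda_{k,l,m,n}(1)=\alpha(k,l,m,n)$, which is finite and nonzero. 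The case $c_0=-1$ reduces to the previous one via the M\"obius conjugation of Fact~2 in $\S4.2.2$: writing $\tilde{c}=(1-p_c)c/(c-p_c)$ and noting $p_c\to 0$ as $c\to -1$, one checks $\tilde c\to 1$, and the cross-ratio is invariant under the conjugation, so $\lambda_{k,l,m,n}(c)$ inherits a finite limit.

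The main obstacle I expect is the pair of degenerate endpoints $c=\pm 1$: one must track the cancellations in (\ref{g-c}) carefully to verify that the rational function $c\mapsto g_c^j(1)$ really takes a finite value there (rather than the indeterminate $0/0$ suggested by the raw formula) and that no two of the four iterates collide in the limit. Everything else is a direct continuity argument: on $\xi\setminus\{\pm 1\}$ the quasi-circle structure provided by Theorem~B and the irrationality of $\theta$ rule out any collisions, and on the circle $\{|z|=R\}$ the lemma is immediate from the open-set continuity already in hand.
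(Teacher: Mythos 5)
Your proof is correct and takes essentially the same route as the paper: both hinge on the explicit limit $g_c(1)\to e^{2\pi i\theta}$ and $g_c(z)\to e^{2\pi i\theta}z$ (for $z\ne 1$) as $c\to 1$, giving $\lambda_{k,l,m,n}(c)\to\alpha(k,l,m,n)$, and both handle $c\to -1$ by the M\"obius conjugation $\phi_c$ fixing $0,1$ and sending $p_c\to\infty$, which preserves the cross-ratio and reduces to the $\tilde c\to 1$ case. You spell out the routine cases (the circle $\{|z|=R\}\subset\Omega_\infty$ and the arcs $\xi\setminus\{\pm1\}$, where Theorem~B puts the orbit on a quasi-circle so no collisions occur) that the paper elides under ``it suffices,'' but the substance of the argument is the same.
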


\begin{proof}
It suffices to prove that both $\lim_{c \to 1}\lambda_{k, l,m,n}(c)$
and $\lim_{c \to -1}\lambda_{k, l, m,n}$ exist and are finite. In
fact, From (\ref{g-c}) in $\S 4.1$, it follows that $$\lim_{c \to
1}g_{c}(1) = \lambda,$$ and for $z \ne 1$, $$\lim_{c \to 1}g_{c}(z)
= \lambda z,$$ where $\lambda = e^{2 \pi i \theta}$. This implies
that for given $ 0 \le k < l< m< n$, $$\lim_{c \to 1}\lambda_{k, l,
m, n}(c) = \alpha(k,l,m,n).$$ This proves that $\lambda_{k, l, m,
n}(c)$ can be continuously extended to the point $1$.

Now let us consider the case that $c \to -1$.  By solving $g_{c}(z)
= z$, it follows that as $c$ is close to $-1$, $g_{c}$ has three
distinct fixed points $0$, $p_{c}$, and $\infty$ where
$$
p_{c} = \frac{(1- \lambda)2c(1+c)}{4c -\lambda(1+c)^{2}} \to 0
$$ as $c$ approaches  $-1$.
Let $\phi_{c}$ be the M\"{o}bius map such that $\phi_{c}(0) =0$,
$\phi_{c}(1) = 1$ and $\phi_{c}(p_{c}) = \infty$. Then $\phi_{c}
\circ g_{c} \circ \phi_{c}^{-1} = g_{c'}$ where $c' = \phi_{c}(c)$.
By a direct calculation, we have $$\phi_{c}(z) = (1 -p_{c})z/(z -
p_{c}).$$ It follows that $$c' = \phi_{c}(c) = (1 - p_{c})c/(c -
p_{c}).$$ Since $p_{c} \to 0$ as $c \to -1$, it follows that $c' \to
1$. Since the cross ratio  $\lambda_{k, l, m, n}(c)$ is preserved by
M\"{o}bius transformations, it follows that $$\lambda_{k,l,m,n}(c) =
\lambda_{k,l,m,n}(c').$$ We thus get that  $$\lim_{c \to
-1}\lambda_{k,l,m,n}(c) = \lim_{c' \to 1}\lambda_{k,l,m,n}(c') =
\alpha(k,l,m,n).$$
\end{proof}

\begin{lemma} $
\lambda_{k,l,m,n}$ has a removable singularity at $\infty$, and
moreover, $\lim_{c \to \infty}\lambda_{k, l, m, n}(c) \ne 0$.
\end{lemma}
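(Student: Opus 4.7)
The plan is to identify the limiting map $g_\infty$ as $c \to \infty$, show that each iterate $g_c^k(1)$ converges to the corresponding iterate of $g_\infty$, and finally verify that the four limiting points are distinct so that the limit cross ratio is finite and nonzero.

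First I would extract the limit. From the explicit normalized form (\ref{g-c}), as $c \to \infty$ one checks directly that $g_c(z) \to g_\infty(z) := \lambda z - (\lambda/2)z^2$ with $\lambda = e^{2\pi i\theta}$, the convergence being locally uniform on $\mathbb{C}$ (the single pole of $g_c$ lies at $z = (1+c)/2$, which escapes to infinity). Hence, by an induction on $k$, for every fixed $k \ge 0$ one has $g_c^k(1) \to g_\infty^k(1)$ as $c \to \infty$, with the point $z = 1$ being the unique (double) critical point of the quadratic polynomial $g_\infty$ in $\mathbb{C}$.

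Next I would argue that the points $g_\infty^k(1)$, $k \ge 0$, are pairwise distinct. The polynomial $g_\infty$ has a fixed Siegel disk $D_\infty$ centered at the origin of rotation number $\theta$. Since $\theta$ is of bounded type, the Petersen theorem invoked at the start of the introduction (and already used freely in the paper) ensures that $\partial D_\infty$ is a Jordan curve containing the critical point $1$; in fact $\partial D_\infty$ is a quasi-circle, so $g_\infty \in R_\theta^{geom}$. The linearizing map $\phi_\infty : \overline{D}_\infty \to \overline{\Delta}$ conjugating $g_\infty|_{\overline{D}_\infty}$ to the rigid rotation $R_\theta$ is a homeomorphism of the closures and sends $\{g_\infty^k(1)\}_{k \ge 0}$ to $\{e^{2\pi ik\theta}\}_{k\ge 0}$, which is an infinite set of distinct points because $\theta$ is irrational. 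Consequently the four points $g_\infty^k(1), g_\infty^l(1), g_\infty^m(1), g_\infty^n(1)$ are mutually distinct for any $0 \le k < l < m < n$.

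Combining the two steps, the cross ratio $\lambda_{k,l,m,n}(c)$ extends continuously to $c = \infty$ with the value
$$\lim_{c \to \infty}\lambda_{k,l,m,n}(c) \;=\; C\bigl(g_\infty^k(1), g_\infty^l(1), g_\infty^m(1), g_\infty^n(1)\bigr),$$
which is a finite nonzero complex number. Since $\lambda_{k,l,m,n}$ is already known to be holomorphic and nonvanishing on $\Omega_R$ with a bounded limit at $\infty$, Riemann's removable singularity theorem gives the desired removable singularity, and the nonvanishing of the limit completes the proof. The only nontrivial input is the appeal to Petersen's theorem to conclude that the critical orbit of $g_\infty$ lies on a Jordan (in fact quasi-circle) boundary and is therefore injective; once this is granted, the rest reduces to a continuity argument in the parameter $c$.
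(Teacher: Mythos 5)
Your proposal is correct and follows essentially the same route as the paper: pass to the locally uniform limit $g_\infty(z) = \lambda z - (\lambda/2)z^2$, deduce $g_c^k(1) \to g_\infty^k(1)$, and invoke the fact that $g_\infty$ has a Siegel disk with quasi-circle boundary through $1$ (Petersen/Douady--Ghys--Herman--Shishikura) to conclude the four limiting points are distinct, so the cross ratio converges to a finite nonzero value. Your additional remarks (locating the escaping pole, and making explicit that the linearizing map sends $\{g_\infty^k(1)\}$ injectively to $\{e^{2\pi ik\theta}\}$) simply supply details the paper leaves implicit.
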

\begin{proof}
It suffices to prove that $\lim_{c \to \infty}\lambda_{k, l, m,
n}(c)$ exists.  From (\ref{g-c}) in $\S 4.1$, it follows that for
any compact set $K$ of the complex plane, $$\lim_{c \to \infty}g_{c}
(z) \to \lambda z -\lambda z^{2}/2 = g_{\infty}(z)$$ uniformly on
$K$. Therefore, for any integer $k \ge 0$, $g_{c}^{k}(1) \to
g_{\infty}^{k}(1)$ as $c \to \infty$. Since $g_{\infty}$ has a
Siegel disk with quasi-circle boundary which passing through $1$, it
follows that the cross ratio $C(g_{\infty}^{k}(1),
g_{\infty}^{l}(1), g_{\infty}^{m}(1), g_{\infty}^{n}(1))$ is defined
and is not equal to $0$.   All of these imply that $$\lim_{c \to
\infty} \lambda_{k,l,m,n}(c) = C(g_{\infty}^{k}(1),
g_{\infty}^{l}(1), g_{\infty}^{m}(1), g_{\infty}^{n}(1))$$ is a
finite non-zero complex value. The lemma follows.
\end{proof}
Let us summarize the above lemmas as the following,
\begin{proposition}\label{nov}
For any integers $0 \le k < l <m <n$, $\lambda_{k, l, m, n}(c)$ is a
non-zero and holomorphic function in $\Omega_{\infty}$. Moreover, it
can be continuously extended to $\partial \Omega_{\infty}$.
\end{proposition}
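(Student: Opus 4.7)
The plan is to assemble the Proposition directly from the three lemmas that immediately precede it, with only a small bookkeeping step to pass from holomorphy on $\Omega_R$ to holomorphy on all of $\Omega_\infty$. First I would observe that the three auxiliary lemmas give: (a) $\lambda_{k,l,m,n}$ is holomorphic and nowhere vanishing on the bounded piece $\Omega_R=\Omega_\infty\setminus\overline{U_R}$; (b) $\lambda_{k,l,m,n}$ extends continuously to the finite part of $\partial\Omega_R$, in particular to $\xi\setminus\{\infty\}$ including the two cusps $c=\pm 1$; (c) $\lambda_{k,l,m,n}$ has a removable singularity at $c=\infty$ with nonzero limit equal to the cross ratio of $g_\infty^k(1),\dots,g_\infty^n(1)$, where $g_\infty(z)=\lambda z-\lambda z^2/2$.

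Next I would cover the remaining open set $U_R\cap\Omega_\infty=\{|c|>R\}$. Here the family $g_c$ is holomorphic in $c$ away from $\{0,1,-1\}$, and by Lemma~\ref{no-finite} the orbit $\{g_c^j(1)\}_{j\ge 0}$ is infinite for every $c\in\Omega_\infty$, so the four points $g_c^k(1),g_c^l(1),g_c^m(1),g_c^n(1)$ are pairwise distinct. Consequently $\lambda_{k,l,m,n}$ is holomorphic and nonvanishing on $\Omega_\infty\setminus\{\infty\}$; combined with (c) and Riemann's removable singularity theorem, this upgrades $\lambda_{k,l,m,n}$ to a holomorphic nonvanishing function on the whole open set $\Omega_\infty$ (viewed in the Riemann sphere), which patches (a) across the artificial boundary $\{|c|=R\}$.

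Finally I would treat the continuous extension to $\partial\Omega_\infty=\xi\cup\{\infty\}$. Away from $\{1,-1\}$ the curve $\xi$ consists of parameters for which $g_c\in R_\theta^{geom}$, so $\partial D_c$ is a quasi-circle passing through both critical points; the orbit $\{g_c^k(1)\}$ is embedded in this quasi-circle and is conjugate to the rigid rotation, hence the cross ratio is finite, nonzero, and depends continuously on $c$ by the uniform $K$-quasi-circle control of Lemma~\ref{distortion}. This is precisely what Lemma 4.4 supplies, and the limiting values at the cusps $c=\pm 1$ (where two critical points collide at $1$ or where an extra fixed point collapses to $0$, respectively) were computed in Lemma 4.5 via the explicit form (\ref{g-c}) and the M\"obius normalization $\phi_c$, both giving $\alpha(k,l,m,n)\ne 0$. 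The limit at $c=\infty$ was computed in Lemma 4.6.

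The routine assembly writes itself once the lemmas are in hand, and no single step should be a serious obstacle; the only conceptual subtlety is the one already addressed by Lemma~\ref{no-finite}, namely ruling out any $c\in\Omega_\infty$ at which the postcritical orbit of $1$ collapses (which would force $\lambda_{k,l,m,n}$ to be either undefined or zero). That lemma uses the Zakeri-style surgery path $\mathbf{S}:[0,1]\to\widehat{\mathbb C}\setminus\{0,1,-1\}$ to move from a hypothetical bad parameter in $\Omega_0$ through $\mathcal{B}$ out to $\infty$ without crossing $\xi$, contradicting Lemma~\ref{par-c}; granted that input, the present Proposition is essentially a one-line corollary.
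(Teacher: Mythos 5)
Your proposal is correct and follows essentially the same route as the paper: the paper's own ``proof'' of Proposition~\ref{nov} is literally the sentence ``let us summarize the above lemmas,'' and you have correctly identified those lemmas (Lemma~\ref{no-finite} for holomorphy and nonvanishing on $\Omega_\infty$, the continuity-at-$\pm 1$ lemma, and the removable-singularity-at-$\infty$ lemma) and reassembled them. Two small notes. First, the explicit patching across the circle $\{|c|=R\}$ is an unnecessary extra step: Lemma~\ref{no-finite} together with the fact that the cross ratio is a holomorphic function of four distinct points on $\widehat{\Bbb C}$ (including when one of them passes through $\infty$) already gives holomorphy and nonvanishing on all of $\Omega_\infty\setminus\{\infty\}$; the paper restricts attention to $\Omega_R$ only to set up the separate removable-singularity lemma at $c=\infty$, not because anything is singular on $\{|c|=R\}$. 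Second, your appeal to Lemma~\ref{distortion} for continuity on $\xi\setminus\{1,-1\}$ is superfluous: for $c$ in that set the orbit of $1$ is infinite, so the four iterates are distinct, hence $\lambda_{k,l,m,n}$ is holomorphic on a full neighborhood of such $c$ and continuity comes for free. The uniform $K$-quasi-circle bound of Lemma~\ref{distortion} is what later controls the \emph{modulus} of the cross ratio on $\xi$ (used in the proof of Theorem C), but it plays no role in establishing continuity of the extension. Neither of these affects the correctness of your argument.
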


\begin{remark}\label{supp}
Note that the distortion of a cross ratio by a $K-$quasiconformal
homeomorphism of the sphere is bounded by some constant dependent
only on $K$. This is an important fact which will be used in the
proof of Theorem C.
\end{remark}

\subsection{Proof of Theorem C}
Recall that $\Omega_{\infty}$ is the unbounded component of
$\widehat{{\Bbb C}}-\xi$. For each $c \in \Omega_{\infty}$, let
$\gamma_{c}$ be the closure of $\{g_{c}^{k}(1), k=0, 1, \cdots\}$.

\begin{lemma}\label{Jordan}
For each $c \in \Omega_{\infty}$, $\gamma_{c}$ is a Jordan curve.
\end{lemma}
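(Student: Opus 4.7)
The plan is to promote the non-vanishing holomorphic cross-ratio functions of Proposition~\ref{nov} to a genuine parametrization. More precisely, I would define $T_{c}: E \to \widehat{\Bbb C}$ on the dense set $E=\{e^{2\pi ik\theta} : k\ge 0\}\subset \partial\Delta$ by $T_{c}(e^{2\pi ik\theta}) = g_{c}^{k}(1)$, and show that $T_{c}$ extends to a continuous injective map $\widetilde T_{c}: \partial\Delta\to\widehat{\Bbb C}$ with image $\gamma_{c}$. Since a continuous injective image of the circle is a Jordan curve, this yields the conclusion.

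The first step is to upgrade Proposition~\ref{nov} to a uniform two-sided comparison with the reference cross ratios on $\partial\Delta$: there should exist a constant $M=M(\theta)\ge 1$ with
\[
M^{-1}\;\le\;\bigl|\lambda_{k,l,m,n}(c)/\alpha(k,l,m,n)\bigr|\;\le\;M
\]
for all $c\in\overline{\Omega_{\infty}}$ and all $0\le k<l<m<n$. On the boundary $\xi=\partial\Omega_{\infty}$, every $g_{c}$ lies in $R_{\theta}^{geom}$ with Siegel boundary a uniform $K$-quasi-circle by Lemma~\ref{distortion}, and the Blaschke surgery of \S4.3 (combined with Lemma~\ref{reduced}) realizes $g_{c}$ as $\phi\circ\widehat{G}\circ\phi^{-1}$ with $\phi$ uniformly $K$-quasiconformal and $\widehat{G}|_{\partial\Delta}$ conjugate to $R_{\theta}$ via a uniformly quasisymmetric $h_{c}$. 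Since $K$-quasiconformal and quasisymmetric maps distort cross ratios by a bounded factor (Remark~\ref{supp}), we obtain the desired bound on $\xi$. Because $\log\bigl(\lambda_{k,l,m,n}/\alpha(k,l,m,n)\bigr)$ is single-valued and holomorphic on $\Omega_{\infty}$ by Proposition~\ref{nov}, the maximum principle propagates this bound to all of $\overline{\Omega_{\infty}}$.

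Given this bound, I would show continuity and injectivity of $T_{c}$ by a standard cross-ratio argument. Suppose a subsequence $k_{n}\to\infty$ satisfies $e^{2\pi ik_{n}\theta}\to\zeta\in\partial\Delta$ but $g_{c}^{k_{n}}(1)$ accumulates at two distinct limits $w\ne w'$. Choose fixed reference indices $l,m$ so that $e^{2\pi il\theta}, e^{2\pi im\theta}$ avoid a neighbourhood of $\zeta$ and $g_{c}^{l}(1), g_{c}^{m}(1)$ are separated from $\{w,w'\}$; picking $k_{n}, k_{n}'$ from the two clusters forces $|\lambda_{l,k_{n},m,k_{n}'}(c)|$ to tend to $0$ or $\infty$, whereas $|\alpha(l,k_{n},m,k_{n}')|$ remains bounded above and below. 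This contradicts the uniform comparison, so $T_{c}$ is uniformly continuous on $E$ and extends to $\widetilde T_{c}:\partial\Delta\to\widehat{\Bbb C}$ with image $\gamma_{c}$. Injectivity is handled identically: an identification $\widetilde T_{c}(\zeta_{1})=\widetilde T_{c}(\zeta_{2})$ with $\zeta_{1}\ne\zeta_{2}$ combined with Lemma~\ref{no-finite} (which rules out finite forward orbits, so $T_{c}$ is injective on $E$) again produces a collapsing cross ratio.

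The main obstacle is the uniform cross-ratio comparison on $\xi$: Lemma~\ref{distortion} only asserts that $\partial D_{g_{c}}$ is a $K$-quasi-circle as a set, which a priori says nothing about the \emph{dynamical} parametrization $e^{2\pi ik\theta}\mapsto g_{c}^{k}(1)$. To extract quasisymmetry of this parametrization one must go through the Blaschke model of Lemma~\ref{3-fir}, use Lemma~\ref{reduced} to control the circle conjugacy $h_{c}$ uniformly in $c$, and then carefully track how the quasiconformal surgery map (which need not fix orbit points) distorts cross ratios. Once this uniformity across the entire boundary $\xi$ is secured, the holomorphic maximum principle and the cross-ratio continuity/injectivity arguments are routine.
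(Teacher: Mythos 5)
Your high-level plan is the same as the paper's: define $T_{c}$ on the dense orbit $\{e^{2\pi i k\theta}\}$, establish uniform continuity and injectivity by comparing the dynamical cross ratios $\lambda_{k,l,m,n}(c)$ with the reference cross ratios $\alpha(k,l,m,n)$ of the rigid rotation, propagate boundary control from $\xi$ into $\Omega_{\infty}$ by the max/min modulus principle, and conclude. However there are two genuine gaps.

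First, the uniform two-sided \emph{multiplicative} comparison $M^{-1}\le|\lambda_{k,l,m,n}(c)/\alpha(k,l,m,n)|\le M$ that you posit as the key technical input is false. Quasiconformal and quasisymmetric maps distort cross ratios only in a \emph{quasi-M\"obius} fashion: there is a homeomorphism $\eta:[0,\infty)\to[0,\infty)$ with $|\lambda|\le\eta(|\alpha|)$ and symmetrically, but $\eta$ is a power-law type gauge, not linear. In particular when $|\alpha|\to 0$ the ratio $|\lambda/\alpha|$ is in general \emph{unbounded} (e.g.\ the quasisymmetric model map $x\mapsto x^{K}$ sends cross ratio $\epsilon$ to $\epsilon^{K}$). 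Your plan for applying the maximum principle to the harmonic function $\log|\lambda/\alpha|$ is therefore not available in this uniform form. What does survive, and is what the paper actually uses, is the one-sided estimate: for a \emph{specific} quadruple $(m,n,k,l)$ with reference cross ratio $<4\delta$, the dynamical cross ratio is $<k(\delta)$ on $\xi$ with $k(\delta)\to0$ as $\delta\to0$, then max modulus principle applied to the holomorphic nonvanishing $\lambda_{m,n,k,l}$ alone. Your downstream continuity and injectivity arguments only consume this weaker quasi-M\"obius bound, so the plan can be repaired, but the statement as you formulate it is incorrect.

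Second, your argument silently assumes that the limit points $w,w'$ of $g_{c}^{k_{n}}(1)$ are finite, and more generally that the orbit $\{g_{c}^{k}(1)\}$ stays in a compact subset of $\Bbb C$, so that the cross ratio of a near-degenerate quadruple actually degenerates to $0$ or $\infty$ as you claim. This needs a separate argument, and the paper devotes the first half of its proof to it: if $g_{c}^{k_{i}}(1)\to\infty$ one uses that $g_{c}$ \emph{fixes} $\infty$, so $g_{c}^{k_{i}+1}(1)\to\infty$ too, and then the cross ratio of $(g_{c}^{m}(1),g_{c}^{n}(1),g_{c}^{k_{i}}(1),g_{c}^{k_{i}+1}(1))$ tends to $0$ while the corresponding circle cross ratio stays bounded below (the adjacent rotation points $e^{2\pi ik_{i}\theta}$ and $e^{2\pi i(k_{i}+1)\theta}$ are not close), contradicting the boundary estimate plus the minimum modulus principle. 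Without this preliminary compactness step, your uniform-continuity estimate $|g_{c}^{k}(1)-g_{c}^{l}(1)|<Ck(\delta)$ does not follow from the cross-ratio bound, because the denominator of the cross ratio is not a priori bounded above. Incorporating both corrections yields essentially the paper's proof.
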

\begin{proof}
When $c = \infty$, $g_{c}$ is a quadratic polynomial with a bounded
type Siegel disk, and the lemma follows from the well-known theorem
of Douady, Ghys, Herman, and Shishikura \cite{Pe1}. Let us assume
that $c \ne \infty$.

First let us show that $\gamma_{c}$ is contained in some compact set
of ${\Bbb C}$.  In fact, if this were not true, there would be a
subsequence, say $k_{1}, k_{2}, \cdots$, such that
$g_{c}^{k_{i}}(1) \to \infty$ and $e^{2 \pi ik_{i} \theta} \to t$
for some $t \in \partial \Delta$ as $i\to \infty$.  Since $g_{c}$
fixes the infinity, it follows that $g_{c}^{k_{i}+1}(1) \to \infty$
also. Take integers $m, n \ge 0$ such that $e^{2 \pi i m\theta},
e^{2 \pi i n\theta}, t, e^{2 \pi i\theta}t$ are all distinct with
each other. On the one hand, we have
\begin{equation}\label{cross-1}
\frac{( g_{c}^{m}(1) - g_{c}^{n}(1))(g_{c}^{k_{i}}(1)-
g_{c}^{k_{i}+1}(1))}{( g_{c}^{m}(1) -
g_{c}^{k_{i}}(1))(g_{c}^{n}(1)-g_{c}^{k_{i}+1}(1))} \to 0
\end{equation}
as $ i\ \to \infty$. On the other hand, By Proposition~\ref{nov},
the cross ratio function on the left hand of (\ref{cross-1}) is
holomorphic in $c$ and has no zeros in $\Omega_{\infty}$, and
moreover, it can be continuously extended to $\partial
\Omega_{\infty} = \xi$. It follows that its minimum of its modulus
is obtained on $\partial \Omega_{\infty} = \xi$. Since $e^{2 \pi
ik_{i} \theta} \to t$ and $e^{2 \pi i m\theta}, e^{2 \pi i n\theta},
t, e^{2 \pi i\theta}t$ are all distinct,  the cross ratio
$$
\frac{(e^{2 \pi i m\theta} - e^{2 \pi i n\theta})( e^{2 \pi i
k_{i}\theta}- e^{2 \pi i (k_{i}+1)\theta})}{ (e^{2 \pi i m\theta}-
e^{2 \pi i k_{i}\theta})( e^{2 \pi i n\theta} -  e^{2 \pi i
(k_{i}+1)\theta})}
$$
is uniformly bounded away from zero for all $i$ large enough.  By
Lemma~\ref{distortion} and Remark~\ref{supp}, it follows that the
modulus of the cross ratio function on the left hand of
(\ref{cross-1}) has a positive lower bound when restricted on $\xi$.
This is a contradiction.

Now for $c \in \Omega_{\infty}$, define a map $T_{c}: \{e^{2 \pi i k
\theta}, k = 0, 1, \cdots\} \to \widehat{\Bbb C}$ by $T_{c}(e^{2 \pi
i k \theta}) = g_{c}^{k}(1)$. First let us show that $T_{c}$ is
uniformly continuous.  To see this, note that $\theta$ is
irrational, and therefore there is an $M > 0$ dependent only on
$\theta$ such that  for any  $0< \delta < 1/100$, and any $k, l$
with  $$|e^{2 \pi i k\theta} - e^{2 \pi i \l\theta}| < \delta,$$
there exist  integers $0 \le m, n \le  M $ such that
$$|e^{2 \pi i m\theta} - e^{2 \pi i n\theta}| < 1/4, |e^{2 \pi i
m\theta} - e^{2 \pi i k\theta}| > 1/4, \hbox{ and } |e^{2 \pi i
n\theta} - e^{2 \pi i l\theta}| > 1/4.$$ The existence of such $M$
is obvious since for $M$ large, the orbit segment $\{e^{2 \pi t m
\theta}, 0 \le t \le M\}$ will be dense enough in $\partial \Delta$
so that one can find two elements $e^{2 \pi i m \theta}$ and $e^{2
\pi i n \theta}$ in this orbit segment which satisfy the above three
inequalities.

For such $m$ and $n$, we have
\begin{equation}
\bigg{|}\frac{( e^{2 \pi i m\theta}  - e^{2 \pi i n\theta}) (e^{2
\pi i k\theta} - e^{2 \pi i l\theta})}{( e^{2 \pi i m\theta}- e^{2
\pi i k\theta})( e^{2 \pi i l\theta} - e^{2 \pi i n\theta}) }
\bigg{|} < 4 \delta.
\end{equation}
By Remark~\ref{supp} and Lemma~\ref{distortion}, it follows that
there is a positive function $k(\delta)$ satisfying $k(\delta) \to
0$ as $\delta \to 0$, such that for all $t \in \xi$,
\begin{equation}\label{t-b}
\bigg{|}\frac{(g_{t}^{m}(1) - g_{t}^{n}(1))(g_{t}^{k}(1)-
g_{t}^{l}(1))}{(g_{t}^{m}(1) -
g_{t}^{k}(1))(g_{t}^{l}(1)-g_{t}^{n}(1)}\bigg{|}< k(\delta).
\end{equation}
From (\ref{t-b}),  Proposition 4.1, and the maximal modulus
principle, we have
\begin{equation}
\bigg{|}\frac{(g_{c}^{m}(1) - g_{c}^{n}(1))(g_{c}^{k}(1)-
g_{c}^{l}(1))}{(g_{c}^{m}(1) -
g_{c}^{k}(1))(g_{c}^{l}(1)-g_{c}^{n}(1)}\bigg{|}< k(\delta)
\end{equation}
for all $c \in \Omega_{\infty}$.

Since $0\le m < n$ are bounded by $M$ which depends only on
$\theta$, for any given $c$,
$$
|g_{c}^{m}(1) - g_{c}^{n}(1)|
$$
has a positive lower bound. Since we have proved that the absolute
value of the denominator of the above fraction has an upper bound in
the beginning of the proof, it follows that
$$
|g_{c}^{k}(1) - g_{c}^{l}(1)| < C k(\delta)
$$
for some uniform $C >
0$. This implies the  uniform continuity of $T_{c}$. Now we can
continuously extend $T_{c}$ to the unit circle.

We now need only to prove that $T_{c}$ is injective. We prove this
by contradiction. Assume that there exist $x, y \in \partial \Delta$
such that $T_{c}(x) = T_{c}(y)$  and $x \ne y$. Take subsequences
$k_{i}$, $l_{i}$ such that $e^{2 \pi i k_{i} \theta} \to x$ and
$e^{2 \pi i l_{i} \theta} \to y$ as $i \to \infty$.  Take integers
$m, n$ such that $T_{c}(e^{2 \pi i m \theta}), T_{c}( e^{2 \pi i n
\theta})$ and $T_{c}(x)$ are all distinct. It follows that $ e^{2
\pi i m \theta}, e^{2 \pi i m \theta}, x$ and $y$ are all distinct.
Then there is a uniform $\delta > 0$ such that
$$
\big{|}\frac{( e^{2 \pi i m\theta}  - e^{2 \pi i n\theta}) (e^{2 \pi
i k_{i}\theta} - e^{2 \pi i l_{i}\theta})}{( e^{2 \pi i m\theta}-
e^{2 \pi i k_{i}\theta})( e^{2 \pi i l_{i}\theta} - e^{2 \pi i
n\theta}) } \big{|} \ge \delta
$$
for all $i$ large enough. By Lemma~\ref{distortion} and
Remark~\ref{supp}, it follows that there is a constant $C(\delta) >
0$ which depends only on $\delta$ such that
$$
\bigg{|}\frac{(g_{t}^{m}(1) - g_{t}^{n}(1))(g_{t}^{k_{i}}(1)-
g_{t}^{l_{i}}(1))}{(g_{t}^{m}(1) -
g_{t}^{k_{i}}(1))(g_{t}^{n}(1)-g_{t}^{l_{i}}(1)}\bigg{|} > C(\delta)
$$
for all $t \in \xi$. This, together with  Proposition~\ref{nov} and
the minimal modulus principle, implies
$$
\bigg{|}\frac{(g_{c}^{m}(1) - g_{c}^{n}(1))(g_{c}^{k_{i}}(1)-
g_{c}^{l_{i}}(1))}{(g_{c}^{m}(1) -
g_{c}^{k_{i}}(1))(g_{c}^{n}(1)-g_{c}^{l_{i}}(1)}\bigg{|} >
C(\delta).
$$
Since $T_{c}(e^{2 \pi i m \theta}), T_{c}( e^{2 \pi i n \theta})$
and $T_{c}(x)$ are distinct with each other and $T(x) = T(y)$, the
absolute value of the denominator of the above fraction has a
positive lower bound. But  the numerator goes to zero as  $i \to
\infty$. This is a contradiction. The lemma follows.

\end{proof}

Using the above argument in the proof of the uniform continuity of
$T_{c}$, the reader shall easily supply a proof of the following
lemma,
\begin{lemma}\label{e-x}
Define $T:  \Omega_{\infty} \times \partial \Delta \to {\Bbb C}$ by
$T(c, x) = T_{c}(x)$.  Then $T$ is continuous.
\end{lemma}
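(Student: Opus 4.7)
The plan is to prove joint continuity of $T$ by combining the cross-ratio estimate from the proof of Lemma~\ref{Jordan}, which I will promote to an \emph{equicontinuity} statement for the family $\{T_{c}\}_{c\in K}$ on each compact $K\subset\Omega_{\infty}$, with the obvious continuity of $T(c,x)$ in $c$ along the orbit $\{e^{2\pi ik\theta}\}_{k\ge 0}$. Fix $(c_{0},x_{0})\in\Omega_{\infty}\times\partial\Delta$ and choose a compact neighborhood $K\subset\Omega_{\infty}$ of $c_{0}$ (if $c_{0}=\infty$, a neighborhood in the spherical sense, noting that $g_{c}\to g_{\infty}=\lambda z-\lambda z^{2}/2$ uniformly on compacta). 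The decomposition
\[
|T(c,x)-T(c_{0},x_{0})|\;\le\;|T(c,x)-T(c,x_{0})|\;+\;|T(c,x_{0})-T(c_{0},x_{0})|
\]
reduces everything to (i) a modulus of continuity for $T_{c}$ that is uniform in $c\in K$, and (ii) continuity in $c$ at a single point of $\partial\Delta$.

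For (i), I would run the cross-ratio argument verbatim from Lemma~\ref{Jordan}: given $\eta>0$ small and $x,y\in\partial\Delta$ with $|x-y|<\eta$, I pick $0\le m<n\le M(\theta)$ so that $e^{2\pi im\theta},e^{2\pi in\theta},x,y$ are quantitatively separated, obtain
\[
\Bigl|\tfrac{(e^{2\pi im\theta}-e^{2\pi in\theta})(x-y)}{(e^{2\pi im\theta}-x)(y-e^{2\pi in\theta})}\Bigr|<4\eta,
\]
transfer this bound to the cross ratio of $g_{c}^{m}(1),g_{c}^{n}(1),T_{c}(x),T_{c}(y)$ for $c\in\xi$ via Lemma~\ref{distortion} and Remark~\ref{supp}, and then extend it to all of $\Omega_{\infty}$ by Proposition~\ref{nov} together with the maximum modulus principle. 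The new point compared with Lemma~\ref{Jordan} is that I need the denominator $(g_{c}^{m}(1)-T_{c}(x))(T_{c}(y)-g_{c}^{n}(1))$ to be bounded above and $|g_{c}^{m}(1)-g_{c}^{n}(1)|$ to be bounded below \emph{uniformly in $c\in K$}. The upper bound follows from the uniform boundedness of $\gamma_{c}$ already established in the proof of Lemma~\ref{Jordan}, again from the maximum modulus principle applied to the relevant cross-ratio functions (the argument there depends only on the boundary bound on $\xi$, which is $K$-independent). The lower bound on $|g_{c}^{m}(1)-g_{c}^{n}(1)|$ over $c\in K$ follows from continuity of $c\mapsto g_{c}^{k}(1)$ on $\Omega_{\infty}$, non-vanishing on $\Omega_{\infty}$ (Lemma~\ref{no-finite}), and compactness of $K$. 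This yields $|T_{c}(x)-T_{c}(y)|\le C(K)\eta$ for all $c\in K$, which is the desired equicontinuity.

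For (ii), I approximate $x_{0}$ by the orbit: given $\varepsilon>0$, first apply the equicontinuity from (i) to pick $\eta>0$ so that $\sup_{c\in K}|T_{c}(x)-T_{c}(y)|<\varepsilon/3$ whenever $|x-y|<\eta$, then use density of $\{e^{2\pi ik\theta}\}_{k\ge 0}$ to choose an integer $k$ with $|e^{2\pi ik\theta}-x_{0}|<\eta$. Since $c\mapsto g_{c}^{k}(1)=T(c,e^{2\pi ik\theta})$ is continuous in $c$ on $\Omega_{\infty}$ (a composition of rational functions depending continuously on their coefficients), there is $\delta>0$ such that $|c-c_{0}|<\delta$ forces $|T(c,e^{2\pi ik\theta})-T(c_{0},e^{2\pi ik\theta})|<\varepsilon/3$. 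Chaining the three estimates through the triangle inequality
\[
|T(c,x_{0})-T(c_{0},x_{0})|\le|T(c,x_{0})-T(c,e^{2\pi ik\theta})|+|T(c,e^{2\pi ik\theta})-T(c_{0},e^{2\pi ik\theta})|+|T(c_{0},e^{2\pi ik\theta})-T(c_{0},x_{0})|
\]
gives continuity in $c$ at $x_{0}$, and combined with (i) this yields joint continuity at $(c_{0},x_{0})$.

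The main obstacle is verifying the uniform lower bound on $|g_{c}^{m}(1)-g_{c}^{n}(1)|$ for $c$ in a compact neighborhood of $c_{0}\in\Omega_{\infty}$; without it the constant $C$ in the cross-ratio estimate could blow up as $c$ moves, destroying equicontinuity. Lemma~\ref{no-finite} is exactly what rules out the degeneracy $g_{c}^{m}(1)=g_{c}^{n}(1)$ inside $\Omega_{\infty}$, and a compactness argument then converts non-vanishing into a positive lower bound. Once this is in hand, the rest is bookkeeping through the triangle inequality.
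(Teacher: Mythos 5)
Your proof is correct and is essentially what the paper has in mind: the paper explicitly defers to "the above argument in the proof of the uniform continuity of $T_c$," and you carry that argument out, adding the one genuine extra step it requires. That extra step -- upgrading the per-$c$ bounds on $|g_{c}^{m}(1)-g_{c}^{n}(1)|$ (from below) and on $\gamma_c$ (from above) to bounds uniform over a compact neighborhood $K$ of $c_0$ -- is exactly the right thing to isolate, and your resolution via Lemma~\ref{no-finite} (non-vanishing on $\Omega_\infty$) together with compactness and continuity of $c\mapsto g_c^k(1)$ for the finitely many relevant pairs $0\le m<n\le M(\theta)$ is correct. The rest -- equicontinuity of $\{T_c\}_{c\in K}$ on the orbit from the cross-ratio estimate transferred from $\xi$ by Proposition~\ref{nov} and the modulus principle, extension to $\partial\Delta$, and then a three-term triangle inequality through a nearby orbit point to get continuity in $c$ -- matches the intended argument.

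One small phrasing point: when you invoke the maximum modulus principle to control the cross ratio of $g_c^m(1), g_c^n(1), T_c(x), T_c(y)$ for general $x,y\in\partial\Delta$, the holomorphicity from Proposition~\ref{nov} applies only when the arguments are orbit points $g_c^k(1), g_c^l(1)$; so, as in the paper's Lemma~\ref{Jordan}, one should first establish the estimate on the orbit and then pass to the closure by continuity of $T_c$ in the $x$-variable. This is a minor bookkeeping matter that does not affect the correctness of your argument.
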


The following lemma characterizes a quasi-circle in the complex
plane by the lower bound of the cross ratios of every four ordered
points on it( Lemma 9.8 \cite{Po}),
\begin{lemma}\label{q-c}
For each $\delta > 0$, there is a $K(\delta) > 1$ such that for any
simple closed curve $\gamma \subset \Bbb C$, if for every four
ordered points $z_{1}, z_{2}, z_{3}, z_{4} \in \gamma$, the
following inequality hold,
\begin{equation}\label{f-i}
\big{|}\frac{(z_{1} - z_{3})(z_{2}-z_{4})}{(z_{2} -
z_{3})(z_{1}-z_{4})}\big{|} \ge \delta,
\end{equation}
then $\gamma$ is a $K(\delta)-$quasi-circle. Similarly, for each $K
> 1$, there is a $\delta(K) >0$ such that for any $K-$quasi-circle
$\gamma \subset \Bbb C$, the following inequality hold for every
four ordered points $z_{1}, z_{2}, z_{3},z_{4}$ on $\gamma$,
\begin{equation}\label{s-i}
\big{|}\frac{(z_{1} - z_{3})(z_{2}-z_{4})}{(z_{2} -
z_{3})(z_{1}-z_{4})}\big{|} \ge \delta(K).
\end{equation}
\end{lemma}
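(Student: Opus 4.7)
The plan is to reduce both directions of the equivalence to Ahlfors' classical three-point characterization of quasi-circles: $\gamma$ is a $K$-quasi-circle if and only if there exists $M = M(K)$ such that $|z_1 - z_2| + |z_2 - z_3| \le M |z_1 - z_3|$ for every three ordered points $z_1, z_2, z_3$ on $\gamma$. Both halves of Lemma~\ref{q-c} will then become quantitative transfers between the three-point constant $M$ and the cross-ratio constant $\delta$.

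For the direction ``$K$-quasi-circle $\Rightarrow$ cross-ratio bound (\ref{s-i})'', I would use the fact that a $K$-quasi-circle is the image $f(\partial\Delta)$ of the unit circle under a $K$-quasiconformal homeomorphism $f$ of $\widehat{\Bbb C}$. For four ordered points $w_1, w_2, w_3, w_4 \in \partial\Delta$, the cross-ratio modulus on the left of (\ref{f-i}) is an absolute constant bounded away from $0$ (computable by a direct trigonometric formula). Invoking the quasi-invariance of cross-ratios under quasiconformal maps in the quantitative form provided by the Mori distortion theorem (equivalently, by the modulus inequality for the Teichm\"uller ring) then yields $\delta(K) > 0$ depending only on $K$.

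For the opposite direction ``cross-ratio bound $\Rightarrow$ $K$-quasi-circle'', I would derive Ahlfors' three-point condition from (\ref{f-i}) by a limit argument. Given three ordered points $z_1, z_2, z_3$ on $\gamma$, one chooses a fourth ordered point $z_4 \in \gamma$ on the arc opposite to $z_2$ and lets $z_4$ tend along $\gamma$ as far as possible from $\{z_1, z_2, z_3\}$. Then $(z_2 - z_4)/(z_1 - z_4) \to 1$, and the hypothesis (\ref{f-i}) degenerates in the limit to
$$\left|\frac{z_1 - z_3}{z_2 - z_3}\right| \ge \delta,$$
giving $|z_2 - z_3| \le \delta^{-1}|z_1 - z_3|$. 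A symmetric choice of $z_4$ on the other arc gives $|z_1 - z_2| \le \delta^{-1}|z_1 - z_3|$, and summing yields the three-point condition with $M = 2/\delta$. Ahlfors' theorem then supplies a quasiconformal reflection across $\gamma$ whose dilatation is controlled by $M$, which is the definition of $\gamma$ being a $K(M)$-quasi-circle.

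The subtle point, and really the content of Pommerenke's Lemma 9.8, is the quantitative matching of constants $K \leftrightarrow \delta \leftrightarrow M$ and its stability when $\gamma$ is allowed to pass through or approach $\infty$; the limiting argument above must be carried out with a concrete auxiliary point and a careful estimate on how small the ``error factor'' $|(z_2 - z_4)/(z_1 - z_4)|$ can be made in terms of the diameter of $\gamma$ relative to the triple $\{z_1, z_2, z_3\}$. Since this lemma is a standard result quoted verbatim from \cite{Po}, in the present paper I would simply invoke it rather than reproduce the proof; the use in Theorem C only requires the qualitative equivalence together with the uniform control of the constants.
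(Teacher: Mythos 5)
The paper states this lemma as a quotation of Lemma~9.8 of \cite{Po} and supplies no proof, so there is no internal argument to compare against; your decision to defer to that citation is appropriate, and your sketch of the direction ``$K$-quasi-circle implies cross-ratio lower bound'' is sound: four cyclically ordered points on $\partial\Delta$ have real cross-ratio at least $1$, and a $K$-quasiconformal self-map of the sphere distorts cross-ratios by an amount controlled by $K$ alone.

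The sketch of the converse, however, contains a genuine error. The three-point criterion you quote, namely $|z_1-z_2|+|z_2-z_3|\le M|z_1-z_3|$ for every ordered triple, is not Ahlfors' condition: it already fails for the unit circle if one takes $z_1,z_3$ nearly coincident and $z_2$ antipodal. Ahlfors' criterion asks only that the arc of \emph{smaller diameter} between $z_1$ and $z_3$ be controlled, i.e.\ that $\min(\mathrm{diam}\,\gamma_1,\mathrm{diam}\,\gamma_2)\le C\,|z_1-z_3|$ where $\gamma_1,\gamma_2$ are the two arcs of $\gamma$ cut off by $z_1,z_3$. The same circle example kills the limiting step of your argument: when $z_2$ is antipodal to the nearby pair $z_1,z_3$, the arc ``opposite to $z_2$'' is short and carries no point $z_4$ far from $\{z_1,z_2,z_3\}$; for any such $z_4$ the ratio $(z_2-z_4)/(z_1-z_4)$ is large rather than close to $1$, and the ``conclusion'' $|z_2-z_3|\le\delta^{-1}|z_1-z_3|$ is simply false for that triple even though the circle satisfies (\ref{f-i}). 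A correct argument bounds only the smaller arc, typically after a M\"obius normalization that places a point of $\gamma$ at $\infty$, rather than attempting a naive limit $z_4\to\infty$ along a bounded curve. Since the paper (and you, in the end) simply cite \cite{Po}, this does not affect anything downstream, but the reconstruction as written does not constitute a proof of the reverse implication.
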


Now let us prove Theorem C. By Lemma~\ref{Jordan}, $\gamma_{c}$ is a
simple closed curve. By Lemma~\ref{distortion}, there exists a
uniform $1 < K < \infty$ such that the boundary of the Siegel disk
of $g_{c}$ for every $c \in \xi$ is a $K-$quasi-circle. By
Lemma~\ref{q-c}, the inequality (\ref{s-i}) holds for every $c \in
\xi$ and any four ordered points $z_{1}, z_{2}, z_{3}, z_{4}$ on
$\gamma_{c}$. By Proposition~\ref{nov} and minimal modulus
principle, it also holds for every $c \in \Omega_{\infty}$. By
Lemma~\ref{q-c} again, it follows that there is a uniform $1 < K'<
\infty$ such that $\gamma_{c}$ is a $K'-$quasi-circle for every $c
\in \Omega_{\infty}$.

Now we need only to show that $\gamma_{c}$ is the boundary of the
Siegel disk of $g_{c}$ which is centered at the origin. By
Lemma~\ref{e-x}, $\gamma_{c}$ moves continuously as $c$ varies in
$\Omega_{\infty}$. Let $D_{c}$ be the bounded component of ${\Bbb C}
- \gamma_{c}$. We will show that $D_{c}$ is the Siegel disk of
$g_{c}$. First let us show that $g_{c}$ is holomorphic on $D_{c}$.
When $c = \infty$, this is obviously true. As $c$ varies from the
infinity to any value in $\Omega_{\infty}$, the finite pole of
$g_{c}$, say $p_{c}$, varies continuously. But $g_{c}(\gamma_{c}) =
\gamma_{c}$ and $\infty \notin \gamma_{c}$, it follows that
$\gamma_{c}$ does not meet the pole $p_{c}$. It follows that $p_{c}
\notin D_{c}$ for otherwise there is some $c$ such that $\gamma_{c}$
meets $p_{c}$, which is a contradiction.  This implies that $g_{c}$
is holomorphic on $D_{c}$. Since $$g_{c}(\partial D_{c}) =
g_{c}(\gamma_{c}) = \partial D_{c},$$ it follows that
$$g_{c}(D_{c}) = D_{c}.$$ This implies that $D_{c}$ is a periodic
Fatou component of $g_{c}$.  Since $0 \notin \gamma_{c}$ for every
$c \in \Omega_{\infty}$, and $0 \in D_{\infty}$, by the same
argument as above, it follows that $0 \in D_{c}$ for every $c \in
\Omega_{\infty}$. Because $g_{c}'(0) = e^{2 \pi i \theta}$ and
$g_{c}(0) = 0$,  it follows that $D_{c}$ is the Siegel disk of
$g_{c}$ which is centered at the origin, and in particular,
$\partial D_{c}$ passes through the critical point $1$ of $g_{c}$.
This completes the proof of Theorem C.

Let $\xi$ be the simple closed curve in Lemma~\ref{par-c}. Recall
that $\Omega_{0}$ is the bounded component of $\widehat{\Bbb C} -
\xi$ and $\Omega_{\infty}$ the unbounded one. For $c \in
\widehat{\Bbb C}-\{0, 1, -1\}$, let $D_{c}$ be the Siegel disk of
$g_{c}$ which is centered at the origin. Based on the proof of
Theorem C, the reader shall easily draw the following conclusion,
\begin{proposition}\label{d-c} Let $c \in \widehat{\Bbb C} - \{0, 1, -1\}$. We have
(1) if  $c \in \xi$, $\partial D_{c}$ passes through both of the
critical points $1$ and $c$, (2) if $c \in \Omega_{\infty}$,
$\partial D_{c}$ passes through $1$ only, (3) if $c \in \Omega_{0}$,
$\partial D_{c}$ passes through $c$ only.

\end{proposition}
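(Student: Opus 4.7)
The plan is to treat the three cases separately, with case (2) being essentially a byproduct of the proof of Theorem C just concluded, case (1) following from the construction of $\xi$, and case (3) reducing to case (2) via the symmetry $c\mapsto 1/c$.

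For case (1), if $c\in\gamma=\{c(t)\mid 0<t<2\pi\}$, then by Lemma~\ref{rel-t} the map $g_c$ realizes some $f_t$ in the sense of that lemma, so in particular $g_c\in R_\theta^{geom}$ and by Remark~\ref{exp} the boundary of the Siegel disk $D_c$ passes through both critical points $1$ and $c$. The same argument applies to $c\in\gamma'=\{\tilde c(t)\mid 0<t<2\pi\}$ using $\tilde f_t$ in place of $f_t$.

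For case (2), the proof of Theorem C just completed shows that for every $c\in\Omega_\infty$ the closure $\gamma_c=\overline{\{g_c^k(1)\}_{k\ge 0}}$ is a quasi-circle and equals $\partial D_c$, so $\partial D_c$ passes through $1$. I claim $c\notin\partial D_c$. Otherwise, since the rotation $g_c|D_c$ has no critical points in $D_c$, we would have $c\in\partial D_c$; combined with $1\in\partial D_c$ this would force both forward critical orbits into $\overline{D_c}$, so $P_{g_c}\subset\overline{D_c}$ and hence $g_c\in R_\theta^{geom}$. But then by Remark~\ref{exp}, $g_c$ would realize $f_t$ or $\tilde f_t$ for $t$ equal to the inner angle between $1$ and $c$, placing $c$ in $\gamma\cup\gamma'\subset\xi$, contradicting $c\in\Omega_\infty$.

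For case (3), I will use the M\"obius involution $\psi_c(z)=z/c$. A direct computation (already used in Fact 3 of $\S4.2.2$ and in the proof of Lemma~\ref{two-ends}) shows that $\psi_c$ conjugates $g_c$ to $g_{1/c}$: indeed $\psi_c\circ g_c\circ\psi_c^{-1}$ fixes $0$ and $\infty$, has derivative $e^{2\pi i\theta}$ at $0$, and its critical points are $\psi_c(1)=1/c$ and $\psi_c(c)=1$, so by the uniqueness in the normalization of $\Sigma$ it equals $g_{1/c}$. By Lemma~\ref{par-c}, $\xi$ is invariant under $c\mapsto 1/c$ and separates $0$ from $\infty$; hence this involution interchanges $\Omega_0$ and $\Omega_\infty$. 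For $c\in\Omega_0$ we have $1/c\in\Omega_\infty$, and case (2) gives $\partial D_{1/c}\ni 1$ and $1/c\notin\partial D_{1/c}$. Applying $\psi_c^{-1}$, which takes $D_{1/c}$ to $D_c$ and sends $1\mapsto c$, $1/c\mapsto 1$, we conclude that $\partial D_c$ passes through $c$ but not through $1$, as required. The only step with any subtlety is ruling out $c\in\partial D_c$ in case (2), which is the key contrapositive use of the uniqueness statement in Theorem B together with the combinatorial dichotomy of Remark~\ref{exp}.
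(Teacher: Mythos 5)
Your proof is correct and is essentially the argument the paper intends when it says the reader "shall easily draw the following conclusion" from the proof of Theorem~C. One small caveat: in case (1) the conclusion that both critical points of $g_{c(t)}$ lie on $\partial D_{c(t)}$ does not come from Remark~\ref{exp} (that remark \emph{assumes} both critical points are on the boundary and then distinguishes $f_t$ from $\widetilde f_t$); rather it follows directly from the normalization in Lemma~\ref{rel-t} together with Definition~\ref{realize}: the conjugating homeomorphism sends $\partial\Delta$ onto $\partial D_{c(t)}$, fixes $1$, and sends the second critical point $e^{it}\in\partial\Delta$ of $f_t$ to the second critical point $c(t)$ of $g_{c(t)}$, so both land on $\partial D_{c(t)}$. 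The remaining two cases are fine: in (2) the only nontrivial step is ruling out $c\in\partial D_c$, which you do by showing this would put $g_c$ in $R_\theta^{geom}$ with both critical points on the Siegel boundary, hence (via the exhaustive dichotomy of Remark~\ref{exp} and the uniqueness in Lemma~\ref{rel-t}) force $c\in\gamma\cup\gamma'\subset\xi$, contradicting $c\in\Omega_\infty$; and case (3) reduces cleanly to case (2) via the involution $z\mapsto z/c$, using that $\xi$ is invariant under $c\mapsto 1/c$ and separates $0$ from $\infty$ (Lemma~\ref{par-c}).
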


\begin{corollary}\label{Her}
Let $f$ be a degree-3 rational map with a bounded type Herman ring.
Then each boundary component of the Herman ring is a quasi-circle
which passes through at least one but at most two of the critical
points of $f$.
\end{corollary}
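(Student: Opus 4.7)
The plan is to reduce the Herman-ring corollary to Theorem C via Shishikura's quasiconformal surgery. Let $f$ be a degree-$3$ rational map with a bounded type Herman ring $H$ of rotation number $\theta$ (which, by Shishikura's bound $\#\{\text{Herman rings}\}\le d-2=1$, we may take to be fixed by $f$), and let $\gamma_1,\gamma_2$ be its two boundary components. Let $U_1,U_2$ be the two topological-disk components of $\widehat{\mathbb{C}}\setminus\overline{H}$, each invariant under $f$. A direct Riemann--Hurwitz count on the proper self-map $f\colon U_i\to U_i$ shows that it has degree $3$ and carries exactly $2$ critical points of $f$ (counted with multiplicity) in $\overline{U_i}$; the four critical points of $f$ therefore split as $2+2$ between the two sides.

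The first step is Shishikura's construction. Cutting $H$ along an $f$-invariant real-analytic curve $\alpha$ and gluing in a rigid rotation disk on each side, one produces, for $i=1,2$, a quadratic rational map $g_i$ with a bounded type Siegel disk $D_{g_i}$ of rotation number $\theta$, together with a quasiconformal homeomorphism $\Psi_i\colon\widehat{\mathbb{C}}\to\widehat{\mathbb{C}}$ that conjugates $f$ to $g_i$ outside a collar of $\alpha$ contained in $H$ and that is holomorphic on a full neighborhood of $\overline{U_i}$. Under $\Psi_i$ the boundary $\gamma_i$ is mapped onto $\partial D_{g_i}$, and the two critical points of $f$ in $\overline{U_i}$ are identified bijectively with the two critical points of $g_i$, preserving local degrees; in particular, a critical point of $f$ on $\gamma_i$ corresponds to a critical point of $g_i$ on $\partial D_{g_i}$.

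The second step is to apply Theorem C to each $g_i$: $\partial D_{g_i}$ is a $K$-quasi-circle, with $K$ depending only on $\theta$, passing through one or both critical points of $g_i$. Since $\Psi_i^{-1}$ is a global quasiconformal homeomorphism, it sends quasi-circles to quasi-circles (with a constant depending on the dilatation of $\Psi_i$), so $\gamma_i=\Psi_i^{-1}(\partial D_{g_i})$ is a quasi-circle passing through one or two of the critical points of $f$ lying in $\overline{U_i}$. This is exactly the statement of the corollary.

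The main obstacle is the critical-point bookkeeping through Shishikura's surgery: one must arrange the interpolation between the rotation dynamics of the glued-in disk and the genuine dynamics of $f$ so that it takes place strictly inside $H$, away from $\gamma_i$. This guarantees that $\Psi_i$ is holomorphic in a neighborhood of $\overline{U_i}$ and in particular holomorphic across $\gamma_i$ from the $U_i$ side, so that the local degree of $f$ at each critical point on $\gamma_i$ is preserved and no spurious critical points are produced on $\gamma_i$ by the surgery. This is a standard feature of Shishikura's original construction, and once it is in place the corollary follows by direct translation of Theorem C across $\Psi_i$.
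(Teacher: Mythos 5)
Your strategy --- collapse one side of the Herman ring by Shishikura's quasiconformal surgery to obtain a quadratic rational map in $R_{\theta}^{geom}$ and then invoke Theorem C --- is exactly the paper's route, and it does yield the corollary. But the bookkeeping you give for the surgery is wrong in two places. The components $U_1, U_2$ of $\widehat{\Bbb C} \setminus \overline{H}$ are not $f$-invariant, and $f\colon U_i \to U_i$ is not a proper degree-$3$ self-map, so the Riemann--Hurwitz count you run on it does not apply. Indeed, $f|_{H}$ is a biholomorphism while $\deg f = 3$, so $f^{-1}(H)$ strictly contains $H$, and the extra preimage components lie in $U_1 \cup U_2$ and are carried onto $H \not\subset U_i$; concretely, in the symmetric Blaschke model $f(z) = e^{2\pi i t}z^2(z-a)/(1-\bar a z)$ with $|a|>1$, the pole $1/\bar a$ lies in $U_1$ and is sent to $\infty \in U_2$. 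The correct way to get the $2+2$ critical-point split is \emph{a posteriori}: the surgery produces a rational map $g_i$ whose critical set is exactly $\Omega_{f} \cap \overline{U_i}$ (no critical points lie in $\overline{H}$ or in the interpolation collar), so $k_1 + k_2 = 4$; and since each $g_i$ has a Siegel disk with nonempty Julia-set boundary $\Psi_i(\gamma_i)$, we have $\deg g_i \ge 2$, i.e. $k_i \ge 2$, forcing $k_1 = k_2 = 2$.

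For the same reason, the claim that $\Psi_i$ is holomorphic on a full neighborhood of $\overline{U_i}$ is false: the invariant Beltrami form one straightens is supported on the entire backward grand orbit of the interpolation collar, which meets $U_i$ along the preimages of $H$. This is harmless --- a global quasiconformal conjugacy already carries critical points to critical points and quasicircles to quasicircles, which is all the corollary requires --- but the assertion should be dropped. Finally, Shishikura's inequality bounds the number of Herman-ring \emph{cycles} by $d-2$, not their period, so the citation you give does not by itself reduce you to a fixed Herman ring; for a period-$p$ cycle the same surgery yields a quadratic map with a period-$p$ Siegel cycle, to which Theorem C as proved in this paper (for a fixed Siegel disk normalized at the origin) does not directly apply, and that case requires a separate argument.
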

\begin{proof}
This is because for any boundary component $\gamma$ of the Herman
ring, by using a quasi-conformal surgery, one can get a quadratic
rational map with  a  Siegel disk which has the same rotation number
as the Herman ring and which has $\gamma$ as its boundary. We leave
the details to the reader.
\end{proof}

\begin{center}
\section{Appendix}
\end{center}

\subsection{Thurston's characterization theory on postcritically finite rational maps}
Since the Thurston's characterization theorem used in this paper is
slightly different from the one presented in \cite{DH}, we will give
a brief introduction of this theory, which has been adapted to our
situation: we use a larger invariant set $X \supseteqq P_{f}$,
instead of the $\emph{postcritical set}$ $P_{f}$. The proof is
completely the same as the one presented in \cite{DH}.

Let $f: S^{2} \to S^{2}$ be a $\emph{postcritically finite}$
branched covering map. Let $X \subset S^{2}$ be a finite set such
that $f(X) \subseteqq X$ and $P_{f} \subseteqq X$. A simple closed
curve in $S^{2} - X$ is said to be $\emph{non-peripheral}$ if
$\gamma$ is not homotopic to a point in $S^{2} - X$. A
$\emph{multi-curve}$ of $f$ in $S^{2} - X$ is a family of disjoint,
non-homotopic and non-peripheral curves. We say a
$\emph{multi-curve}$ $\Gamma$ is $f-$stable if for any $\gamma \in
\Gamma$, any $\emph{non-peripheral}$ component of $f^{-1}(\gamma)$
is homotopic in $S^{2} - X$ to one of the elements in $\Gamma$.

Two branched covering maps $f$ and $g$ are said to be
$\emph{combinatorially equivalent}$ with respect to the set $X$ if
there are two homeomorphisms of the sphere $\phi, \psi$ which are
isotopic to each other rel $X$ such that $f = \phi^{-1} \circ g
\circ \psi$.

Let $\Gamma  = \{\gamma_{1}, \cdots, \gamma_{n}\}$ be a $f-$stable
$\emph{multi-curve}$.  For each $\gamma_{j}$, let $\gamma_{i, j,
\alpha}, \alpha \in \Lambda$ be the $\emph{non-peripheral}$
components of $f^{-1}(\gamma_{j}$ which is homotopic to
$\gamma_{i}$.  Define
$$
a_{i,j}  = \frac{1}{d_{i,j,\alpha}}.
$$
The matrix $A = (a_{i, j})_{n \times n}$ is called the Thurston
linear transformation matrix of $f$.  $\Gamma$ is called a Thurston
obstruction if the maximal eigenvalue of $A$ is greater than $1$.

Associated to each $\emph{postcritically finite}$ rational map $f$,
one can construct an orbifold $\mathcal{O}$$_{f}$$ = (S^{2},
\nu_{f})$ by defining $\nu_{f}: S^{2} \to {\Bbb Z}^{+} \cup
\{\infty\}$ to be the minimal function satisfying the following two
conditions,
\begin{itemize}
\item[1.] $\nu_{f}(x) = 1$ for $x \notin P_{f}$,
\item[2.] $\nu_{f}(x)$ is a multiple of $\nu_{f}(y)\deg_{y}f$ for each $y \in f^{-1}(x)$.
\end{itemize}

An orbifold is called $\emph{hyperbolic}$ if
$$
\chi_{f} = 2 - \sum_{\nu_{f}(x) \ge 2}(1 - \frac{1}{\nu_{f}(x)}) <
0.
$$

\begin{T}
Let $f$ be a postcritically finite branched covering map of the
sphere. Let $X$ be a finite set such that $f(X) \subseteqq X$, and
$P_{f} \subseteqq X$. Assume that the orbifold $\mathcal{O}$$_{f}$
is hyperbolic.  Then $f$ is combinatorially equivalent to a rational
map with respect to $X$ if and only  $f$ has no Thurston
obstructions in $S^{2} - X$.
\end{T}

\subsection{Short simple closed geodesics}

In this appendix, we present a few results on the simple closed
geodesics in a hyperbolic Riemann surface, and for detailed proofs
of these results, we refer the reader to $\S6$ and $\S7$ of [DH].

\begin{A.1}[Corollary \:6.6, \cite{DH}] Let $X$ be a hyperbolic Riemann surface and $\gamma_{1}, \gamma_{2}$ be two simple closed geodesics with length $< \log(\sqrt2 + 1)$. Then either $\gamma_{1} = \gamma_{2}$ or $\gamma_{1} \cap \gamma_{2} = \emptyset$.
\end{A.1}

\begin{A.2}[Corollary \: 6.7, \cite{DH}] Let $X$ be a hyperbolic Riemann surface. Let $\gamma$ be a geodesic in $X$ which intersects itself transversally at least once. Then $l_{X}(\gamma) > 2 \log(\sqrt2 + 1)$.
\end{A.2}

\begin{A.3}[Theorem \:7.1, \cite{DH}] Let $X$ be a hyperbolic Riemann surface, $P \subset X$ a finite set, with $|P| = p>0$. Choose $L <  \log(\sqrt2 + 1)$.
Let $X' = X - P$. Let $\gamma$ be a simple closed geodesic in $X$
and $\{\gamma_{1}', \cdots, \gamma_{s}'\}$ be the simple closed
geodesic in $X'$ which is homotopic to $\gamma$ in $X$ with length
$< L$. Then
$$
\frac{1}{l} - \frac{2}{\pi}-\frac{p+1}{L} < \sum_{1 \le i \le
s}\frac{1}{l_{i}'} < \frac{1}{l} + \frac{2(p+1)}{\pi}.
$$
\end{A.3}

\begin{A.4}[Proposition \:7.2, \cite{DH}] Let $P \subset S^{2}$ be a finite set, and $\gamma$ be a non-peripheral curve in $S^{2} - P$. Let $\phi, \psi: S^{2} \to {\Bbb P}^{1}$ be quasiconformal homeomorphisms.   If $dist_{T(S^{2}, P)}(\phi, \psi) < K$, then
$$
e^{-2K}\|\gamma\|_{\phi,P} \le \|\gamma\|_{\psi,P} \le
e^{2K}\|\gamma\|_{\phi,P}.
$$
\end{A.4}

\bibliographystyle{amsalpha}

\begin{thebibliography}{DH2}



\bibitem{A} L. Ahlfors, {\em Lectures on quasiconformal mappings}.
\bibitem{BFJ} B. Bielefeld, Y. Fisher, J. Hubbard, {\em The classification of critically periodic polynomials as dynamical systems}, Journal AMS 5(1992)pp. 721-762.

\bibitem{C1} G. Cui, {\em On geometrical finite rational maps with prescribed combinatorics}, Preprint(2004)

\bibitem{C2} G. Cui, {\em Conjugacies between rational maps and extremal quasiconformal maps}, Proc. Amer. Math. Soc. 129 (2001), 1949-1953.



\bibitem{CJS} G. Cui, Y. Jiang, D. Sullivan {\em On Geometrically Finite Rational Maps}, Preprint, 1998.



\bibitem{D} A. Douady, {\em Systems dynamiques holomorphes}, Seminar Bourbaki, Asterisque, 152-153(1987)151-172
\bibitem{DE} A. Douady, C. Earle, {\em Conformally Natural Extension of homeomorphisms of the Circle},
Acta Mathematica 157, 23-48, 1986.

\bibitem{dFdM} E. De Faria and W. De Melo, {\em On the Rigidity of Critical Circle Mappings Part I}, Preprint IMS \#1997/16, Institute of Mathematics Science, Stony rook, November 1997.

\bibitem{DH} A. Douady, J. Hubbard, {\em A Proof of Thurston's Topological Characterization of Rational Functions}, Acta Math, 171(1993), 263-297


\bibitem{GK} L. Goldberg , L. Keen,{\em The mapping class group of a generic quadratic rational map and automorphisms of the 2-shift}, Invent. Math. 101(1990) 335-372.


\bibitem{GS} J. Graczyk, G. Swiatek,{\em Siegel disks with critical points in their boundaries}, Duke Math. J. 11992003), no.1. 189-196.



\bibitem{H} M. Herman,{\em Conjugais on quasisymetrique des homeomorphisms analytique des cercle a des rotations}, Manuscript.

\bibitem{HT} P. Haissinsky, L. Tan {\em Matings of geometrically finite polynomials}, Preprint, 2003.




\bibitem{KH} A. Katok, B. Hasselblatt,{\em Introduction to the Modern Theory of Dynamical Systems}, Encyclopedia of Mathematics and its Applications, Vol.54, Cambridge University Press, London-New York, 1995.

\bibitem{L} M. Lyubich, {\em The dynamics of rational transforms: the topological picture}, Russian Math. Surveys 41:4(1986),43-117.


\bibitem{McM1} C. McMullen, {\em Complex dynamics and Renormalization},
Annals of Math Studies 135, Princeton University Press, 1994.


\bibitem{McM2}C. McMullen, {\em Self-similarity of Siegel disks and Hausdorff dimension of
Julia sets}, Acta Math. \textbf{180} (1998), 247-292.

\bibitem{Mi}J. Milnor {\em Remarks on quadratic Rational Maps}, arXiv:math.DS/9209221. v1, 1992.

\bibitem{Pe1} C. Petersen, {\em Local connectivity of some Julia sets containing a circle with an irrational rotation}, Acta Math., 177(1996)163-224

\bibitem{Pe2} C. Petersen, {\em Herman-Swiatek Theorems with Applications}, London Mathematical Society Lecture Note Series. 274 (2000), 211-225.
\bibitem{PL} Kevin M. Pilgrim, Tan Lei, {\em Combining rational maps and controlling obstructions},  Ergodic Theory  $\&$ Dynamical System, (1998), 18, 221-245.


\bibitem{Pi} Kevin M. Pilgrim, {\em Canonical Thurston obstructions},
Advances in Mathematics 158 (2001) 154-168.

\bibitem{Po} Ch. Pommerenke,{\em Univalent functions}, Vandenhoeck \& Ruprecht in G\"{o}ttingen, 1975

\bibitem{PZ} C. Petersen, S. Zakeri, {\em On the Julia set of a Typical Quadratic Polynomial with a Siegel disk}, Annulus of Mathematics (2004), Vol 159, NO. 1, 1-52

\bibitem{Sw} G. Swiatek, {\em  On critical circle homeomorphisms}, Bol. Soc.
Brasil. Mat. (N.S.) 29 (1998), no. 2, 329--351.

\bibitem{Th} W. Thurston, {\em Combinatorics of iteration of rational maps}, Preprint(1985).

\bibitem{Y} J.C. Yoccoz, {\em Petits Diviseurs en Dimension 1}, Asterisque 231, 1995.



\bibitem{Ya} M. Yampolsky,  {\em Complex bounds for critical circle maps}, Stony Brook IMS Preprint 1995/12.

\bibitem{YZ} M. Yampolsky,  S. Zakeri, {\em Mating Quadratic Siegel Polynomials}.

\bibitem{Z1} S. Zakeri, {\em Dynamics of cubic Siegel Polynomials}, Comm. Math. Phys. 206
(1999), no. 1, 185--233

\bibitem{Z2} S. Zakeri, {\em Old and New on Quadratic Siegel Disks},
In Shahyard, a Volume dedicated to Siarash Shahshahani on the
occasion of his 60th birthday, 2002.

\bibitem{Zh1}G. Zhang,  {\em On the non-escaping set of $e^{2\pi i \theta} \sin(z)$},
to appear in Iseral Journal of Math.
\bibitem{Zh2}G. Zhang,  {\em Topological Model of Simple Siegel Polynomials}, Ph.D. Thesis, 2002, The City University of New York.

\bibitem{Zh3}G. Zhang,  {\em Dynamics of Bi-critical Siegel Rational Maps},
Preprint, 2005.





\end{thebibliography}

\end{document}